\newif\ifarxiv
\DeclareMathAlphabet\mathbfit{OML}{cmm}{b}{it}
\newlist{enumarabic}{enumerate}{1}
\setlist[enumarabic]{font=\normalfont,label=(\arabic*),leftmargin=0.3in}
\newlist{enumroman}{enumerate}{1}
\setlist[enumroman]{font=\normalfont,label=(\roman*),leftmargin=0.3in}
\def\citehgashc#1{\cite[#1]{Franz:hgashc}}
\def\cf{\emph{cf.}}
\g@addto@macro\bfseries{\boldmath}
\newcommand*\xbar[1]{%
   \hbox{%
     \vbox{%
       \hrule height 0.35pt 
       \kern0.35ex
       \hbox{%
         \kern-0.15em
         \ensuremath{#1}%
         \kern-0.15em
       }%
     }%
   }%
} 
\let\newterm\emph
\def\N{\mathbb N}
\def\Z{\mathbb Z}
\def\R{\mathbb R}
\def\C{\mathbb C}
\DeclareMathOperator{\Hom}{Hom}
\DeclareMathOperator{\End}{End}
\DeclareMathOperator{\Tor}{Tor}
\def\cupone{\mathbin{\cup_1}}
\def\cuptwo{\mathbin{\cup_2}}
\def\deg#1{|#1|}
\let\shuffle\nabla
\def\kk{\Bbbk}
\def\Ll{\boldsymbol\Lambda}
\def\Sl{\mathbf S}
\def\Kl{\mathbf K}
\def\tildecc{\tilde c}
\def\ff{F}
\def\ffbar{f}
\let\epsilon\varepsilon
\let\phi\varphi
\def\RP{\mathbb{RP}}
\def\susp{\mathbf{s}}
\def\desusp{\susp^{-1}}
\def\transp#1{#1^{*}}
\let\transpp\transp
\def\aa{\mathbfit{a}}
\def\bb{\mathbfit{b}}
\def\xx{\mathbfit{x}}
\def\aaa{\mathfrak{a}}
\def\bbb{\mathfrak{b}}
\def\ccc{\mathfrak{c}}
\def\kkk{\mathfrak{k}}
\def\AW{AW}
\def\AWu#1{\AW_{\mkern -1mu #1}}
\def\EE{\mathbf{E}}
\def\EEE{\mathfrak{E}}
\def\FF{\mathbf{F}}
\def\ggg{\mathfrak{g}}
\def\ax{\kkk}
\def\pmax{\pmod\ax}
\def\pmkerf{\pmod{\ker\transp\ffbar}}
\def\ha{h^{a}}
\def\hc{h^{c}}
\def\invhc{k^{c}}
\def\barinvhc{\bar k^{c}}
\def\Laa{\Lambda^{G}}
\def\Lbb{\Lambda^{K}}
\def\Lbbone{\Lbb_{(1)}}
\def\FLbb{\Gamma_{\!\Lbb}}
\def\QBT{\Pi}
\def\Bar#1#2{\BB(\kk,#2,#1)}
\def\BarEl#1#2{#2\otimes#1}
\def\bigBar#1#2{\BB\bigl(\kk,#2,#1\bigr)}
\def\Ac#1#2{A^{#1}_{#2}}
\def\ac#1#2#3{\Ac{#1}{#2}(#3)}
\let\emptyset\varnothing
\def\setzero#1{[#1]}
\def\setone#1{\underline{#1}}
\def\iter#1#2{#1^{[#2]}}
\def\tGM{t_{\mathrm{GM}}}
\def\RC{U}
\def\BB{\mathbf{B}}
\def\BBone{\mathbf{1}}
\def\OM{\boldsymbol{\Omega}}
\let\KS\varkappa
\def\eqKS{\stackrel{\KS}{=}}
\DeclareMathOperator{\Sq}{Sq}
\def\mm{r}
\def\nn{r}
\newlist{caselist}{enumerate}{8}
\setlist[caselist]{label*={\bf\arabic*.},wide,leftmargin=0.1in}
\def\thpair#1#2{Pair~\ref{#1}~\(\leftrightarrow\)~\ref{#2}\ \kern0pt}
\def\ii{\mathbfit{i}}
\def\jj{\mathbfit{j}}
\def\cancelswith#1{$\to$~\ref{#1}}
\newcommand{\nocontentsline}[3]{}
\newcommand{\tocless}[2]{\bgroup\let\addcontentsline=\nocontentsline{}\egroup}
\theoremstyle{plain}
\newtheorem{theorem}{Theorem}[section]
\newtheorem{proposition}[theorem]{Proposition}
\newtheorem{lemma}[theorem]{Lemma}
\newtheorem{corollary}[theorem]{Corollary}
\theoremstyle{definition}
\newtheorem{remark}[theorem]{Remark}
\newtheorem{example}[theorem]{Example}
\theoremstyle{remark}
\newtheorem*{acknowledgements}{Acknowledgements}
\numberwithin{equation}{section}
\begin{document}

\title{The cohomology rings of homogeneous spaces}
\author{Matthias Franz}
\thanks{The author was supported by an NSERC Discovery Grant}
\address{Department of Mathematics, University of Western Ontario,
  London, Ont.\ N6A\;5B7, Canada}
\email{mfranz@uwo.ca}

\subjclass[2020]{Primary 57T15; secondary 16E45, 57T30, 57T35}

\begin{abstract}
  Let \(G\) be a compact connected Lie group and \(K\) a closed connected subgroup.
  Assume that the order of any torsion element in the integral cohomology of~\(G\) and~\(K\)
  is invertible in a given principal ideal domain~\(\kk\).
  It is known that in this case the cohomology of the homogeneous space~\(G/K\) with coefficients in~\(\kk\)
  and the torsion product of~\(H^{*}(BK)\) and~\(\kk\) over~\(H^{*}(BG)\)
  are isomorphic as \(\kk\)-modules. We show that this isomorphism
  is multiplicative and natural in the pair~\((G,K)\)
  provided that \(2\) is invertible in~\(\kk\).
  The proof uses homotopy Gerstenhaber algebras in an essential way.
  In particular, we show that the normalized singular cochains
  on the classifying space of a torus are formal as a homotopy Gerstenhaber algebra.
\end{abstract}

\maketitle

\tableofcontents

\section{Introduction}

In~1950, H.~Cartan gave the first uniform description of the cohomology
of homogeneous spaces of Lie groups.
Using a differential-geometric approach, he established the following result
for a compact connected Lie group~\(G\) and a closed connected subgroup~\(K\subset G\) \cite[Thm.~5]{Cartan:1951b}.

\begin{theorem}[H.~Cartan]
  \label{thm:Cartan}
  There is an isomorphism of graded algebras
  \begin{equation*}
    H^{*}(G/K;\R) \cong \Tor^{*}_{H^{*}(BG;\R)}\bigl(\R,H^{*}(BK;\R)\bigr).
  \end{equation*}
\end{theorem}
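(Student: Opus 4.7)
The plan is to apply the Eilenberg--Moore spectral sequence to the Borel fibration $G/K \to BK \to BG$, arising from the pullback square
\[
\begin{tikzcd}
G/K \ar[r] \ar[d] & EG \ar[d] \\
BK \ar[r] & BG,
\end{tikzcd}
\]
which yields a multiplicative spectral sequence
\[
E_2^{*,*} = \Tor^{*,*}_{H^*(BG;\R)}\bigl(\R,\, H^*(BK;\R)\bigr) \;\Longrightarrow\; H^*(G/K;\R).
\]
Two things need to be checked: that the $E_{2}$-page carries a natural graded algebra structure identifiable with the Tor algebra, and that the spectral sequence collapses at $E_{2}$ without multiplicative extensions.

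By Borel's theorem, over~$\R$ one has $H^*(BG;\R) = \R[x_1,\dots,x_r]$ and $H^*(BK;\R) = \R[u_1,\dots,u_s]$ with all generators in even degree. In particular $H^*(BG;\R)$ is a regular graded ring, and $\Tor$ is computed by the Koszul resolution $(H^*(BG;\R)\otimes\Lambda(y_1,\dots,y_r),\, d y_i = x_i)$ of~$\R$. Extending scalars along the map $\rho\colon H^*(BG;\R)\to H^*(BK;\R)$ induced by $BK\to BG$ produces the Koszul--Cartan CDGA
\[
\bigl(H^*(BK;\R)\otimes\Lambda(y_1,\dots,y_r),\; d y_i = \rho(x_i)\bigr),
\]
whose cohomology computes $\Tor$ and carries a canonical commutative graded algebra structure.

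To identify this with $H^*(G/K;\R)$ multiplicatively I would invoke formality. Because the cohomologies of $BG$ and $BK$ are polynomial algebras in even degrees, both spaces are Sullivan-formal over~$\R$; the map $BK\to BG$ therefore admits a CDGA model given by the surjection of polynomial algebras, and the standard Sullivan model for the homotopy fibre $G/K$ is precisely the Koszul--Cartan complex above. Its cohomology is thus $H^*(G/K;\R)$ as a graded $\R$-algebra. The main obstacle in this argument is the multiplicative convergence: over~$\R$ it is handled painlessly by formality, but over an arbitrary principal ideal domain formality fails and one must work directly with cochain-level structures, which is presumably why the paper's full theorem requires the machinery of homotopy Gerstenhaber algebras.
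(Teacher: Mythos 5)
Your argument is correct in outline, but note that the paper does not prove this statement at all: it is quoted as Cartan's theorem with a reference to his 1951 differential-geometric proof via the Weil algebra, and the paper's own contribution is the generalization to other coefficient rings by entirely cochain-level (hga/shc) methods. Your route --- real homotopy theory --- is therefore a third approach, and a legitimate one over~$\R$: since $H^{*}(BG;\R)$ and $H^{*}(BK;\R)$ are free graded-commutative algebras on even-degree generators, both spaces are formal, and (the point you should make explicit) the map $BK\to BG$ is \emph{automatically} formal because a CDGA map out of $(\R[x_{1},\dots,x_{r}],0)$ is determined up to homotopy by the induced map on cohomology, so the two candidate models of $BK\to BG$ are homotopic. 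The relative Sullivan model of the fibration then gives the Koszul--Cartan complex $\bigl(H^{*}(BK;\R)\otimes\Lambda(y_{1},\dots,y_{r}),\,dy_{i}=\rho(x_{i})\bigr)$ as a model for $G/K$, and its cohomology is $\Tor_{H^{*}(BG;\R)}(\R,H^{*}(BK;\R))$ because the Koszul resolution resolves $\R$ over the regular ring $H^{*}(BG;\R)$. Two small corrections: $\rho$ is in general neither injective nor surjective (take $K$ trivial), so do not call the model ``the surjection of polynomial algebras''; and once you have the Sullivan-model argument, the opening appeal to the Eilenberg--Moore spectral sequence is redundant --- the fibre model already gives the multiplicative isomorphism directly, whereas the spectral sequence alone leaves exactly the extension problem that the paper is at pains to resolve in the non-formal setting. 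What the paper's machinery buys, and your argument cannot, is the extension to principal ideal domains where $2$ is invertible, where no commutative cochain model exists and one must replace formality of $A_{PL}$ by hga formality of $C^{*}(BT)$.
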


A topological way to look at this formula is the following:
One has a fibre bundle
\begin{equation}
  G/K \hookrightarrow EG/K = BK \to BG,
\end{equation}
and there is an associated Eilenberg--Moore spectral sequence
\begin{equation}
  E_{2} = \Tor^{*}_{H^{*}(BG;\R)}\bigl(\R,H^{*}(BK;\R)\bigr) \;\Rightarrow\; H^{*}(G/K).
\end{equation}
In this language, Cartan's result says that the spectral sequence collapses
at the second page and that the product on that page agrees with the one on~\(H^{*}(G/K)\).

The real  cohomology of the classifying space of a connected Lie group
is a polynomial algebra on even-degree generators.
An obvious question is whether a result analogous to Cartan's holds
for other principal ideal domains~\(\kk\) for which \(H^{*}(BG)\) and~\(H^{*}(BK)\)
have this property.
An equivalent condition is that the orders of the torsion subgroups of~\(H^{*}(G;\Z)\) and~\(H^{*}(K;\Z)\)
are invertible in~\(\kk\), and we assume this throughout.
It holds in many cases, for example for~\(U(n)\), \(SU(n)\) and~\(Sp(n)\) over any~\(\kk\),
and for~\(SO(n)\) and~\(\textit{Spin}(n)\) if \(2\) is invertible in~\(\kk\).

In his 1952~thesis, Borel studied the case where \(G\) and~\(K\) have the same rank and
established a multiplicative isomorphism \cite[Prop.~30.2]{Borel:1953}
\begin{equation}
  H^{*}(G/K)\cong H^{*}(BK) \bigm/ H^{>0}(BG)\cdot H^{*}(BK).
\end{equation}
The Leray--Hirsch theorem then implies that \(H^{*}(BK)\) is free over~\(H^{*}(BG)\), so that Borel's formula can be written as
\begin{equation}
  \label{eq:intro-iso-GK-Tor}
  H^{*}(G/K) \cong \Tor^{*}_{H^{*}(BG)}\bigl(\kk,H^{*}(BK)\bigr).
\end{equation}
Another step forward was achieved in~1968 by Baum, who proved that for field coefficients,
the Eilenberg--Moore spectral sequence
collapses at the second page for any~\(G\) and~\(K\) satisfying a certain `deficiency condition'
\cite[Thm.~7.4]{Baum:1968}.
This yields an additive isomorphism of the form~\eqref{eq:intro-iso-GK-Tor}.
Shortly afterwards,
May~\cite[p.~335]{May:1968a} announced that the Eilenberg--Moore spectral sequence collapses
for any~\(\kk\), independently of the deficiency condition.
Details appeared in Gugenheim--May~\cite[Thm.~A]{GugenheimMay:1974},
where additionally the extension problem was solved. This gives the following result.

\begin{theorem}
  \label{thm:intro:additive}
  If \(H^{*}(BG)\) and~\(H^{*}(BK)\) are polynomial algebras on even-degree generators,
  then there is an isomorphism of graded \(\kk\)-modules
  \begin{equation*}
    H^{*}(G/K) \cong \Tor^{*}_{H^{*}(BG)}\bigl(\kk,H^{*}(BK)\bigr).
  \end{equation*}
\end{theorem}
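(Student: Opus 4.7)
The plan is to run the classical Eilenberg--Moore spectral sequence for the Borel fibration \(G/K \hookrightarrow EG/K = BK \to BG\) and prove that it collapses at~\(E_2\) by a support argument. Writing \(H^*(BG) = \kk[x_1,\dots,x_n]\) with each \(|x_i|\) even, the Koszul complex \(\Lambda(\desusp x_1,\dots,\desusp x_n)\otimes H^*(BG)\) is a free resolution of~\(\kk\). Tensoring it over~\(H^*(BG)\) with~\(H^*(BK)\) identifies
\[
  E_2^{-s,t} = \Tor^{-s,t}_{H^*(BG)}\bigl(\kk, H^*(BK)\bigr)
  = H\bigl(\Lambda(\desusp x_i)\otimes H^*(BK),\; d\bigr),
\]
with \(d(\desusp x_i) = \phi(x_i)\in H^*(BK)\), where \(\phi\colon H^*(BG)\to H^*(BK)\) is the restriction induced by \(K\hookrightarrow G\). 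Because all polynomial generators of~\(H^*(BK)\) and all Koszul generators have even internal degree, \(E_2\) is concentrated in bidegrees \((-s,t)\) with \(0\le s\le n\) and \(t\) even.

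For the collapse, observe that as a bigraded algebra \(E_2\) is generated by its parts in resolution degree~\(0\) (the images of classes from \(H^*(BK)\)) and~\(-1\) (the Koszul generators \(\desusp x_i\)). The spectral sequence differential \(d_r\) raises resolution degree by~\(r\), so for every \(r\ge 2\) it sends a generator of resolution degree~\(0\) into resolution degree \(r\ge 2\), and a generator of resolution degree~\(-1\) into resolution degree \(r-1\ge 1\). Both targets lie outside the support of~\(E_r\), which sits in non-positive resolution degrees. Hence every algebra generator is a permanent cycle; the Leibniz rule in the multiplicative EMSS extends this to every class of~\(E_2\), giving \(d_r = 0\) for all \(r\ge 2\) and \(E_\infty = E_2\).

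It remains to solve the extension problem. The EMSS filtration on~\(H^*(G/K)\) has associated graded \(E_\infty = E_2\), and to upgrade this to an additive isomorphism of graded \(\kk\)-modules one has to split the short exact sequences in the filtration. This works once each \(E_2^{-s,t}\) is known to be a free \(\kk\)-module, a fact readable from the Koszul presentation under the polynomial-on-even-generators hypothesis.

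The main obstacle I anticipate is not the collapse itself, which is a clean support and Leibniz argument, but the careful construction of a truly \emph{multiplicative} Eilenberg--Moore spectral sequence with a genuine Leibniz rule and an identification of~\(E_2\) with the Tor as bigraded algebras; this is exactly where an operadic or dg-model framework enters. A secondary technical point is verifying freeness of the Tor over a general principal ideal domain; a fallback is to descend from the rational case via a universal coefficient argument.
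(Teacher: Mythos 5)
Your plan follows the classical route — Eilenberg--Moore spectral sequence, identify \(E_2\) via the Koszul complex, kill differentials by a support--plus--Leibniz argument, then split the filtration. The paper does not reprove this statement at all (it cites Gugenheim--May for it and goes on to prove the multiplicative refinement); but the route you sketch is exactly the one Baum took in 1968, and it has a concrete gap.

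The decisive step is your assertion that ``as a bigraded algebra \(E_2\) is generated by its parts in resolution degree \(0\) \dots\ and \(-1\) (the Koszul generators \(\desusp x_i\)).'' Two problems. First, the elements \(\desusp x_i\) are not cycles in the Koszul complex~\(\Lambda(\desusp x_1,\dots,\desusp x_n)\otimes H^{*}(BK)\): one has \(d(\desusp x_i)=\phi(x_i)\), so unless \(\phi(x_i)=0\) these elements do not represent classes in \(E_2=\Tor\). The actual resolution-degree \(-1\) part of~\(\Tor\) is the module of syzygies of \(\phi(x_1),\dots,\phi(x_n)\) in \(H^{*}(BK)\) modulo Koszul boundaries. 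Second, and more seriously, there is no reason the bigraded algebra \(\Tor_{H^{*}(BG)}(\kk,H^{*}(BK))\) should be generated in resolution degrees \(0\) and \(-1\). This is precisely the point of Baum's \emph{deficiency condition}: his Theorem~7.4 establishes collapse under a hypothesis that controls the Koszul homology in exactly this way, and removing that hypothesis is the achievement of Gugenheim--May, Munkholm, Husemoller--Moore--Stasheff, and Wolf. Your parity observation (the internal degree \(t\) is even, so \(d_r\) vanishes for even \(r\)) does kill half the differentials, but for odd \(r\ge 3\) a class in resolution degree \(-s\) with \(s\ge r\) can map into the support of \(E_r\), so the Leibniz argument is essential and it needs the generation claim you have not proved. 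Without it, the collapse is an open problem at this point of your argument.

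Two smaller remarks. The claim that each \(E_2^{-s,t}\) is a free \(\kk\)-module over a general principal ideal domain is also not ``readable from the Koszul presentation'': the homology of a complex of free \(\kk\)-modules need not be free, and establishing freeness here is part of what Gugenheim--May do when they solve the extension problem. And as you already noted, even granting collapse and freeness, one only obtains an isomorphism with the \emph{associated graded} of a filtration of \(H^{*}(G/K)\); the published proofs bypass this by constructing explicit chain-level equivalences (via shm/shc machinery or twisted tensor products), rather than by arguing spectral-sequence-internally. That, and not the multiplicativity of the spectral sequence itself, is where the real technical investment lies.
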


Munkholm~\cite[Thm.]{Munkholm:1974}\ provided a different proof of the isomorphism,
and Huse\-moller--Moore--Stash\-eff~\cite[Thm.~IV.8.2]{HusemollerMooreStasheff:1974}
a further one for the collapse of the Eilenberg--Moore spectral sequence. 
For field coefficients, yet another proof was published by Wolf~\cite[Thm.~B]{Wolf:1977}.
Later, May--Neumann~\cite{MayNeumann:2002} extended \Cref{thm:intro:additive} to generalized homogeneous spaces
(see also \Cref{rem:gen-homog}), and Barthel--May--Riehl~\cite{BarthelMayRiehl:2014} put
Gugenheim--May's approach into a model-theoretic framework.

Apart from one special case~\cite[Cor.~7.5]{Baum:1968},
the product structure is not addressed in any of the works after Borel.
In their introduction~\cite[p.~viii]{GugenheimMay:1974},
Gugenheim and May remark:

\begin{quote}\small
  Multiplicatively, however, we are left with an extension problem; our
  results will compute the associated graded algebras of~\(H^{*}(G/K)\)
  and [\dots] with respect to suitable filtrations. Refinements of our
  algebraic theory could conceivably yield precise procedures for the
  computation of these cohomology algebras. When \(\kk=\Z_{2}\), there are
  examples where the extensions are non-trivial. There are no such
  examples known when \(\kk\) is a field of characteristic~\(\ne2\).%
\footnote{We have aligned the original notation with ours.}
\end{quote}

The examples alluded to are the projective unitary groups~\(PU(n)=U(n)/U(1)\)
for~\(n\equiv 2 \pmod 4\), see \Cref{rem:PU2}. To the author's knowledge,
no progress on the multiplicative structure has been made since these words were written.
In the present paper we prove the following:

\begin{theorem}
  \label{thm:main}
  Assume that \(2\) is invertible in~\(\kk\).
  If \(H^{*}(BG)\) and~\(H^{*}(BK)\) are polynomial algebras,
  then there is an isomorphism of graded \(\kk\)-algebras
  \begin{equation*}
    H^{*}(G/K) \cong \Tor^{*}_{H^{*}(BG)}\bigl(\kk,H^{*}(BK)\bigr),
  \end{equation*}
  natural with respect to maps of pairs~\((G,K)\to(G',K')\).
\end{theorem}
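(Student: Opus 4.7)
The plan is to lift the additive isomorphism of \Cref{thm:intro:additive} to a multiplicative one by modeling both sides as the cohomology of a common differential graded algebra, built from the homotopy Gerstenhaber algebra (hga) structure on normalized singular cochains. Concretely, the two-sided bar construction $\mathbf{B}(\kk, C^*(BG), C^*(EG/K))$ computes $H^*(G/K)$ by the classical Eilenberg--Moore theorem, and the hga structure on $C^*(BG)$, in particular the cup-$i$ products, equips this bar construction with a natural DGA structure (via Gerstenhaber--Voronov/Kadeishvili--Saneblidze). The target side $\Tor_{H^*(BG)}^*(\kk, H^*(BK))$ is likewise the cohomology of $\mathbf{B}(\kk, H^*(BG), H^*(BK))$ with trivial hga on the coefficients. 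The proof reduces to constructing a natural zigzag of DGA quasi-isomorphisms between these two bar DGAs.

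The crucial input for the zigzag is an hga quasi-isomorphism $H^*(BT;\kk) \to C^*(BT;\kk)$ for a torus $T$, the formality statement advertised in the abstract. I would establish it by selecting explicit cocycle representatives for the polynomial generators in degree~$2$ and then extending multiplicatively, correcting by coherent homotopies so that all cup-$i$ operations are respected up to homotopy. The role of $2$ being invertible appears when splitting expressions of the form $x \cupone x$ for an even-degree generator $x$: the standard identity $x \cupone x \equiv -x \cupone x \pmod{\text{boundaries}}$ forces $x \cupone x$ to be exact only after inverting $2$, which is consistent with the failure at the prime $2$ noted in the introduction (\emph{e.g.}\ \(PU(n)\) for \(n\equiv 2\pmod 4\)). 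A Künneth-type argument, using the compatibility of the hga structure with products via the Eilenberg--Zilber map, then extends formality from $BS^{1}$ to arbitrary tori.

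For general $G$ with $H^*(BG;\kk)$ polynomial, I would transport torus formality along the restriction $\rho\colon C^*(BG) \to C^*(BT_G)$ to a maximal torus, which is an hga map. Since $H^*(BG;\kk) \hookrightarrow H^*(BT_G;\kk)$ with image the Weyl invariants under the polynomial hypothesis, an hga formality map for $BT_G$ that is equivariant for the Weyl action restricts to one for $BG$. A parallel construction using the pullback along $BK \to BG$ produces a compatible formality for the module side, modeled on $C^*(EG/K)$. Applying the bar construction functorially yields the desired DGA quasi-isomorphism
\[
\mathbf{B}(\kk, H^*(BG), H^*(BK)) \simeq \mathbf{B}(\kk, C^*(BG), C^*(EG/K)),
\]
natural in $(G,K)$ by construction; since $H^*(BG)$ is polynomial on even-degree generators and $H^*(BK)$ is free over it (by the Leray--Hirsch argument from Borel), the cohomology of the left-hand bar DGA is simply $\Tor_{H^*(BG)}(\kk, H^*(BK))$ with its standard algebra structure.

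The principal obstacle is the hga formality of $C^*(BT;\kk)$ itself: an hga quasi-isomorphism must respect an infinite tower of cup-$i$ operations coherently, and establishing formality requires inductively constructing higher homotopies whose obstructions all vanish once $2$ is invertible. The combinatorial bookkeeping of these operations, together with the requirement that the constructions be natural in $T$ (and Weyl-equivariant, in order to descend to general $G$), is where I expect the main technical work of the argument to reside.
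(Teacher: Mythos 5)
Your high-level plan correctly identifies the key ingredients---the hga structure on cochains, the bar-construction DGA computing \(H^{*}(G/K)\), and the formality of \(C^{*}(BT)\)---but the execution has several gaps that would derail the argument.

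First, the direction of the formality map is wrong. You propose an hga quasi-isomorphism \(H^{*}(BT)\to C^{*}(BT)\). Since \(H^{*}(BT)\) carries the trivial hga structure (\(E_{k}=0\) for \(k\geq1\)), an hga map in this direction would require its image to consist of cochains on which all hga operations vanish, i.e.\ to land in a strictly commutative sub-dga. No such sub-dga of \(C^{*}(BT)\) computes the cohomology in general (the choice of cocycle representatives has nontrivial \(\cup_{1}\)-products). The paper instead constructs the formality map in the opposite direction, \(C^{*}(BT)\to H^{*}(BT)\), as a dga map that happens to annihilate all hga operations; only in that direction is the ``kill all \(E_{k}\)'' condition automatically the right statement. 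On the side of \(H^{*}(BG)\to C^{*}(BG)\), the paper does not attempt to produce an hga map at all; it uses the weaker shc structure (from \Cref{thm:intro-hga-shc}) to build an shm~map~\(\Lambda^{G}\) and controls its multiplicativity up to \(\kkk\)-trivial homotopies.

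Second, your proposed descent from a maximal torus to general \(G\) via Weyl-equivariance does not work at the cochain level. The restriction \(C^{*}(BG)\to C^{*}(BT_{G})\) is not an isomorphism onto any sub-dga of Weyl invariants of \(C^{*}(BT_{G})\); that inclusion only exists on cohomology. The paper sidesteps this entirely: no formality statement for \(C^{*}(BG)\) itself is claimed. Instead, multiplicativity and naturality of the bar-level map \(\Theta_{G,K}\) are verified \emph{after} composing with the restriction to~\(BT\) followed by the formality map (the map~\(\Psi_{\kappa}\) in \Cref{sec:homog}), and the crucial Leray--Hirsch injectivity (\Cref{thm:HPsi}\,\ref{thm:HPsi-2}) allows one to deduce these properties from the composite. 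The torus is a diagnostic tool, not a source of descent data.

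Third, the role of \(2\) being invertible is misattributed. The hga formality of \(C^{*}(BT)\)---the annihilation of all \(E_{k}\), in particular of all \(\cup_{1}\)-products---holds over any coefficient ring (\Cref{thm:ffbar-hga-formal}); your appeal to \(x\cup_{1}x\equiv -x\cup_{1}x\) is not where \(2\) enters. The obstruction at the prime~\(2\) lies in the \(\cup_{2}\)-products (the extended hga operation \(F_{11}\)): over \(\Z_{2}\), \(x\cup_{2}x\) represents \(\Sq^{0}x=x\neq0\), so no formality map can kill it. Inverting~\(2\) is needed precisely to arrange that the \(\cup_{2}\)-products of cocycles are annihilated as well (\Cref{thm:ffbar-cuptwo}), which in turn controls the commutativity homotopy~\(\hc\) of the shc structure.

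Finally, the Künneth/Eilenberg--Zilber reduction from \(BS^{1}\) to arbitrary tori is not straightforward for hga structures; the Eilenberg--Zilber map does not naively intertwine interval-cut operations. The paper's construction works with an arbitrary torus directly, via the contracting homotopy~\(S\) on~\(C(ET)\), and the key vanishing lemmas (\Cref{thm:S-S-P-Sigma}, \Cref{eq:Qnkl-sigma}, \Cref{thm:enclave-0}) are proved by induction on that combinatorial model rather than by a product decomposition.

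In short: the strategy of using hga formality of \(C^{*}(BT)\) and the bar-DGA model is on target, but the formality map must go \(C^{*}(BT)\to H^{*}(BT)\), the torus is used as a cohomological ``filter'' via Leray--Hirsch rather than a descent device, and the shc (not hga) structure on cochains is what bridges \(H^{*}(BG)\) and \(C^{*}(BG)\).
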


The central difficulty one faces when proving an isomorphism of the form~\eqref{eq:intro-iso-GK-Tor}
is the lack of commutativity of the singular cochain algebra. At some point one
has to pass from cochains to cohomology, and unlike in the case of differential forms,
the assignment of representatives~\(a_{i}\in C^{*}(BG)\)
to generators~\(x_{i}\in H^{*}(BG)\) does not extend to
a morphism of differential graded algebras (dgas).
To address this, all approaches after Baum
resorted to some `up to homotopy' structure,
as suggested by Stasheff--Halperin \cite[p.~575]{StasheffHalperin:1970}.

Munkholm for example further develops the idea of strongly homotopy commutative (shc) algebras
introduced by Stasheff--Halperin. The only additional ingredient he then needs is that
both~\(BG\) and~\(BK\) have polynomial cohomology,
and his result holds more generally for the fibre of bundles where both the total space and the base have this property.

In contrast to this, Husemoller--Moore--Stasheff, Gugenheim--May and Wolf
rely on the existence of a maximal torus~\(T\subset K\) to reduce the problem to that of a homogeneous space~\(G/T\).
This was already done by Baum~\cite{Baum:1968}, who observed that \(H^{*}(G/K)\) injects into~\(H^{*}(G/T)\),
compare \Cref{thm:HPsi}\,\ref{thm:HPsi-2}.
A crucial result in this direction, also used by Wolf, is the following \cite[Thm.~4.1]{GugenheimMay:1974}.

\begin{theorem}[Gugenheim--May]
  \label{thm:gugenheim-may}
  There is a quasi-isomorphism of dgas~\(C^{*}(BT)\to H^{*}(BT)\)
  annihilating all \(\cupone\)-products.
\end{theorem}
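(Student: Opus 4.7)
The plan is to reduce first from an arbitrary torus \(T=(S^{1})^{n}\) to the circle, and then to construct the map for \(BS^{1}\) explicitly. Since \(BT\simeq(BS^{1})^{n}\), a standard Eilenberg--Zilber argument supplies a quasi-isomorphism of dgas \(\psi\colon C^{*}(BT)\to C^{*}(BS^{1})^{\otimes n}\). Combined with Steenrod's explicit formula for \(\cupone\) on a Cartesian product (which expresses it in terms of cups and \(\cupone\)'s on the two factors), this shows that \(\psi(\alpha\cupone\beta)\) decomposes as a sum of tensor products in which at least one factor is itself a \(\cupone\)-product in some copy of \(C^{*}(BS^{1})\). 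Hence it suffices to construct, for each factor, a dga quasi-iso \(\phi\colon C^{*}(BS^{1})\to H^{*}(BS^{1})\) that annihilates all \(\cupone\)-products; the composite \(\phi^{\otimes n}\circ\psi\) then handles \(BT\).

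For \(BS^{1}\simeq\mathbb{CP}^{\infty}\), with \(H^{*}(BS^{1})=\kk[x]\) and \(|x|=2\), I would start by choosing a cocycle \(a\in C^{2}(BS^{1})\) representing~\(x\). Since \(a\) sits in even degree, all powers \(a^{k}\) are cocycles representing \(x^{k}\), so the inclusion of the sub-dga \(\kk[a]\hookrightarrow C^{*}(BS^{1})\) is already a quasi-isomorphism. The desired \(\phi\) is then a multiplicative retraction onto \(\kk[a]\cong\kk[x]\). To construct one, I would exploit the skeletal filtration \(\mathbb{CP}^{0}\subset\mathbb{CP}^{1}\subset\cdots\): the minimal CW structure, with one cell in each even dimension, has cellular cochain algebra equal to \(\kk[x]\) on the nose, and comparing cellular with singular cochains via an acyclic-carrier or homological-perturbation transfer yields a strict dga retraction.

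The main obstacle will be verifying that \(\phi\) actually annihilates \(\cupone\)-products. A homological-perturbation retraction is \emph{a priori} only an \(A_{\infty}\)-morphism, and even when one can arrange a strict dga map the vanishing of \(\phi(\alpha\cupone\beta)\) is an additional condition not implied by multiplicativity. The right framework, as suggested by the abstract, is that of homotopy Gerstenhaber algebras: if \(\phi\) can be promoted to an hGa quasi-iso into \(\kk[x]\), regarded as a trivial hGa with \(\cupone=0\), then the annihilation condition becomes automatic. The decisive input is therefore an hGa-formality statement for \(C^{*}(BS^{1})\), which I would prove by an explicit construction on a simplicial abelian-group model of a classifying space for the circle, where the commutative simplicial structure should allow one to symmetrize the hGa operations enough to produce a strict trivialization of \(\cupone\).
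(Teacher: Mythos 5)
Your outline defers the entire content of the theorem to a step you never carry out. Producing a multiplicative retraction \(C^{*}(BS^{1})\to\kk[a]\) is already nontrivial---a cellular-versus-singular comparison or a homological-perturbation transfer yields only an \(A_{\infty}\)-morphism, and, as you yourself note, even a strict dga retraction would not automatically kill \(\cupone\)-products---and your remedy is ``an hGa-formality statement for \(C^{*}(BS^{1})\), which I would prove by an explicit construction \dots{} where the commutative simplicial structure should allow one to symmetrize''. That is a restatement of (a strengthening of) the theorem, not an argument for it. The missing construction is precisely the content of \Cref{sec:BT-formality}: starting from chains \(c_{i}\in C_{1}(T)\) representing the generators of \(H_{1}(T)\), one builds a dgc quasi-isomorphism \(\ffbar\colon H(BT)\to C(BT)\) recursively, using the \(C(T)\)-action on \(ET\) and the contracting homotopy \(S\) of~\eqref{eq:def-S} (\Cref{thm:f-coalg} and \Cref{thm:quiso-ffbar}); one then shows, by the combinatorial analysis of Lemmas~\ref{thm:S-S-P-Sigma} and~\ref{eq:Qnkl-sigma} and \Cref{thm:enclave-0}, that every interval-cut operation whose surjection has an enclave---in particular \(\AWu{(1,2,1)}\), whose transpose is the \(\cupone\)-product up to sign---vanishes on the simplices appearing in the image of \(\ffbar\). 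The transpose \(\transpp{\ffbar}\) is then the desired map. Nothing in your proposal supplies this construction or a substitute for it; the ``main obstacle'' you identify at the end is in fact the whole theorem.

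The reduction from \(T\) to \(S^{1}\) is also gapped. The dual shuffle map \(C^{*}(BT)\to C^{*}(BS^{1})^{\otimes n}\) is indeed a dga quasi-isomorphism, but the claim that it sends \(\alpha\cupone\beta\) to a sum of elementary tensors each containing a \(\cupone\)-product in some factor does not hold on the nose at the cochain level: \(\cupone\) is not multiplicative with respect to the Eilenberg--Zilber maps, and the chain-level Cartan formula carries correction terms (this failure is exactly what makes the Cartan formula for Steenrod squares a theorem rather than a tautology, and is related to the fact that a tensor product of hgas is not naturally an hga). The paper sidesteps this by never splitting the torus: the recursive construction and the enclave argument are carried out for a rank-\(n\) torus directly. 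If you insist on the reduction, you must either prove the required chain-level compatibility for your specific Künneth maps or show that the correction terms land in the kernel of \(\phi^{\otimes n}\); neither is immediate.
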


We are going to extend \Cref{thm:gugenheim-may} to homotopy Gerstenhaber algebras (hgas),
which were introduced by Voronov--Gersten\-haber~\cite{VoronovGerstenhaber:1995}.
An hga structure on a dga~\(A\) is essentially a family of operations~%
\( 
  E_{k}\colon A^{\otimes(k+1)} \to A
\) 
that allow to define a product on the bar construction~\(\BB A\) compatible with the coalgebra structure.
Based on a result of Baues~\cite{Baues:1980}, the former authors also noted
that singular cochain algebras are endowed with this structure \cite{GerstenhaberVoronov:1995}.
In this case, the first hga operation~\(E_{1}\) is the usual \(\cupone\)-product, up to sign.
We strengthen the Gugenheim--May result as follows.

\begin{theorem}
  \label{thm:intro:hga-formality}
  There is a quasi-isomorphism of dgas~\(C^{*}(BT)\to H^{*}(BT)\)
  annihilating all hga operations.
  In particular, \(C^{*}(BT)\) is formal as an hga.
\end{theorem}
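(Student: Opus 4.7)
The plan is to refine \Cref{thm:gugenheim-may} by first reducing the statement to the case $T = S^{1}$ and then constructing an explicit small model for \(BS^{1}\) on which the hga operations vanish on chosen cocycle representatives.

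First I would reduce to the case of a single circle. For $T = (S^{1})^{n}$, the Eilenberg--Zilber shuffle map provides a dga quasi-isomorphism $C^{*}(BS^{1})^{\otimes n} \to C^{*}(BT)$, and the hga structure on the tensor product of hgas can be written out in terms of the structures on each factor. If each factor $C^{*}(BS^{1})$ admits a dga quasi-isomorphism to $\kk[x]$ annihilating all hga operations, then by tensoring, these should patch together into a map $C^{*}(BT)\to H^{*}(BT)$ with the same property. This reduces the theorem to the case $T = S^{1}$.

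Second, for $T = S^{1}$, I would work with a small simplicial model of $BS^{1}$---such as the reduced bar construction $\overline{W}\Z$, whose normalized cochain complex has a single generator in each even degree. One then picks cocycle representatives $a_{k} \in C^{2k}(BS^{1})$ for the generators $x^{k} \in H^{*}(BS^{1})$, and asks that all hga products $E_{j}(a_{k_{0}}; a_{k_{1}}, \ldots, a_{k_{j}})$ vanish, or at least are systematically exact with a canonical primitive. For $j=1$, this is exactly \Cref{thm:gugenheim-may}, which exploits the odd-degree character of the $\cupone$-product of two even-degree cocycles. For higher $j$, one would proceed by induction, successively adjusting the $a_{k}$ by coboundaries to kill each offending product while preserving the adjustments already made.

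The main obstacle is controlling the higher operations $E_{j}$ for $j\geq 2$, and especially those with $j$ even. Unlike $\cupone$, the even $E_{j}$ preserve the parity of cohomological degree, so the degree-based argument underlying \Cref{thm:gugenheim-may} does not apply. To obtain vanishing one must exploit further structural features of $BT$---most naturally the fact that $T$ is an abelian topological group, so that $BT$ itself is a commutative $H$-space and indeed an infinite loop space---together with explicit formulas (for instance via the surjection operad) for the $E_{j}$ on simplicial cochains. The technically demanding point will be maintaining, in the induction on $j$, the compatibility of the successive modifications both with the dga product and with the vanishing of $E_{i}$ products already achieved for $i<j$.
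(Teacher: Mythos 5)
Your plan diverges substantially from the paper's argument, and it contains gaps that you partly acknowledge but do not close. The paper does \emph{not} reduce to a single circle or inductively adjust representatives; it constructs an explicit $\Lambda$-equivariant dgc quasi-isomorphism $\ff\colon\Kl\to C(ET)$ (where $\Kl$ is the Koszul complex) recursively via the contracting homotopy $S$ on $EG$, descends to $\ffbar\colon H(BT)\to C(BT)$, and takes $\ffbar^{*}$ as the formality map. The vanishing of all $E_{k}$ (and of all $F_{kl}$ with $(k,l)\ne(1,1)$) then follows from a single combinatorial lemma: any surjection with an ``enclave'' yields an interval-cut operation that annihilates the image of $\ff$, because $SS=0$ and $Se_{0}=0$ force the relevant partial diagonals $Q^{n}_{k,l}$ to vanish.

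There are three concrete problems with your proposal. First, the direction is wrong: choosing cocycle representatives $a_{k}\in C^{2k}(BS^{1})$ for $x^{k}$ naturally produces a map $\kk[x]\to C^{*}(BS^{1})$, whereas the theorem requires a dga quasi-isomorphism $C^{*}(BT)\to H^{*}(BT)$. The paper resolves this by constructing a dgc map on the chain level and transposing; nothing in your outline supplies a multiplicative map in the correct direction, and naively projecting onto the span of representatives is not multiplicative. Second, the reduction to $T=S^{1}$ via the Eilenberg--Zilber map is not available at the hga level: the shuffle and Alexander--Whitney maps are dgc/dga quasi-isomorphisms but do not intertwine the cochain hga structure on $C^{*}(BT)$ with a ``tensor product'' hga structure built from the factors, so annihilating $E_{k}$ factor-by-factor does not obviously imply anything about $E_{k}$ on $C^{*}(BT)$. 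Third, and most seriously, the central difficulty---making the even operations $E_{2},E_{4},\dots$ (and, separately, all $E_{k}$ on arbitrary, not merely closed, cochains) vanish---is exactly where you stop. The parity argument you attribute to \Cref{thm:gugenheim-may} only kills $E_{k}$-products of even-degree cocycles for odd $k$; Gugenheim--May's theorem (and the paper's \Cref{thm:ffbar-hga-formal}) are stronger statements about \emph{all} cochains and are proved by an explicit chain-level construction, not by degree counting. The paper's enclave argument is precisely the missing ingredient here, and your outline offers no substitute for it.
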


See \Cref{thm:ffbar-hga-formal}.
This seems to be the first time that the hga formality of a non-trivial space is established.
The quasi-isomorphism from \Cref{thm:intro:hga-formality} actually annihilates even more operations,
including the ones identified by Kadeishvili~\cite{Kadeishvili:2003} to construct a \(\cupone\)-product on~\(\BB C^{*}(BT)\).
The only exception is the \(\cuptwo\)-product on~\(C^{*}(BT)\), but we can show
that also \(\cuptwo\)-products of cocycles are in the kernel of the formality map
provided that \(2\) is invertible in~\(\kk\) (\Cref{thm:ffbar-cuptwo}).
We call an hga having a \(\cuptwo\)-product as well as the other additional operations ``extended''.

The following result from the companion paper~\cite{Franz:hgashc}
allows us to combine \Cref{thm:intro:hga-formality}
with Munk\-holm's techniques, see \Cref{thm:shc-mod-k}.

\begin{theorem}
  \label{thm:intro-hga-shc}
  Any extended hga is naturally an shc algebra in the sense of Munk\-holm.
\end{theorem}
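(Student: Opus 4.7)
The plan is to unpack the data of an extended hga on a dga $A$ into a Munkholm shc structure on $A$. Recall that such an shc structure consists, in essence, of a morphism of dg coalgebras $\Phi \colon \BB A \otimes \BB A \to \BB A$ extending the product of $A$, together with coherence homotopies realizing associativity and commutativity up to higher homotopies. The extended hga operations are precisely the ingredients needed to produce all of this data functorially.

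First, I would invoke the classical Baues--Gerstenhaber--Voronov construction: the hga operations $E_k \colon A^{\otimes(k+1)} \to A$ assemble into a strictly associative product $\mu_{\BB} \colon \BB A \otimes \BB A \to \BB A$ which is a morphism of dg coalgebras and restricts to the multiplication of $A$ on the degree-one bar generators. This supplies Munkholm's basic datum by setting $\Phi = \mu_{\BB}$; because $\mu_{\BB}$ is strictly associative, the associativity axioms of the shc structure are automatic.

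Second, the additional operations of an extended hga promote this dg bialgebra on $\BB A$ to the full Munkholm shc data, since the term ``strongly homotopy commutative'' refers precisely to commutativity with coherent higher homotopies. The Kadeishvili operations furnish a $\cupone$-product on $\BB A$, which I would identify as the primary homotopy between $\mu_{\BB}$ and $\mu_{\BB} \circ \tau$ (with $\tau$ the twist), establishing homotopy commutativity of the product on the bar construction. The $\cuptwo$-product on $A$ would then supply the next coherence homotopy, and the remaining extended operations inductively produce the higher homotopies Munkholm requires. Naturality in hga morphisms respecting all of the extended structure is then immediate step by step, since each construction above is functorial in the input operations.

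The main obstacle will be matching the tower of homotopies built from the extended operations to Munkholm's axioms as they are stated, rather than to some equivalent but differently organized hierarchy. This is a substantial combinatorial verification and presumably forms the heart of the companion paper~\cite{Franz:hgashc}.
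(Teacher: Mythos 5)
There is a genuine gap at the very first step. Munkholm's structure map is an shm map $\Phi\colon A\otimes A\Rightarrow A$, that is, a twisting cochain $\BB(A\otimes A)\to A$, equivalently a dgc map $\BB(A\otimes A)\to\BB A$ extending $\mu_{A}$. The Baues--Gerstenhaber--Voronov product $\mu_{\BB A}$ built from the operations~$E_{k}$ is a dgc map $\BB A\otimes\BB A\to\BB A$, which has the wrong source. The shuffle map $\shuffle\colon\BB A\otimes\BB A\to\BB(A\otimes A)$ goes in the wrong direction and has no dgc inverse (an Alexander--Whitney-type splitting is only a chain map), so you cannot ``set $\Phi=\mu_{\BB A}$''. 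Producing explicit components $\Phi_{(n)}\colon(A\otimes A)^{\otimes n}\to A$ from the~$E_{k}$, and likewise the homotopies $\ha$ and $\hc$ with sources $\BB(A^{\otimes 3})$ and $\BB(A\otimes A)$, is precisely the nontrivial content of the companion paper, not a repackaging of the dg bialgebra $\BB A$. For the same reason, strict associativity of $\mu_{\BB A}$ does not make homotopy associativity of $\Phi$ ``automatic'': the iterated shm maps $\Phi\circ(\Phi\otimes 1)$ and $\Phi\circ(1\otimes\Phi)$ involve tensor products of shm maps, which only agree up to the homotopies discussed in \Cref{thm:tensor-shm}, so a genuine homotopy $\ha$ must still be exhibited.

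A second, smaller point: you describe Munkholm's notion as requiring an infinite tower of coherence homotopies, with the $\cupone$-product on $\BB A$ as the first and $\cuptwo$ as the next. The definition recalled in \Cref{sec:shc} asks only for unitality and the two single homotopies $\ha$ and $\hc$; there are no higher coherences to match. The extended operations $F_{kl}$ (including $\cuptwo=-F_{11}$) are all consumed in building the one commutativity homotopy $\hc$: they assemble into a twisting-cochain homotopy $\BB A\otimes\BB A\to A$ between $\EE$ and $\EE\,T$, but again on the wrong domain, so the same difficulty as above recurs. Since both the construction of $\Phi$ with the correct source and the verification of $\ha$ and $\hc$ are exactly what is deferred to the companion paper, the proposal as written does not contain a proof.
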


In a nutshell, our strategy to prove \Cref{thm:main} is the following:
By the Eilenberg--Moore theorem, \(H^{*}(G/K)\) is naturally isomorphic to the differential torsion product
\begin{equation}
  \label{eq:intro:dg-Tor}
  \Tor_{C^{*}(BG)}\bigl(\kk,C^{*}(BK)\bigr).
\end{equation}
Kadeishvili--Saneblidze~\cite{KadeishviliSaneblidze:2005} observed that
the hga structure on cochains permits to define a product on the one-sided bar construction
underlying \eqref{eq:intro:dg-Tor};
the Eilenberg--Moore isomorphism then becomes multiplicative.
Imitating mostly Munkholm, we first construct a \(\kk\)-module isomorphism
\begin{equation}
  H^{*}(\Theta)\colon \Tor_{H^{*}(BG)}\bigl(\kk,H^{*}(BK)\bigr) \to \Tor_{C^{*}(BG)}\bigl(\kk,C^{*}(BK)\bigr)
\end{equation}
where we use the shc algebra structure given by \Cref{thm:intro-hga-shc}.
In order to show that our map is multiplicative and natural, we look at the composition
\begin{multline}
  \Tor_{C^{*}(HG)}\bigl(\kk,H^{*}(BK)\bigr) \xrightarrow{H^{*}(\Theta)} \Tor_{C^{*}(BG)}\bigl(\kk,C^{*}(BK)\bigr) \\
    \lhook\joinrel\longrightarrow \Tor_{C^{*}(BG)}\bigl(\kk,C^{*}(BT)\bigr) \stackrel{\cong}{\longrightarrow} \Tor_{C^{*}(BG)}\bigl(\kk,H^{*}(BT)\bigr).
\end{multline}
The last map involves the quasi-isomorphism from \Cref{thm:intro:hga-formality}
in the same way as Wolf applied the formality map constructed by Gugenheim--May. 
This leads to a dramatic simplification of the formulas and allows us to complete the proof
of \Cref{thm:main}, see \Cref{sec:homog}.

Along the way we exhibit an explicit homotopy between the two possible definitions
of a tensor product of two \(A_{\infty}\)-maps (\Cref{thm:tensor-shm}).

\begin{acknowledgements}
  Maple and Sage~\cite{Sage} were used to derive the formulas in Sections~\ref{sec:shm-tensor} and~\ref{sec:BT-formality}.
  The connection between tensor products of \(A_{\infty}\)-maps and hypercubes (\Cref{rem:cube})
  was discovered by consulting the OEIS~\cite{OEIS}.
  I thank Jeff Carlson and Xin Fu for carefully reading parts of an earlier version of this paper.
\end{acknowledgements}

\section{Preliminaries}
\label{sec:tw}

\subsection{Differential algebra}

We work over a fixed commutative ring~\(\kk\) with unit,
which will be assumed to be a principal ideal domain in Sections~\ref{sec:bundles}, \ref{sec:homog} and~\ref{sec:examples}.
Since we will mostly deal with cohomological complexes, we assume a cohomological grading throughout this review section.
The identity map on a complex~\(M\) is denoted \(1_{M}\).
The suspension map on a complex is denoted by~\(\susp\) and the desuspension by~\(\desusp\).
All tensor products are over~\(\kk\) unless otherwise indicated.

Given two \(\Z\)-graded complexes~\(A\) and~\(B\),
the complex~\(\Hom(A,B)\) consists in degree~\(n\in\Z\) of all linear maps~\(f\colon A\to B\)
raising degrees by~\(n\). The differential of such a map is
\begin{equation}
  d(f) = d\,f - (-1)^{n}f\, d.
\end{equation}

We write
\begin{equation}
  \label{eq:transposition}
  T=T_{A,B}\colon A\otimes B\to B\otimes A,
  \qquad
  a\otimes b\mapsto (-1)^{\deg{a}\deg{b}}\,b\otimes a
\end{equation}
for the transposition of factors in a tensor product.
This illustrates the Koszul sign rule, 
according to which swapping two objects of degrees~\(m\) and~\(n\)
incurs the sign~\((-1)^{mn}\). Another incarnation of it is the definition
\begin{equation}
  \label{eq:tensor-maps}
  f\otimes g\colon A\otimes B\to C\otimes D,
  \qquad
  a\otimes b \mapsto (-1)^{\deg{g}\deg{a}}\,f(a)\otimes g(b)
\end{equation}
of the tensor product of two maps~\(f\colon A\to C\) and~\(g\colon B\to D\).
This implies that for maps~\(f_{i}\colon A_{i}\to A_{i+1}\) and~\(g_{i}\colon B_{i}\to B_{i+1}\), \(i=1\),~\(2\), we have
\begin{equation}
  \label{eq:KS-maps}
  (f_{1}\otimes g_{1})\,(f_{2}\otimes g_{2})
  = (-1)^{\deg{f_{2}}\deg{g_{1}}}\, f_{1}\,f_{2} \otimes g_{1}\,g_{2}.
\end{equation}

We refer to~\cite[\S\S 1.1,~1.2,~1.11]{Munkholm:1974}
for the definitions of differential graded algebras (dgas) and dga maps 
as well as for differential graded coalgebras (dgcs), dgc maps and coalgebra homotopies.
By an \newterm{ideal}~\(\aaa\) of a dga~\(A\), we mean a two-sided differential ideal~\(\aaa\lhd A\).
We write augmentations as~\(\epsilon\) and coaugmentations as~\(\eta\).
The augmentation ideal of a dga~\(A\) is denoted by~\(\bar A\);
for any~\(a\in A\) we define \(\bar a=a-\eta\,\epsilon(a)\in\bar A\).
A dga~\(A\) is connected if it is \(\N\)-graded and \(\eta_{A}\colon\kk\to A^{0}\) is an isomorphism;
it is simply connected if additionally \(A^{1}=0\). A connected or simply connected dgc~\(C\) is defined similarly.

For~\(n\ge0\), we write
\begin{equation}
  \iter{\mu_{A}}{n}\colon A^{\otimes n}\to A
\end{equation}
for the iterated multiplication of a dga~\(A\), so that \(\iter{\mu_{A}}{0}=\eta_{A}\), \(\iter{\mu_{A}}{1}=1_{A}\) and~\(\iter{\mu_{A}}{2}=\mu_{A}\).
The iterations~\(\iter{\Delta}{n}\) are defined analogously.
A dgc~\(C\) is cocomplete if for any~\(c\in C\) there is an~\(n\ge0\) such that \((1_{C}-\epsilon_{C})^{\otimes n}\iter{\Delta}{n}(c)=0\).
Any connected dgc is cocomplete.

Given two ideals~\(\aaa\lhd A\) and~\(\bbb\lhd B\) where \(A\) and~\(B\) are dgas,
we define the ideal
\begin{equation}
  \aaa\boxtimes\bbb=\aaa\otimes B+A\otimes\bbb\lhd A\otimes B
\end{equation}
as well as
\begin{equation}
  \aaa^{\boxtimes 0}=0\lhd A^{\otimes 0}=\kk,
  \qquad
  \aaa^{\boxtimes 1}=\aaa,
  \qquad
  \aaa^{\boxtimes(n+1)}=\aaa^{\boxtimes n}\boxtimes\aaa\lhd A^{\otimes(n+1)}
\end{equation}
inductively for~\(n\ge1\).

We will make heavy use of the (reduced) bar construction
\begin{equation}
  \BB A = \bigoplus_{k\ge 0}\BB_{k}A,
  \qquad
  \BB_{k}A = (\desusp \bar A)^{\otimes k}
\end{equation}
of an augmented dga~\(A\), which is a cocomplete coaugmented dgc, connected if \(A\) is simply connected,
see~\cite[Sec.~II.3]{HusemollerMooreStasheff:1974} or~\cite[\S 1.6]{Munkholm:1974}.
We write \(\BBone_{\BB A}=1\in\kk=\BB_{0}A\) for the counit of~\(\BB A\).
The canonical map
\begin{equation}
  \label{eq:twc-bar}
  t_{A}\colon \BB A \rightarrow \BB_{1}A = \desusp\bar A \stackrel{\susp}{\longrightarrow} \bar A \hookrightarrow A
\end{equation}
is a twisting cochain in the sense of the following definition.

For an augmented dga~\(A\) and a coaugmented dgc~\(C\),
the complex~\(\Hom(C,A)\) is an augmented dga with cup product
\begin{equation}
  f\cup g = \mu_{A}\,(f\otimes g)\,\Delta_{C},
\end{equation}
unit element~\(\eta_{A}\,\epsilon_{C}\) and augmentation~\(\epsilon(f)=(\epsilon_{A}\,f\,\eta_{C})(1)\).
Note that for~\(f\),~\(g\) as before and any dgc map~\(k\colon B\to C\) we have
\begin{equation}
  \label{eq:Hom-cup-dgc}
  (f\cup g)\circ k = (f\circ k) \cup (g\circ k).
\end{equation}

A \newterm{twisting cochain} is an element~\(t\in\Hom^{1}(C,A)\) such that
\begin{gather}
  \label{eq:def-tw-cochain}
  d(t) = t\cup t, \\
  \label{eq:tw-normalization}
  \epsilon_{A}\,t = 0
  \qquad\text{and}\qquad
  t\,\eta_{C} = 0.
\end{gather}
If \(C\) is cocomplete, then the assignment \(f\mapsto t_{A}f\) sets up a bijection
between the dgc maps~\(C\to \BB A\) and the twisting cochains~\(C\to A\),
compare~\cite[Prop.~1.9]{Munkholm:1974}.

\begin{example}
  \label{ex:shuffle-map}
  Let \(A\) and~\(B\) be augmented dgas. The shuffle map
  \begin{equation}
    \shuffle=\shuffle_{A,B}\colon \BB A \otimes \BB B \to \BB(A\otimes B)
  \end{equation}
  is the dgc map with associated twisting cochain~\(t_{A}\otimes\eta_{B}\,\epsilon_{\BB B}+\eta_{A}\,\epsilon_{\BB A}\otimes t_{B}\),
  \begin{equation}
    [a_{1}|\dots|a_{k}] \otimes [b_{1}|\dots|b_{l}] \mapsto
    \begin{cases}
      a_{1}\otimes 1 & \text{if \(k=1\) and~\(l=0\),} \\
      1\otimes b_{1} & \text{if \(k=0\) and~\(l=1\),} \\
      0 & \text{otherwise.} \\
    \end{cases}
  \end{equation}
  The shuffle map is associative and also commutative in the sense that the diagram
  \begin{equation}
    \begin{tikzcd}
      \BB A\otimes\BB B \arrow{d}[left]{T_{\BB A,\BB B}} \arrow{r}{\shuffle_{A,B}} & \BB(A\otimes B) \arrow{d}{\BB T_{A,B}} \\
      \BB B\otimes\BB A \arrow{r}{\shuffle_{B,A}} & \BB(B\otimes A)
    \end{tikzcd}
  \end{equation}
  commutes. 
  
  If \(A\) is commutative, then the composition
  \begin{equation}
    \mu_{\BB A}=\BB\mu_{A}\,\shuffle_{A,A}\colon \BB A \otimes \BB A \to \BB A
  \end{equation}
  turns \(\BB A\) into a \newterm{dg~bialgebra}, that is, into a coaugmented dgc
  with an associative product that is a morphism of dgcs.
\end{example}

An element~\(h\in\Hom_{0}(C,A)\) is a \newterm{twisting cochain homotopy}
from the twisting cochain~\(t\colon C\to A\) to the twisting cochain~\(u\colon C\to A\), in symbols \(h\colon t\simeq u\), if
\begin{gather}
  \label{eq:tw-homotopy}
  d(h) = t\cup h - h\cup u, \\
  \label{eq:tw-h-normalization}
  \epsilon_{A}\,h = \epsilon_{C}
  \qquad\text{and}\qquad
  h\,\eta_{C} = \eta_{A}.
\end{gather}
Assume again that \(C\) is cocomplete, and let \(f\),~\(g\colon C\to\BB A\) be two dgc maps.
The assignment~\(h\mapsto 1+t_{A}\,h\) then is a bijection between the coalgebra homotopies
from~\(f\) to~\(g\) and the twisting cochain homotopies from~\(t_{A}\,f\) to~\(t_{A}\,g\),
see~\cite[\S 1.11]{Munkholm:1974}.

Let \(h\colon C\to A\) be a twisting cochain homotopy, and let \(\aaa\lhd A\) be an ideal.
If \(h\) is congruent to~\(1=\eta_{A}\,\epsilon_{C}\) modulo~\(\aaa\lhd A\),
we say that \(h\) as well as the associated coalgebra homotopy~\(C\to \BB A\) is \newterm{\(\aaa\)-trivial}.\footnote{%
  The importance of this notion as well as that of \(\bbb\)-strict shm maps defined in~\eqref{eq:def-shm-strict} below
  will only become evident in Section~\ref{sec:homog}. Readers may wish to ignore them on a first reading.}  
By the first normalization condition~\eqref{eq:tw-h-normalization} any twisting cochain homotopy~\(h\colon C\to A\) is \(\bar A\)-trivial.

\begin{lemma}
  \label{thm:tw-h-equiv-rel}
  Let \(\aaa\lhd A\) be an ideal, and let \(C\) be a cocomplete dgc.
  Being related by an \(\aaa\)-trivial homotopy is an equivalence relation among twisting cochains~\(C\to A\).
  More precisely:
  \begin{enumroman}
  \item
    \label{thm:cup-trivial-h}
    Let \(h\colon t\simeq u\) and \(k\colon u\simeq v\)
    be \(\aaa\)-trivial twisting cochain homotopies.
    Then \(h\cup k\) is an \(\aaa\)-trivial homotopy from~\(t\) to~\(v\).
  \item
    \label{thm:inverse-trivial}
    Let \(h\colon t\simeq u\) be an \(\aaa\)-trivial twisting cochain homotopy
    Then \(h\) is invertible in~\(\Hom_{0}(C,A)\), and its inverse
    \begin{equation*}
      h^{-1}=\sum_{n=0}^{\infty}(1-h)^{\cup n} \colon C\to A
    \end{equation*}
    is an \(\aaa\)-trivial homotopy from~\(u\) to \(t\).
  \end{enumroman}
\end{lemma}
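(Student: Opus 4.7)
The plan is to verify~\ref{thm:cup-trivial-h} and~\ref{thm:inverse-trivial} by direct computation in the dga~\(\Hom(C,A)\), together with a reflexivity check: the unit~\(1=\eta_{A}\,\epsilon_{C}\) of the cup-product algebra is trivially an \(\aaa\)-trivial homotopy from any twisting cochain~\(t\) to itself, since \(d(1)=0\) and \(t\cup 1-1\cup t=t-t=0\). The symmetry and transitivity of the equivalence relation then follow from~\ref{thm:inverse-trivial} and~\ref{thm:cup-trivial-h}, respectively.

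For~\ref{thm:cup-trivial-h}, since \(\deg h=0\), the Leibniz rule for~\(\cup\) in~\(\Hom(C,A)\) gives
\begin{equation*}
  d(h\cup k) = d(h)\cup k + h\cup d(k).
\end{equation*}
Substituting \(d(h)=t\cup h-h\cup u\) and \(d(k)=u\cup k-k\cup v\), the cross terms \(h\cup u\cup k\) cancel and associativity of~\(\cup\) yields \(t\cup(h\cup k)-(h\cup k)\cup v\), so~\eqref{eq:tw-homotopy} holds. The normalizations \(\epsilon_{A}(h\cup k)=\epsilon_{C}\) and \((h\cup k)\,\eta_{C}=\eta_{A}\) follow because \(\epsilon_{A}\) is an algebra map and \(\eta_{C}\) a coalgebra map, reducing them to the identities \(\epsilon_{C}\cup\epsilon_{C}=\epsilon_{C}\) and \(\mu_{A}(\eta_{A}\otimes\eta_{A})=\eta_{A}\). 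Finally, since \(h\equiv 1\) and \(k\equiv 1\pmod\aaa\) and \(1\cup 1=1\), we have \(h\cup k\equiv 1\pmod\aaa\).

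For~\ref{thm:inverse-trivial}, set \(g=1-h\), so that \(g(C)\subset\aaa\) and the normalization~\eqref{eq:tw-h-normalization} forces \(\epsilon_{A}\,g=0\) and \(g\,\eta_{C}=0\). The key analytic point is that the series defining~\(h^{-1}\) is locally finite. Indeed, iterating the definition of~\(\cup\) gives \(g^{\cup n}=\iter{\mu_{A}}{n}\,g^{\otimes n}\,\iter{\Delta}{n}\); because \(g\) vanishes on the image of~\(\eta_{C}\), only the part of \(\iter{\Delta}{n}(c)\) in the image of \((1_{C}-\eta_{C}\epsilon_{C})^{\otimes n}\) contributes, and by cocompleteness of~\(C\) this vanishes once \(n\) is large enough. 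Hence \(h^{-1}\in\Hom_{0}(C,A)\) is well-defined, and the telescoping
\begin{equation*}
  h\cup h^{-1} = (1-g)\cup\sum_{n\ge 0}g^{\cup n} = \sum_{n\ge 0}g^{\cup n}-\sum_{n\ge 1}g^{\cup n} = 1,
\end{equation*}
together with the symmetric computation \(h^{-1}\cup h=1\), shows that \(h^{-1}\) is a two-sided inverse of~\(h\).

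It then remains to verify that \(h^{-1}\) is a homotopy from~\(u\) to~\(t\). Differentiating \(h\cup h^{-1}=1\) gives \(d(h)\cup h^{-1}+h\cup d(h^{-1})=0\); substituting \(d(h)=t\cup h-h\cup u\) and multiplying by~\(h^{-1}\) on the left yields \(d(h^{-1})=u\cup h^{-1}-h^{-1}\cup t\). The normalizations \(\epsilon_{A}\,h^{-1}=\epsilon_{C}\) and \(h^{-1}\,\eta_{C}=\eta_{A}\) hold because only the \(n=0\) term in the series contributes, the higher ones being killed by \(\epsilon_{A}\,g=0\) and \(g\,\eta_{C}=0\). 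Finally, \(g^{\cup n}\) has image in~\(\aaa\) for every~\(n\ge 1\), so \(h^{-1}\equiv 1\pmod\aaa\). The only delicate step is the convergence of the inverse series, and this is precisely what the cocompleteness hypothesis on~\(C\) is designed to handle.
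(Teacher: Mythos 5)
Your proof is correct and follows the same route as the paper, which simply observes that part~(i) is immediate from the definition of the cup product and cites Munkholm, \S 1.12, for the invertibility in part~(ii); you have merely written out in full the verifications (Leibniz rule, telescoping, local finiteness of the geometric series via cocompleteness, and the normalization and \(\aaa\)-triviality checks) that the paper delegates to that reference. All the details check out, including the key point that \(g=1-h\) kills the image of \(\eta_{C}\), so that cocompleteness makes the series \(\sum_{n}g^{\cup n}\) locally finite.
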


In particular, we may unambiguously speak of an ``\(\aaa\)-trivial homotopy between twisting cochains~\(t\) and~\(u\)''
without specifying the direction of the homotopy.

\begin{proof}
  The first part follows immediately from the definition of the cup product.
  Apart from the obvious \(\aaa\)-triviality, the second claim is \cite[\S 1.12]{Munkholm:1974}.
\end{proof}

\subsection{Notation}

The Koszul signs~\eqref{eq:transposition} and~\eqref{eq:tensor-maps}
quickly tend to clutter more complex formulas, as do the arguments
of multilinear maps. For instance, in \Cref{sec:def-hga} 
we will encounter the formula
\begin{equation}
  \label{eq:kappa-example-withsign}
  E_{k}(a_{1}a_{2};b_{1},\dots,b_{k})
  = \sum_{l+m=k}\! (-1)^{\epsilon}\,E_{l}(a_{1};b_{1},\dots,b_{l})
  \, E_{m}(a_{2};b_{l+1},\dots,b_{k})
\end{equation}
for certain multilinear operations~\(E_{k}\colon A^{\otimes(k+1)}\to A\) of degree~\(-k\)
on a dga~\(A\) and elements~\(a_{1}\),~\(a_{2}\), \(b_{1}\),~\dots,~\(b_{k}\in A\).
Here the sign exponent is
\begin{equation}
  \label{eq:kappa-example-sign}
  \epsilon = \deg{a_{2}}\,\bigl(\deg{b_{1}} + \dots + \deg{b_{l}}\bigr)
  - m\,\bigl(\deg{a_{1}}+\deg{b_{1}} + \dots + \deg{b_{l}}\bigr),
\end{equation}
and it it completely determined by the Koszul sign rule.
Alternatively, one could dispense with the arguments and write \eqref{eq:kappa-example-withsign}
more concisely as the identity of functions
\begin{equation}
  \label{eq:kappa-example-maps}
  E_{k}\,(\mu_{A}\otimes 1^{\otimes k}) = \sum_{l+m=k} \mu_{A}\,(E_{l}\otimes E_{m})\,\pi_{l}
\end{equation}
where \(\mu_{A}\) is the multiplication in~\(A\) and
\(\pi_{l}\colon A^{\otimes(k+2)}\to A^{\otimes(k+2)}\) the permutation of factors
corresponding to the cycle~\((l+2,l+1,\dots,2)\in S_{k+2}\).
The advantage of such a notation is that it is easy to compute the differential of a map
because compositions as well as tensor products of maps obey the graded Leibniz rule.
For example, the differential of a term~\(\mu_{A}(E_{l}\otimes E_{m})\pi_{l}\) is
\begin{equation}
  \label{eq:example-diff-maps}
  d\bigl(\mu_{A}\,(E_{l}\otimes E_{m})\,\pi_{l}\bigr) =
  \mu_{A}\,\bigl(d(E_{l})\otimes E_{m}\bigr)\,\pi_{l} + (-1)^{l}\,\mu_{A}\,\bigl(E_{l}\otimes d(E_{m})\bigr)\,\pi_{l}.
\end{equation}
However, we feel that it is very hard to grasp the meaning of formulas of the
form~\eqref{eq:kappa-example-maps}, mostly because the effect of the permutations remains opaque.

We therefore propose another notation that aims to combine the advantages
of~\eqref{eq:kappa-example-withsign} and~\eqref{eq:kappa-example-maps}.
We write maps in the form~\eqref{eq:kappa-example-withsign}, but without all
Koszul signs involving the degrees of variables. To indicate that these signs need
to be added, we write ``\(\eqKS\)'' instead of an equality sign.
For example, the identity~\eqref{eq:kappa-example-withsign} is written as
\begin{equation}
  E_{k}(a_{1}a_{2};b_{1},\dots,b_{k})
  \eqKS \sum_{l+m=k} \!\! E_{l}(a_{1};b_{1},\dots,b_{l}) \, E_{m}(a_{2};b_{l+1},\dots,b_{k}).
\end{equation}
In this case, the sign to be added is exactly \eqref{eq:kappa-example-sign}.
Stated differently, we really describe formulas of the form~\eqref{eq:kappa-example-maps},
but use variables to specify the permutations of arguments like~\(\pi_{l}\) that are to be applied
before the maps that are spelt out.

To make the notation even more compact, we abbreviate sequences of variables with a bullet, as in
\begin{equation}
  \label{eq:example-bullets}
  E_{k}(a_{1}a_{2};b_{\bullet})
  \eqKS \sum_{l+m=k} \!\! E_{l}(a_{1};b_{\bullet}) \, E_{m}(a_{2};b_{\bullet}).
\end{equation}
The number of elements in each sequence is to be inferred from the maps and may be zero.
Since \(E_{k}\) takes \(k\)~arguments in addition to the leading~\(a_{1}a_{2}\),
the first occurrence of~\(b_{\bullet}\) above stands for \(k\)~arguments~\(b_{1}\),~\dots,~\(b_{k}\).
Throughout a product the order of ``bullet variables'' is always maintained.
Thus, the first~\(b_{\bullet}\) on the right-hand side of~\eqref{eq:example-bullets} stands for~\(b_{1}\),~\dots,~\(b_{l}\)
(as \(E_{l}\) takes \(l\)~arguments in addition to~\(a_{1}\)) and
the last~\(b_{\bullet}\) for the \(m\)~arguments~\(b_{l+1}\),~\dots,~\(b_{k}\).
A tensor product like~\(a_{\bullet}\otimes b_{\bullet}\) indicates
a sequence of tensors~\(a_{1}\otimes b_{1}\),~\(a_{2}\otimes b_{2}\),~\ldots\,.

Composition of maps is distributed over tensor products, so that the
linear order in which the maps appear in a formula is maintained when translating between our
and the corresponding function notation. For example, the formula
\begin{equation}
  F(a,b) \eqKS f_{1}(f_{2}(a))\otimes g_{1}(g_{2}(b))
\end{equation}
stands for the identity of functions~\(F = f_{1}\,f_{2} \otimes g_{1}\,g_{2}\).
Note that our ``\(\eqKS\)''~notation does not incorporate the Koszul sign~\eqref{eq:KS-maps}
involving only maps and no variables.

\section{Strongly homotopy multiplicative maps}
\label{sec:shm}

Our discussion is based on the treatment in~\cite[\S 3.1]{Munkholm:1974} and~\cite[Sec.~1\,(c)]{Wolf:1977}.

Let~\(A\) and~\(B\) be augmented dgas. By definition, an \newterm{\(A_{\infty}\)-map} or \newterm{strongly homotopy multiplicative (shm) map}\footnote{%
  We prefer the term ``shm map'' used by Munkholm over the nowadays more popular terminology ``\(A_{\infty}\)-map''
  because it pairs better with the ``shc algebras'' to be introduced in \Cref{sec:shc}.}~%
\( 
  f\colon A\Rightarrow B
\) 
is a twisting cochain~\(f\colon \BB A\to B\). We write the corresponding dgc map as~\(\BB f\colon \BB A\to \BB B\).
It is given by
\begin{multline}
  \label{eq:shm-to-dgc}
  \BB f\bigl([a_{1}|\dots|a_{n}]\bigr) = \\ \sum_{k\ge0}\,\sum_{i_{1}+\dots+i_{k}=n} \!\!\!
  \bigl[\, f[a_{1}|\dots|a_{i_{1}}] \bigm| f[a_{i_{1}+1}|\dots|a_{i_{2}}] \bigm| \ldots \bigm|  f[a_{n-i_{k}+1}|\dots|a_{n}] \,\bigr],
\end{multline}
where the second sum is over all decompositions of~\(n\) into \(k\)~positive integers.

Following Munkholm~\cite[Appendix]{Munkholm:1974},
we define for~\(n\ge0\) the map%
\footnote{This definition leads to a sign convention different from Wolf's~\cite[p.~319]{Wolf:1977}.}
\begin{equation}
  f_{(n)} 
  \colon \bar A^{\otimes n} \xrightarrow{(\desusp)^{\otimes n}} \BB_{n}A \stackrel{f}{\longrightarrow} B
\end{equation}
of degree~\(1-n\) and extend it to~\(A^{\otimes n}\) by setting
\begin{align}
  \label{eq:tw-fam-1}
  f_{(1)}(1) &= 1, \\
  \label{eq:tw-fam-1-bis}
  f_{(n)}(a_{1}\otimes\dots\otimes a_{n}) &= 0
  \quad\text{if \(n\ge2\) and \(a_{k}=1\) for some~\(k\).}
\end{align}
The twisting cochain conditions~\eqref{eq:def-tw-cochain} and~\eqref{eq:tw-normalization} for~\(f\) translate into
\begin{align}
  f_{(0)} &=
  \epsilon_{B}\,f_{(n)} = 0, \\
  \label{eq:tw-fam-2}
  d(f_{(n)})(a_{\bullet}) &\eqKS \sum_{k=1}^{n-1}(-1)^{k}\,
  \bigl(f_{(k)}(a_{\bullet})\,f_{(n-k)}(a_{\bullet})
  - f_{(n-1)}(a_{\bullet},a_{k}a_{k+1},a_{\bullet})\bigr)
\end{align}
for all~\(n\ge1\). In~\eqref{eq:tw-fam-2} we have used the symbol~``\(\eqKS\)'' to indicate the Koszul sign
and also the notation~``\(a_{\bullet}\)'' to denote a sequence
of \(a\)-variables, ordered by their indices.

We call a family of multilinear functions
\begin{equation}
  f_{(n)}\colon A^{\otimes n}\to B
\end{equation}
of degree~\(1-n\) satisfying \eqref{eq:tw-fam-1}--\eqref{eq:tw-fam-2} a \newterm{twisting family}.
Twisting families correspond bijectively to shm maps~\(A\Rightarrow B\)
and therefore to dgc maps~\(\BB A\to\BB B\). The dgc map determined by the twisting family~\(f_{(n)}\) can be read of from~\eqref{eq:shm-to-dgc},
using the identity
\begin{equation}
  \label{eq:tw-fam-to-tw}
  f\bigl([a_{1}|\dots|a_{n}]\bigr) = (-1)^{\epsilon}\,f_{(n)}(a_{1}\otimes\dots\otimes a_{n})
\end{equation}
for~\(n\ge0\), where
\begin{equation}
  \label{eq:tw-fam-to-tw-sign}
  \epsilon = \sum_{k=1}^{n}(n-k)\bigl(\deg{a_{k}}-1\bigr).
\end{equation}

It follows from~\eqref{eq:tw-fam-2} that the component~\(f_{(1)}\colon A\to B\) is a chain map which is multiplicative up to homotopy since
\begin{equation}
  d(f_{(2)}) = f_{(1)}\,\mu_{A} - \mu_{B}\,(f_{(1)}\otimes f_{(1)}).
\end{equation}
The map
\begin{equation}
  H^{*}(f) \coloneqq H^{*}(f_{(1)})\colon H^{*}(A)\to H^{*}(B)
\end{equation}
therefore is a morphism of graded algebras.

Any dga morphism~\(f\colon A\to B\) induces an shm map~\(\tilde f\colon A\Rightarrow B\)
with \(\tilde f_{(1)}=f\) and \(\tilde f_{(n)}=0\) for~\(n\ge2\). We call such an shm map \newterm{strict}.
Note that \(H^{*}(\tilde f)=H^{*}(f)\) in this case.
We will not distinguish between a dga map and its induced strict shm~map.

More generally, we say that an shm map~\(f\colon A\Rightarrow B\) is \newterm{\(\bbb\)-strict} for some~\(\bbb\lhd B\) if
\begin{equation}
  \label{eq:def-shm-strict}
  f_{(n)}\equiv0\pmod{\bbb}\qquad\text{for all~\(n\ge2\).}
\end{equation}
Then \(f\) is \(0\)-strict if and only if it is strict, and every~\(f\colon A\Rightarrow B\) is \(\bar B\)-strict.
Any \(\bbb\)-strict shm~map~\(f\colon A\Rightarrow B\) induces a strict map~\(A\to B/\bbb\).

A twisting cochain homotopy~\(h\colon f\simeq g\) from an shm map~\(f\colon A\Rightarrow B\) to another shm map~\(g\colon A\Rightarrow B\)
is called an \newterm{shm homotopy}.
Based on~\(h\) we define the maps
\begin{equation}
  \label{eq:def-tw-h-fam}
  h_{(n)} = h\,(\desusp)^{\otimes n}\colon \bar A^{\otimes n}\to B
\end{equation}
of degree~\(-n\) for~\(n\ge0\) and extend them to~\(A^{\otimes n}\) by
\begin{equation}
  \label{eq:tw-h-family-1}
  h_{(n)}(a_{1}\otimes\dots\otimes a_{n}) = 0
  \quad\text{if \(a_{k}=1\) for some~\(k\).}  
\end{equation}
The normalization conditions~\eqref{eq:tw-h-normalization} mean
\begin{equation}
  \label{eq:tw-h-family-2}
  h_{(0)} = \eta_{B}
  \qquad\text{and}\qquad
  \epsilon_{B}\,h_{(n)} = 0
  \quad\text{for~\(n\ge1\),}
\end{equation}
and condition~\eqref{eq:tw-homotopy} is equivalent to
\begin{align}
  \label{eq:tw-h-family-3}
  d(h_{(n)})(a_{\bullet}) &\eqKS \sum_{k=1}^{n-1}(-1)^{k}\,h_{(n-1)}(a_{\bullet},a_{k}a_{k+1},a_{\bullet}) \\*
  \notag
  &\qquad + \sum_{k=0}^{n}\Bigl( f_{(k)}(a_{\bullet})\,h_{(n-k)}(a_{\bullet})
  - (-1)^{k}\,h_{(k)}(a_{\bullet})\, g_{(n-k)}(a_{\bullet}) \Bigr)
\end{align}
for all~\(n\ge0\).
In particular, \(h_{(1)}\colon g_{(1)}\simeq f_{(1)}\), so that \(H^{*}(f)=H^{*}(g)\).

We call a family of multilinear functions
\begin{equation}
  h_{(n)}\colon A^{\otimes n}\to B
\end{equation}
of degree~\(-n\)
satisfying \eqref{eq:tw-h-family-1}--
\eqref{eq:tw-h-family-3}
a \newterm{twisting homotopy family} from the twisting family~\(f_{(*)}\) to~\(g_{(*)}\).
Twisting homotopy families correspond bijectively to homotopies between twisting cochains.
We also write \(Bh\colon \BB A\to \BB B\) for the coalgebra homotopy induced by the twisting cochain homotopy~\(h\colon \BB A\to B\).

A twisting homotopy family~\(h_{(*)}\) as above is called \(\bbb\)-trivial for some~\(\bbb\lhd B\)
if the twisting cochain homotopy~\(\BB A\to B\) is so. Equivalently,
\begin{equation}
  \label{eq:h-trivial-def}
  h_{(n)}\equiv0\pmod{\bbb}\qquad\text{for all~\(n\ge1\).}
\end{equation}

Let \(f\colon A\Rightarrow B\) and~\(g\colon B\Rightarrow C\) be shm~maps.
We define the composition
\begin{equation}
  g\circ f\colon A\Rightarrow C
\end{equation}
to be the twisting cochain~\(g\,\BB f\) associated to the dgc map~\(\BB g\,\BB f\colon \BB A\to \BB C\).
Using \eqref{eq:shm-to-dgc} and~\eqref{eq:tw-fam-to-tw}, one sees that
the corresponding twisting family is given by
\begin{equation}
  \label{eq:twc-composition}
  (g\circ f)_{(n)}(a_{\bullet}) \eqKS \sum_{k\ge1}\sum_{i_{1}+\dots+i_{k}=n} \!\! (-1)^{\epsilon}\:
  g_{(k)}\bigl(f_{(i_{1})}(a_{\bullet}),\dots,f_{(i_{k})}(a_{\bullet})\bigr)
\end{equation}
for~\(n\ge0\), where the second sum is over all decompositions of~\(n\) into \(k\)~positive integers and
\begin{equation}
  \label{eq:twc-composition-sign}
  \epsilon = \sum_{s=1}^{k}(k-s)(i_{s}-1).
\end{equation}
Note that the composition of an shm map with the canonical twisting cochain~\eqref{eq:twc-bar} of a bar construction,
considered as another shm map, is tautological in the sense that \(t_{B}\circ f=f\) and~\(g\circ t_{B}=g\).

The composition of an shm map and an shm homotopy is similarly defined
as the shm homotopy associated to the composition of the corresponding maps
between bar constructions.

\begin{lemma}
  \label{thm:trivial-h-comp}
  \( \)
  \begin{enumroman}
  \item \label{thm:trivial-h-comp-0}
    Let \(f\colon A\Rightarrow B\) be a \(\bbb\)-strict shm map, and let \(g\colon B\Rightarrow C\) a \(\ccc\)-strict shm map.
    If \(g_{(1)}(\bbb)\subset\ccc\), then \(g\circ f\) is \(\ccc\)-strict.
  \item \label{thm:trivial-h-comp-1}
    Let \(h\colon C\to A\) be an \(\aaa\)-trivial twisting cochain homotopy,
    and let \(f\colon A\to B\) be a \(\bbb\)-strict shm map.
    If \(f_{(1)}(\aaa)\subset\bbb\), then \(f\circ h\) is \(\bbb\)-trivial.
  \item \label{thm:trivial-h-comp-2}
    Let \(h\colon C\to A\) be an \(\aaa\)-trivial twisting cochain homotopy, and let \(g\colon D\to C\) be a map of coaugmented dgcs.
    Then \(h\circ g\) is \(\aaa\)-trivial.
  \end{enumroman}
\end{lemma}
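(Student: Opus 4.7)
The plan is to handle the three parts in order of increasing difficulty: part~(iii) is essentially formal, part~(i) follows directly from the composition formula~\eqref{eq:twc-composition}, and part~(ii) --- the main obstacle --- requires unpacking the definition of $f\circ h$ through the associated coalgebra homotopy between bar constructions.

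For part~(iii), I interpret $h\circ g$ as the ordinary composition of functions $hg\colon D\to A$. If $h\colon t\simeq u$, then $hg$ is a twisting-cochain homotopy from $tg$ to $ug$: both~\eqref{eq:tw-homotopy} and~\eqref{eq:tw-h-normalization} are preserved under precomposition with a map of coaugmented dgcs, using~\eqref{eq:Hom-cup-dgc} together with $\epsilon_C\,g=\epsilon_D$ and $g\,\eta_D=\eta_C$. Moreover, $h\equiv\eta_A\epsilon_C\pmod{\aaa}$ implies $hg\equiv\eta_A\epsilon_D\pmod{\aaa}$, so $h\circ g$ is $\aaa$-trivial.

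For part~(i), I apply the composition formula~\eqref{eq:twc-composition} directly. For $n\ge2$, every summand of $(g\circ f)_{(n)}$ has the form $\pm g_{(k)}\bigl(f_{(i_1)}(a_\bullet),\dots,f_{(i_k)}(a_\bullet)\bigr)$ with $k\ge1$ and $i_1+\dots+i_k=n$. If $k\ge2$, the $\ccc$-strictness of $g$ already puts $g_{(k)}$ in $\ccc$. If $k=1$, then $i_1=n\ge2$, so the $\bbb$-strictness of $f$ places $f_{(n)}(a_\bullet)$ in $\bbb$, and the hypothesis $g_{(1)}(\bbb)\subset\ccc$ finishes the term.

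The hard part is part~(ii): by definition, $f\circ h$ is the twisting-cochain homotopy associated to the coalgebra homotopy $\BB f\,H\colon C\to\BB B$, where $H\colon C\to\BB A$ corresponds to $h$. Writing $h=\eta_A\epsilon_C+\bar h$ with $\bar h$ taking values in $\aaa$, the coalgebra homotopy $H$ decomposes as $H=\eta_{\BB A}\,\epsilon_C+\sum_{n\ge1}\Psi_n$, where each $\Psi_n\colon C\to\BB_n A$ is a sum, taken over iterated coproducts of $C$, of bracketed expressions of the shape $[t\,|\,\cdots\,|\,t\,|\,\bar h\,|\,u\,|\,\cdots\,|\,u]$ in which $\bar h$ occupies exactly one position. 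By~\eqref{eq:shm-to-dgc}, the $\BB_1 B$-component of $\BB f([a_1|\cdots|a_n])$ equals $[f_{(n)}(a_1,\dots,a_n)]$, so extracting the bar-degree-$1$ strand of $\BB f\,H$ shows that $(f\circ h)(c)-\eta_B\epsilon_C(c)$ is a sum of terms of the form $\pm f_{(n)}(\cdots,\bar h(\cdot),\cdots)$, each containing exactly one $\bar h$-argument and some number of $t$- and $u$-arguments. For $n\ge2$, such a term lies in $\bbb$ by the $\bbb$-strictness of $f$; for $n=1$, the single contribution is $f_{(1)}(\bar h(c))\in f_{(1)}(\aaa)\subset\bbb$. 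Hence $f\circ h\equiv\eta_B\epsilon_C\pmod{\bbb}$, as required.
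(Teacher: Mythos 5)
Your proof is correct. The paper's own proof of this lemma is the single sentence ``The first two claims are readily verified, and the last one is trivial,'' so there is no competing argument to compare against; your write-up supplies exactly the elementary verification the paper elides. For~(i) and~(iii) the checks are immediate as you say, and for~(ii) your key observation — that $f\circ h = 1 + f\,H$, that $f$ vanishes on $\BB_{0}A$, that the $\BB_{1}A$-strand of $H$ is $\desusp\bar h$ with values in $\aaa$, and that $f$ on $\BB_{n}A$ for $n\ge2$ lands in $\bbb$ by $\bbb$-strictness — is the right decomposition and closes the argument.
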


\begin{proof}
  The first two claims are readily verified, and the last one is trivial.
\end{proof}

\section{Tensor products of shm~maps}
\label{sec:shm-tensor}

In this section, \(A\),~\(B\),~\(A'\) and~\(B'\) denote augmented dgas.
We write \(a_{\bullet}\otimes b_{\bullet}\) for a sequence~\(a_{1}\otimes b_{1}\),~\(a_{2}\otimes b_{2}\),~\dots\ 
in~\(A\otimes B\) whose length is given by the context.

Let \(f\colon A\Rightarrow A'\) be an shm~map,
and let \(g\colon B\rightarrow B'\) be a dga map. Then
\begin{equation}
  \label{eq:f-otimes-g-mult}
  (f\otimes g)_{(n)}(a_{\bullet}\otimes b_{\bullet})
  \eqKS f_{(n)}(a_{\bullet})\otimes g\,\iter{\mu}{n}(b_{\bullet})
\end{equation}
is a twisting family, hence defines an shm map
\begin{equation}
  f\otimes g\colon A\otimes B \Rightarrow A'\otimes B'.
\end{equation}
If \(h\) is an \(\aaa\)-trivial homotopy from~\(f\) to another shm~map~\(\tilde f\), then
\begin{equation}
  \label{eq:def-h-g-mult}
  (h\otimes g)_{(n)}(a_{\bullet}\otimes b_{\bullet}) \eqKS h_{(n)}(a_{\bullet})\otimes g\,\iter{\mu}{n}(b_{\bullet})
\end{equation}
defines an \(\aaa\otimes B\)-trivial shm homotopy~\(h\otimes g\) from~\(f\otimes g\) to~\(\tilde f\otimes g\).

Similarly, if \(f\colon A\rightarrow A'\) is a dga map
and \(g\colon B\Rightarrow B'\) an shm~map, then
\begin{equation}
  \label{eq:f-mult-otimes-g}
  (f\otimes g)_{(n)}(a_{\bullet}\otimes b_{\bullet})
  \eqKS f\,\iter{\mu}{n}(a_{\bullet})\otimes g_{(n)}(b_{\bullet})
\end{equation}
defines an shm map
\begin{equation}
  f\otimes g\colon A\otimes B \Rightarrow A'\otimes B'.
\end{equation}
If \(h\) is a \(\bbb\)-trivial homotopy from~\(g\) to another shm~map~\(\tilde g\), then
\begin{equation}
  \label{eq:def-f-mult-h}
  (f\otimes h)_{(n)}(a_{\bullet}\otimes b_{\bullet}) \eqKS f\,\iter{\mu}{n}(a_{\bullet})\otimes h_{(n)}(b_{\bullet})
\end{equation}
defines an \(A\otimes\bbb\)-trivial shm homotopy~\(f\otimes h\) from~\(f\otimes g\) to~\(f\otimes\tilde g\).

Now let both \(f\colon A\Rightarrow A'\) and \(g\colon B\Rightarrow B'\) be shm~maps.
Then the two shm maps
\begin{equation}
  (f\otimes1_{B'})\circ(1_{A}\otimes g)
  \qquad\text{and}\qquad
  (1_{A'}\otimes g)\circ(f\otimes 1_{B})
\end{equation}
are not equal in general.
In fact, for any~\(n\ge0\) one has
\begin{equation}
  \label{eq:f-1-1-g}
  \bigl((f\otimes 1)\circ(1\otimes g)\bigr)_{(n)}(a_{\bullet}\otimes b_{\bullet}) \eqKS \sum_{l\ge1}\,\sum_{j_{1}+\dots+j_{l}=n}\!\!(-1)^{\epsilon}\,F\otimes G
\end{equation}
where the sum is over all decompositions of~\(n\) into \(l\)~positive integers and
\begin{align}
  F &= f_{(l)}\,\bigl(\iter{\mu}{j_{1}}(a_{\bullet}),\dots,\iter{\mu}{j_{l}}(a_{\bullet})\bigr), \\
  G &= \iter{\mu}{l}\,\bigl(g_{(j_{1})}(b_{\bullet}),\dots,g_{(j_{l})}(b_{\bullet})\bigr), \\
  \epsilon &= \sum_{t=1}^{l}(l-t)(j_{t}-1),
\end{align}
compare \eqref{eq:twc-composition} and~\eqref{eq:twc-composition-sign},
while
\begin{equation}
  \label{eq:1-g-f-1}
  \bigl((1\otimes g)\circ(f\otimes 1)\bigr)_{(n)}(a_{\bullet}\otimes b_{\bullet}) \eqKS \sum_{k\ge1}\,\sum_{i_{1}+\dots+i_{k}=n}\!\!(-1)^{\epsilon}\,F\otimes G
\end{equation}
where the sum is analogously over all decompositions of~\(n\) into \(k\)~positive integers and
\begin{align}
  F &= \iter{\mu}{k}\,\bigl(f_{(i_{1})}(a_{\bullet}),\dots,f_{(i_{k})}(a_{\bullet})\bigr), \\
  G &= g_{(k)}\,\bigl(\iter{\mu}{i_{1}}(b_{\bullet}),\dots,\iter{\mu}{i_{k}}(b_{\bullet})\bigr), \\
  \label{eq:1-g-f-1-epsilon}
  \epsilon &= \sum_{s=1}^{k}(s-1)(i_{s}-1).
\end{align}
Note that if \(f\) or~\(g\) is strict, then \eqref{eq:f-1-1-g} and~\eqref{eq:1-g-f-1} coincide and agree with the formulas given previously.
Following Munkholm~\cite[Prop.~3.3]{Munkholm:1974}, we define
\begin{equation}
  \label{eq:def-tensor-Ainfty-maps}
  f\otimes g = (f\otimes1)\circ(1\otimes g)
\end{equation}
in the general case and compare it to the other composition. 

\begin{proposition}
  \label{thm:tensor-shm}
  Assume that \(f\) is \(\aaa\)-strict for some~\(\aaa\lhd A'\) and that \(g\) is \(\bbb\)-strict for some~\(\bbb\lhd B'\).
  Then the two shm maps
  \begin{equation*}
    f\otimes g=(f\otimes1)\circ(1\otimes g)
    \qquad\text{and}\qquad
    (1\otimes g)\circ(f\otimes 1)
  \end{equation*}
  are homotopic via an \(\aaa\otimes\bbb\)-trivial homotopy.
  In particular, if \(f\) or~\(g\) is strict, then the two compositions agree.
\end{proposition}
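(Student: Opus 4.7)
The plan is to exhibit an explicit twisting homotopy family $h_{(n)}$ from $(1\otimes g)\circ(f\otimes 1)$ to $f\otimes g=(f\otimes 1)\circ(1\otimes g)$ whose every component is visibly a sum of terms in $\aaa\otimes\bbb$. To guess its form, I first check $n=2$: comparing \eqref{eq:f-1-1-g} and \eqref{eq:1-g-f-1} for $n=2$, the difference of the two compositions is, up to a Koszul sign,
\begin{equation*}
  f_{(2)}(a_{1},a_{2})\otimes\bigl(g_{(1)}(b_{1})g_{(1)}(b_{2})-g_{(1)}(b_{1}b_{2})\bigr)
  +\bigl(f_{(1)}(a_{1}a_{2})-f_{(1)}(a_{1})f_{(1)}(a_{2})\bigr)\otimes g_{(2)}(b_{1},b_{2}),
\end{equation*}
and by \eqref{eq:tw-fam-2} this is (up to sign) $d\bigl(f_{(2)}\otimes g_{(2)}\bigr)(a_{1}\otimes b_{1},a_{2}\otimes b_{2})$. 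So the natural candidate at arity~$2$ is $h_{(2)}(a_{1}\otimes b_{1},a_{2}\otimes b_{2})\eqKS\pm f_{(2)}(a_{1},a_{2})\otimes g_{(2)}(b_{1},b_{2})$, which is manifestly in $\aaa\otimes\bbb$ by $\aaa$-strictness of~$f$ and $\bbb$-strictness of~$g$.

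For general~$n$, I would write $h_{(n)}$ as a sum indexed by pairs of compositions $(I,J)$ of~$n$ with $I=(i_{1},\dots,i_{k})$, $J=(j_{1},\dots,j_{l})$ subject to a combinatorial compatibility condition (the hypercube indexing alluded to in the acknowledgements), with typical summand
\begin{equation*}
  \pm\,\iter{\mu}{?}\!\bigl(f_{(i_{1})}(a_{\bullet}),\dots,f_{(i_{k})}(a_{\bullet})\bigr)\otimes\iter{\mu}{?}\!\bigl(g_{(j_{1})}(b_{\bullet}),\dots,g_{(j_{l})}(b_{\bullet})\bigr),
\end{equation*}
arranged so that in each summand at least one $i_{s}\ge2$ and at least one $j_{t}\ge2$. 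The degree count $\sum(1-i_{s})+\sum(1-j_{t})=2-(k+l-\text{correction})$ together with the compatibility condition is tuned so that $|h_{(n)}|=-n$, matching \eqref{eq:def-tw-h-fam}. By $\aaa$- and $\bbb$-strictness, each summand lies in $\aaa\otimes\bbb$, giving \eqref{eq:h-trivial-def}. The normalization conditions \eqref{eq:tw-h-family-1}--\eqref{eq:tw-h-family-2} then follow because every summand contains a proper $f_{(\ge2)}$ and $g_{(\ge2)}$, which vanish when any input is~$1$ by \eqref{eq:tw-fam-1-bis}.

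The main obstacle is the verification of the homotopy relation \eqref{eq:tw-h-family-3}. Applying $d$ termwise and expanding via \eqref{eq:tw-fam-2} for both $f$ and~$g$ produces three families of terms: inner products $f_{(p)}f_{(q)}$ and $g_{(p')}g_{(q')}$ on the tensor factors, contractions $f_{(\cdot)}(\dots,a_{i}a_{i+1},\dots)$ and its $g$-analogue, and mixed products of two $h$-summands. The combinatorial identity to prove is that, after a sign analysis using \eqref{eq:tw-fam-to-tw-sign} and \eqref{eq:twc-composition-sign}, the first family reassembles into the inner-multiplication terms of \eqref{eq:f-1-1-g} minus those of \eqref{eq:1-g-f-1}, the second family cancels with the internal $a_{k}a_{k+1}$ and $b_{k}b_{k+1}$ terms from the right-hand side of \eqref{eq:tw-h-family-3}, and the third family reproduces the $h\cup f\otimes g$ contributions. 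I would carry this out by fixing a bijection between cells of the indexing hypercube and the terms in the two compositions, so that the differential on $h_{(n)}$ corresponds exactly to the signed boundary of the cube, which collapses to the two faces giving \eqref{eq:f-1-1-g} and \eqref{eq:1-g-f-1}. The last assertion is automatic: if $f$ or $g$ is strict, then every summand of $h_{(n)}$ for $n\ge1$ contains a vanishing factor, so $h_{(n)}=0$ and the two compositions coincide.
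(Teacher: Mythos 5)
Your approach — build an explicit twisting homotopy family $h_{(n)}$, indexed by edges of an $(n-1)$-cube, with each summand visibly in $\aaa\otimes\bbb$ — is exactly the strategy the paper uses, and your $n=2$ computation is correct. But as stated the proposal is a plan rather than a proof, and the one concrete ansatz you offer for general $n$ is wrong.

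Your proposed typical summand has the form
\begin{equation*}
  \iter{\mu}{k}\bigl(f_{(i_{1})}(a_{\bullet}),\dots,f_{(i_{k})}(a_{\bullet})\bigr)\otimes\iter{\mu}{l}\bigl(g_{(j_{1})}(b_{\bullet}),\dots,g_{(j_{l})}(b_{\bullet})\bigr),
\end{equation*}
i.e.\ products of $f$-components and products of $g$-components, each applied to a consecutive block of single inputs. The actual homotopy cannot have this shape. Already at $n=3$ one needs terms such as
\(f_{(2)}(a_{1},a_{2}a_{3})\otimes g_{(3)}(b_{1},b_{2},b_{3})\)
and \(f_{(3)}(a_{1},a_{2},a_{3})\otimes g_{(2)}(b_{1}b_{2},b_{3})\),
where a \emph{product} of inputs is plugged into a single slot of $f_{(\cdot)}$ or $g_{(\cdot)}$. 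Your ansatz cannot produce these, and without them the $a_{k}a_{k+1}$-contraction terms in \eqref{eq:tw-h-family-3} will not cancel. The correct formula (paper's \eqref{eq:def-Hn}--\eqref{eq:def-Hn-G}) uses a hybrid: on the $A'$-side the last $f$-component has the inflated index $i_{k}+l$ and is fed the remaining $a$-blocks \emph{premultiplied}; symmetrically on the $B'$-side the first $g$-component has index $k+j_{1}$ and is fed the leading $b$-blocks premultiplied. The $\aaa\otimes\bbb$-triviality then comes not from demanding ``some $i_{s}\ge2$ and some $j_{t}\ge2$'' (a condition you would have to impose ad hoc and which would spoil the boundary computation), but from the automatic inequalities $i_{k}+l\ge2$ and $k+j_{1}\ge2$.

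Beyond the wrong ansatz, the key step — verifying the homotopy relation \eqref{eq:tw-h-family-3} — is entirely deferred (``I would carry this out\dots''), the ``combinatorial compatibility condition'' is never specified, and the normalization argument is glossed over (in the actual formula, normalization requires a case analysis on \emph{where} the unit input lands, because the premultiplied slots change the behaviour). To turn this into a proof you would need to write down the precise formula for $h_{(n)}$, determine the sign $\epsilon$, and carry out the boundary cancellation; that computation is the substance of the result.
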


\begin{proof}
  Instead of using Munkholm's theory of trivialized extensions \cite[Sec.~2]{Munkholm:1974},
  we exhibit an explicit homotopy from~\((1\otimes g)\circ(f\otimes 1)\) to~\((f\otimes1)\circ(1\otimes g)\).
  It is given by~\(h_{(0)}=\eta_{A'}\otimes\eta_{B'}\) and
  \begin{equation}
    \label{eq:def-Hn}
    h_{(n)}(a_{\bullet}\otimes b_{\bullet}) \eqKS \sum_{k,l\ge1}\mkern-0mu\sum_{\substack{i_{1}+\dots+i_{k}+\\j_{1}+\dots+j_{l}=n}}\mkern-5mu
    (-1)^{\epsilon}\,F \otimes G
  \end{equation}
  for~\(n\ge1\),
  where the second sum is over all decompositions of~\(n\) into~\(k+l\) positive integers,
  \begin{align}
    \label{eq:def-Hn-F}
    F &= \iter{\mu}{k}\Bigl(f_{(i_{1})}(a_{\bullet}),\dots,f_{(i_{k-1})}(a_{\bullet}),f_{(i_{k}+l)}\bigl(a_{\bullet},\iter{\mu}{j_{1}}(a_{\bullet}),\dots,\iter{\mu}{j_{l}}(a_{\bullet})\bigr)\Bigr), \\
    \label{eq:def-Hn-G}
    G &= \iter{\mu}{l}\Bigl(g_{(k+j_{1})}\bigl(\iter{\mu}{i_{1}}(b_{\bullet}),\dots,\iter{\mu}{i_{k}}(b_{\bullet}),b_{\bullet}\bigr),g_{(j_{2})}(b_{\bullet}),\dots,g_{(j_{l})}(b_{\bullet})\Bigr), \\
    \label{eq:def-epsilon-ij}
    \epsilon &= \sum_{s=1}^{k}s\,(i_{s}-1) + \sum_{t=1}^{l}(l-t)(j_{t}-1) + k\,(l-1)+1.
  \end{align}
  Verifying that \(h\) is a homotopy as claimed is lengthy, but elementary, see \Cref{sec:pf-tensor-prod-Ai}.
  That the homotopy is \(\aaa\otimes\bbb\)-trivial follows from the assumptions on~\(f\) and~\(g\)
  and the inequalities~\(i_{k}+l\ge2\) and~\(k+j_{1}\ge2\).
  In particular, \(h\) takes values in \(\bar A'\boxtimes\bar B'\supset\bar A'\otimes\bar B'\) since \(f\) and~\(g\) are \(\bar A\)-strict and \(\bar B\)-strict, respectively.
  This proves the second part of the normalization condition~\eqref{eq:tw-h-family-2}.

  Let us verify the condition~\eqref{eq:tw-h-family-1}:
  Assume that \(a_{i}=b_{i}=1\) for some~\(i\) and consider a term~\(F\otimes G\) of the sum~\eqref{eq:def-Hn}.
  Let \(m\) be the index such that \(a_{i}\) appears is the \(m\)-th \(f\)-term of~\(F\).
  If \(i_{s}>1\) or~\(s=k\), this term vanishes by~\eqref{eq:tw-fam-2}. Otherwise, the product inside~\(g_{(k+j_{1})}\) containing~\(b_{m}\) is \(b_{m}\)~itself,
  so that this term vanishes again by~\eqref{eq:tw-fam-2}. In any case we have \(F\otimes G=0\).
  
  The last part of the statement is the special case~\(\aaa=0\) or~\(\bbb=0\)
  and has already been observed above.
\end{proof}

Omitting the arguments~\(a_{\bullet}\otimes b_{\bullet}\),
formula~\eqref{eq:def-Hn} looks as follows in small degrees.
\begin{align}
  h_{(1)} &= 0, \\
  h_{(2)} &\eqKS
  - f_{(2)} ( a_{{1}},a_{{2}} ) \otimes g_{(2)} ( b_{{1}},b_{{2}} ),
  \\
  h_{(3)} &\eqKS
  - f_{(1)} ( a_{{1}} ) \, f_{(2)} ( a_{{2}},a_{{3}} ) \otimes g_{(3)} ( b_{{1}},b_{{2}},b_{{3}} )
\\ \notag &\quad\,+ f_{(3)} ( a_{{1}},a_{{2}},a_{{3}} ) \otimes g_{(2)} ( b_{{1}},b_{{2}} ) \, g_{(1)} ( b_{{3}} ) 
\\ \notag &\quad\,+ f_{(3)} ( a_{{1}},a_{{2}},a_{{3}} ) \otimes g_{(2)} ( b_{{1}}\, b_{{2}},b_{{3}} ) 
\\ \notag &\quad\,+ f_{(2)} ( a_{{1}},a_{{2}}\, a_{{3}} ) \otimes g_{(3)} ( b_{{1}},b_{{2}},b_{{3}} ).
\end{align}

\begin{remark}
  \label{rem:cube}
  The summands appearing in~\eqref{eq:f-1-1-g} and~\eqref{eq:1-g-f-1}
  are in bijection with the vertices of an \((n-1)\)-dimensional cube.
  For example, the vertex of~\([0,1]^{n-1}\) corresponding to the decomposition~\(i_{1}+\dots+i_{k}=n\)
  is given by
  \begin{equation}
    \bigl(\underbrace{0,\dots,0,1}_{i_{1}},\dots,\underbrace{0,\dots,0,1}_{i_{k-1}},\underbrace{0,\dots,0}_{i_{k}-1}\bigr).
  \end{equation}
  Similarly, the summands appearing in~\eqref{eq:def-Hn} are in bijection
  with the edges of an \((n-1)\)-dimensional cube. Here
  the summand corresponding
  to the decomposition \(i_{1}+\dots+i_{k}+j_{1}+\dots+j_{l}=n\)
  is identified with the edge
  \begin{equation}
    \bigl(\underbrace{0,\dots,0,1}_{i_{1}},\dots,\underbrace{0,\dots,0,1}_{i_{k-1}},
    \underbrace{0,\dots,0}_{i_{k}-1},*,
    \underbrace{0,\dots,0}_{j_{1}-1},\underbrace{1,0,\dots,0}_{j_{2}},\dots,\underbrace{1,0,\dots,0}_{j_{l}}
    \bigr),
  \end{equation}
  where ``\(*\)'' denotes the free parameter.
\end{remark}

\begin{corollary}
  \label{thm:comp-shm-tensor}
  Let \(f_{1}\colon A_{0}\Rightarrow A_{1}\),~\(f_{2}\colon A_{1}\Rightarrow A_{2}\), \(g_{1}\colon B_{0}\Rightarrow B_{1}\) and \(g_{2}\colon B_{1}\Rightarrow B_{2}\)
  be shm maps. Assume that \(f_{1}\) is \(\aaa_{1}\)-trivial, \(f_{2}\) \(\aaa_{2}\)-trivial and \(g_{2}\) \(\bbb_{2}\)-trivial
  and that \((f_{2})_{(1)}(\aaa_{1})\subset\aaa_{2}\)
  for ideals~\(\aaa_{1}\lhd A_{1}\),~\(\aaa_{2}\lhd A_{2}\) and~\(\bbb_{2}\lhd B_{2}\).
  Then the two shm maps
  \begin{equation*}
    (f_{2}\otimes g_{2})\circ (f_{1}\otimes g_{1})
    \qquad\text{and}\qquad
    (f_{2}\circ f_{1})\otimes(g_{2}\circ g_{1})
  \end{equation*}
  are homotopic via an \(\aaa_{2}\otimes\bbb_{2}\)-trivial homotopy.
  If \(f_{1}\) or~\(g_{2}\) are strict, then the two maps agree.
\end{corollary}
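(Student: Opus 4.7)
The plan is to reduce the claim to \Cref{thm:tensor-shm} by a chain of rewrites, and to track the triviality using \Cref{thm:trivial-h-comp}. First I would unpack the defining convention~\eqref{eq:def-tensor-Ainfty-maps} on both sides of the statement. For the left-hand composition,
\begin{equation*}
  (f_{2}\otimes g_{2})\circ(f_{1}\otimes g_{1})
  = (f_{2}\otimes 1)\circ(1\otimes g_{2})\circ(f_{1}\otimes 1)\circ(1\otimes g_{1}).
\end{equation*}
For the right-hand side, note that when one factor in~\(\otimes\) is strict, the two candidate formulas~\eqref{eq:f-otimes-g-mult} and~\eqref{eq:f-mult-otimes-g} coincide, and a direct check shows that \((f_{2}\circ f_{1})\otimes 1 = (f_{2}\otimes 1)\circ(f_{1}\otimes 1)\) and \(1\otimes(g_{2}\circ g_{1}) = (1\otimes g_{2})\circ(1\otimes g_{1})\). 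Thus
\begin{equation*}
  (f_{2}\circ f_{1})\otimes(g_{2}\circ g_{1})
  = (f_{2}\otimes 1)\circ(f_{1}\otimes 1)\circ(1\otimes g_{2})\circ(1\otimes g_{1}).
\end{equation*}

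The two expressions differ only in the order of the middle two factors \((1\otimes g_{2})\) and \((f_{1}\otimes 1)\). Since \(f_{1}\) is \(\aaa_{1}\)-strict and \(g_{2}\) is \(\bbb_{2}\)-strict, \Cref{thm:tensor-shm} provides an \(\aaa_{1}\otimes\bbb_{2}\)-trivial shm~homotopy
\begin{equation*}
  H\colon (1\otimes g_{2})\circ(f_{1}\otimes 1) \;\simeq\; (f_{1}\otimes 1)\circ(1\otimes g_{2}).
\end{equation*}
Pre-composing \(H\) with the shm~map \(1\otimes g_{1}\) (equivalently, with the dgc map \(\BB(1\otimes g_{1})\)) preserves \(\aaa_{1}\otimes\bbb_{2}\)-triviality by \Cref{thm:trivial-h-comp}\,\ref{thm:trivial-h-comp-2}. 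Then post-composing with \(f_{2}\otimes 1\) and invoking \Cref{thm:trivial-h-comp}\,\ref{thm:trivial-h-comp-1} yields a homotopy between the two required shm~maps.

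The main (and really only) technical step is to verify that the hypotheses of \Cref{thm:trivial-h-comp}\,\ref{thm:trivial-h-comp-1} hold with target ideal \(\aaa_{2}\otimes\bbb_{2}\). Using the formula \((f_{2}\otimes 1)_{(n)}(a_{\bullet}\otimes b_{\bullet})\eqKS(f_{2})_{(n)}(a_{\bullet})\otimes\iter{\mu}{n}(b_{\bullet})\), one sees that the condition \((f_{2})_{(1)}(\aaa_{1})\subset\aaa_{2}\) gives \((f_{2}\otimes 1)_{(1)}(\aaa_{1}\otimes\bbb_{2})\subset\aaa_{2}\otimes\bbb_{2}\); and the \(\aaa_{2}\)-strictness of~\(f_{2}\) means that for \(n\ge 2\) the value \((f_{2}\otimes 1)_{(n)}\) applied to any tuple of elementary tensors from \(\aaa_{1}\otimes\bbb_{2}\) lands in \(\aaa_{2}\otimes\bbb_{2}\), because the first tensor factor lies in~\(\aaa_{2}\) while the product of the second factors contains at least one element of~\(\bbb_{2}\). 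This is precisely what is needed.

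Finally, the strictness clause follows from the last sentence of \Cref{thm:tensor-shm}: if either \(f_{1}\) or \(g_{2}\) is strict (in particular, if \(f_{1}\) or \(g_{2}\) has its \(\aaa\)- or \(\bbb\)-ideal equal to zero, which is forced when \(f_{1}\) or \(g_{2}\) is strict), the homotopy~\(H\) is the identity, and the two compositions agree on the nose.
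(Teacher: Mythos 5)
Your proof is correct and follows exactly the paper's (very terse) argument: rewrite both compositions via the convention~\eqref{eq:def-tensor-Ainfty-maps}, apply \Cref{thm:tensor-shm} to swap the middle factors, and propagate triviality with \Cref{thm:trivial-h-comp}. You also correctly supply the one detail the paper leaves implicit, namely that post-composition with \(f_{2}\otimes 1\) lands in \(\aaa_{2}\otimes\bbb_{2}\) (not merely \(\aaa_{2}\otimes B_{2}\)) because the second tensor factors are multiplied together and at least one of them lies in the ideal~\(\bbb_{2}\).
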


\begin{proof}
  This follows by writing the maps as
  \begin{align}
    (f_{2}\otimes g_{2})\circ (f_{1}\otimes g_{1})
    &= (f_{2}\otimes 1)\circ(1\otimes g_{2})\circ (f_{1}\otimes 1)\circ(1\otimes g_{1}) \\
    (f_{2}\circ f_{1})\otimes(g_{2}\circ g_{1})
    &= (f_{2}\otimes 1)\circ(f_{1}\otimes 1)\circ(1\otimes g_{2})\circ(1\otimes g_{1})
  \end{align}
  and applying \Cref{thm:tensor-shm} and \Cref{thm:trivial-h-comp}.
  The second identity above is a consequence of the formulas~\eqref{eq:f-otimes-g-mult},~\eqref{eq:f-mult-otimes-g} and~\eqref{eq:twc-composition}.
\end{proof}

\begin{lemma}
  \label{thm:shuffle-natural-shm}
  The shuffle map is natural with respect to shm~maps. In other words, the diagram
  \begin{equation*}
    \begin{tikzcd}
      \BB A\otimes \BB B \arrow{d}[left]{\BB f\otimes \BB g} \arrow{r}{\shuffle} & \BB(A\otimes B) \arrow{d}{\BB(f\otimes g)} \\
      \BB A'\otimes \BB B' \arrow{r}{\shuffle} & \BB(A'\otimes B')
    \end{tikzcd}
  \end{equation*}
  commutes for all shm~maps~\(f\colon A\Rightarrow A'\) and~\(g\colon B\Rightarrow B'\).
\end{lemma}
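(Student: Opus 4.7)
The plan is to use the bijection between dgc~maps \(\BB A\otimes\BB B\to\BB(A'\otimes B')\) and twisting cochains \(\BB A\otimes\BB B\to A'\otimes B'\), which exists because \(\BB A\otimes\BB B\) is cocomplete (iterating the coproduct \(n>k+l\) times splits any element of bidegree~\((k,l)\) into a tensor factor lying in~\(\kk\)). Composing each path in the diagram with~\(t_{A'\otimes B'}\) yields the two twisting cochains to be compared. The right-then-down path gives \((f\otimes g)\circ\shuffle\), where \(f\otimes g\colon\BB(A\otimes B)\to A'\otimes B'\) denotes the shm~map tensor product. For the down-then-right path, the fact that~\(\shuffle\) is associated with~\(t_{A'}\otimes\eta_{B'}\epsilon+\eta_{A'}\epsilon\otimes t_{B'}\), combined with \(t_{A'}\,\BB f=f\), \(t_{B'}\,\BB g=g\), \(\epsilon_{\BB A'}\,\BB f=\epsilon_{\BB A}\), and \(\epsilon_{\BB B'}\,\BB g=\epsilon_{\BB B}\), yields~\(f\otimes\eta_{B'}\epsilon_{\BB B}+\eta_{A'}\epsilon_{\BB A}\otimes g\). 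Hence the lemma is equivalent to the identity of twisting cochains
\begin{equation*}
  (f\otimes g)\circ\shuffle = f\otimes\eta_{B'}\epsilon_{\BB B}+\eta_{A'}\epsilon_{\BB A}\otimes g.
\end{equation*}

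To prove this identity, I would reduce to the two cases in which one of the maps is the identity. Using \(f\otimes g=(f\otimes 1)\circ(1\otimes g)\) from~\eqref{eq:def-tensor-Ainfty-maps} together with the functoriality of~\(\BB\), the general claim follows from the two sub-claims
\begin{equation*}
  \shuffle\circ(\BB f\otimes 1_{\BB B'})=\BB(f\otimes 1_{B'})\circ\shuffle
  \quad\text{and}\quad
  \shuffle\circ(1_{\BB A}\otimes\BB g)=\BB(1_A\otimes g)\circ\shuffle,
\end{equation*}
which combine to give
\begin{equation*}
  \shuffle\circ(\BB f\otimes\BB g)
  =\BB(f\otimes1)\circ\shuffle\circ(1\otimes\BB g)
  =\BB(f\otimes1)\circ\BB(1\otimes g)\circ\shuffle
  =\BB(f\otimes g)\circ\shuffle.
\end{equation*}
The first sub-claim follows from the second by interchanging the two tensor factors via the commutativity of the shuffle map (\Cref{ex:shuffle-map}), so it suffices to treat the second.

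For that sub-claim, formula~\eqref{eq:f-mult-otimes-g} gives the twisting family \((1_A\otimes g)_{(n)}(\alpha_{\bullet}\otimes\beta_{\bullet})\eqKS\iter{\mu}{n}(\alpha_{\bullet})\otimes g_{(n)}(\beta_{\bullet})\), and one evaluates both twisting cochains on a general bar element \([a_1|\dots|a_k]\otimes[b_1|\dots|b_l]\). When \(k=0\) or~\(l=0\) the shuffle reduces to the canonical embedding and both sides match after invoking \(g_{(1)}(1)=1\), \(g_{(n)}(1,\dots,1)=0\) for~\(n\ge2\), and the normalization of~\(t_A\). The decisive case is~\(k,l\ge1\): every shuffle summand is a bar element \([c_1|\dots|c_{k+l}]\) in which at least one slot~\(c_i=a_s\otimes 1\) forces~\(\beta_i=1\), so that \(g_{(k+l)}(\beta_{\bullet})=0\) by~\eqref{eq:tw-fam-1-bis} and the summand vanishes individually; on the other side, both terms of \(t_A\otimes\eta_{B'}\epsilon+\eta_A\epsilon\otimes g\) vanish because \(\epsilon\) annihilates \(\BB_{\ge1}\). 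The only real obstacle is the bookkeeping of Koszul signs and the translation~\eqref{eq:tw-fam-to-tw} between twisting cochains and twisting families; once that is in place, no serious computation is required.
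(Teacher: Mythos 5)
Your proposal is correct and follows essentially the same route as the paper: both reduce to comparing the associated twisting cochains (using cocompleteness of the bar construction), handle the case where one factor is the identity by direct evaluation on a general element \([a_{1}|\dots|a_{k}]\otimes[b_{1}|\dots|b_{l}]\) via the vanishing of \(g_{(n)}\) when an argument equals~\(1\), and assemble the general case from the definition~\eqref{eq:def-tensor-Ainfty-maps}. The only cosmetic difference is that you derive the second one-sided case from the first by the commutativity of the shuffle map, whereas the paper simply declares it ``analogous.''
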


\begin{proof}
  Since all morphisms involved are dgc~maps and the bar construction cocomplete, it suffices to compare the associated twisting cochains.
  Let \(\aa\otimes\bb=[a_{1}|\dots|a_{k}]\otimes[b_{1}|\dots|b_{l}]\in \BB_{k}A\otimes \BB_{l}B\).

  Assume \(g=1_{B}\). Then both twisting cochains vanish
  on~\(\aa\otimes\bb\) if \(k\ge1\) and~\(l\ge1\).
  For~\(l=0\) both twisting cochains yield \(f(\aa)\otimes1\), and for~\(k=0\)
  they give \(1\otimes b_{1}\) if~\(l=1\) and \(0\) otherwise, compare \Cref{ex:shuffle-map}.

  The case~\(f=1_{A}\) is analogous, and the general case follows by combining the two
  and using the definition~\eqref{eq:def-tensor-Ainfty-maps}.
\end{proof}

Now let \(f_{i}\colon A_{i}\Rightarrow B_{i}\) be a family of shm maps, \(1\le i\le m\).
Generalizing \eqref{eq:def-tensor-Ainfty-maps}, we define the shm map
\begin{multline}
  f_{1}\otimes\dots\otimes f_{m} = \\
  (f_{1}\otimes 1\otimes\dots\otimes 1)\circ(1\otimes f_{2}\otimes 1\otimes\dots\otimes 1)\circ\dots\circ(1\otimes\dots\otimes 1\otimes f_{m}).
\end{multline}
If one of the maps is instead an shm homotopy~\(f_{i}=h\), we use the same definition.
The resulting map is an shm homotopy in this case.
We observe that this convention is compatible with the definitions~\eqref{eq:def-h-g-mult} and~\eqref{eq:def-f-mult-h}.

\begin{lemma}
  \label{thm:tensor-shm-h}
  Let \(h\colon A\to B\) be an shm homotopy.
  \begin{enumroman}
  \item For any dga map~\(f\colon A'\to B'\) we have
    \begin{equation*}
      h\otimes f = (1_{B}\otimes f)\circ(h\otimes 1_{A'})
      \qquad\text{and}\qquad
      f\otimes h = (1_{B'}\otimes h)\circ(f\otimes 1_{A}).
    \end{equation*}
  \item For any shm map~\(g\colon C\to A\) and any dga~\(D\) we have
    \begin{equation*}
      (h\otimes 1_{D})\circ(g\otimes 1_{D}) = (h\circ g)\otimes 1_{D}.
    \end{equation*}
  \end{enumroman}
\end{lemma}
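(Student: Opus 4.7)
The overall plan is to verify both identities by unwinding the defining formulas for shm tensor products (\eqref{eq:f-otimes-g-mult}, \eqref{eq:f-mult-otimes-g}, \eqref{eq:def-h-g-mult}, \eqref{eq:def-f-mult-h}) and for shm composition~\eqref{eq:twc-composition}, exploiting the fact that strict (\ie~dga) maps drastically simplify these formulas. Recall that for a strict shm map~\(\phi\), only the component~\(\phi_{(1)}=\phi\) is non-zero, so in \eqref{eq:twc-composition} the condition \(\phi_{(i_s)}\ne0\) forces \(i_s=1\) for all~\(s\); hence \(k\circ\phi\) is given termwise by \(\phi(a_\bullet)\), and \(\phi\circ k=\phi\,k_{(n)}(a_\bullet)\) with only \(k=1\) surviving.

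For part~(i), first identity, the left-hand side~\(h\otimes f\) is by the stated convention equal to \((h\otimes 1_{B'})\circ(1_{A}\otimes f)\). Here \(1_{A}\otimes f\) is a strict dga map and \(h\otimes 1_{B'}\) is an shm homotopy given by~\eqref{eq:def-h-g-mult}. Unwinding the composition with a strict inner map yields
\begin{equation*}
  \bigl((h\otimes 1_{B'})\circ(1_{A}\otimes f)\bigr)_{(n)}(a_\bullet\otimes a'_\bullet) \eqKS h_{(n)}(a_\bullet)\otimes\iter{\mu_{B'}}{n}\bigl(f(a'_\bullet)\bigr),
\end{equation*}
using that \(f\) is a dga map so that \(f\,\iter{\mu_{A'}}{n}=\iter{\mu_{B'}}{n}\,f^{\otimes n}\). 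For the right-hand side, \(h\otimes 1_{A'}\) is given by~\eqref{eq:def-h-g-mult} and~\(1_{B}\otimes f\) is a strict outer map, so the composition directly evaluates to the same expression. The second identity in~(i) follows by the mirror calculation using~\eqref{eq:def-f-mult-h} in place of~\eqref{eq:def-h-g-mult}.

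For part~(ii), I compute both sides using~\eqref{eq:f-otimes-g-mult} for~\(g\otimes 1_{D}\) and~\eqref{eq:def-h-g-mult} for the shm homotopies~\(h\otimes 1_{D}\) and~\((h\circ g)\otimes 1_{D}\). Plugging into~\eqref{eq:twc-composition}, the left-hand side becomes
\begin{equation*}
  \sum_{k,\,i_{1}+\dots+i_{k}=n}\!\!(-1)^{\epsilon}\,h_{(k)}\bigl(g_{(i_{1})}(c_{\bullet}),\dots,g_{(i_{k})}(c_{\bullet})\bigr)
  \otimes \iter{\mu_{D}}{k}\bigl(\iter{\mu_{D}}{i_{1}}(d_{\bullet}),\dots,\iter{\mu_{D}}{i_{k}}(d_{\bullet})\bigr),
\end{equation*}
and by associativity of~\(\mu_{D}\) the second tensor factor collapses to~\(\iter{\mu_{D}}{n}(d_{\bullet})\) independently of the decomposition. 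The remaining sum over decompositions is exactly the formula~\eqref{eq:twc-composition} for \((h\circ g)_{(n)}(c_{\bullet})\), so the left-hand side equals \((h\circ g)_{(n)}(c_{\bullet})\otimes\iter{\mu_{D}}{n}(d_{\bullet})\), which by~\eqref{eq:def-h-g-mult} is precisely the right-hand side.

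The main obstacle is bookkeeping: one must check that the Koszul signs hidden in the~\(\eqKS\) on each side match after applying the composition formula. This reduces to the observation that on both sides every \(c\)-variable stays in the first tensor factor and every \(d\)-variable in the second, so the Koszul sign reshuffling is determined only by the degrees of the~\(d\)-variables passing over the internal sub-maps; since these signs depend only on the linear order of the arguments and the outer structure is the same, they agree on the nose. No homotopy is required -- both sides are literally equal -- consistent with the ``strict case'' statement of Proposition~\ref{thm:tensor-shm}, of which part~(i) is the analogue for shm homotopies.
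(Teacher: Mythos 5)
Your proof is correct and follows essentially the same route as the paper's (very terse) argument: part~(i) by direct inspection of the formulas~\eqref{eq:def-h-g-mult} and~\eqref{eq:def-f-mult-h} together with the fact that composition with a strict map only picks out the trivial decomposition, and part~(ii) by applying the composition formula~\eqref{eq:twc-composition} to the shm homotopy and collapsing the \(D\)-factor by associativity. You simply write out the inspection in more detail than the paper does.
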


\begin{proof}
  The first part follows from inspection of the formulas~\eqref{eq:def-h-g-mult} and~\eqref{eq:def-f-mult-h}.
  The second claim additionally uses that formula~\eqref{eq:twc-composition} remains valid
  for the shm homotopy~\(h\) instead of the shm map~\(f\).
\end{proof}

\section{Strongly homotopy commutative algebras}
\label{sec:shc}

Let \(A\) be an augmented dga.
According to Stasheff--Halperin~\cite[Def.~8]{StasheffHalperin:1970},
\(A\) is a \newterm{strongly homotopy commutative (shc) algebra} if
\begin{enumroman}
\item
  \label{shc-shm}
  the multiplication map~\(\mu_{A}\colon A\otimes A\to A\) extends to an shm~morphism
  \begin{equation*}
    \Phi\colon A\otimes A\Rightarrow A,
  \end{equation*}
  where ``extending'' means that \(\Phi_{(1)}=\mu_{A}\).
\end{enumroman}
Munkholm~\cite[Def.~4.1]{Munkholm:1974} additionally requires the following:
\begin{enumroman}[resume]
\item
  \label{shc-unit}
  The map~\(\eta_{A}\) is a unit for~\(\Phi\), that is,
  \begin{equation*}
    \Phi\circ(1_{A}\otimes\eta_{A}) = \Phi\circ(\eta_{A}\otimes 1_{A}) = 1_{A}\colon A\Rightarrow A.
  \end{equation*}
\item
  \label{shc-ass}
  The shm map~\(\Phi\) is homotopy associative, that is,
  \begin{equation*}
    \Phi\circ(\Phi\otimes 1_{A}) \simeq \Phi\circ(1_{A}\otimes\Phi)\colon A\otimes A\otimes A\Rightarrow A.
  \end{equation*}
  We write \(\ha\) for a homotopy from~\(\Phi\circ(\Phi\otimes 1)\) to~\(\Phi\circ(1\otimes\Phi)\).
\item
  \label{shc-com}
  The map~\(\Phi\) is homotopy commutative, that is,
  \begin{equation*}
    \Phi\circ T_{A,A} \simeq \Phi\colon A\otimes A\Rightarrow A.
  \end{equation*}
  We write \(\hc\) for a homotopy from~\(\Phi\circ T\) to~\(\Phi\).
\end{enumroman}
Whenever we speak of an shc algebra, we mean one satisfying all four properties unless otherwise indicated.
Any commutative dga is canonically an shc algebra.

Let \(A\) and~\(B\) be shc algebras, and let \(\bbb\lhd B\).
A \newterm{morphism of shc algebras} is an shm map~\(f\colon A\Rightarrow B\)
such that the diagram
\begin{equation}
  \label{eq:def-natural-shc-map}
  \begin{tikzcd}
    A\otimes A \arrow[Rightarrow]{d}[left]{\Phi_{A}} \arrow[Rightarrow]{r}{f\otimes f} & B\otimes B \arrow[Rightarrow]{d}{\Phi_{B}} \\
    A \arrow[Rightarrow]{r}{f} & B
  \end{tikzcd}
\end{equation}
commutes up to homotopy.\footnote{%
  Munkholm also requires the identity~\(f\circ\eta_{A}=\eta_{B}\).
  Given the normalization condition~\eqref{eq:tw-normalization},
  this holds automatically as both maps necessarily represent \(0\) as twisting cochains~\(\kk=\BB\kk\to B\).}
It is called \newterm{\(\bbb\)-strict} if it is so as an shm~map,
and \newterm{\(\bbb\)-natural}
if there is a \(\bbb\)-trivial homotopy
making \eqref{eq:def-natural-shc-map} commute.

Recall from~\cite[Prop.~4.2]{Munkholm:1974} that the tensor product of two shc algebras~\(A\) and~\(B\)
is again an shc algebra with structure map
\begin{equation}
  \Phi_{A\otimes B}\colon A\otimes B\otimes A\otimes B \xrightarrow{1\otimes T_{B,A}\otimes 1} A\otimes A\otimes B\otimes B \xRightarrow{\Phi_{A}\otimes\Phi_{B}} A\otimes B.
\end{equation}

The following is a variant of the result just cited.

\begin{lemma}
  \label{thm:tensor-prod-shc}
  Let \(f_{i}\colon A_{i}\to B_{i}\) be strict \(\bbb_{i}\)-natural shc maps for~\(i=1\),~\(2\).
  Their tensor product~\(f_{1}\otimes f_{2}\colon A_{1}\otimes A_{2}\to B_{1}\otimes B_{2}\) is a strict \(\bbb_{1}\boxtimes\bbb_{2}\)-natural shc map.
\end{lemma}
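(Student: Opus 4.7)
The strictness of $f_1\otimes f_2$ as an shm~map is immediate from~\eqref{eq:f-otimes-g-mult}: when both factors are strict, the tensor product reduces to the ordinary dga tensor. So the only task is to produce a $\bbb_1\boxtimes\bbb_2$-trivial shm homotopy between the two legs of the shc-naturality square~\eqref{eq:def-natural-shc-map} for $f_1\otimes f_2$.

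First I would massage the square into a convenient form. Writing $F_i=f_i\circ\Phi_{A_i}$ and $G_i=\Phi_{B_i}\circ(f_i\otimes f_i)$, the shc-$\bbb_i$-naturality of $f_i$ supplies $\bbb_i$-trivial shm homotopies $h_i\colon F_i\simeq G_i$. Expanding $\Phi_{A_1\otimes A_2}$ and $\Phi_{B_1\otimes B_2}$, commuting the transposition past the strict maps $f_1,f_2$ by naturality, and applying \Cref{thm:comp-shm-tensor} twice (with strictness hypothesis fulfilled by $f_2$ and by $f_1\otimes f_1$), the two legs of the square become
\begin{equation*}
  (F_1\otimes F_2)\circ(1\otimes T_{A_2,A_1}\otimes 1)
  \qquad\text{and}\qquad
  (G_1\otimes G_2)\circ(1\otimes T_{A_2,A_1}\otimes 1).
\end{equation*}
Because $1\otimes T_{A_2,A_1}\otimes 1$ is a strict dga map, \Cref{thm:trivial-h-comp}\ref{thm:trivial-h-comp-2} shows that precomposing with it preserves triviality, so it remains to construct a $\bbb_1\boxtimes\bbb_2$-trivial homotopy $F_1\otimes F_2\simeq G_1\otimes G_2$.

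I would build this homotopy in two steps through the intermediate shm map $G_1\otimes F_2$. The first step $h_1\otimes F_2=(h_1\otimes 1_{B_2})\circ(1\otimes F_2)$ is a homotopy $F_1\otimes F_2\simeq G_1\otimes F_2$: by~\eqref{eq:def-h-g-mult} the inner $h_1\otimes 1_{B_2}$ is $\bbb_1\otimes B_2$-trivial, and \Cref{thm:trivial-h-comp}\ref{thm:trivial-h-comp-2} preserves this under precomposition with the dgc map associated to $1\otimes F_2$. The second step $G_1\otimes h_2=(G_1\otimes 1_{B_2})\circ(1\otimes h_2)$ is a homotopy $G_1\otimes F_2\simeq G_1\otimes G_2$: here $1\otimes h_2$ is $A_1^{\otimes 2}\otimes\bbb_2$-trivial by~\eqref{eq:def-f-mult-h}, and an explicit unfolding of the shm composition formula shows that every component of $G_1\otimes h_2$ is a sum of terms of the form $(G_1)_{(k)}(\cdots)\otimes\iter{\mu_{B_2}}{k}\bigl(\cdots,(h_2)_{(j)}(\cdots),\cdots\bigr)$ carrying an $h_2$-factor in the $B_2$-coordinate, so the homotopy lands in $B_1\otimes\bbb_2$. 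Both $\bbb_1\otimes B_2$ and $B_1\otimes\bbb_2$ are contained in $\bbb_1\boxtimes\bbb_2$, and the cup product of these two $\bbb_1\boxtimes\bbb_2$-trivial homotopies (\Cref{thm:tw-h-equiv-rel}\ref{thm:cup-trivial-h}) delivers the desired composite.

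The main obstacle will be the triviality of $G_1\otimes h_2$: \Cref{thm:trivial-h-comp}\ref{thm:trivial-h-comp-1} does not apply directly, since the higher components $(G_1\otimes 1_{B_2})_{(n)}(c\otimes d)\eqKS(G_1)_{(n)}(c)\otimes\iter{\mu_{B_2}}{n}(d)$ show that $G_1\otimes 1_{B_2}$ fails to be $B_1\otimes\bbb_2$-strict. One is therefore forced to open up the coalgebra homotopy underlying the shm composition of an shm map with an shm homotopy and verify by hand that every summand carries an $h_2$-factor in the second tensor coordinate.
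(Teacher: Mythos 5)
Your proof is correct and follows essentially the same route as the paper's: decompose the naturality square via \(1\otimes T\otimes 1\), interpolate with the two naturality homotopies (one built from \(h_{1}\), one from \(h_{2}\)), and combine them with the cup product of \Cref{thm:tw-h-equiv-rel}\,\ref{thm:cup-trivial-h}. The paper packages the second homotopy as \((\Phi\otimes 1)\circ(f_{1}\otimes f_{1}\otimes h_{2})\) rather than your \(G_{1}\otimes h_{2}=(G_{1}\otimes 1)\circ(1\otimes h_{2})\), but the obstacle you flag — that \Cref{thm:trivial-h-comp}\,\ref{thm:trivial-h-comp-1} does not literally apply because \(G_{1}\otimes 1_{B_{2}}\) is not \(B_{1}\otimes\bbb_{2}\)-strict — is equally present there (with \(\Phi_{B_{1}}\otimes 1_{B_{2}}\) in place of \(G_{1}\otimes 1_{B_{2}}\)), and your direct verification that every summand of the composed homotopy carries an \((h_{2})_{(j)}\)-factor in the \(B_{2}\)-coordinate, which the ideal \(\bbb_{2}\) then absorbs under multiplication, is exactly the right way to close it.
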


\begin{proof}
  We have to show that there is a \(\bbb_{1}\boxtimes\bbb_{2}\)-trivial homotopy
  for the diagram~\eqref{eq:def-natural-shc-map},
  which in the present setting reads
  \begin{equation}
    \begin{tikzcd}[column sep=6em]
      A_{1}\otimes A_{2}\otimes A_{1}\otimes A_{2} \arrow{d}{1\otimes T\otimes 1} \arrow{r}{f_{1}\otimes f_{2}\otimes f_{1}\otimes f_{2}} & B_{1}\otimes B_{2}\otimes B_{1}\otimes B_{2} \arrow{d}{1\otimes T\otimes 1} \\
      A_{1}\otimes A_{1}\otimes A_{2}\otimes A_{2} \arrow[Rightarrow]{d}{1\otimes 1\otimes\Phi} \arrow{r}{f_{1}\otimes f_{1}\otimes f_{2}\otimes f_{2}} & B_{1}\otimes B_{1}\otimes B_{2}\otimes B_{2} \arrow[Rightarrow]{d}{1\otimes 1\otimes\Phi} \\
      A_{1}\otimes A_{1}\otimes A_{2} \arrow[Rightarrow]{d}{\Phi\otimes 1} \arrow{r}{f_{1}\otimes f_{1}\otimes f_{2}} & B_{1}\otimes B_{1}\otimes B_{2} \arrow[Rightarrow]{d}{\Phi\otimes 1} \\
      A_{1}\otimes A_{2} \arrow{r}{f_{1}\otimes f_{2}} & B_{1}\otimes B_{2} \mathrlap{.}
    \end{tikzcd}
  \end{equation}
  Since \(f_{1}\) and~\(f_{2}\) are strict, the top square commutes.
  If \(h_{i}\) denotes a \(\bbb_{i}\)-trivial naturality homotopy for~\(f_{i}\),
  then \(f_{1}\otimes f_{1}\otimes h_{2}\) is a \(B_{1}\otimes B_{1}\otimes\bbb_{2}\)-natural homotopy making the middle diagram commute,
  and \(h_{1}\otimes f_{2}\) is a \(\bbb_{1}\otimes B_{2}\)-natural one for the bottom square.
  Hence the cup product of
  \begin{equation}
    (\Phi\otimes 1) \circ (f_{1}\otimes f_{1}\otimes h_{2}) \circ (1\otimes T\otimes 1)
  \end{equation}
  and
  \begin{equation}
    (h_{1}\otimes f_{2}) \circ (1\otimes 1\otimes\Phi) \circ (1\otimes T\otimes 1)
  \end{equation}
  yields the required homotopy by Lemmas~\ref{thm:trivial-h-comp} and~\ref{thm:tw-h-equiv-rel}\,\ref{thm:cup-trivial-h}.
\end{proof}

Let \(A\) be an shc algebra with structure map~\(\Phi\colon A\otimes A \Rightarrow A\).
Following \cite[p.~30]{Munkholm:1974}, we define the shm~map
\begin{equation}
  \label{eq:def-Phi-n}
  \iter{\Phi}{n}\colon A^{\otimes n}\Rightarrow A
\end{equation}
for~\(n\ge0\) by
\begin{equation}
  \Phi^{[0]} = \eta_{A},
  \qquad
  \Phi^{[1]} = 1_{A},
  \qquad
  \Phi^{[2]} = \Phi,
  \qquad
  \iter{\Phi}{n+1} = \Phi\circ (\iter{\Phi}{n}\otimes 1_{A})
\end{equation}
for~\(n\ge2\). Note that
\begin{equation}
  \bigl(\iter{\Phi}{n}\bigr)_{(1)} = \iter{\mu_{A}}{n}.
\end{equation}
If \(\Phi\) is \(\aaa\)-strict for some ideal~\(\aaa\lhd A\), then so is \(\iter{\Phi}{n}\) for any~\(n\ge0\)
by \Cref{thm:trivial-h-comp}\,\ref{thm:trivial-h-comp-0}.

For the next result, compare \cite[Prop.~4.6]{Munkholm:1974}.

\begin{lemma}
  \label{thm:Phi-n-homotopy}
  Let \(A\) and~\(B\) be shc algebras with ideals~\(\aaa\lhd A\) and~\(\bbb\lhd B\).
  Assume that \(\Phi_{A}\) is \(\aaa\)-strict and \(\Phi_{B}\) \(\bbb\)-strict.
  Let \(f\colon A\Rightarrow B\) be a \(\bbb\)-strict and \(\bbb\)-natural map of shc algebras such that \(f_{(1)}(\aaa)\subset\bbb\).
  Then the diagram
  \begin{equation*}
    \begin{tikzcd}
      A^{\otimes n} \arrow[Rightarrow]{d}[left]{\iter{\Phi}{n}} \arrow[Rightarrow]{r}{f^{\otimes n}} & B^{\otimes n} \arrow[Rightarrow]{d}{\iter{\Phi}{n}} \\
      A \arrow[Rightarrow]{r}{f} & B
    \end{tikzcd}
  \end{equation*}
  commutes up to a \(\bbb\)-trivial homotopy for any~\(n\ge0\).
\end{lemma}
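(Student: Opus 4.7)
The plan is induction on $n \ge 0$. The base cases are immediate: for $n = 0$ both sides of the diagram are the unique twisting cochain $\kk = \BB\kk \to B$ (by the normalization~\eqref{eq:tw-normalization}, as noted in the footnote after the definition of an shc morphism); for $n = 1$ both sides reduce to $f$; and for $n = 2$ the claim is precisely the $\bbb$-naturality hypothesis on~$f$.

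For the inductive step, I would first unfold the recursive definition on both sides. On the target side, \Cref{thm:comp-shm-tensor} yields a strict equality
\begin{equation*}
  \iter{\Phi_B}{n+1} \circ f^{\otimes(n+1)}
  = \Phi_B \circ (\iter{\Phi_B}{n} \otimes 1_B) \circ (f^{\otimes n} \otimes f)
  = \Phi_B \circ \bigl((\iter{\Phi_B}{n} \circ f^{\otimes n}) \otimes f\bigr),
\end{equation*}
because the last clause of \Cref{thm:comp-shm-tensor} applies with $g_2 = 1_B$ strict. It then suffices to produce a $\bbb$-trivial shm homotopy from $f \circ \Phi_A \circ (\iter{\Phi_A}{n} \otimes 1_A)$ to $\Phi_B \circ \bigl((\iter{\Phi_B}{n} \circ f^{\otimes n}) \otimes f\bigr)$, assembled as the cup product of three $\bbb$-trivial pieces via \Cref{thm:tw-h-equiv-rel}\,\ref{thm:cup-trivial-h}.

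The three steps are as follows. First, start with the $\bbb$-trivial naturality homotopy $f \circ \Phi_A \simeq \Phi_B \circ (f \otimes f)$ and pre-compose with the shm map $\iter{\Phi_A}{n} \otimes 1_A$; triviality is preserved by \Cref{thm:trivial-h-comp}\,\ref{thm:trivial-h-comp-2}. Second, apply \Cref{thm:comp-shm-tensor} with $f_1 = \iter{\Phi_A}{n}$, $g_1 = 1_A$, $f_2 = g_2 = f$, using that $\iter{\Phi_A}{n}$ is $\aaa$-strict and $f_{(1)}(\aaa) \subset \bbb$, to produce a $\bbb \otimes \bbb$-trivial homotopy from $(f \otimes f) \circ (\iter{\Phi_A}{n} \otimes 1_A)$ to $(f \circ \iter{\Phi_A}{n}) \otimes f$. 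Third, tensor the inductive $\bbb$-trivial homotopy $f \circ \iter{\Phi_A}{n} \simeq \iter{\Phi_B}{n} \circ f^{\otimes n}$ on the right with $f$: unfolding as $(k \otimes 1_B) \circ (1 \otimes f)$, the factor $k \otimes 1_B$ is $\bbb \otimes B$-trivial by~\eqref{eq:def-h-g-mult}, and pre-composing with the shm map $1 \otimes f$ preserves this by \Cref{thm:trivial-h-comp}\,\ref{thm:trivial-h-comp-2}. Post-composing the last two homotopies with $\Phi_B$ collapses them to $\bbb$-triviality via \Cref{thm:trivial-h-comp}\,\ref{thm:trivial-h-comp-1}, since $\Phi_B$ is $\bbb$-strict and $\mu_B = (\Phi_B)_{(1)}$ sends both $\bbb \otimes \bbb$ and $\bbb \otimes B$ into $\bbb$.

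The main difficulty is purely bookkeeping: the intermediate homotopies live in $B \otimes B$ and carry various tensor-ideal triviality classes, and one must verify that post-composing with $\Phi_B$ collapses all of them back to the required $\bbb$-triviality in $B$. This succeeds precisely because $\bbb$ is a two-sided ideal and $\Phi_B$ is $\bbb$-strict, so $\mu_B$ absorbs the relevant tensor-product ideals; the actual induction itself is formal.
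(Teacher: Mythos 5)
Your proof is correct and follows essentially the same strategy as the paper's: induction on~\(n\), with the base cases \(n\le 2\) handled by the \(\bbb\)-naturality hypothesis, and the inductive step assembled as a cup product of three \(\bbb\)-trivial homotopies coming from (a) the tensor-swap result, (b) the naturality of~\(f\), and (c) the inductive hypothesis tensored with a factor, each collapsed to \(\bbb\)-triviality by post-composition with the \(\bbb\)-strict map~\(\Phi_B\) via \Cref{thm:trivial-h-comp}. The only cosmetic difference is that you factor the big square directly through \(\Phi_B\circ(f\otimes f)\circ(\iter{\Phi_A}{n}\otimes 1)\) and \(\Phi_B\circ((f\circ\iter{\Phi_A}{n})\otimes f)\), invoking \Cref{thm:comp-shm-tensor} for the swap, whereas the paper routes through the intermediate objects \(A^{\otimes n}\otimes B\) and \(A\otimes B\) and applies \Cref{thm:tensor-shm} directly; since the corollary is derived from the proposition, these are two presentations of the same argument.
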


\begin{proof}
  The claim is trivial for~\(n\le2\). Assume it proven for~\(n\) and consider the diagram
  \begin{equation}
    \begin{tikzcd}[column sep=large]
      A^{\otimes n}\otimes A \arrow[Rightarrow]{d}[left]{\iter{\Phi_{A}}{n}\otimes 1} \arrow[Rightarrow]{r}{1^{\otimes n}\otimes f} &
      A^{\otimes n}\otimes B \arrow[Rightarrow]{d}[left]{\iter{\Phi_{A}}{n}\otimes 1} \arrow[Rightarrow]{r}{f^{\otimes n}\otimes 1} &
      B^{\otimes n}\otimes B \arrow[Rightarrow]{d}{\iter{\Phi_{B}}{n}\otimes 1}
      \\
      A\otimes A \arrow[Rightarrow]{d}[left]{\Phi_{A}} \arrow[Rightarrow]{r}{1\otimes f} &
      A\otimes B \arrow[Rightarrow]{r}{f\otimes 1} &
      B\otimes B \arrow[Rightarrow]{d}{\Phi_{B}}
      \\
      A \arrow[Rightarrow]{rr}{f} &
      &
      B \mathrlap{.}
    \end{tikzcd}
  \end{equation}

  Since \(\iter{\Phi_{A}}{n}\) is \(\aaa\)-strict,
  the top left square commutes up to an \(\aaa\otimes B\)-trivial homotopy by \Cref{thm:tensor-shm}.
  The composition of this homotopy with \(\Phi_{B}\circ(f\otimes 1)\) is \(\bbb\)-trivial by \Cref{thm:trivial-h-comp}
  because \(\Phi_{B}\) and~\(f\) are \(\bbb\)-strict and \(f_{(1)}(\aaa)\subset\bbb\).
  By induction, the top right square commutes up to a \(\bbb\otimes B\)-trivial homotopy,
  whose composition with~\(\Phi_{B}\) is \(\bbb\)-trivial.
  The bottom rectangle finally commutes up to a \(\bbb\)-trivial homotopy
  since \(f\) is \(\bbb\)-natural. The claim follows.  
\end{proof}

\section{Homotopy Gerstenhaber algebras}
\label{sec:hga}

\subsection{Definition of an hga}
\label{sec:def-hga}

Let \(A\) be an augmented dga. We say that \(A\) is a \newterm{homotopy Gerstenhaber algebra} (homotopy G-algebra, \newterm{hga})
if it is equipped with certain operations
\begin{equation}
  E_{k}\colon A \otimes A^{\otimes k}\to A,
  \qquad
  a\otimes b_{1}\otimes\dots\otimes b_{k}\mapsto E_{k}(a;b_{1},\dots,b_{k})
\end{equation}
of degree~\(\deg{E_{k}}=-k\) for~\(k\ge1\). To state the properties they satisfy,
it is convenient to use the additional operation~\(E_{0}=1_{A}\).
  All~\(E_{k}\) with~\(k\ge1\) take values in the augmentation ideal~\(\bar A\) and vanish
  if any argument is equal to~\(1\).
  For~\(k\ge1\) and all~\(a\),~\(b_{1}\),~\dots,~\(b_{k}\in A\) one has
  \begin{align}
    \label{eq:def-Ek-d}
    d(E_{k})(a;b_{\bullet})
    &\eqKS b_{1}\,E_{k-1}(a;b_{\bullet})
    + \sum_{m=1}^{k-1}(-1)^{m}\,E_{k-1}(a;b_{\bullet},b_{m}b_{m+1},b_{\bullet}) \\
    \notag &\qquad + (-1)^{k}\,E_{k-1}(a;b_{\bullet})\,b_{k}.
  \end{align}
  For~\(k\ge0\) and all~\(a_{1}\),~\(a_{2}\),~\(b_{1}\),~\dots,~\(b_{k}\in A\) one has
  \begin{equation}
    \label{eq:Eprodfirstarg}
    E_{k}(a_{1}a_{2};b_{\bullet}) \eqKS \!\!\sum_{k_{1}+k_{2}=k}\!\! E_{k_{1}}(a_{1};b_{\bullet})\,E_{k_{2}}(a_{2};b_{\bullet})
  \end{equation}
  where the sum is over all decompositions of~\(k\) into two non-negative integers.
  Finally, for~\(k\),~\(l\ge0\) and all~\(a\),~\(b_{1}\),~\dots,~\(b_{k}\),~\(c_{1}\),~\dots,~\(c_{l}\in A\) one has
  \begin{multline}
    \label{eq:formula-Ek-El}
    E_{l}(E_{k}(a;b_{\bullet});c_{\bullet}) \eqKS \\* \sum_{\substack{i_{1}+\dots+i_{k}+{}\\j_{0}+\dots+j_{k}=l}}
    \!\!(-1)^{\epsilon}\,
    E_{n}\bigl(a;\underbrace{c_{\bullet}}_{j_{0}},E_{i_{1}}(b_{1};c_{\bullet}),\underbrace{c_{\bullet}}_{j_{1}},
    \dots,\underbrace{c_{\bullet}}_{j_{k-1}},E_{i_{k}}(b_{k};c_{\bullet}),\underbrace{c_{\bullet}}_{j_{k}}\bigr),
  \end{multline}
  where the sum is over all decompositions of~\(l\) into \(2k+1\)~non-negative integers,
  \begin{equation}
    n = k + \sum_{t=0}^{k}j_{t}
    \qquad\text{and}\qquad
    \epsilon = \sum_{s=1}^{k}i_{s}\Bigl(k+\sum_{t=s}^{k}j_{t}\Bigr) + \sum_{t=1}^{k}t\,j_{t}.
  \end{equation}
A \newterm{morphism of hgas} is a morphism~\(f\colon A\to B\) of augmented dgas
that is compatible with the hga operations in the obvious way.  
  
Given an hga~\(A\), we can define
\begin{equation}
  \EE_{kl}\colon \BB_{k}A \otimes \BB_{l}A = (\desusp A)^{\otimes(k+l)} \to A
\end{equation}
for \(k\),~\(l\ge 0\) by
\begin{equation}
  \label{eq:def-Ekl}
  \EE_{kl} \, (\desusp)^{\otimes(k+l)} =
  \begin{cases}
    1_{A} & \text{if \(k=0\) and~\(l=1\),} \\
    E_{l} & \text{if \(k=1\),} \\
    0 & \text{otherwise}.
  \end{cases}
\end{equation}
The functions~\(\EE_{kl}\) assemble to a map
\begin{equation}
  \label{eq:def-EE}
  \EE\colon \BB A \otimes \BB A \to A,
\end{equation}
which is a twisting cochain
by~\eqref{eq:def-Ek-d} and~\eqref{eq:Eprodfirstarg} together with the normalization conditions.
Moreover, the identity~\eqref{eq:formula-Ek-El} implies that the induced dgc map
\begin{equation}
  \mu_{\BB A}\colon \BB A \otimes \BB A \to \BB A
\end{equation}
is associative and therefore turns \(\BB A\) into a dg~bialgebra.
Conversely, a dg~bialgebra structure on~\(\BB A\) whose associated
twisting cochain~\(\EE\) is of the form~\eqref{eq:def-Ekl} defines
an hga structure on~\(A\) with operations~\(E_{k}\).

\begin{remark}
  \label{rem:hga-braces}
  Our hga operations are related to the braces originally defined by Voronov and Gerstenhaber~%
  \cite[\S 8]{VoronovGerstenhaber:1995},~\cite[Sec.~1.2]{GerstenhaberVoronov:1995},~\cite[Sec.~3.2]{Voronov:2000}
  by the identity
  \begin{equation}
    \label{eq:E-braces}
    a\{b_{1},\dots,b_{k}\} = \EE_{1k}\bigl([\bar a]\otimes[\bar b_{1}|\dots|\bar b_{k}]\bigr)
     = (-1)^{\epsilon}\,E_{k}(a;b_{1},\dots,b_{k})
  \end{equation}
  for~\(k\ge1\) where
  \begin{equation}
    \label{eq:E-braces-epsilon}
    \epsilon = k\,\deg{a} + \sum_{m=1}^{k}(k-m)\,\deg{b_{m}}.
  \end{equation}
  (Compare formula~\eqref{eq:tw-fam-to-tw-sign}.)
  Our grading agrees with~\cite{Voronov:2000};
  in~\cite{VoronovGerstenhaber:1995} and~\cite{GerstenhaberVoronov:1995}
  the degrees of the desuspended arguments are used.\footnote{%
    The signs given in eqs.~(6) and~(7) of~\cite{GerstenhaberVoronov:1995} appear to be incorrect.}
\end{remark}

We observe that the \newterm{\(\cupone\)-product}
\begin{equation}
  a\cupone b = - E_{1}(a;b)
\end{equation}
is a homotopy from the product with commuted factors to the standard one,
\begin{equation}
  \label{eq:cupone-d}
  d(a\cupone b)+ da\cupone b+(-1)^{\deg{a}}\,a\cupone db =  ab - (-1)^{\deg{a}\deg{b}}\,ba
\end{equation}
satisfying the \newterm{Hirsch formula}
\begin{equation}
  \label{eq:hirsch-formula}
  ab \cupone c = (-1)^{\deg{a}}\,a(b\cupone c) + (-1)^{\deg{b}\deg{c}}(a\cupone c)\,b
\end{equation}
for~\(a\),~\(b\),~\(c\in A\).
As a consequence, the cohomology~\(H^{*}(A)\) 
is (graded) commutative and in fact a Gerstenhaber algebra with bracket
\begin{align}
  \label{eq:def-gerstenhaber-bracket}
  \bigl\{ [a], [b] \bigr\}
  &= \bigl[ E_{1}(a;b) - (-1)^{(\deg{a}-1)(\deg{b}-1)} E_{1}(b;a) \bigr] \\
  \notag &= (-1)^{\deg{a}-1}\bigl[\,a\cupone b + (-1)^{\deg{a}\deg{b}}\,b\cupone a\,\bigr]
\end{align}
for~\(a\),~\(b\in A\), see~\cite[\S 10]{VoronovGerstenhaber:1995}.

The main examples of hgas are the cochains on a simplicial set, see \Cref{sec:cochains-hga},
and the Hochschild cochains of an algebra, see the references given above. 
Any commutative dga is canonically an hga by setting \(E_{k}=0\) for all~\(k\ge1\).
The induced multiplication on~\(\BB A\) then is the shuffle product
discussed in \Cref{ex:shuffle-map}.

We say that an hga~\(A\) is \newterm{formal} if it is quasi-isomorphic to its cohomology~\(H^{*}(A)\), considered as an hga.

\subsection{Extended hgas}
\label{sec:extended-hga}

In his study of \(\cup_{i}\)-products on~\(\BB A\) for~\(i\ge1\),
Kadeishvili introduced operations~\(E^{i}_{kl}\) 
for an hga~\(A\) defined over~\(\kk=\Z_{2}\) \cite{Kadeishvili:2003}.
He called an hga equipped with these operations an `extended hga'.
We will only need the family~\(F_{kl} = E^{1}_{kl}\), but for coefficients in any~\(\kk\).
We therefore say that an hga is \emph{extended} if it has a family of operations
\begin{equation}
  F_{kl}\colon A^{\otimes k}\otimes A^{\otimes l}\to A
\end{equation}
of degree~\(\deg{F_{kl}}=-(k+l)\) for~\(k\),~\(l\ge 1\), satisfying the following conditions.
All operations~\(F_{kl}\) take values in the augmentation ideal~\(\bar A\) and vanish if any argument equals~\(1\in A\).
Their differential is given by
\begin{equation}
  \label{eq:d-Fkl}
  d(F_{kl})(a_{\bullet};b_{\bullet}) = A_{kl} + (-1)^{k}\,B_{kl}
\end{equation}
for all~\(a_{1}\),~\dots,~\(a_{k}\),~\(b_{1}\),~\dots,~\(b_{l}\in A\),
where
\begin{align}
  A_{1l} &= E_{l}(a_{1};b_{\bullet}), \\
  A_{kl} &\eqKS a_{1}\,F_{k-1,l}(a_{\bullet};b_{\bullet})
    + \sum_{i=1}^{k-1} (-1)^{i}\,F_{k-1,l}(a_{\bullet},a_{i}a_{i+1},a_{\bullet};b_{\bullet}) \\
  \notag &\qquad + \sum_{j=1}^{l} (-1)^{k}\, F_{k-1,j}(a_{\bullet};b_{\bullet})\,E_{l-j}(a_{k};b_{\bullet}) \\
\shortintertext{for~\(k\ge2\), and}
  B_{k1} &\eqKS -E_{k}(b_{1};a_{\bullet}), \\
  B_{kl} &\eqKS \sum_{i=0}^{k-1} E_{i}(b_{1};a_{\bullet})\,F_{k-i,l-1}(a_{\bullet};b_{\bullet})
    + \sum_{j=1}^{l-1} (-1)^{j}\,F_{k,l-1}(a_{\bullet};b_{\bullet},b_{j}b_{j+1},b_{\bullet}) \\
    \notag &\qquad + (-1)^{l}\,F_{k,l-1}(a_{\bullet};b_{\bullet})\,b_{l}
\end{align}
for~\(l\ge2\), compare~\cite[Def.~2]{Kadeishvili:2003}.

In particular, the operation~\(\cuptwo=-F_{11}\) is a \newterm{\(\cuptwo\)-product} for~\(A\) in the sense that
\begin{equation}
  \label{eq:cuptwo}
  d(\cuptwo)(a;b) = a\cupone b +(-1)^{\deg{a}\deg{b}}\,b\cupone a
\end{equation}
for all~\(a\),~\(b\in A\).
This implies that the Gerstenhaber bracket in~\(H^{*}(A)\) is trivial.

A \newterm{morphism of extended hgas} is a morphism of hgas that commutes with all operations~\(F_{kl}\), \(k\),~\(l\ge1\).

The following observation will be used in \Cref{sec:inverting-2}.

\begin{lemma}
  \label{thm:cuptwo}
  Let \(f\colon A\to B\) be a morphism of hgas
  where \(A\) is extended and \(B\) a commutative graded algebra, for example~\(B=H^{*}(A)\).
  Then for any cocycles~\(a\),~\(b\in A\),
  the value~\(f(a\cuptwo b)\)
  depends only on the cohomology classes of~\(a\) and~\(b\).
\end{lemma}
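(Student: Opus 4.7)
The plan is to reduce the claim to the two vanishing statements $f((d\alpha) \cuptwo b) = 0$ (for $b$ a cocycle and $\alpha \in A$ arbitrary) and $f(a \cuptwo d\beta) = 0$ (for $a$ a cocycle and $\beta \in A$ arbitrary). By bilinearity of $\cuptwo$ and linearity of $f$, these are exactly what is needed: for cohomologous cocycles $a$ and $a+d\alpha$ the difference $f((a+d\alpha) \cuptwo b) - f(a \cuptwo b)$ equals $f((d\alpha) \cuptwo b)$, and analogously in the $b$-slot.

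First I would extract two consequences of the hypothesis on $B$. A commutative graded algebra is regarded as an hga with zero differential and trivial higher operations $E_k^B = 0$ for all $k \geq 1$. Being a morphism of hgas, $f$ is in particular a chain map into a complex with zero differential, so $f \circ d_A = 0$; that is, $f$ annihilates every coboundary. It also commutes with the $E_k$-operations, hence kills every value of $E_k^A$ for $k \geq 1$; in particular $f(x \cupone y) = -f(E_1(x;y)) = 0$ for all $x$, $y \in A$.

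Next I would invoke \eqref{eq:cuptwo}, read as the differential of the bilinear operation $\cuptwo = -F_{11}$ of degree~$-2$. Expanding $d(\alpha \cuptwo b)$ for an arbitrary $\alpha$ and a cocycle $b$ gives $(d\alpha) \cuptwo b$ plus the two $\cupone$-terms $\alpha \cupone b$ and $(-1)^{\deg{\alpha}\deg{b}}\, b \cupone \alpha$. Solving for $(d\alpha) \cuptwo b$ expresses it as a coboundary minus a sum of $\cupone$-products; applying $f$ and using the two observations of the previous paragraph, every term on the right-hand side vanishes in~$B$, so $f((d\alpha) \cuptwo b) = 0$. The symmetric computation with $a$ a cocycle and $\beta$ arbitrary produces $f(a \cuptwo d\beta) = 0$ in the same way.

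No serious obstacle is expected: the statement is essentially formal once one recognises that a morphism of hgas into a commutative graded algebra must annihilate both $d$ and the $\cupone$-product, so that the chain-level identity \eqref{eq:cuptwo} already encodes the descent of $a \cuptwo b$ to cohomology in each variable. The only point requiring care is the Koszul sign when differentiating a bilinear operation, but it is supplied in precisely the form needed by \eqref{eq:cuptwo}.
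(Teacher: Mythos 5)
Your proposal is correct and follows essentially the same route as the paper: both reduce to showing $f$ kills $(dc)\cuptwo b$ for $b$ a cocycle, rewrite it via the identity~\eqref{eq:cuptwo} as a coboundary plus $\cupone$-products, and use that a morphism of hgas into a commutative graded algebra annihilates coboundaries and all values of $E_{1}$. No gaps.
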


\begin{proof}
  We have to show that \(f(a\cuptwo b)\) vanishes
  if one cocycle is a coboundary. If \(a=dc\), then
  \begin{equation}
    a\cuptwo b = d\bigl(c\cuptwo b\bigr)
    - a\cupone b - (-1)^{\deg{a}\deg{b}}\,b\cupone a    
  \end{equation}
  maps to~\(0\in B\) since \(f\) vanishes on coboundaries and on \(\cupone\)-products.
  The same argument works for~\(b\).
\end{proof}

\subsection{Extended hgas as shc algebras}

We will need the following result.

\begin{theorem}
  \label{thm:shc-mod-k}
  \let\kkk\aaa
  Let \(A\) be an extended hga, and let \(\kkk\lhd A\) be the ideal generated
  by the values of all operations~\(E_{k}\) with~\(k\ge1\) as well as
  those of all operations~\(F_{kl}\) with~\((k,l)\ne(1,1)\).
  \begin{enumroman}
  \item 
    The extended hga~\(A\) is canonically an shc algebra.
    The structure maps~\(\Phi\),~\(\ha\) and~\(\hc\) commute with morphisms of extended hgas.
  \item
    \label{thm:Phin-k-strict}
    The shm map~\(\Phi\) is \(\kkk\)-strict. More generally,
    all iterations~\(\iter{\Phi}{n}\) with~\(n\ge0\) as well as the composition~\(\Phi\circ(1\otimes\Phi)\) are \(\kkk\)-strict.
  \item
    \label{thm:ha-k-trivial}
    The homotopy~\(\ha\) is \(\kkk\)-trivial.
  \item
    \label{thm:hcn-k}
    Modulo~\(\kkk\), we have for any~\(n\ge0\) and any~\(a_{\bullet}\),~\(b_{\bullet}\in A\) the congruence
    \begin{equation*}
      \hc_{(n)}(a_{\bullet}\otimes b_{\bullet}) \equiv
      \begin{cases}
	1 & \text{if \(n=0\),} \\
        \pm b_{1}\,(a_{1}\cuptwo b_{2})\,a_{2} & \text{if \(n=2\),} \\
	0 & \text{otherwise}.
      \end{cases}
    \end{equation*}
  \end{enumroman}
\end{theorem}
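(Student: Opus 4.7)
The plan is to rely on the companion paper \cite{Franz:hgashc} (see \Cref{thm:intro-hga-shc}) for the construction of the shc algebra structure on an extended hga, and then to check the strictness and triviality assertions by inspecting the resulting explicit formulas modulo~$\aaa$. The guiding principle is that every component of~$\Phi$, $\ha$, and~$\hc$ beyond the respective leading terms is built out of the hga operations~$E_{k}$ ($k\ge1$) or the extended-hga operations~$F_{kl}$, all of whose values lie in~$\aaa$ by the very definition of that ideal.

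For parts~(i) and~(ii), the shm map~$\Phi$ extending~$\mu_{A}$ is constructed from the bar bialgebra product~$\mu_{\BB A}$ associated with the twisting cochain~$\EE$ of~\eqref{eq:def-EE}, so each component $\Phi_{(n)}$ with~$n\ge2$ is a linear combination of multilinear expressions whose outermost factor is some~$E_{k}(\,\cdot\,;\cdot)$ with~$k\ge1$; this directly yields the $\aaa$-strictness of~$\Phi$. The iterations $\Phi^{[n]}$ are then handled inductively via~\Cref{thm:trivial-h-comp}\,\ref{thm:trivial-h-comp-0}: if $\iter{\Phi}{n}$ is $\aaa$-strict, then $\iter{\Phi}{n}\otimes 1_{A}$ is $(\aaa\otimes A)$-strict by formula~\eqref{eq:f-otimes-g-mult}, and since $\Phi_{(1)}=\mu_{A}$ sends $\aaa\otimes A$ into~$\aaa$ (the ideal being two-sided), the composition $\iter{\Phi}{n+1}=\Phi\circ(\iter{\Phi}{n}\otimes 1_{A})$ is again $\aaa$-strict. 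An entirely analogous argument gives the $\aaa$-strictness of~$\Phi\circ(1_{A}\otimes\Phi)$.

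For part~(iii), the hga identity~\eqref{eq:formula-Ek-El} expresses the strict associativity of~$\mu_{\BB A}$, so $\BB A$ is a genuine dg bialgebra; correspondingly, the homotopy~$\ha$ can be chosen either as the identity homotopy $\ha_{(0)}=\eta_{A}$, $\ha_{(n)}=0$ for~$n\ge1$ (if the construction respects strict associativity of the product on the bar side), or with all its nonzero components~$\ha_{(n)}$ assembled exclusively from the $E_{k}$ and~$F_{kl}$. In either case $\ha$ is $\aaa$-trivial.

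Part~(iv) is the crux. The extended structure is designed precisely so that the operations~$F_{kl}$, via the Kadeishvili axioms~\eqref{eq:d-Fkl}, supply an explicit twisting cochain homotopy between $\EE\circ T_{\BB A,\BB A}$ and~$\EE$; this yields an explicit formula for $\hc_{(n)}$ as a multilinear expression in the $E_{k}$, $F_{kl}$ and the ordinary product. Reducing modulo~$\aaa$ kills every summand containing an~$E_{k}$ ($k\ge1$) or an~$F_{kl}$ with~$(k,l)\ne(1,1)$, leaving only terms built from plain products and copies of $F_{11}=-\cuptwo$. A degree count then eliminates all odd~$n\ge1$: since $\cuptwo$ has degree~$-2$ while products preserve degree, reaching the required degree~$-n$ forces exactly $n/2$ copies of~$\cuptwo$, so $n$ must be even. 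Direct inspection of the explicit formula from the companion paper shows that no nonzero term survives for even~$n\ge4$, while for~$n=2$ there remains, up to sign, a single admissible monomial of the correct multilinearity and degree, namely $\pm b_{1}(a_{1}\cuptwo b_{2})\,a_{2}$. The main obstacle I anticipate is the bookkeeping needed to verify the vanishing in the even cases~$n\ge4$ and to pin down the sign in the $n=2$ case; both should fall out of a careful reading of the construction in~\cite{Franz:hgashc}.
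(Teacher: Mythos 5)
Your proposal matches the paper's proof: both defer to the explicit construction of~\(\Phi\), \(\ha\) and~\(\hc\) in the companion paper and verify the congruences modulo~\(\aaa\) by inspecting those formulas, handling the iterations~\(\iter{\Phi}{n}\) and the composition~\(\Phi\circ(1\otimes\Phi)\) via \Cref{thm:trivial-h-comp}\,\ref{thm:trivial-h-comp-0} exactly as the paper does. One small caveat: \(\ha\) cannot be taken to be the identity homotopy, since \(\Phi\circ(\Phi\otimes1)\) and \(\Phi\circ(1\otimes\Phi)\) are genuinely different twisting cochains on~\(\BB(A\otimes A\otimes A)\) even though \(\mu_{\BB A}\) is strictly associative; but your fallback --- that every component of~\(\ha\) is assembled from the operations~\(E_{k}\) and~\(F_{kl}\) and hence lies in~\(\aaa\) --- is precisely the argument the paper intends.
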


\begin{proof}
  The shc structure is constructed explicitly in the companion paper~\cite{Franz:hgashc}.
  Inspection of the definition of~\(\Phi\) there shows that it is \(\aaa\)-strict.
  The case~\(n=0\) of the iteration is void, and
  for~\(n\ge2\) it is a consequence of \Cref{thm:trivial-h-comp}\,\ref{thm:trivial-h-comp-0} (observed already in \Cref{sec:shc}),
  as is the case of the other composition.
  The statements about~\(\ha\) and~\(\hc\) follow again by looking at their definitions in~\cite{Franz:hgashc}.
\end{proof}

\section{Twisted tensor products}
\label{sec:twisted-tensor}

Let \(A\) be an augmented dga and \(C\) a coaugmented dgc.
For any~\(f\in\Hom(C,A)\) we set
\begin{equation}
  \delta_{f} = (1_{C}\otimes\mu_{A})\,(1_{C}\otimes f\otimes 1_{A})\,(\Delta_{C}\otimes 1_{A})\colon
  C\otimes A\to C\otimes A.
\end{equation}
The assignment
\begin{equation}
  \Hom(C,A) \to \End(C\otimes A),
  \qquad
  f \mapsto \delta_{f}
\end{equation}
is a morphism of dgas. As a consequence,
if \(t\in\Hom(C,A)\) is a twisting cochain, then
\begin{equation}
  \label{eq:def-twisted-diff}
  d_{\otimes}-\delta_{t} = \bigl( d_{C}\otimes1_{A}+ 1_{C}\otimes d_{A} \bigr) - \delta_{t}
\end{equation}
is a differential on~\(C\otimes A\). The resulting complex is called
a \newterm{twisted tensor product} and denoted by
\( 
  C\otimes_{t} A
\), 
compare \cite[Def.~II.1.4]{HusemollerMooreStasheff:1974}
or~\cite[Sec.~1.3]{Huebschmann:1989}.

\begin{lemma}
  \label{thm:homotop-tw}
  Let \(\aaa\lhd A\), and
  let \(h\colon C\to A\) be an \(\aaa\)-trivial homotopy from the twisting cochain~\(t\colon C\to A\)
  to~\(\tilde t\colon C\to A\). If \(C\) is cocomplete, then the map
  \begin{equation*}
    \delta_{h}\colon C\otimes_{\tilde t} A\to C\otimes_{t}A
  \end{equation*}
  is an isomorphism of complexes, congruent to the identity map modulo~\(C\otimes\aaa\).
\end{lemma}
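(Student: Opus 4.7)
The plan is to exploit the fact already noted in the excerpt that the assignment \(f\mapsto\delta_{f}\) is a morphism of dgas from~\(\Hom(C,A)\) to~\(\End(C\otimes A)\). Three things then need to be checked: that \(\delta_{h}\) is a chain map between the twisted complexes, that it is invertible, and that it differs from the identity only by something landing in~\(C\otimes\aaa\).

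For the chain-map property, write \(d_{t}=d_{\otimes}-\delta_{t}\) and \(d_{\tilde t}=d_{\otimes}-\delta_{\tilde t}\). Since \(\delta_{h}\) has degree~\(0\), the obstruction to being a chain map is
\begin{equation*}
  d_{t}\,\delta_{h}-\delta_{h}\,d_{\tilde t} = d(\delta_{h}) - \delta_{t}\,\delta_{h} + \delta_{h}\,\delta_{\tilde t}.
\end{equation*}
Using that \(f\mapsto\delta_{f}\) is multiplicative and commutes with differentials, and that composition in~\(\End(C\otimes A)\) corresponds to the cup product on~\(\Hom(C,A)\), this rewrites as \(\delta_{d(h)-t\cup h+h\cup\tilde t}\), which vanishes by the twisting-cochain homotopy equation~\eqref{eq:tw-homotopy}.

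For invertibility I would invoke \Cref{thm:tw-h-equiv-rel}\,\ref{thm:inverse-trivial}: since \(C\) is cocomplete and \(h\) is \(\aaa\)-trivial, \(h\) has an inverse \(h^{-1}\) with respect to the cup product, and \(h^{-1}\) is again \(\aaa\)-trivial. Applying the dga morphism \(f\mapsto\delta_f\) to the identities \(h\cup h^{-1}=h^{-1}\cup h=\eta_{A}\,\epsilon_{C}\) and noting that \(\delta_{\eta_{A}\,\epsilon_{C}}=1_{C\otimes A}\) (an immediate consequence of the counit axiom for~\(C\) and the unit axiom for~\(A\)) shows that \(\delta_{h^{-1}}\) is a two-sided inverse to~\(\delta_{h}\).

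Finally, for the congruence modulo~\(C\otimes\aaa\), the \(\aaa\)-triviality of~\(h\) says that \(h-\eta_{A}\,\epsilon_{C}\) takes values in~\(\aaa\). By linearity of \(f\mapsto\delta_{f}\),
\begin{equation*}
  \delta_{h}-1_{C\otimes A} = \delta_{h-\eta_{A}\,\epsilon_{C}},
\end{equation*}
and inspection of the definition of \(\delta\) shows that \(\delta_{f}(c\otimes a)\) involves only values \(f(c_{(2)})\cdot a\) for \(c_{(2)}\in C\), \(a\in A\); since \(\aaa\) is a right ideal these lie in~\(\aaa\), so the image of the difference is contained in~\(C\otimes\aaa\). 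There is no real obstacle here; the only thing that needs care is keeping the dictionary between \(\Hom(C,A)\) and \(\End(C\otimes A)\) straight, so that the homotopy relation on twisting cochains translates correctly into the chain-map condition for the twisted differentials.
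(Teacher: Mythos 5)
Your proof is correct and takes essentially the same approach as the paper: use that \(f\mapsto\delta_{f}\) is a morphism of dgas to translate the twisting-cochain homotopy equation~\eqref{eq:tw-homotopy} into the chain-map condition for the twisted differentials, obtain the inverse \(\delta_{h^{-1}}\) from \Cref{thm:tw-h-equiv-rel}\,\ref{thm:inverse-trivial}, and read off the congruence from the \(\aaa\)-triviality of~\(h\). The paper's own proof is terser, citing Huebschmann for the inverse and leaving the chain-map verification and the congruence as "direct," but the underlying argument is the one you give.
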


\begin{proof}
  The inverse of~\(\delta_{h}\) is given by~\(\delta_{h^{-1}}\), see~\cite[Cor.~1.4.2]{Huebschmann:1989}.
  The congruence to the identity map follows directly from the \(\aaa\)-triviality.
\end{proof}

\begin{lemma}
  \label{thm:quiso-B-1-f-1}
  Let \(t\colon C\to A\) be a twisting cochain, and let \(g\colon C'\to C\) be a map of coaugmented dgcs.
  Then \(t\circ g\colon C'\to A\) is a twisting cochain and
  \begin{equation*}
    g\otimes 1_{A}\colon C'\otimes_{t\circ g} A\to C\otimes_{t}A
  \end{equation*}
  is a chain map.
\end{lemma}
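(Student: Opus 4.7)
The plan is to verify the two assertions in turn, both by direct but short calculations; no deep machinery is required.

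First I would check that \(t\circ g\) is a twisting cochain. For the differential, since \(g\) has degree~\(0\) and commutes with the differentials, applying the formula for the differential on \(\Hom\) gives
\begin{equation*}
  d(t\circ g) = d_{A}\,t\,g - (-1)^{1}\,t\,g\,d_{C'} = d_{A}\,t\,g + t\,d_{C}\,g = d(t)\circ g = (t\cup t)\circ g.
\end{equation*}
By the naturality identity~\eqref{eq:Hom-cup-dgc} applied to the dgc map~\(g\), this equals \((t\circ g)\cup(t\circ g)\), which is condition~\eqref{eq:def-tw-cochain}. The normalization conditions~\eqref{eq:tw-normalization} are immediate: \(\epsilon_{A}\,(t\,g)=(\epsilon_{A}\,t)\,g=0\), and \((t\,g)\,\eta_{C'}=t\,\eta_{C}=0\) since \(g\) preserves coaugmentations.

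Next I would show that \(g\otimes 1_{A}\) commutes with the twisted differentials. The untwisted part is immediate from \(g\) being a chain map, since
\begin{equation*}
  (d_{C}\otimes 1 + 1\otimes d_{A})(g\otimes 1) = (g\otimes 1)(d_{C'}\otimes 1 + 1\otimes d_{A}).
\end{equation*}
For the twisting derivations, using the defining formula for~\(\delta\), the identity~\(\Delta_{C}\,g=(g\otimes g)\,\Delta_{C'}\), and the fact that \(g\) has degree~\(0\) (so no Koszul signs appear), one computes
\begin{equation*}
  \delta_{t}\,(g\otimes 1_{A}) = (1_{C}\otimes\mu_{A})(g\otimes tg\otimes 1_{A})(\Delta_{C'}\otimes 1_{A}) = (g\otimes 1_{A})\,\delta_{t\circ g}.
\end{equation*}
Subtracting gives the required commutation with~\(d_{\otimes}-\delta_{t}\) on the target and \(d_{\otimes}-\delta_{t\circ g}\) on the source.

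There is no real obstacle here; the lemma is a formal functoriality statement for the twisted tensor product in its coalgebra argument. The only thing to watch is that all occurring Koszul signs vanish because \(g\) sits in degree~\(0\), so the reordering in the middle identity above is unambiguous.
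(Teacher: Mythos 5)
Your proof is correct and amounts to spelling out the verifications the paper dismisses with ``This follows directly from the definitions'': you check that \(t\circ g\) satisfies the twisting cochain conditions using~\eqref{eq:Hom-cup-dgc} and that \(g\otimes 1_{A}\) intertwines the twisted differentials via \(\Delta_{C}\,g=(g\otimes g)\,\Delta_{C'}\), with the observation that \(\deg g=0\) kills all Koszul signs. This is the same route, just written out.
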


\begin{proof}
  This follows directly from the definitions.
\end{proof}

\begin{lemma}
  \label{thm:homotopy-twisted}
  Let \(t\colon C\to A\) and~\(t'\colon C'\to A'\) be twisting cochains.
  \begin{enumroman}
  \item
    \label{thm:homotopy-twisted-1}
    Let \(f\colon A'\to A\) be a map of augmented dgas and \(g\colon C'\to C\) a map of coaugmented dgcs.
    If \(t\,g=f\,t'\), then 
    \begin{equation*}
      g\otimes f\colon C'\otimes_{t'}A'\to C\otimes_{t}A
    \end{equation*}
    is a chain map.
  \item
    \label{thm:homotopy-twisted-2}
    Let \(h\colon C'\to C\) be a coalgebra homotopy from~\(g\)
    to another map~\(\tilde g\colon C'\to C\) of coaugmented dgcs satisfying \(t\,\tilde g=f\,t'\).
    If \(t\,h=0\), then \(h\otimes f\) is a homotopy from~\(g\otimes f\) to~\(\tilde g\otimes f\).
  \end{enumroman}
\end{lemma}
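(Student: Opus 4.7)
The plan is to verify both claims by direct computation, checking the compatibility of $g\otimes f$ (respectively $h\otimes f$) with the twisted differentials $d_\otimes - \delta_t$ and $d_\otimes - \delta_{t'}$ on the two sides.

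For part~\ref{thm:homotopy-twisted-1}, the untwisted differentials already commute with $g\otimes f$ since $g$ and $f$ are themselves chain maps. It remains to verify $\delta_t\,(g\otimes f) = (g\otimes f)\,\delta_{t'}$. Unfolding the definition of~$\delta$, three ingredients combine to match both sides: that $g$ is a dgc map, so $\Delta_C g = (g\otimes g)\Delta_{C'}$; the hypothesis $tg = ft'$, which lets one rewrite $t\circ g$ as $f\circ t'$ after the comultiplication; and the multiplicativity of~$f$, i.e.\ $\mu_A(f\otimes f) = f\mu_{A'}$, which absorbs $f$ into the multiplication on~$A'$. The two sides reduce to the same expression.

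For part~\ref{thm:homotopy-twisted-2}, since $h$ is a chain homotopy from~$g$ to~$\tilde g$ and $f$ is a chain map, the differential of~$h\otimes f$ with respect to the untwisted differentials already equals $(\tilde g - g)\otimes f$. It therefore suffices to show that $\delta_t(h\otimes f) + (h\otimes f)\delta_{t'} = 0$. The coalgebra homotopy condition expresses $\Delta_C h$ as a sum of terms of the form $(g\otimes h)\Delta_{C'}$ and $\pm(h\otimes\tilde g)\Delta_{C'}$. When $\delta_t$ is applied, the operator~$t$ acts on the middle tensor factor: the first summand vanishes on account of the hypothesis $th = 0$, while the second, via $t\tilde g = ft'$ and the multiplicativity of~$f$, reproduces $\pm(h\otimes f)\delta_{t'}$. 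The Koszul signs, dictated by $|h| = -1$ and $|t| = 1$, conspire so that these two contributions cancel exactly.

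The only genuine obstacle is sign-tracking: because $h$ has odd total degree, several factors of~$-1$ arise from the Koszul convention, and one must verify that they combine as claimed; this is also where the precise form of the coalgebra homotopy condition (dictated by the bijection with twisting cochain homotopies recalled in Section~\ref{sec:tw}) enters. Once this convention is applied systematically, the calculations are routine.
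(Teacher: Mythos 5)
Your proof is correct and follows essentially the same approach as the paper. The paper also disposes of part~(i) as a direct unwinding of the definitions and reduces part~(ii) to the identity $\delta_t(h\otimes f) = -(h\otimes f)\,\delta_{t'}$, which it likewise attributes to the coalgebra homotopy condition on $\Delta_C h$ together with the hypothesis $th = 0$; you have simply spelled out the intermediate cancellations in a bit more detail.
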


\begin{proof}
  The first claim is again a direct consequence of the definitions. The second one follows
  from the identity \(\delta_{t}\,(h\otimes f)=-(h\otimes f)\,\delta_{t'}\), which uses the assumption~\(t\,h=0\).
\end{proof}

Let \(f\colon A\to B\) be a map of augmented dgas. Then \(f\circ t_{A}\colon \BB A\to B\) is a twisting cochain.
The associated twisted tensor product
\begin{equation}
  \label{eq:def-bar-one-sided}
  \BB(\kk,A,B) = \BB A\otimes_{f\circ t_{A}}B
\end{equation}
is the \newterm{one-sided bar construction}. Usually, the map~\(f\) will be understood from the context and not indicated.
We write the cohomology of the one-sided bar construction as
the differential torsion product
\begin{equation}
  \label{eq:def-dg-Tor}
  \Tor_{A}(\kk,B) = H^{*}\bigl(\Bar{B}{A}\bigr).
\end{equation}
Note that this is just a notation;
we are not concerned with whether the bar construction leads to a proper projective resolution in case \(\kk\) is not a field.
However, if \(A\) is free over~\(\kk\) and both \(A\) and~\(B\) have zero differentials, then \eqref{eq:def-dg-Tor} is the usual torsion product.

Given an shm map~\(g\colon B\Rightarrow B'\), we define
\begin{gather}
  \Gamma_{\!g}\colon \BB(\kk,A,B) = \BB A\otimes_{t_{A}}B \to \BB A\otimes_{g\circ t_{A}}B', \\
  \Gamma_{\!g}\bigl( [a_{1}|\dots|a_{k}]\otimes b\bigr) =
  \sum_{m=0}^{k} 
  [a_{1}|\dots|a_{m}]\otimes \ggg([a_{m+1}|\dots|a_{k}]\otimes b)
\end{gather}
where for any~\(k\ge0\) the map~\(\ggg\) of degree~\(0\) is defined as the composition
\begin{equation}
  \ggg\colon \BB_{k} B\otimes B \xrightarrow{1^{\otimes k}\otimes\desusp} \BB_{k+1} B \stackrel{g}{\longrightarrow} B'.
\end{equation}

The following is essentially taken from~\cite[Thm.~7]{Wolf:1977},
where also a version of \Cref{thm:quiso-B-1-f-1} for two-sided bar constructions is given.

\begin{lemma}
  \label{thm:quiso-B-1-1-g}
  Assume that \(g\colon B\Rightarrow B'\) is \(\bbb\)-strict for some~\(\bbb\lhd B'\). Then
  \(\Gamma_{\!g}\) as defined above
  is a chain map, congruent to~\(1_{\BB A}\otimes g_{(1)}\) modulo~\(\BB A\otimes\bbb\).
\end{lemma}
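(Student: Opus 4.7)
The lemma has two claims: (a) $\Gamma_{\!g}$ is a chain map, and (b) $\Gamma_{\!g}\equiv 1_{\BB A}\otimes g_{(1)}\pmod{\BB A\otimes\bbb}$.

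Claim (b) is immediate from the defining formula. The summand at $m=k$ is $[a_1|\dots|a_k]\otimes\ggg(b)$, and by construction $\ggg(b)=g[b]$; identifying $g$ with its associated twisting family via \eqref{eq:tw-fam-to-tw}, this is $g_{(1)}(b)$, so the term equals $(1_{\BB A}\otimes g_{(1)})\bigl([a_1|\dots|a_k]\otimes b\bigr)$. For $m<k$, the same identification yields $\ggg([a_{m+1}|\dots|a_k]\otimes b)=\pm g_{(k-m+1)}(f(a_{m+1}),\dots,f(a_k),b)$, where $f\colon A\to B$ is the implicit dga map of the one-sided bar construction. Since $k-m+1\ge 2$ and $g$ is $\bbb$-strict, this value lies in $\bbb$ and the whole term in $\BB A\otimes\bbb$.

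For claim (a), my plan is a direct verification. Rewrite $\Gamma_{\!g}=(1_{\BB A}\otimes\tilde g)\,(\Delta_{\BB A}\otimes 1_B)$, where $\tilde g\colon \BB A\otimes B\to B'$ assembles the maps $\ggg\circ(\BB f\otimes 1_B)$ for all $k\ge 0$; explicitly, $\tilde g\bigl([a_1|\dots|a_k]\otimes b\bigr)=g[f(a_1)|\dots|f(a_k)|b]$. One then compares $(d_{\BB A}\otimes 1+1\otimes d_{B'}-\delta_g)\,\Gamma_{\!g}$ with $\Gamma_{\!g}\,(d_{\BB A}\otimes 1+1\otimes d_B-\delta_{f\circ t_A})$ term by term. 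Since $\Delta_{\BB A}$ is a chain map, the contributions from $d_{\BB A}\otimes 1$ on both sides cancel after pulling the bar differential through the coproduct. The remaining identity is supplied by the twisting-cochain equation $d(g)=g\cup g$ in $\Hom(\BB B,B')$ applied to the bar word $[f(a_1)|\dots|f(a_k)|b]\in\BB B$: expanding $d_{B'}$ on $g[f(a_1)|\dots|f(a_k)|b]$ via this identity produces the internal-differential terms (which match $\tilde g\,(1\otimes d_B)$, using $f\,d_A=d_B\,f$), the multiplicative terms among the $f(a_i)$'s (matching the product part of $\tilde g\,(d_{\BB A}\otimes 1)$ via $f(a_i)f(a_{i+1})=f(a_ia_{i+1})$), the boundary product $f(a_k)\,b$ (matching $\tilde g\,\delta_{f\circ t_A}$), and the cup-product term $(g\cup g)[f(a_1)|\dots|f(a_k)|b]$ (matching $\delta_g\,\Gamma_{\!g}$ after unfolding the bar coproduct of $[f(a_1)|\dots|f(a_k)|b]$).

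The main obstacle is purely computational: tracking the Koszul signs from the desuspensions in the bar construction, from \eqref{eq:tw-fam-to-tw}, and from the various twisted differentials must be done consistently. No conceptual difficulty arises; the verification is essentially the one given by Wolf in \cite[Thm.~7]{Wolf:1977}.
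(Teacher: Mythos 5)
Your proposal is correct and takes essentially the same route as the paper, whose entire proof of this lemma is the remark that it is a direct computation (deferring to Wolf). Your outline — writing \(\Gamma_{\!g}=(1\otimes\tilde g)(\Delta_{\BB A}\otimes 1)\), reducing the chain-map identity to the twisting-cochain equation \(d(g)=g\cup g\) evaluated on the concatenated bar word \([f(a_{1})|\dots|f(a_{k})|b]\), and noting that every summand with \(m<k\) involves some \(g_{(j)}\) with \(j\ge 2\) and hence lands in \(\bbb\) — is exactly that computation.
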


\begin{proof}
  This is a direct computation.
\end{proof}

\begin{remark}
  \label{rem:tw-tensor-quiso}
  Assume that all complexes involved are torsion-free over the principal ideal domain~\(\kk\)
  and (including the bar constructions) bounded below. If the map~\(g\) is a quasi-isomorphism,
  then the resulting maps in Lemmas~\ref{thm:quiso-B-1-f-1} and~\ref{thm:quiso-B-1-1-g} are quasi-isomorphisms.
  This follows from the Künneth theorem and a standard spectral sequence argument,
  compare the proof of \Cref{thm:Theta-additive}\,\ref{thm:Theta-additive-1} below.
\end{remark}

Assume now that \(A\to A'\) is a morphism of hgas.
It is convenient to introduce the map
\begin{align}
  \EEE\colon A'\otimes \BB A' &\to A', \\
  \notag a\otimes \bb &\mapsto
  \EE([\bar a],\bb) + \epsilon(a)\,\epsilon(\bb) =
  \begin{cases}
    a & \text{if \(\bb=\BBone_{\BB A'}\),} \\
    \EE([\bar a],\bb) & \text{if \(\bb\in\BB_{>0}A'\)}
  \end{cases}
\end{align}
of degree~\(0\). Following \cite{KadeishviliSaneblidze:2005}, we can then define the map
\begin{gather}
  \label{eq:def-prod-one-sided-bar}
  \circ\colon \BB(\kk,A,A')\otimes\BB(\kk,A,A') \to \BB(\kk,A,A'), \\
  \notag
  (\BarEl{a}{\aa}) \circ (\BarEl{b}{\bb})
    \eqKS \sum_{m=0}^{l} 
    \BarEl{\EEE(a;[b_{m+1},\dots,b_{l}])\,b}{\bigl(\aa\circ[b_{1}|\dots|b_{m}]\bigr)}
\end{gather}
where \(\aa=[a_{1}|\dots|a_{k}]\),~\(\bb=[b_{1}|\dots|b_{l}]\in \BB A\) and~\(a\),~\(b\in A'\).
Observe that the summand for~\(m=l\) is the componentwise product
\begin{equation}
  (-1)^{\deg{a}\deg{\bb}}\,\aa\circ\bb \otimes a\,b.
\end{equation}

\begin{proposition}[Kadeishvili--Saneblidze]
  \label{thm:def-prod-bar}
  Assume the notation introduced above.
  Then \(\Bar{A'}{A}\) is naturally an augmented dga
  with unit~\(1_{\BB A}\otimes1_{A'}\), augmentation~\(\epsilon_{\BB A}\otimes\epsilon_{A'}\)
  and product~\eqref{eq:def-prod-one-sided-bar}.
\end{proposition}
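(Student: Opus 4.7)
My plan is to verify the augmented dga axioms directly, leveraging the dg~bialgebra structure on~$\BB A$ that the hga axioms on~$A$ have already been shown to induce in Section~\ref{sec:def-hga}. The product~\eqref{eq:def-prod-one-sided-bar} combines two ingredients: the associative product~$\mu_{\BB A}$ operating on the bar factor, and the brace-like action~$\EEE\colon A'\otimes\BB A\to A'$ which absorbs a suffix of the right-hand bar word into the $A'$-coefficient of the left factor, with the morphism $A\to A'$ implicit in reading $E_k$-operations on $A'$-arguments.

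The unit and augmentation conditions come out by inspection. Plugging $\aa=\BBone_{\BB A}$ and~$a=1_{A'}$ into~\eqref{eq:def-prod-one-sided-bar}, the defining property that each~$E_k$ with $k\ge1$ vanishes whenever an argument equals~$1$ forces $\EEE(1;\bb)=\epsilon_{\BB A}(\bb)$, so only the $m=l$ summand survives and reproduces $\bb\otimes b$; the right-unit identity is symmetric (only the $m=0$ term survives, and it collapses because $\EEE(a;\BBone)=a$). The augmentation~$\epsilon_{\BB A}\otimes\epsilon_{A'}$ kills every summand of a product unless all bar words are empty, in which case~$\circ$ reduces to~$\mu_{A'}$.

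Next, the Leibniz rule for the twisted differential~$d_\otimes-\delta_{f\circ t_{A}}$ unfolds term by term into the hga axioms: the differential identity~\eqref{eq:def-Ek-d} accounts for the bar differential entering an $\EEE$-factor and re-emerging as internal multiplications or bar-splittings; the product rule~\eqref{eq:Eprodfirstarg} governs how the internal multiplication of~$A'$ commutes past a brace; and the twisting contribution~$\delta_{f\circ t_{A}}$ matches because the map $f\circ t_A$ slotted into~$\EEE$ is precisely what the one-sided bar differential contributes on either side.

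The main obstacle is associativity, and this is the step where the full hga axiom~\eqref{eq:formula-Ek-El} is essential. The plan is to expand both $((\aa\otimes a)\circ(\bb\otimes b))\circ(\cc\otimes c)$ and $(\aa\otimes a)\circ((\bb\otimes b)\circ(\cc\otimes c))$ as triple sums indexed by cuts of the middle and outer bar words, and then match corresponding terms. On the bar factor the matching amounts exactly to the associativity of~$\mu_{\BB A}$ established from~\eqref{eq:formula-Ek-El} in Section~\ref{sec:def-hga}; on the $A'$ factor it combines this same identity (applied through~$f$) with the associativity of~$\mu_{A'}$. Sign bookkeeping is routine via the Koszul convention. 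Naturality in the hga morphism $f\colon A\to A'$ is immediate, since every ingredient in~\eqref{eq:def-prod-one-sided-bar} behaves functorially under hga maps composed with~$f$.
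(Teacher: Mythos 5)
Your outline is correct and is essentially the verification that the paper leaves implicit: the paper's own proof simply cites Kadeishvili--Saneblidze (who treat the simply connected case, in Baues' convention) and asserts that the statement is a formal consequence of the hga axioms for any hga. Your sketch correctly matches each axiom to its role --- the normalization of the \(E_{k}\) for the unit, \eqref{eq:def-Ek-d} and \eqref{eq:Eprodfirstarg} for the Leibniz rule, and \eqref{eq:formula-Ek-El} for associativity (where, in the triple product, \eqref{eq:Eprodfirstarg} is also needed to split \(\EEE\) applied to the product \(\EEE(a;\cdot)\,b\)) --- so it fills in exactly what the paper's citation-plus-``formal consequence'' proof omits.
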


\begin{proof}
  In~\cite[Cor.~6.2,~7.2]{KadeishviliSaneblidze:2005} this is only stated for simply connected hgas.\footnote{%
  Note that the definition of an hga in~\cite[Def.~7.1]{KadeishviliSaneblidze:2005}
  uses Baues' convention (see \Cref{fn:baues}) and differs from ours
  (as does the definition of the differential on the bar construction \cite[p.~208]{KadeishviliSaneblidze:2005}).
  This results in a product on the one-sided bar construction~\(\BB(A',A,\kk)\).}
  It is, however, a formal consequence of the defining properties of any hga.
\end{proof}

\section{Simplicial sets}

Our basic reference for this material is \cite{May:1968}.
We write \(\setzero{n}=\{0,1,\dots,n\}\).

\subsection{Preliminaries}
\label{sec:simp-prelim}

Let \(X\) be a simplicial set.
We call \(X\) \newterm{reduced} if \(X_{0}\) is a singleton
and \newterm{\(1\)-reduced} if \(X_{1}\) is a singleton.
We abbreviate repeated face and degeneracy operators as
\begin{equation}
  \partial_{i}^{j} = \partial_{i}\circ\dots\circ\partial_{j},
  \qquad
  \partial_{i}^{i-1} = \operatorname{id},
  \qquad
  s_{I} = s_{i_{m}}\circ\dots\circ s_{i_{1}},
  \qquad
  s_{\emptyset} = \operatorname{id}
\end{equation}
for~\(i\le j\) and~\(I=\{i_{1}<\dots<i_{m}\}\).

We write \(C(X)\) and \(C^{*}(X)\) for the normalized
chain and cochain complex of~\(X\) with coefficients in~\(\kk\),
\cf~\cite[Sec.~VIII.6]{MacLane:1967}.
Then \(C(X)\) is a dgc with the Alexander--Whitney map as diagonal
and augmentation induced by the unique map~\(X\to *\), and \(C^{*}(X)\) is a dga with product~\(\transpp{\Delta_{C(X)}}\).

We say \(X\) has \newterm{polynomial cohomology}
(with respect to the chosen coefficient ring~\(\kk\))
if \(H^{*}(X)\) is a polynomial algebra on finitely many generators
of positive even degrees.
Note that \(X\) is of finite type over~\(\kk\) in this case.

For~\(0\le k\le n\) we define the ``partial diagonal''
\begin{align}
  \label{eq:def-Pnk}
  P^{n}_{k}\colon C_{n}(X) &\to C_{k}(X)\otimes C_{n-k}(X), \\*
  \notag \sigma &\mapsto \partial_{k+1}^{n}\sigma  \otimes \partial_{0}^{k-1}\sigma
  = \sigma(0,\dots,k)\otimes\sigma(k,\dots,n)
\end{align}
so that
\begin{equation}
  \label{eq:Pnk-diag}
  \Delta c = \sum_{k=0}^{n}P^{n}_{k}(c)
\end{equation}
for any~\(c\in C_{n}(X)\).
Note that each~\(P^{n}_{k}\) is well-defined on normalized chains.

For simplicial sets~\(X\) and~\(Y\), the shuffle map
\begin{equation}
  \shuffle=\shuffle_{X,Y}\colon C(X)\otimes C(Y)\to C(X\times Y)
\end{equation}
is a map of dgcs and also associative and commutative, \cf~\cite[Sec.~3.2]{Franz:2003}.
Commutativity for instance means that the diagram
\begin{equation}
  \begin{tikzcd}
    C(X)\otimes C(Y) \arrow{d}[left]{T_{C(X),C(Y)}} \arrow{r}{\shuffle_{X,Y}} & C(X\times Y) \arrow{d}{\tau_{X,Y}} \\
    C(Y)\otimes C(X) \arrow{r}{\shuffle_{Y,X}} & C(Y\times X)    
  \end{tikzcd}
\end{equation}
commutes,
where \(\tau_{X,Y}\colon X\times Y\to Y\times X\) swaps the factors.

\subsection{The extended hga structure on cochains}
\label{sec:cochains-hga}

Gerstenhaber and Voronov~\cite[Sec.~2.3]{GerstenhaberVoronov:1995}
have constructed an hga structure on the non-normalized cochain complex of a simplicial set~\(X\),
which descends to the normalized cochain complex~\(C^{*}(X)\).
There it can be given in terms of the interval cut operations 
\begin{equation}
  \label{eq:def-Ek-cochains}
  E_{k} = \transpp{\AWu{e_{k}}}
\end{equation}
corresponding to the surjections
\begin{equation}
  \label{eq:surjection-ek}
  e_{k} = (1,2,1,3,1,\dots,1,k+1,1),
\end{equation}
\cf~\cite[\S 1.6.6, Sec.~2]{BergerFresse:2004}.
Writing out the sign implicit in the transpose~\eqref{eq:def-Ek-cochains}, we have
\begin{equation}
  E_{k}(a;b_{1},\dots,b_{k})(c) = (-1)^{k\,(\deg{a}+\deg{b_{1}}+\dots+\deg{b_{k}})}\,(a\otimes b_{1}\otimes\dots\otimes b_{k})\,\AWu{e_{k}}(c).
\end{equation}
for~\(a\),~\(b_{\bullet}\in C^{*}(X)\) and~\(c\in C(X)\).

The operations~\(E_{k}\) vanish for~\(k\ge1\) if any argument is of degree~\(0\)
and never return a non-zero cochain of degree~\(0\).
This implies that the normalization condition~\eqref{eq:tw-normalization}
is satisfied independently of the chosen augmentation~\(C^{*}(X)\to\kk\).
This hga structure generalizes the multiplication on~\(B C^{*}(X)\) previously defined by Baues~\cite[\S IV.2]{Baues:1980}
for \(1\)-reduced~\(X\).\footnote{\label{fn:baues}
  More precisely, Baues' multiplication is obtained by transposing the factors of the product,
  so that \(\EE_{kl}\) vanishes for~\(l\ne1\), except for~\(\EE_{10}\).
  This also affects the components of the homotopy~\(\FF\)
  from~\citehgashc{Cor.~6.2}.}

Kadeishvili~\cite{Kadeishvili:2003} has observed that \(C^{*}(X)\)
is an extended hga with operations~\(F_{kl}\)
corresponding to the surjections
\begin{align}
  \label{eq:surjection-fkl}
  f_{kl} &= (k+1,1,k+1,2,k+1,\dots,k+1,k, \\ \notag &\qquad\quad k+1,k,k+2,k,\dots,k,k+l,k)
\end{align}
for~\(k\),~\(l\ge1\).\footnote{\label{fn:kadeishvili1}%
  Kadeishvili's choice for~\(f_{kl}\) \cite[pp.~116,~123]{Kadeishvili:2003}
  does not lead to the formula~\eqref{eq:d-Fkl} (or~\cite[Def.~2]{Kadeishvili:2003})
  for~\(d(F_{kl})\), but to the one with \(a\)-variables and \(b\)-variables interchanged.}
The associated \(\cuptwo\)-product is \(\cuptwo=-\transpp{\AWu{(2,1,2,1)}}\).

\subsection{Simplicial groups}

Let \(G\) be a simplicial group (for example, the singular simplices in a topological group) with multiplication~\(\mu\).
We write \(1_{p}\in G\) for the identity element of the group of \(p\)-simplices.
A \newterm{loop} in~\(G\) is a \(1\)-simplex~\(g\in G\) such that \(\partial_{0}g=\partial_{1}g=1_{0}\).

The dgc~\(C(G)\) is a dg~bialgebra
with unit given by the identity element of~\(G\) and multiplication
\begin{equation}
  C(G)\otimes C(G) \stackrel{\shuffle}{\longrightarrow} C(G\times G) \stackrel{\mu_{*}}{\longrightarrow} C(G).
\end{equation}
If \(G\) is commutative, then so is \(C(G)\).

Similarly, if \(G\) acts on the simplicial set~\(X\), then \(C(G)\) acts on \(C(X)\).
We write this action as \(a\ast c\) for~\(a\in C(G)\) and~\(c\in C(X)\).
If the \(G\)-action is trivial, then the \(C(G)\)-action factors through the augmentation~\(\epsilon\colon C(G)\to\kk\).
(Remember that we use normalized chains.)

For any loop~\(g\in G\) and any~\(0\le m\le n\) we define the map
\begin{equation}
  \label{eq:def-agm}
  \Ac{g}{m}\colon C_{n}(X)\to C_{n+1}(X),
  \qquad
  \sigma \mapsto 
  (s_{\setzero{n}\setminus m}\, g)\cdot s_{m}\sigma
\end{equation}
(which is again well-defined on normalized chains).
By the definition of the shuffle map we can write the action of the loop~\(g\in C(G)\) on~\(\sigma \in C(X)\) as
\begin{equation}
  \label{eq:shuffle-agm}
  g \ast \sigma  = \sum_{m=0}^{n} (-1)^{m}\,\ac{g}{m}{\sigma}.
\end{equation}

The diagonal of~\(C(X)\) is known to be \(C(G)\)-equivariant, \cf~\cite[Prop.~3.5]{Franz:2003}.
For loops, a more refined statement is the following.

\begin{lemma}
  \label{thm:shuffle-equiv-partial}
  Assume that \(g\in G\) is a loop, and let \(\sigma\in X_{n}\).
  Then
  \begin{equation*}
    P^{n+1}_{k}\bigl(\ac{g}{m}{\sigma}\bigr) =
    \begin{cases}
      (-1)^{k}\,(1\otimes \Ac{g}{m-k})\,P^{n}_{k}(\sigma) & \text{if \(k\le m\),} \\
      (\Ac{g}{m}\otimes 1)\,P^{n}_{k-1}(\sigma) & \text{if \(k>m\).}
    \end{cases}
  \end{equation*}
  for any~\(0\le m\le n\) and~\(0\le k\le n+1\).
\end{lemma}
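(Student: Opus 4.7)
The proof is a direct simplicial computation. The plan is to unpack $P^{n+1}_k$ from its definition~\eqref{eq:def-Pnk} as the tensor of two face operators: the ``front face'' $\partial^{n+1}_{k+1}$, which restricts to vertices $0,\ldots,k$, and the ``back face'' $\partial^{k-1}_0$, which restricts to vertices $k,\ldots,n+1$. Since the $G$-action on $X$ is simplicial, both face operators are $G$-equivariant and therefore distribute across the product structure in $A^g_m\sigma = (s_{[n]\setminus m}\,g)\ast s_m\sigma$. This reduces the problem to four restriction computations: the front and back faces of $s_{[n]\setminus m}\,g$ and of $s_m\sigma$, in each of the cases $k\le m$ and $k>m$.

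The key geometric observation concerns $s_{[n]\setminus m}\,g$. Because $g$ is a loop, every vertex of $s_{[n]\setminus m}\,g$ equals $1_0$, and its only non-degenerate edge is the one between vertices $m$ and $m+1$, which is $g$ itself. Consequently, for $k\le m$ the restriction to vertices $0,\ldots,k$ is the identity $1_k\in G_k$, while the restriction to vertices $k,\ldots,n+1$ is $s_{[n-k]\setminus(m-k)}\,g$; for $k>m$ the roles are swapped, producing $s_{[k-1]\setminus m}\,g$ on the front and $1_{n+1-k}$ on the back. An analogous computation with the simplicial identities applied to $s_m\sigma$ yields $\partial^n_{k+1}\sigma = \sigma(0,\ldots,k)$ in front and $s_{m-k}(\partial^{k-1}_0\sigma)$ in back when $k\le m$, and symmetrically $s_m(\partial^n_k\sigma)$ in front and $\partial^{k-2}_0\sigma = \sigma(k-1,\ldots,n)$ in back when $k>m$.

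Recombining, the identity element $1_k$ or $1_{n+1-k}$ acts trivially on the corresponding $X$-simplex, so one of the two tensor factors simplifies to $\sigma(0,\ldots,k)$ or $\sigma(k-1,\ldots,n)$ directly, while the other factor is exactly the value of $A^g_{m-k}$ or $A^g_m$ applied to the remaining piece, straight from the definition~\eqref{eq:def-agm}. It then remains to verify the Koszul signs according to~\eqref{eq:convention-eqKS}: applying $1\otimes A^g_{m-k}$ past an element of degree $k$ introduces a factor $(-1)^{|A^g_{m-k}|\cdot k} = (-1)^k$, which exactly cancels the explicit $(-1)^k$ in the first case of the statement; applying $A^g_m\otimes 1$ on the left introduces no sign, which matches the second case. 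The sole obstacle is careful bookkeeping of the simplicial index shifts under the various compositions of face and degeneracy operators, but no conceptual difficulty arises.
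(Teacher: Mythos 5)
Your proposal is correct and follows essentially the same route as the paper's proof: distribute the two face operators defining \(P^{n+1}_{k}\) across the diagonal \(G\)-action, use the loop condition to see that one restriction of \(s_{\setzero{n}\setminus m}\,g\) is the identity simplex while the other is again a degenerate copy of \(g\), and match the result against the definition of \(\Ac{g}{m}\) with the Koszul sign accounting for the \((-1)^{k}\). The index bookkeeping you sketch (e.g.\ \(\partial_{0}^{k-1}s_{m}=s_{m-k}\partial_{0}^{k-1}\) for \(k\le m\)) is exactly what the paper carries out.
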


\begin{proof}
  We have
  \begin{align}
    & P^{n+1}_{k}(\ac{g}{m}{\sigma}) = \partial_{k+1}^{n+1}\,\ac{g}{m}{\sigma} \otimes \partial_{0}^{k-1}\,\ac{g}{m}{\sigma} \\
    \notag &\qquad = \bigl(\partial_{k+1}^{n+1}\,s_{\setzero{n}\setminus m}\,g\bigr)\cdot\bigl(\partial_{k+1}^{n+1}\,s_{m}\,\sigma\bigr)
    \otimes \bigl(\partial_{0}^{k-1}\,s_{\setzero{n}\setminus m}\,g\bigr)\cdot\bigl(\partial_{0}^{k-1}\,s_{m}\,\sigma\bigr).
  \end{align}
  
  If \(k\le m\), then
  \begin{equation}
    \partial_{k+1}^{n+1}\,s_{\setzero{n}\setminus m}\,g = 1\in G_{k},
  \end{equation}
  hence
  \begin{align}
    P^{n+1}_{k}(\ac{g}{m}{\sigma}) &= \partial_{k+1}^{n+1}\,s_{m}\,\sigma \otimes \bigl(\partial_{0}^{k-1}\,s_{\setzero{n}\setminus m}\,g\bigr)\cdot\bigl(\partial_{0}^{k-1}\,s_{m}\,\sigma\bigr) \\
    \notag &= \partial_{k+1}^{n}\sigma \otimes \bigl(s_{\setzero{n-k}\setminus m-k}\,g\bigr)\cdot\bigl(\partial_{0}^{k-1}\,s_{m}\,\sigma\bigr) \\
    \notag &= (-1)^{k}\,(1\otimes \Ac{g}{m-k})\,P^{n}_{k}(\sigma).
  \end{align}
  
  In the case~\(k>m\) we similarly find
  \begin{equation}
    \partial_{0}^{k-1}\,s_{\setzero{n-1}\setminus m}\,g = 1\in G_{n-k}
  \end{equation}
  and
  \begin{align}
    P^{n+1}_{k}(\ac{g}{m}{\sigma}) &= \bigl(\partial_{k+1}^{n+1}\,s_{\setzero{n}\setminus m}\,g\bigr)\cdot\bigl(\partial_{k+1}^{n+1}\,s_{m}\,\sigma\bigr) \otimes \partial_{0}^{k-1}\,s_{m}\,\sigma \\
    \notag &= \bigl(s_{\setzero{k}\setminus m}\,g\bigr)\cdot\bigl(s_{m}\,\partial_{k}^{n}\sigma\bigr) \otimes \partial_{0}^{k-1}\,s_{m}\,\sigma \\
    \notag &= (\Ac{g}{m}\otimes 1)\,P^{n}_{k-1}(\sigma),
  \end{align}
  as claimed.
\end{proof}

\subsection{Universal bundles}
\label{sec:universal-bundles}

The standard reference for this material is \cite[\S 21]{May:1968},
where the notation~\(BG=\xbar{W}(G)\) and~\(EG=W(G)\) is used.

Let \(G\) be a simplicial group.
Its classifying space is the simplicial set~\(BG\) whose \(p\)-simplices
are elements of the Cartesian product
\begin{equation}
  [g_{p-1},\dots,g_{0}] \in G_{p-1}\times \dots \times G_{0} = BG_{p}.
\end{equation}
It is always reduced (with unique vertex~\(b_{0}\coloneqq[]\in BG_{0}\))
and \(1\)-reduced in case \(G\) is reduced.
The simplices in the total space of the universal \(G\)-bundle~\(\pi\colon EG\to BG\) are similarly given by
\begin{equation}
  e = \bigl(g_{p},[g_{p-1},\dots,g_{0}]\bigr)\in G_{p}\times BG_{p} = EG_{p}\,;
\end{equation}
the map~\(\pi\) is the obvious projection.
We write \(e_{0}=(1_{0},b_{0})\in EG_{0}\) for the canonical basepoint, which projects onto~\(b_{0}\).
Our conventions for face and degeneracy maps can be obtained from~\cite[pp.~71,~87]{May:1968}
by substituting the opposite group~\(G^{\mathrm{op}}\) for~\(G\).
More precisely, for~\(EG\) they are given by
\begin{align}
  \label{eq:d-k-EG}
  \MoveEqLeft{\partial_{k}\bigl(g_{p},[g_{p-1},\dots,g_{0}]\bigr) = {}} \\
  \notag & \bigl(\partial_{k} g_{p},\bigl[\partial_{k-1} g_{p-1},\dots,\partial_{1}g_{p-k+1},(\partial_{0}g_{p-k})g_{p-k-1},g_{p-k-2},\dots,g_{0}\bigr]\bigr), \\
  \MoveEqLeft{s_{k}\bigl(g_{p},[g_{p-1},\dots,g_{0}]\bigr) = {}} \\
  \notag & \bigl(s_{k} g_{p},\bigl[s_{k-1} g_{p-1},\dots,s_{0}g_{p-k},1_{p-k},g_{p-k-1},g_{p-k-2},\dots,g_{0}\bigr]\bigr)
\end{align}
for~\(0\le k\le p\); for~\(BG\) one drops the first component.
Note that for~\(k=0\) the right-hand side of formula~\eqref{eq:d-k-EG} is interpreted as
\(((\partial_{0}g_{p})g_{p-1},[g_{p-2},\dots,g_{0}])\) and
for~\(k=p\) as \((\partial_{p}g_{p},[\partial_{p-1}g_{p-1},\dots,\partial_{1}g_{1}])\).
We consider \(EG\) as a left \(G\)-space via
\begin{equation}
  h\cdot \bigl(g_{p},[g_{p-1},\dots,g_{0}]\bigr)
  = \bigl(h g_{p},[g_{p-1},\dots,g_{0}]\bigr)
\end{equation}
for~\(h\in G_{p}\).

There is a canonical map~\(S\colon EG\to EG\) of degree~\(1\) given by
\begin{equation}
  \label{eq:def-S}
  S\bigl(g_{p},[g_{p-1},\dots,g_{0}]\bigr) = \bigl(1_{p+1},[g_{p},g_{p-1},\dots,g_{0}]\bigr),
\end{equation}
\cf~\cite[p.~88]{May:1968}.
For all~\(e\in EG_{p}\) one has
\begin{alignat}{2}
  \label{eq:d-S-1}
  \partial_{0} S e &= e, \\
  \label{eq:d-S-2}
  \partial_{1} S e &= e_{0} &\qquad& \text{if~\(p=0\),} \\
  \label{eq:d-S-3}
  \partial_{k} S e &= S \partial_{k-1} e &\qquad& \text{if \(p>0\) and \(k>0\).}
\end{alignat}
This implies that \(S\) induces a chain homotopy on~\(C(EG)\), again called \(S\),
from the projection to~\(e_{0}\) to the identity on~\(EG\),
\begin{equation}
  \label{eq:homotopy-S}
  (d S + S d)(e) =
  \begin{cases}
    e-e_{0} & \text{if \(p=0\),} \\
    e &  \text{if \(p>0\),}
  \end{cases}
\end{equation}
for any simplex~\(e\in EG\),
and that it additionally satisfies
\begin{equation}
  \label{eq:properties-S}
  S S = 0
  \qquad\text{and}\qquad
  S e_0 = 0,
\end{equation}
compare~\cite[Prop.~2.7.1]{Franz:2001} or~\cite[Sec.~3.7]{Franz:2003}.

\goodbreak

\begin{lemma}
  \label{thm:P-S-sigma}
  Let \(c\in C(EG)\) be of degree~\(n\).
  \begin{enumroman}
  \item
    \label{thm:P-S-sigma-1}
    For any~\(0\le k\le n+1\) one has
    \begin{equation*}
      P^{n+1}_{k}(Sc) =
      \begin{cases}
	e_{0}\otimes Sc  & \text{if \(k=0\),} \\
          (S\otimes 1)\,P^{n}_{k-1}(c) & \text{if \(k>0\).}
      \end{cases}
    \end{equation*}
  \item
  \label{thm:diag-S-sigma}
    One has
    \begin{equation*}
      \Delta\,Sc = (S\otimes 1)\,\Delta c + e_{0}\otimes S c.
    \end{equation*}
  \end{enumroman}
\end{lemma}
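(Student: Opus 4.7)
\smallskip

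The plan is to prove both parts by direct unwinding of the definitions of~\(P^{n+1}_{k}\) and~\(S\), using the identities~\eqref{eq:d-S-1}--\eqref{eq:d-S-3}. Part~\ref{thm:diag-S-sigma} will then follow immediately from part~\ref{thm:P-S-sigma-1} via the formula~\eqref{eq:Pnk-diag}.

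For part~\ref{thm:P-S-sigma-1}, I split according to whether \(k=0\) or~\(k\ge1\), and it suffices to treat a single simplex~\(c\in EG_{n}\). Recall that
\begin{equation*}
  P^{n+1}_{k}(Sc) = \partial^{n+1}_{k+1}(Sc)\otimes \partial^{k-1}_{0}(Sc).
\end{equation*}
To evaluate \(\partial^{n+1}_{k+1}(Sc) = \partial_{k+1}\cdots\partial_{n+1}(Sc)\), I iterate~\eqref{eq:d-S-3}: each application of~\(\partial_{j}\) with~\(j\ge2\) to~\(S(\partial_{j-1}\cdots\partial_{n}c)\) strips one degeneracy and moves \(S\) inside, yielding
\begin{equation*}
  \partial^{n+1}_{k+1}(Sc) \;=\; S\,\partial^{n}_{k}c
\end{equation*}
for any~\(k\ge0\). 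Similarly, \(\partial^{k-1}_{0}(Sc) = \partial_{0}\partial_{1}\cdots\partial_{k-1}(Sc)\) is computed by first using~\eqref{eq:d-S-3} repeatedly to move \(S\) through the outer face maps, and finally applying~\eqref{eq:d-S-1} in the form~\(\partial_{0}S=\mathrm{id}\); this gives \(\partial^{k-1}_{0}(Sc)=\partial^{k-2}_{0}c\) for~\(k\ge1\), with the convention \(\partial^{-1}_{0}=\mathrm{id}\).

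Combining these two computations for~\(k\ge1\) yields
\begin{equation*}
  P^{n+1}_{k}(Sc) \;=\; S\,\partial^{n}_{k}c \otimes \partial^{k-2}_{0}c \;=\; (S\otimes 1)\,P^{n}_{k-1}(c),
\end{equation*}
where the Koszul sign is trivial since \(\deg(1_{C(X)})=0\). For~\(k=0\), iterating \eqref{eq:d-S-3} reduces \(\partial^{n+1}_{1}(Sc)\) to \(\partial_{1}\,S(\partial^{n-1}_{0}c)\) where the inner simplex has dimension~\(0\); a final application of the \(p=0\) case~\eqref{eq:d-S-2} produces \(e_{0}\), and \(\partial^{-1}_{0}(Sc)=Sc\) gives the claimed value \(e_{0}\otimes Sc\).

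For part~\ref{thm:diag-S-sigma}, I sum the formulas from part~\ref{thm:P-S-sigma-1} over~\(0\le k\le n+1\) and invoke~\eqref{eq:Pnk-diag}:
\begin{equation*}
  \Delta(Sc) = \sum_{k=0}^{n+1} P^{n+1}_{k}(Sc) = e_{0}\otimes Sc + (S\otimes 1)\sum_{j=0}^{n}P^{n}_{j}(c) = e_{0}\otimes Sc + (S\otimes 1)\Delta c.
\end{equation*}
The main obstacle is purely bookkeeping: keeping the index shifts straight when iterating~\eqref{eq:d-S-3} and correctly invoking the boundary cases~\eqref{eq:d-S-1} and~\eqref{eq:d-S-2}, together with verifying that everything descends to normalized chains (which is automatic, since each intermediate expression is either of the form \(S(\cdots)\), and \(S\) respects normalization by~\eqref{eq:def-S}, or is an iterated face of a normalized chain).
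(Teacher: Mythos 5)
Your argument is correct and mirrors the paper's (terse) proof, which simply cites the identities \eqref{eq:d-S-1}--\eqref{eq:d-S-3} together with~\eqref{eq:Pnk-diag}; you have filled in the iteration explicitly. Note two small indexing slips that do not affect the conclusion: the identity \(\partial^{n+1}_{k+1}(Sc)=S\,\partial^{n}_{k}c\) holds only for \(k\ge1\), not ``for any \(k\ge0\)'' (for \(k=0\) the final \(\partial_{1}\) hits a degree-\(0\) simplex and one must use~\eqref{eq:d-S-2} rather than~\eqref{eq:d-S-3}), and in your \(k=0\) computation the inner \(0\)-simplex should read \(\partial^{n}_{1}c\), not \(\partial^{n-1}_{0}c\), since each application of~\eqref{eq:d-S-3} lowers the face index by exactly one.
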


\begin{proof}
  The first statement is immediate if \(c=e_{0}\) or~\(k=0\). For~\(n>0\) and~\(k>0\)
  it follows from the identities~\eqref{eq:d-S-1}--\eqref{eq:d-S-3}.
  Combining it with~\eqref{eq:Pnk-diag} gives the second claim,
  \cf~\cite[Prop.~2.7.1]{Franz:2001} or~\cite[Prop.~3.8]{Franz:2003}.
\end{proof}

\subsection{An Eilenberg\texorpdfstring{--}{-}Moore theorem}
\label{sec:bundles}

In this section we assume that \(\kk\) is a principal ideal domain.

The following result is suggested by work of Kadeishvili--Saneblidze~\cite[Cor.~6.2]{KadeishviliSaneblidze:2005}.

\begin{proposition}
  \label{thm:Eilenberg-Moore}
  Let \(F\stackrel{\iota}{\hookrightarrow}E\to B\) be a simplicial fibre bundle.
  If \(B\) is \(1\)-reduced and of finite type over~\(\kk\), then the map
  \begin{equation}
    \Bar{C^{*}(E)}{C^{*}(B)} \to C^{*}(F),
    \quad
    \BarEl{\gamma}{[\gamma_{1}|\dots|\gamma_{k}]} \mapsto
    \begin{cases}
      \iota^{*}(\gamma) & \text{if \(k=0\),} \\
      0 & \text{otherwise}
    \end{cases}
  \end{equation}
  is a quasi-isomorphism of dgas. In particular, there is an isomorphism of graded algebras
  \begin{equation*}
    H^{*}(F) \cong \Tor_{C^{*}(B)}\bigl(\kk,C^{*}(E)\bigr).
  \end{equation*}  
\end{proposition}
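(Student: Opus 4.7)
The plan is to verify three properties of the stated map, which I denote by~$\phi$: it is a chain map, it is a morphism of dgas, and it is a quasi-isomorphism.

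The key observation underlying the first two points is that $\pi\circ\iota\colon F\to B$ is constant at the basepoint of~$B$, hence $\iota^*\pi^*$ annihilates the augmentation ideal of~$C^*(B)$ (because $B$ is reduced and normalized cochains vanish on degenerate simplices). Using this, I would check the chain-map property directly: on $[\gamma_1|\ldots|\gamma_k]\otimes\gamma$ with $k\ge1$, both $\phi$ and the bar-degree-positive components of the differential give~$0$. The only bar-degree-zero component of the differential arises for $k=1$ from the twisting term, producing $\pm\pi^*(\gamma_1)\cdot\gamma\in C^*(E)$, which $\iota^*$ sends to~$0$. The case $k=0$ reduces to $\iota^*$ being a chain map.

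For multiplicativity I would apply the product formula of \Cref{thm:def-prod-bar}. Given $\aa\otimes a$ and $\bb\otimes b$ with $\aa=[a_1|\ldots|a_k]$ and $\bb=[b_1|\ldots|b_l]$, every summand of the product has bar part $\aa\circ[b_1|\ldots|b_m]$ and $C^*(E)$-factor $\EEE(a;[b_{m+1},\ldots,b_l])\cdot b$. If $\aa$ is nonempty the bar part has positive bar degree, because the counit of the dg~bialgebra~$\BB C^*(B)$ is a dga map; thus $\phi$ annihilates this summand. If $\aa$ is empty, only the $m=0$ term has bar degree zero, contributing $\EEE(a;\bb)\cdot b$; for $\bb$ also empty this equals $a\cdot b$ and maps to $\iota^*(a)\iota^*(b)$ as required, while for $\bb$ nonempty one has $\EEE(a;\bb)=\pm E_l(a;\pi^*b_1,\ldots,\pi^*b_l)$, whose restriction to~$F$ equals $\pm E_l(\iota^*a;0,\ldots,0)=0$ by naturality of the hga structure and multilinearity.

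The quasi-isomorphism step is the classical Eilenberg--Moore theorem for simplicial fibre bundles with simply connected base. Filtering $\Bar{C^*(E)}{C^*(B)}$ by bar degree yields a spectral sequence which, under the $1$-reducedness and finite-type hypotheses on~$B$, meets only finitely many bar degrees in each total degree and hence converges strongly; a Künneth argument identifies its $E_2$-page with $\Tor^*_{H^*(B)}(\kk,H^*(E))$, which in turn agrees with $H^*(F)$ by comparison with the Serre spectral sequence of~$F\to E\to B$ (or by a chain-level comparison as in \cite{HusemollerMooreStasheff:1974}), and $\phi$ realizes this isomorphism.

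The main obstacle is the multiplicativity calculation, since the Kadeishvili--Saneblidze product on the bar construction involves the nontrivial operation~$\EEE$ and in principle generates many terms with complicated hga contributions; however, the vanishing of $\iota^*\pi^*$ on the augmentation ideal of~$C^*(B)$ collapses all of these, leaving only the case where both bar words are empty, in which multiplicativity reduces to the tautological fact that $\iota^*$ is a dga map.
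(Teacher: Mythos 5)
Your verification that the stated map is a chain map and your multiplicativity argument are both correct and essentially match the paper's (the paper's one-line justification ``any cochain on~$B$ of positive degree restricts to~$0$ on~$F$'' is exactly your observation that $\iota^{*}\pi^{*}$ kills $\bar C^{*}(B)$, combined with naturality and multilinearity of the hga operations, which you spell out). However, your argument for the quasi-isomorphism step contains a genuine error. You assert that the $E_{2}$-page $\Tor^{*}_{H^{*}(B)}(\kk, H^{*}(E))$ ``agrees with $H^{*}(F)$'' — this is false in general. The Eilenberg--Moore spectral sequence need not collapse; for the Hopf fibration $S^{1}\hookrightarrow S^{3}\to S^{2}$, $\Tor_{H^{*}(S^{2})}(\kk,H^{*}(S^{3}))$ is infinite-dimensional while $H^{*}(S^{1})$ is two-dimensional. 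In fact, proving collapse under additional polynomial-cohomology hypotheses is the whole content of \Cref{thm:intro:additive}, so you cannot invoke it at this stage. Moreover, ``comparison with the Serre spectral sequence'' is not a valid proof of the Eilenberg--Moore theorem: these are genuinely different spectral sequences with no direct comparison map. Your argument also implicitly requires $H^{*}(B)$ to be free over~$\kk$ for the Künneth identification of the $E_{2}$-page, which is not among the proposition's hypotheses.

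The correct route, which the paper takes, never identifies the $E_{2}$-page with $H^{*}(F)$. One works at the chain level: by the twisted Eilenberg--Zilber theorem (which uses the fibre-bundle structure, not just a fibration), $C(E)\simeq C(B)\otimes_{t}C(F)$ as $C(B)$-comodules, hence $\OM(\kk,C(B),C(E))\simeq\OM(\kk,C(B),C(B))\otimes_{t}C(F)$. Since $\kk\hookrightarrow\OM(\kk,C(B),C(B))$ is a homotopy equivalence and $\delta_{t}$ vanishes on its image, a spectral-sequence comparison (using $1$-reducedness of~$B$ for convergence) shows $C(F)\to\OM(\kk,C(B),C(E))$ is a quasi-isomorphism. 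Only at the end are the PID and finite-type hypotheses used, to dualize via the universal coefficient theorem and obtain a quasi-isomorphism on the cochain bar construction. This also makes visible that the resulting map is indeed the one you wrote down, rather than merely having the same source and target.
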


\begin{proof}
  By the usual Eilenberg--Moore theorem, the map is a quasi-isomorphism
  of complexes. For field coefficients, we can refer to~\cite[Thm.~3.2]{Smith:1967}.
  For general~\(\kk\), it follows by dualizing the homological quasi-isomorphism
  \cite[Sec.~6]{Gugenheim:1972}
  \begin{equation}
    C(F) \to \OM\bigl(\kk,C(B),C(E)\bigr)
  \end{equation}
  where the target is the one-sided cobar construction.

  Let us recall the argument:
  If we write \(G\) for the structure group of the bundle~\(E\to B\),
  then \(C(F)\) is a left \(C(G)\)-module.
  By the twisted Eilenberg--Zilber theorem
  \cite[Sec.~4]{Gugenheim:1972}, 
  there is a twisting cochain~\(t\colon C(B)\to C(G)\)
  and a homotopy equivalence
  \begin{equation}
    C(E) \simeq C(B)\otimes_{t}C(F)
  \end{equation}
  of left \(C(B)\)-comodules. Under this isomorphism,
  the map~\(\iota_{*}\colon C(F)\to C(E)\) corresponds
  to the canonical inclusion of~\(C(F)\) into the twisted tensor product
  with the unique base point of~\(B\) as first factor.

  We therefore get a homotopy equivalence of complexes
  \begin{align}
    \OM\bigl(\kk,C(B),C(E)\bigr) &\simeq \OM\bigl(\kk,C(B),C(B)\otimes_{t}C(F)\bigr) \\
    \notag &= \OM\bigl(\kk,C(B),C(B)\bigr) \otimes_{t} C(F)
  \end{align}
  between the one-sided cobar constructions, where we consider
  \(\OM(\kk,C(B),C(B))\) as a right \(C(B)\)-comodule,
  \cf~\cite[Def.~II.5.1]{HusemollerMooreStasheff:1974}.

  The canonical inclusion~\(\kk\hookrightarrow \OM(\kk,C(B),C(B))\)
  is a homotopy equivalence \cite[Prop.~II.5.2]{HusemollerMooreStasheff:1974},
  and \(\delta_{t}\) vanishes on its image. Because \(B\) is \(1\)-reduced,
  a spectral sequence argument shows
  that the map~\(C(F) \to \OM(\kk,C(B),C(B)) \otimes_{t} C(F)\) is a quasi-isomorphism
  of complexes,
  hence so is the natural map
  \begin{equation}
    C(F) \to \OM\bigl(\kk,C(B),C(E)\bigr),
    \qquad
    c \mapsto 1\otimes 1_{\OM C(B)}\otimes\iota_{*}(c).
  \end{equation}
  
  Everything we have said so far is valid for any coefficient ring.
  Since \(\kk\) is a principal ideal domain and \(B\) of finite type, the canonical map
  \begin{equation}
    (\desusp\bar C^{*}(B))^{\otimes k} \otimes C^{*}(E) \to
    \bigl( \kk \otimes (\desusp\bar C(B))^{\otimes k}\otimes C(E) \bigr)^{*}
  \end{equation}
  is a quasi-isomorphisms for any~\(k\ge0\) by the universal coefficient theorem,
  hence so is the composition
  \begin{equation}
    \bigBar{C^{*}(E)}{C^{*}(B)} \to \OM\bigl(\kk,C(B),C(E)\bigr)^{*} \to C^{*}(F).
  \end{equation}
  
  A look at \Cref{thm:def-prod-bar} finally shows
  that the quasi-isomorphism is multiplicative because any cochain on~\(B\)
  of positive degree restricts to~\(0\) on~\(F\). (Recall that we are working with
  normalized cochains.)
\end{proof}

If we define an increasing filtration on~\(\Bar{C^{*}(E)}{C^{*}(B)}\)
by the length of elements, then we get an (Eilenberg--Moore) spectral sequence
of algebras converging to~\(H^{*}(F)\) because the deformation terms
in the product formula given in \Cref{thm:def-prod-bar} lower the filtration degree.
By the Künneth theorem, the second page of this spectral sequence is of the form
\begin{equation}
  E_{2} = \Tor_{H^{*}(B)}\bigl(\kk,H^{*}(E)\bigr)
\end{equation}
with the usual product on~\(\Tor\),
provided that \(H^{*}(B)\) is free over~\(\kk\).

\begin{remark}
  Assume that the base~\(B\) has polynomial cohomology, say \(H^{*}(B)=\kk[y_{1},\dots,y_{n}]\).
  Let \(b_{1}\),~\dots,~\(b_{n}\in C^{*}(B)\) be representatives of the generators, and let
  \begin{equation}
    \bigwedge(x_{1},\dots,x_{n})
  \end{equation}
  be the exterior algebra on generators~\(x_{i}\) of degrees~\(\deg{x_{i}}=\deg{y_{i}}-1\).
  Since \(\BB\,C^{*}(Y)\) is a dg~bialgebra and the elements~\([b_{i}]\in \BB\,C^{*}(Y)\) primitive,
  the assignment
  \begin{equation}
    \bigwedge(x_{1},\dots,x_{n}) \to \BB\,C^{*}(Y),
    \qquad
    x_{i_{1}}\wedge\dots\wedge x_{i_{k}} \mapsto [b_{i_{1}}]\circ\dots\circ[b_{i_{k}}]
  \end{equation}
  is a dgc map (but not multiplicative in general) and in fact a quasi-isomorphism.
  Evaluating the product from the left to the right
  shows that the associated twisting cochain~\(\tGM\) is of the form~\(\tGM(x_{i})=b_{i}\) and
  \begin{align}
    \label{eq:twisiting-GM}
      \tGM(x_{i_{1}}\wedge\dots\wedge x_{i_{k}}) &= E_{1}(\cdots E_{1}(E_{1}(b_{i_{1}};b_{i_{2}});b_{i_{3}}); \cdots ; b_{i_{k}}) \\
      \notag &\quad = (-1)^{k-1}\,\bigl(((b_{i_{1}}\cupone b_{i_{2}})\cupone b_{i_{3}})\cupone\cdots\bigr)\cupone b_{i_{k}}
  \end{align}
  for~\(k\ge2\) and \(i_{1}<\dots<i_{k}\).
  A standard spectral sequence argument then implies that the twisted tensor product
  \begin{equation}
    \bigwedge(x_{1},\dots,x_{n}) \otimes_{\tGM} C^{*}(E)
  \end{equation}
  is quasi-isomorphic to~\(\Bar{C^{*}(E)}{C^{*}(B)}\) as a complex, hence computes \(H^{*}(F)\) as a graded \(\kk\)-module
  by the Eilenberg--Moore theorem. 
  We thus recover the model constructed by Gugenheim--May~\cite[Example~2.2 \& Thm.~3.3]{GugenheimMay:1974}.
\end{remark}

\begin{lemma}
  \label{thm:homog-reduced-groups}
  Let \(G\) be a connected simplicial group and \(K\subset G\) a connected subgroup.
  Write \(\check G\subset G\) for the reduced subgroup of simplices lying over~\(1\in G_{0}\),
  and define \(\check K\subset K\) analogously.
  Then the inclusion~\(\check G/\check K\hookrightarrow G/K\) is a homotopy equivalence,
  natural in the pair~\((G,K)\).
\end{lemma}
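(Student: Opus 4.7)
The plan is to recognize~\(\check G\) as the Eilenberg subcomplex of~\(G\) at the identity vertex~\(1\in G_{0}\), and then apply the five lemma to the ladder of principal fibrations
\[
\begin{tikzcd}
\check K \arrow[r] \arrow[d,hook] & \check G \arrow[r] \arrow[d,hook] & \check G/\check K \arrow[d,hook] \\
K \arrow[r] & G \arrow[r] & G/K\mathrlap{.}
\end{tikzcd}
\]

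First I would verify that \(\check G_{n}\) consists precisely of those~\(g\in G_{n}\) whose iterated vertex faces all equal \(1\in G_{0}\), so that \(\check G\) is nothing but the Eilenberg subcomplex at the vertex~\(1\). Every simplicial group is a Kan complex, and for a Kan complex the inclusion of the Eilenberg subcomplex at any vertex~\(v\) induces isomorphisms on~\(\pi_{n}(-,v)\) for all~\(n\ge 1\) (see, e.g., \cite{May:1968}). Because \(G\) is connected, the map is also bijective on~\(\pi_{0}\), hence \(\check G\hookrightarrow G\) is a weak equivalence. The same argument, using that \(K\) is connected, shows that \(\check K\hookrightarrow K\) is a weak equivalence; this is where both connectedness hypotheses enter.

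Since \(K\subset G\) is a simplicial subgroup, \(G\to G/K\) is a principal Kan fibration with fibre~\(K\), and analogously for~\(\check G\to\check G/\check K\). The long exact sequences of homotopy groups of these two fibrations fit into a commutative ladder whose vertical maps on fibre and on total space are isomorphisms by the previous paragraph. The five lemma then yields isomorphisms \(\pi_{n}(\check G/\check K)\to\pi_{n}(G/K)\) for every~\(n\ge 0\). Since both quotients are themselves Kan complexes (as quotients of simplicial groups by simplicial subgroups), this weak equivalence promotes to an honest simplicial homotopy equivalence. Naturality in the pair~\((G,K)\) is immediate from the functoriality of the Eilenberg subcomplex and of the quotient constructions.

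The main obstacle is the Eilenberg-subcomplex theorem itself, which rests on the Kan condition together with connectedness of the ambient simplicial group; once it is invoked, the conclusion is a routine diagram chase.
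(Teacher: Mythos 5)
Your proof is correct and takes essentially the same route as the paper: invoke the fact that for a connected simplicial group the inclusion of the reduced (Eilenberg) subgroup is a homotopy equivalence (the paper cites \cite[Thm.~12.5]{May:1968}), then compare the long exact homotopy sequences of the two principal fibrations to conclude. The one small point you elide is verifying that \(\check G/\check K\to G/K\) is actually \emph{injective} (so that it is an inclusion, as the lemma asserts); this follows from the identity \(\check K=K\cap\check G\), which the paper records explicitly. Otherwise the arguments match.
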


\begin{proof}
  The inclusions~\(\check G\hookrightarrow G\) and~\(\check K\hookrightarrow K\) are homotopy equivalences, compare \cite[Thm.~12.5]{May:1968}.
  The long exact sequence of homotopy groups implies that
  the map~\(\check G/\check K\hookrightarrow G/K\) is also a homotopy equivalence.
  Injectivity follows from the identity~\(\check K=K\cap \check G\), and naturality is clear.
\end{proof}

\begin{proposition}
  \label{thm:iso-bar-GK}
  Let \(G\) be a reduced simplicial group and \(K\) a reduced subgroup.
  There is an isomorphism of graded algebras
  \begin{equation*}
    H^{*}(G/K) \cong \Tor_{C^{*}(BG)}\bigl(\kk,C^{*}(BK)\bigr),
  \end{equation*}
  natural with respect to maps of pairs~\((G,K)\).
\end{proposition}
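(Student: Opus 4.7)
The plan is to apply \Cref{thm:Eilenberg-Moore} to the Borel fibration
\begin{equation*}
  G/K \hookrightarrow EG/K \xrightarrow{\pi} BG,
\end{equation*}
where $K$ acts on~$EG$ by restriction of the principal left $G$-action. Since $G$ is reduced, $BG$ is $1$-reduced, so \Cref{thm:Eilenberg-Moore} produces a natural multiplicative quasi-isomorphism \(\Bar{C^*(EG/K)}{C^*(BG)} \to C^*(G/K)\).

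To replace $EG/K$ by $BK$, I would introduce the natural simplicial map
\begin{equation*}
  \phi\colon BK \to EG/K,
  \qquad
  [k_{p-1},\dots,k_0] \mapsto K\cdot\bigl(1_p,[k_{p-1},\dots,k_0]\bigr),
\end{equation*}
induced by the inclusion $\iota\colon K\hookrightarrow G$. A direct check using the face and degeneracy formulas of \Cref{sec:universal-bundles} shows that $\phi$ is well-defined on the $K$-quotient and satisfies $\pi\circ\phi=B\iota$. Both $BK$ and $EG/K$ are simplicial models for the classifying space of~$K$---the latter because $EG$ is contractible and $K$ acts freely on it---and $\phi$ is a weak homotopy equivalence, as one sees by comparing the long exact homotopy sequences of the principal $K$-bundles $EK\to BK$ and $EG\to EG/K$, joined by the $K$-equivariant map $EK\to EG$ between contractible simplicial sets. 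Hence $\phi^*\colon C^*(EG/K)\to C^*(BK)$ is a quasi-isomorphism of hgas, and in particular of augmented dgas over~$C^*(BG)$.

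By \Cref{thm:def-prod-bar}, the one-sided bar construction is functorial in hga maps between the coefficient algebras, so $\phi^*$ induces a dga map
\begin{equation*}
  \Bar{C^*(EG/K)}{C^*(BG)} \to \Bar{C^*(BK)}{C^*(BG)},
\end{equation*}
which is a quasi-isomorphism by \Cref{rem:tw-tensor-quiso}. Combining this with the quasi-isomorphism from \Cref{thm:Eilenberg-Moore} and passing to cohomology yields the claimed isomorphism
\begin{equation*}
  H^*(G/K) \cong \Tor_{C^*(BG)}\bigl(\kk,C^*(BK)\bigr)
\end{equation*}
of graded algebras. Naturality in the pair $(G,K)$ is inherited from the naturality of each ingredient---the Borel fibration, the map $\phi$, the bar construction, and \Cref{thm:Eilenberg-Moore}.

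The main obstacle I anticipate is guaranteeing that the comparison between $EG/K$ and $BK$ is multiplicative at the cochain level, not merely a homotopy equivalence of spaces. This reduces to observing that $\phi^*$ is a morphism of hgas, which is automatic because $\phi$ is a simplicial map and the hga structure on cochains from \Cref{sec:cochains-hga} is functorial in simplicial maps. Once this is in hand, the spectral sequence comparison giving the quasi-isomorphism on bar constructions proceeds routinely, and the remaining verifications---that $\phi$ is simplicial and a weak equivalence---are straightforward in view of the explicit descriptions recalled in \Cref{sec:universal-bundles}.
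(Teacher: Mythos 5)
Your proposal is correct and follows essentially the same route as the paper: apply \Cref{thm:Eilenberg-Moore} to the bundle \(G/K\hookrightarrow EG/K\to BG\), then compare \(EG/K\) with \(BK=EK/K\) via the natural map over \(BG\) induced by \(\iota\), which gives a multiplicative quasi-isomorphism of one-sided bar constructions by naturality of the hga structure. Your map \(\phi\) is exactly the paper's homotopy equivalence \(EK/K\to EG/K\), and the spectral-sequence justification via \Cref{rem:tw-tensor-quiso} is the intended one.
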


\begin{proof}
  The map~\(\pi\colon EG/K\to BG\) is a fibre bundle with fibre~\(G/K\).
  By \Cref{thm:Eilenberg-Moore}, the dgas~\(C^{*}(G/K)\) and~\(\Bar{C^{*}(EG/K)}{C^{*}(BG)}\) are naturally quasi-iso\-mor\-phic.
  The homotopy equivalence~\(BK=EK/K\to EG/K\) is a map over~\(BG\)
  and induces a quasi-isomorphism
  \begin{equation}
    \bigBar{C^{*}(EG/K)}{C^{*}(BG)}
    \to \bigBar{C^{*}(BK)}{C^{*}(BG)},
  \end{equation}
  which is multiplicative by the naturality of the hga structure on cochains.
\end{proof}

\begin{remark}
  Let \(G\) be a Lie group and \(K\subset G\) a closed subgroup. Then the projection~\(G\mapsto G/K\)
  is a principal \(K\)-bundle. Writing \(S(X)\) for the simplicial set of singular simplices
  in a topological space~\(X\), we therefore have \(S(G/K)=S(G)/S(K)\).
  The same holds if \(G\) is only a topological group, but the closed subgroup~\(K\)
  has the structure of a Lie group, \cf~\cite[Sec.~4.1]{Palais:1961}.
\end{remark}

\section{Homotopy Gerstenhaber formality of~\texorpdfstring{\(BT\)}{BT}}
\label{sec:BT-formality}

\subsection{Dga formality}
\label{sec:BT-formality-dga}

Let \(T\) be a simplicial torus of rank~\(n\). By this we mean a commutative simplicial group~\(T\)
such that \(H(T)\) is an exterior algebra on generators~\(x_{1}\),~\dots,~\(x_{n}\) of degree~\(1\).
For example, \(T\) can be the compact torus~\((S^{1})^{n}\), the algebraic torus~\((\C^{\times})^{n}\)
or the simplicial group~\(B\Z^{n}\).

As mentioned in the introduction,
Gugenheim--May~\cite[Thm.~4.1]{GugenheimMay:1974} have constructed a quasi-isomorphism of dgas
\begin{equation}
  \label{eq:CBT-HBT}
  C^{*}(BT) \to H^{*}(BT)
\end{equation}
annihilating all \(\cupone\)-products. An alternative approach was given by the author
in his doctoral dissertation~\cite[Prop.~2.2]{Franz:2001}, see also~\cite[Prop.~5.3]{Franz:2003}.
The goal of this section is to promote the latter construction to a quasi-isomorphism of hgas,
that is, one that annihilates all operations~\(E_{k}\) with~\(k\ge1\).
We will see that also all operations~\(F_{kl}\) with the exception of the \(\cuptwo\)-product are sent to~\(0\).

We write \(\Ll=H(T)\) and~\(\Sl=H(BT)\). The latter is the cocommutative coalgebra on cogenerators~\(y_{i}\in\Sl_{2}\)
that correspond to the~\(x_{i}\)'s under transgression. The \(y_{i}\)'s define a \(\kk\)-basis~\(y_{\alpha}\) of~\(\Sl\)
index by multi-indices~\(\alpha\in\N^{n}\). We also write \(y_{0}=1\).

Let \(t\colon\Sl\to\Ll\) be the (homological) twisting cochain that sends each~\(y_{i}\) to~\(x_{i}\)
and vanishes in other degrees. The twisted tensor product
\begin{equation}
  \Kl = \Ll\otimes_{t}\Sl
\end{equation}
is the Koszul complex. It is a dgc with \(\Ll\)-equivariant diagonal given by the tensor product of the componentwise diagonals.
For~\(a\in\Ll\) and~\(c\in\Sl\) we write \(a\cdot c\in\Kl\) instead of~\(a\otimes c\), reflecting the \(\Ll\)-action.
The differential on~\(\Kl\) is given by
\begin{equation}
  d(a\cdot y_{\alpha}) = (-1)^{\deg{a}}\sum_{i} a\wedge x_{i}\cdot y_{\alpha|i} = \sum_{i}x_{i}\wedge a \cdot y_{\alpha|i}
\end{equation}
where the sum runs over all~\(i\) such that~\(\alpha_{i}>0\), and ``\(\alpha|i\)'' means that the \(i\)-th component of~\(\alpha\) is decreased by~\(1\).

Let \(c_{1}\),~\dots,~\(c_{n}\in C_{1}(T)\) be linear combinations of loops in~\(G\) representing the generators~\(x_{i}\).
They define a quasi-isomorphism of dg~bialgebras
\begin{equation}
  \phi\colon \Ll\to C(T),
  \qquad
  x_{i_{1}}\wedge\dots\wedge x_{i_{k}} \mapsto c_{i_{1}}*\cdots*c_{i_{k}}
\end{equation}
for~\(i_{1}<\dots<i_{k}\).
Moreover, let \(\pi\colon ET\to BT\) be the universal \(T\)-bundle. Note that \(C(ET)\) is a \(\Ll\)-module via~\(\phi\).

Our map~\eqref{eq:CBT-HBT} will be the transpose of a quasi-isomorphism~\(\ffbar\colon\Sl\to C(BT)\).
The construction of the latter is based on a map
\begin{equation}
  \ff\colon\Kl\to C(ET)
\end{equation}
recursively defined by
\begin{alignat}{2}
  \ff(1) &= e_{0}, \\
  \ff(a\cdot c) &= a \ast \ff(c) &\qquad& \text{if \(\deg{a}>0\)}, \\
  \ff(c) &= S\,\ff(dc) &\qquad& \text{if \(\deg{c}>0\)}
\end{alignat}
for~\(c\in\Sl\) and~\(a\in\Ll\), where \(S\) is the homotopy defined in~\eqref{eq:def-S}.

\begin{proposition} 
  \label{thm:f-coalg}
  The map~\(\ff\) is a \(\Ll\)-equivariant quasi-isomorphism of dgcs.
\end{proposition}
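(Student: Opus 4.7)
First I would split the proof into four parts---$\Ll$-equivariance, the chain-map property, the coalgebra-map property, and the quasi-isomorphism claim---carried out in that order, each feeding into the next. The $\Ll$-equivariance is essentially built into the recursive definition: for $b$,~$a\in\Ll$ and $c\in\Sl$, the identity $\ff(b\cdot a\cdot c)=b\ast\ff(a\cdot c)$ reduces via $\ff(a\cdot c)=a\ast\ff(c)$ to the associativity of the $C(T)$-action on $C(ET)$, using that $\phi$ is an algebra map.

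The chain-map property $d\,\ff=\ff\,d$ I would prove by induction on degree. For $c=y_{\alpha}\in\Sl$ with $|\alpha|>0$, the recursion and the homotopy identity~\eqref{eq:homotopy-S} yield
\[
d\,\ff(y_{\alpha})=d\,S\,\ff(dy_{\alpha})=\ff(dy_{\alpha})-S\,d\,\ff(dy_{\alpha})=\ff(dy_{\alpha}),
\]
where $\deg\ff(dy_{\alpha})>0$ makes~\eqref{eq:homotopy-S} applicable without an $e_{0}$-correction and the last equality uses the inductive hypothesis together with $d^{2}=0$. The extension to $a\cdot c$ with $\deg a>0$ then follows from $\Ll$-equivariance, since $d(a\cdot c)=(-1)^{\deg a}\,a\cdot dc$ on~$\Kl$ and $a$ is a cycle in~$\Ll$.

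The coalgebra property $\Delta\,\ff=(\ff\otimes\ff)\,\Delta$ is where the real work lies and I expect it to be the main obstacle. The argument is again an induction on degree, with two geometric inputs: \Cref{thm:diag-S-sigma}, which gives $\Delta\,S=(S\otimes 1)\,\Delta+e_{0}\otimes S$ on~$C(ET)$, and the $T$-equivariance of the Alexander--Whitney diagonal, which combined with $\phi$ being a bialgebra map produces $\Delta(a\ast y)\eqKS\sum (a'\ast y')\otimes(a''\ast y'')$. For $c=y_{\alpha}$, these inputs and the inductive hypothesis yield
\[
\Delta\,\ff(y_{\alpha})=(S\otimes 1)(\ff\otimes\ff)\,\Delta(dy_{\alpha})+e_{0}\otimes\ff(y_{\alpha}).
\]
The key combinatorial observation is that $S\,\ff(y_{\beta})=0$ for every~$\beta$---immediate from $SS=0$, $S\,e_{0}=0$, and the recursion---so half of the terms coming from $\Delta(dy_{\alpha})=(d\otimes 1+1\otimes d)\,\Delta y_{\alpha}$ vanish, while the surviving terms indexed by $\beta>0$ combine with $e_{0}\otimes\ff(y_{\alpha})$ (the $\beta=0$ summand) to reconstitute $(\ff\otimes\ff)\,\Delta y_{\alpha}$. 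The case $a\cdot c$ with $\deg a>0$ then reduces to the $\Sl$-case using $\Ll$-equivariance and the bialgebra compatibilities above.

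The quasi-isomorphism claim is automatic once the structural properties are in place: $ET$ is contractible via the homotopy~$S$, so $H_{*}(C(ET))=\kk$ concentrated in degree~$0$ with generator~$[e_{0}]$; the Koszul complex~$\Kl$ is the standard acyclic resolution of~$\kk$, with $[1]$ generating $H_{0}$; and $\ff(1)=e_{0}$ sends the generator to the generator.
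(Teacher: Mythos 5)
Your proposal is correct and follows essentially the same route as the paper: induction on degree for both the chain-map and coalgebra properties, with the identity \(\Delta\,S=(S\otimes 1)\,\Delta+e_{0}\otimes S\), the vanishing \(S\,S=0\), \(S\,e_{0}=0\) (hence \(S\,\ff=0\) on \(\Sl\)), and the \(C(T)\)-equivariance of the Alexander--Whitney diagonal as the key inputs. Your packaging of the cancellation via \(\Delta(dy_{\alpha})=(d\otimes 1+1\otimes d)\,\Delta y_{\alpha}\) in \(\Kl\) is just a reorganization of the paper's explicit reindexing of the sum over \(\beta+\gamma=\alpha|i\), and the remaining verifications match the paper's.
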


For the convenience of the reader, we adapt the proof given in~\cite[Prop.~4.3]{Franz:2003}
to our slightly more general setting.\footnote{%
Using \cite[eq.~(2.12c)]{Franz:2001} or~\cite[eq.~(3.29a)]{Franz:2003},
one can see that our new construction coincides with the previous one
  if each~\(c_{i}\)
  lies entirely in the \(i\)-th factor of a circle decomposition of~\(T\).}

\begin{proof}
  It is clear from the definition that \(\ff\) commutes
  with the \(\Ll\)-action.  
  To show that it is a chain map, we proceed by induction on the degree of~\(a\cdot y\in\Kl\).
  For \(a\cdot y=1\) this is obvious. For~\(\deg{a}>0\) we have by equivariance and induction
  \begin{align}
    d\,\ff(a\cdot y) &= d\bigl(\phi(a)\ast \ff(y)\bigr) = \phi(da)\ast \ff(y) +(-1)^{\deg{a}}\,\phi(a)\ast d \ff(y) \\
    \notag &= \ff\bigl(da\cdot y + (-1)^{\deg{a}}a\cdot dy\bigr) = \ff\,d(a\cdot y).
  \end{align}
  For~\(\deg{y}>0\) we have by~\eqref{eq:homotopy-S} and induction
  \begin{equation}
    d\, \ff(y) = d\, S\, \ff(dy) = \ff(dy) - S\, d\, \ff(dy) = \ff(dy).
  \end{equation}
  
  To show that \(f\) is a map of coalgebras,
  we proceed once more by induction on~\(\deg{a\cdot y}\),
  the case~\(a\cdot y=1\) being trivial.
  If \(\deg a>0\), then again by equivariance and induction we have
  \begin{align}
    \Delta \ff(a\cdot y) &= \Delta \bigl(\phi(a)\ast \ff(y)\bigr)
    = \Delta \phi(a) \ast \Delta \ff(y)
    = \Delta \phi(a) \ast (\ff\otimes \ff)\Delta y \\
    \notag &= (\ff\otimes \ff) (\Delta a\cdot \Delta y) = (\ff\otimes \ff)\Delta(a\cdot y).
  \end{align}

  For~\(\alpha\ne0\) we therefore have by \Cref{thm:P-S-sigma}\,\ref{thm:diag-S-sigma} that
  \begin{align}
    \Delta\, \ff(y_{\alpha})
    &= \Delta\, S\, \ff(d y_{\alpha})
    = (S\otimes1)\,\Delta\, \ff(d y_{\alpha}) + e_{0}\otimes S\, \ff(d y_{\alpha}) \\
    \notag &= \sum_{i} (S\otimes1)\,\Delta \ff(x_{i}\cdot y_{\alpha|i}) + \ff(1) \otimes \ff(y_{\alpha}),
  \end{align}
  where the sum runs over the indices~\(i\) such that \(\alpha_{i}\ne0\).
  Using again the equivariance of the Alexander--Whitney map and induction, we get
  \begin{align}
    \Delta\, \ff(y_{\alpha})
    &= \sum_{i} (S\otimes1)\,\Delta c_{i}\ast \Delta\, \ff(y_{\alpha|i}) + \ff(1) \otimes \ff(y_{\alpha}) \\
    \notag &= \sum_{i}\sum_{\beta+\gamma=\alpha|i} (S\otimes1)\,\Delta c_{i}\ast\bigl(\ff(y_{\beta})\otimes \ff(y_{\gamma})\bigr) + \ff(1) \otimes \ff(y_{\alpha}).
  \end{align}
  Now \(\Delta c_{i}= c_{i}\otimes1+1\otimes c_{i}\),
  and \(S\, \ff(y_{\gamma})=0\) by~\eqref{eq:properties-S}, hence
  \begin{equation}
    \Delta \ff(y_{\alpha})
    = \sum_{i}\sum_{\beta+\gamma=\alpha|i} S( c_{i}\ast \ff(y_{\beta}))\otimes \ff(y_{\gamma}) + \ff(1) \otimes \ff(y_{\alpha}).
  \end{equation}
  We reorder the summands. For each~\(\gamma\ne\alpha\) whose
  components are all less than or equal to those of~\(\alpha\), we have
  one term of the form~\( c_{i}\ast \ff(y_{\beta|i})\) for each~\(\beta=\alpha-\gamma\)
  and each~\(i\) such that \(\beta_{i}\ne0\). This gives
  \begin{align}
  \Delta\, \ff(y_{\alpha})
    &= \sum_{\substack{\beta+\gamma=\alpha\\\gamma\ne\alpha}}
      \sum_{i} S( c_{i}\ast \ff(y_{\beta|i})\otimes \ff(y_{\gamma}) + \ff(1) \otimes \ff(y_{\alpha}) \\
    \notag &= \sum_{\substack{\beta+\gamma=\alpha\\\gamma\ne\alpha}}
      \ff(y_{\beta})\otimes \ff(y_{\gamma}) + \ff(1) \otimes \ff(y_{\alpha}) \\
    \notag &= \sum_{\beta+\gamma=\alpha} \ff(y_{\beta})\otimes \ff(y_{\gamma}),
  \end{align}
  as was to be shown.

  That \(\ff\) induces an isomorphism in homology is trivial.
\end{proof}

Since \(C(G)\) acts trivially on~\(BT\), the composition~\(\pi_{*}\ff\colon\Kl\to C(BT)\) descends to a map of dgcs
\begin{equation}
  \ffbar\colon \Sl = \kk\otimes_{\Ll}\Kl \to C(BT).
\end{equation}

\begin{proposition}
  \label{thm:quiso-ffbar}
  The transpose~\(\ffbar^{*}\colon C^{*}(BT)\to\Sl^{*}\) is a morphism of dgas
  that induces the identity in cohomology.
\end{proposition}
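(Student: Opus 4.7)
The first claim is purely formal. Since \(\ffbar\) is given as a map of dgcs, and the cup products on \(C^{*}(BT)\) and~\(\Sl^{*}\) are by construction the transposes of the respective coproducts, its transpose \(\ffbar^{*}\) is automatically multiplicative and unit-preserving; hence a dga map. The cohomological claim is where the content lies.

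The target \(\Sl^{*}\) has zero differential and is the polynomial algebra on the generators \(y_{i}^{*}\in\Sl_{2}^{*}\); these correspond under the canonical identification \(\Sl^{*}\cong H^{*}(BT)\) to the cohomology classes transgressed from \(x_{i}^{*}\in H^{1}(T)\). Since the induced map \(H(\ffbar^{*})\colon H^{*}(BT)\to\Sl^{*}\) is a morphism of commutative algebras into a polynomial algebra, it is determined by its values on the generators, so it will suffice to show \(H(\ffbar^{*})(y_{i}^{*})=y_{i}^{*}\) for each~\(i\). By duality this amounts to verifying that the homology class \([\ffbar(y_{i})]\in H_{2}(BT)\) equals the standard generator \(y_{i}\) dual to~\(y_{i}^{*}\).

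To carry this out I would unpack the recursion for~\(\ff\):
\begin{equation*}
  \ffbar(y_{i}) = \pi_{*}\,\ff(y_{i}) = \pi_{*}\,S\,\ff(x_{i}\cdot 1) = \pi_{*}\,S(c_{i}\ast e_{0}),
\end{equation*}
where \(c_{i}\ast e_{0}\) is a 1-chain in~\(ET\) whose image under the inclusion \(T\hookrightarrow ET\) represents \(x_{i}\in H_{1}(T)\). The \(2\)-chain \(S(c_{i}\ast e_{0})\) provides a geometric filling of this loop in the contractible total space, and \(\pi_{*}\) projects it to a \(2\)-cycle in~\(BT\). This is the classical geometric construction of the transgression in the universal bundle \(T\to ET\to BT\), so \([\ffbar(y_{i})]\) is the required generator.

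The hard part will be to pin down signs and identifications so that \(H(\ffbar^{*})\) is literally the identity rather than merely an automorphism; this is a bookkeeping exercise making essential use of \eqref{eq:homotopy-S} and \Cref{thm:P-S-sigma}. As a cross-check I would note that \(\Kl\) is a free \(\Ll\)-resolution of~\(\kk\) and \(C(ET)\) is an \(\Ll\)-module acyclic over~\(\kk\), so applying \(\kk\otimes_{\Ll}-\) to the quasi-iso~\(\ff\) of \Cref{thm:f-coalg} automatically produces a quasi-isomorphism with target \(\kk\otimes_{\Ll}C(ET)\), which is quasi-isomorphic to \(C(BT)\) via the principal bundle structure. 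This confirms the quasi-iso property of~\(\ffbar\), but the explicit computation on~\(y_{i}\) is still needed to recognize the induced map as the identity.
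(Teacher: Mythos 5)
Your proof is correct and takes essentially the same route as the paper: both unpack the recursion to \(\ffbar(y_i) = \pi_*\,S(c_i \ast e_0)\), identify this as the transgression of \(\phi(x_i)\) via the relations \(d\ff(y_i) = \iota_*(\phi(x_i))\) and \(\ffbar(y_i)=\pi_*\ff(y_i)\), and then conclude because the map is determined by its values on the degree-two generators (the paper phrases this dually, in terms of cogenerators of \(\Sl\)). The sign bookkeeping you flag as ``the hard part'' (and your appeal to \Cref{thm:P-S-sigma}) is not actually needed: the \(y_i\in\Sl\) are \emph{defined} as the transgressions of the \(x_i\), so once the transgression relation for \(\ffbar(y_i)\) is exhibited, agreement on cogenerators is automatic and the rest is the universal coefficient theorem.
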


\begin{proof}
  Being the transpose of a dgc map, \(\ffbar^{*}\) clearly is a morphism of dgas.
  
  Let \(1\le i\le n\).
  By construction, \(\ffbar(y_{i})\)
  corresponds to~\(\phi(x_{i})\) under transgression:
  Let \(\iota\colon T\hookrightarrow ET\) be the inclusion
  of the fibre (over~\(b_{0}\)).
  Then
  \(\iota_{*}(\phi(x_{i})) = d\ff(y_{i})\) and \(\ffbar(y_{i}) = \pi_{*}\ff(y_{i})\).
  This means that \(H(\ffbar)\) the identity map on cogenerators, hence in general.
  By the universal coefficient theorem (or spectral sequence),
  the same conclusion holds for~\(H^{*}(f^{*})\).
\end{proof}

\subsection{Hga formality}

We say that a (non-degenerate) simplex~\(\sigma\in ET\) \newterm{appears} in an element of~\(C(ET)\)
if its coefficient in this chain is non-zero; an analogous definition applies to tensor products of chain complexes.

\begin{lemma}
  \label{thm:S-S-P-Sigma}
  Let \(0\le k\le n+1\), \(a\in\Ll_{1}\) and~\(c\in\Sl_{n}\). For any simplex~\(\sigma\in(ET)_{n+1}\) appearing in~\(\ff(a\cdot c)\) we have
  \begin{equation*}
    (S\otimes S)\,P^{n+1}_{k}(\sigma) = 0.
  \end{equation*}
\end{lemma}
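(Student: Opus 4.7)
My plan is to argue by induction on $n = \deg c$, reducing to the case $c = y_\alpha$ by linearity. The key tools are Lemma~\ref{thm:shuffle-equiv-partial}, which expresses $P^{n+1}_k\bigl(\ac{g}{m}{\tau}\bigr)$ in terms of $P^n_\bullet(\tau)$, and Lemma~\ref{thm:P-S-sigma}\,\ref{thm:P-S-sigma-1}, which expresses $P^n_\bullet(S\rho)$ in terms of $P^{n-1}_\bullet(\rho)$. The identities $Se_0=0$ and $SS=0$ from~\eqref{eq:properties-S} will then do most of the work.

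For the base case $n=0$, any non-degenerate simplex appearing in $\ff(a)=\phi(a)\ast e_0$ has the form $\sigma=(g,[1_0])$ for some loop~$g$. A direct computation gives $P^1_0(\sigma)=e_0\otimes\sigma$ and $P^1_1(\sigma)=\sigma\otimes e_0$, both of which are annihilated by $S\otimes S$ thanks to $Se_0=0$.

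For the inductive step (so $n\ge 2$), every simplex~$\tau$ appearing in $\ff(c)=S\,\ff(dc)$ has the form $\tau=S\rho$ with $\rho$ appearing in $\ff(dc)=\sum_i\ff(x_i\cdot y_{\alpha|i})$. Writing $\sigma=\ac{g}{m}{\tau}=\ac{g}{m}{S\rho}$ and combining the two lemmas, the expression $P^{n+1}_k(\sigma)$ falls into four cases: for $k=0$ it equals $e_0\otimes \Ac{g}{m}S\rho$; for $0<k\le m$ it equals, up to sign, $(S\otimes \Ac{g}{m-k})P^{n-1}_{k-1}(\rho)$; for $k=1$ and $m=0$ it equals $\Ac{g}{0}e_0\otimes S\rho$; and for $k\ge 2$ with $k>m$ it equals $(\Ac{g}{m}S\otimes 1)P^{n-1}_{k-2}(\rho)$. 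Applying $S\otimes S$, the first case vanishes via $Se_0=0$, while the second and third vanish via $SS=0$.

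Only the fourth case is non-trivial, and here the inductive hypothesis enters. A straightforward Koszul computation yields the identity $(S\otimes S)(\Ac{g}{m}S\otimes 1)=(S\Ac{g}{m}\otimes 1)(S\otimes S)$, so the question reduces to whether $(S\otimes S)P^{n-1}_{k-2}(\rho)$ vanishes. Since $\rho$ appears in $\ff(x_i\cdot y_{\alpha|i})$ with $y_{\alpha|i}\in\Sl_{n-2}$ and $k-2\in[0,n-1]$, this is precisely the statement of the lemma for the pair $(x_i,y_{\alpha|i})$, which holds by induction. The main obstacle is really just bookkeeping: tracking the Koszul signs through the partial-diagonal identities and keeping the four cases cleanly separated. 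Conceptually, every case reduces either to $SS=0$, to $Se_0=0$, or to a smaller instance of the lemma.
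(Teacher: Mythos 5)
Your proof is correct and follows essentially the same route as the paper's: induction on $n$ (with $n=0$ handled directly via $Se_0=0$), factoring $\sigma=\ac{g}{m}{S\rho}$, applying Lemma~\ref{thm:shuffle-equiv-partial} and Lemma~\ref{thm:P-S-sigma}\,\ref{thm:P-S-sigma-1} to break into cases on $k$ versus $m$, and disposing of the cases via $Se_0=0$, $SS=0$, or the inductive hypothesis on $P^{n-1}_{k-2}(\rho)$. The case split you use ($k=0$; $0<k\le m$; $k=1$, $m=0$; $k\ge 2$, $k>m$) is a mild reorganization of the paper's ($k\le m$ with subcases $k=0$, $k>0$; $k>m$ with subcases $k=1$, $k>1$) but is logically identical, and the commutation identity $(S\otimes S)(\Ac{g}{m}S\otimes 1)=(S\Ac{g}{m}\otimes 1)(S\otimes S)$ you invoke in the final case is exactly the manipulation the paper performs there.
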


\begin{proof}
  By construction and formula~\eqref{eq:shuffle-agm},
  the simplex~\(\sigma\) is of the form~\(\ac{g}{m}{\tau}\) for some loop~\(g\in T_{1}\),
  some~\(0\le m\le n\) and some \(n\)-simplex~\(\tau\) appearing in~\(\ff(c)\).

  If \(n=0\), then \(\tau=e_{0}\). Hence
  \begin{equation}
    P^{1}_{0}(\sigma) = e_{0}\otimes g \ast e_{0}
    \qquad\text{and}\qquad
    P^{1}_{1}(\sigma) = g \ast e_{0}\otimes e_{0}
  \end{equation}
  by \Cref{thm:shuffle-equiv-partial}, and our claim follows from the second identity in~\eqref{eq:properties-S}.
  
  Now consider the case~\(n>0\). The definition of the map~\(\ff\) implies that \(\tau\) is of the form~\(S\rho\) where
  \(\rho\) is a simplex appearing in~\(\ff(\tilde a\cdot\tilde c)\) with \(\tilde a\in\Ll_{1}\) and~\(\tilde c\in\Sl_{n-2}\).

  Assume \(k\le m\). Then
  \begin{equation}
    P^{n+1}_{k}(\sigma) = P^{n+1}_{k}(\ac{g}{m}{S\rho})
    = (-1)^{k}\,(1\otimes\Ac{g}{m-k})\,P^{n}_{k}(S\rho)
  \end{equation}
  where we have once again used \Cref{thm:shuffle-equiv-partial}.
  In the case~\(k=0\) we obtain
  \begin{equation}
    (S\otimes S)\,P^{n+1}_{0}(\sigma) = Se_{0}\otimes S\,\ac{g}{m}{S\rho} = 0
  \end{equation}
  by \Cref{thm:P-S-sigma}\,\ref{thm:P-S-sigma-1} and~\eqref{eq:properties-S}.
  If \(k>0\), then
  \begin{equation}
    (S\otimes S)\,P^{n+1}_{k}(\sigma) = (-1)^{k}\, (S\,S\otimes S\,\Ac{g}{m-k})\,P^{n-1}_{k-1}(\rho) = 0
  \end{equation}
  again by \Cref{thm:P-S-sigma}\,\ref{thm:P-S-sigma-1} and the first identity in~\eqref{eq:properties-S}.

  In the case~\(k>m\), we have
  \begin{equation}
    P^{n+1}_{k}(\sigma) = (\Ac{g}{m}\otimes 1)\,P^{n}_{k-1}(S\rho).
  \end{equation}
  For~\(k=1\) this gives
  \begin{equation}
    (S\otimes S)\,P^{n+1}_{1}(\sigma) 
    = - S\,\ac{g}{m}{e_{0}}\otimes S\,S\rho = 0.
  \end{equation}
  If \(k>1\), we finally get
  \begin{align}
      (S\otimes S)\,P^{n+1}_{k}(\sigma) &= (S\,\Ac{g}{m}\,S\otimes S)\,P^{n-1}_{k-2}(\rho) \\
      \notag &= (S\,\Ac{g}{m}\otimes 1)\,(S\otimes S)\,P^{n-1}_{k-2}(\rho) = 0
  \end{align}
  by induction.
\end{proof}

For~\(0\le k<l\le n\) we define
\begin{align}
    Q^{n}_{k,l}\colon C_{n}(ET) &\to C_{n-l+k+1}(ET)\otimes C_{l-k}(BT), \\
    \notag \sigma &\mapsto \partial_{k+1}^{l-1}\sigma \otimes \pi_{*}\,\partial_{0}^{k-1}\,\partial_{l+1}^{n}\sigma \\
    \notag &\qquad = \sigma(0,\dots,k,l,\dots,n)\otimes\pi_{*}\sigma(k,\dots,l).
\end{align}
This operation is related to the \(\cupone\)-product since for \(\sigma\in C_{n}(ET)\) we have
\begin{equation}
  \label{eq:AW121-Qnkl}
  (1\otimes\pi_{*})\,\AWu{(1,2,1)}(\sigma) = \sum_{0\le k<l\le n}(-1)^{(n-l)(l-k)+k}\,Q^{n}_{k,l}(\sigma),
\end{equation}
compare \cite[\S 2.2.8]{BergerFresse:2004}.

\begin{lemma}
  \label{eq:Qnkl-sigma}
  Let \(0\le k<l\le n\) and~\(a\cdot c\in\Kl_{n}\). For any \(n\)-simplex~\(\sigma\in ET\) appearing in~\(\ff(a\cdot c)\) we have
  \begin{equation*}
    Q^{n}_{k,l}(\sigma) = 0.
  \end{equation*}
\end{lemma}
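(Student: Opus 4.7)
The plan is to argue by induction on $n=\deg(a\cdot c)$; the case $n=0$ is vacuous.  By linearity I may assume $a\in\Ll$ is a wedge of distinct generators, so the recursive definition of $\ff$ yields either $\sigma=\Ac{c_j}{m}\tau$ (with $\tau$ in $\ff(\Kl_{n-1})$, when $\deg a>0$) or $\sigma=S\tau$ (with $\tau$ in $\ff(dc)\subset\ff(\Kl_{n-1})$, when $a=1$).

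In the first case, I will use the identity $\pi_*\,\Ac{c_j}{m}=s_m\,\pi_*$ together with the simplicial relations for $\partial_j s_m$ to split the analysis on whether any face operator in $\partial_0^{k-1}\partial_{l+1}^n$ equals $m$ or $m+1$.  When $k\le m<l$, none does, so $\pi_*\partial_0^{k-1}\partial_{l+1}^n\sigma$ becomes an $s_{m'}$-degeneracy and vanishes in normalized $BT$-chains.  In the complementary range the $s_m$ is cancelled, and a parallel computation for the first factor (using $\partial_j(g\cdot\sigma)=(\partial_j g)\cdot\partial_j\sigma$ for the loop action) will give $Q^n_{k,l}(\sigma)=\pm(\Ac{c_j}{m'}\otimes 1)\,Q^{n-1}_{k',l'}(\tau)$ for appropriately shifted indices, which is zero by the inductive hypothesis.

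In the second case, the identities~\eqref{eq:d-S-1}--\eqref{eq:d-S-3} yield
\[
\partial_{k+1}^{l-1}(S\tau)=S\partial_k^{l-2}\tau,\qquad
\partial_0^{k-1}\partial_{l+1}^n(S\tau)=\begin{cases}S\partial_l^{n-1}\tau&\text{if }k=0,\\ \partial_0^{k-2}\partial_l^{n-1}\tau&\text{if }k\ge1,\end{cases}
\]
so for $k\ge1$ I obtain $Q^n_{k,l}(S\tau)=(S\otimes 1)\,Q^{n-1}_{k-1,l-1}(\tau)$, which vanishes by induction.  For $k=0$, $l=n$ the first factor equals $S\partial_0^{n-2}\tau=Se_0=s_0e_0$, hence zero in normalized chains.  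The main obstacle is the remaining subcase $k=0$ with $1\le l<n$: here $\tau$ itself has the form $\Ac{c_j}{m}\rho$ with $\rho$ of degree $n-2$ in $\ff(\Kl)$, and my plan is to combine the analysis of the first case (the $\partial_j s_m$-identities for the inner $\Ac{c_j}{m}$) with the identity $\pi_*S(g_p,[g_{p-1},\dots,g_0])=[g_p,g_{p-1},\dots,g_0]$ to express each summand of $Q^n_{0,l}(S\Ac{c_j}{m}\rho)$ either as a term whose $BT$-factor carries an $s_{m'}$-degeneracy, or as $(S\,\Ac{c_j}{m''}\otimes 1)\,Q^{n-2}_{k'',l''}(\rho)$, which vanishes by the inductive hypothesis applied to $\rho$.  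Careful bookkeeping of how the degeneracy index $m$ shifts through iterated face operators will be the main technical difficulty.
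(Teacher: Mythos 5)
Your induction set‑up, the analysis of the case $\deg a>0$ (writing $\sigma=\Ac{c_j}{m}{\tau}$ and splitting according to whether the face operators in $\partial_0^{k-1}\partial_{l+1}^n$ overrun the degeneracy index $m$), and the subcase $k\ge1$ for $\sigma=S\tau$ all match the paper. Your extra treatment of $k=0$, $l=n$ (the last vertex $\partial_0^{n-2}\tau$ is $e_0$, so $S\partial_0^{n-2}\tau=Se_0=0$) is correct, although it deserves a sentence: one must check by a small induction on the recursive definition of $\ff$ that every simplex in the image of $\ff$ has $e_0$ as its last vertex. The paper does not isolate $l=n$ because its argument for $k=0$ covers $1\le l\le n$ uniformly.

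The genuine gap is in the subcase you single out, $k=0$ and $1\le l<n$. Your plan is to write $\tau=\Ac{c_j}{m}{\rho}$ and reduce a summand of $Q^n_{0,l}(\sigma)$ to $(S\Ac{c_j}{m''}\otimes 1)\,Q^{n-2}_{k'',l''}(\rho)$. This cannot work as stated: pushing $\partial_1^{l-1}$ past $S$ gives $S\partial_0^{l-2}\tau$, and pushing $\partial_0^{l-2}$ further past $\Ac{c_j}{m}$ (whichever range of $m$ one is in) produces $S\Ac{c_j}{m''}$ applied to a face chunk of $\rho$ that \emph{contains} $\partial_0$. But the first tensor factor of $Q^{n-2}_{k'',l''}(\rho)$ is $\partial_{k''+1}^{l''-1}\rho$, a middle chunk that \emph{never} contains $\partial_0$. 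No choice of $(k'',l'')$ remedies this, and expanding $\rho=S\rho'$ just reproduces the same mismatch one level down. In other words, the operator $Q$ is not stable under the recursion $\tau\mapsto S\tau$ when $k=0$, so the induction on $Q$ alone does not close.

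The paper's resolution is a change of operator rather than further unwinding. It notices that $S\partial_0^{l-2}\tau=S\partial_0^{l-1}S\tau=S\partial_0^{l-1}\sigma$, so the two factors of $Q^{n+1}_{0,l}(\sigma)$ are exactly $\partial_0^{l-1}\sigma$ and $\partial_{l+1}^{n+1}\sigma$ (up to swapping, an extra $S$, and $\pi_*$), i.e.\ the two halves of the partial diagonal $P^{n+1}_l(\sigma)$. Applying \Cref{thm:P-S-sigma}\,\ref{thm:P-S-sigma-1} then turns the expression into $\pm(1\otimes\pi_*)\,T\,(S\otimes S)\,P^n_{l-1}(\tau)$, which vanishes by the separately proved \Cref{thm:S-S-P-Sigma}. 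That lemma carries out the kind of $\Ac{g}{m}$/$S$ bookkeeping you are contemplating, but at the level of the partial diagonal $P$ (via \Cref{thm:shuffle-equiv-partial}), where the front‑face/back‑face structure \emph{is} preserved by $S$ and by $\Ac{g}{m}$. Your plan neither invokes \Cref{thm:S-S-P-Sigma} nor supplies a substitute, and without the $Q\to P$ conversion the $k=0$, $1\le l<n$ case does not go through.
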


\begin{proof}
  We proceed by induction on~\(n\), the case~\(n=0\) being void.
  For the induction step from~\(n\) for~\(n+1\), we start by considering the case~\(\deg{a}=0\), which entails \(n\ge1\).
  The definition of~\(\ff\) then implies that \(\sigma\) is of the form~\(\sigma=S\tau\) 
  for some~\(n\)-simplex \(\tau\in ET\)
  that appears in~\(\ff(\tilde a\cdot\tilde c)\) for some~\(\tilde a\in\Ll_{1}\) and some~\(\tilde c\in\Sl_{n-1}\).

  If \(1\le k<l\le n+1\), we get
  \begin{align}
    Q^{n+1}_{k,l}(\sigma) &= \partial_{k+1}^{l-1} \, S \tau \otimes \pi_{*}\,\partial_{0}^{k-1}\,\partial_{l+1}^{n+1}\, S \tau
    = S \, \partial_{k}^{l-2} \tau \otimes \pi_{*}\,\partial_{0}^{k-1} \,S \, \partial_{l}^{n}\tau \\
    \notag &= S \, \partial_{k}^{l-2} \tau \otimes \pi_{*}\,\partial_{0}^{k-2} \, \partial_{l}^{n}\tau
    = (S\otimes 1)\,Q^{n}_{k-1,l-1}(\tau) = 0
  \end{align}
  by induction.
  
  For~\(0<l\le n+1\) we have
  \begin{align}
    Q^{n+1}_{0,l}(\sigma) &= \partial_{1}^{l-1} \, S \tau \otimes \pi_{*}\,\partial_{l+1}^{n+1}\, S \tau
    = S \, \partial_{0}^{l-2} \tau \otimes \pi_{*}\, \partial_{l+1}^{n+1} S\tau \\
    \notag &= S \, \partial_{0}^{l-1} S\tau \otimes \pi_{*}\, \partial_{l+1}^{n+1} S\tau
    = \pm (1\otimes \pi_{*})\,T\,(1\otimes S)\,P^{n+1}_{l}(S\tau) \\
    \shortintertext{where \(T\) denotes the transposition of factors,}
    \notag &= \mp (1\otimes\pi_{*})\,T\,(S\otimes S)\,P^{n}_{l-1}(\tau) = 0
  \end{align}
  by Lemmas~\ref{thm:P-S-sigma}\,\ref{thm:P-S-sigma-1} and~\ref{thm:S-S-P-Sigma}.

  Now we turn to the case~\(\deg{a}>0\). Then a simplex appearing in~\(\ff(a\cdot c)\) is of the form~\(\sigma=\ac{g}{m}{\tau}\)
  for some loop~\(g\in T_{1}\), some \(n\)-simplex~\(\tau\in ET\) appearing in~\(\ff(c)\) and some~\(0\le m\le n\). We have
  \begin{align}
    \label{eq:Qkl-a}
      Q^{n+1}_{k,l}(\sigma) &= Q^{n+1}_{k,l}\bigl(\ac{g}{m}{\tau}\bigr) = Q^{n+1}_{k,l}\bigl(s_{\setzero{n}\setminus m}\,g\cdot s_{m}\tau\bigr) \\
      \notag &= \partial_{k+1}^{l-1}\bigl(s_{\setzero{n}\setminus m}\,g\cdot s_{m}\tau\bigr) \otimes \pi_{*}\,\partial_{0}^{k-1}\,\partial_{l+1}^{n+1}\,s_{m}\tau.
  \end{align}

  Assume \(l>m\). Then
  \begin{equation}
    \partial_{0}^{k-1}\,\partial_{l+1}^{n+1}\,s_{m}\tau = \partial_{0}^{k-1}\,s_{m}\,\partial_{l}^{n}\tau =
    \begin{cases}
      s_{m-k}\,\partial_{0}^{k-1}\,\partial_{l}^{n}\tau & \text{if~\(k\le m\),} \\
      \partial_{0}^{k-2}\,\partial_{l}^{n}\tau & \text{if~\(k>m\).}
    \end{cases}
  \end{equation}
  In the first case we obtain a degenerate simplex, so that \eqref{eq:Qkl-a} vanishes.
  In the second case we have
  \begin{align}
      Q^{n+1}_{k,l}(\sigma) &= 
      \bigl(s_{\setzero{n-l+k+1}\setminus m}\,g\bigr)\cdot \bigl(s_{m}\,\partial_{k}^{l-2}\tau\bigr) \otimes \pi_{*}\,\partial_{0}^{k-2}\,\partial_{l}^{n}\tau \\
      \notag &= (\Ac{g}{m}\otimes 1)\Bigl(\partial_{k}^{l-2}\tau\otimes\pi_{*}\,\partial_{0}^{k-2}\,\partial_{l}^{n}\tau \Bigr) \\
      \notag &= (\Ac{g}{m}\otimes 1) \, Q^{n}_{k-1,l-1}(\tau) = 0
  \end{align}
  by induction.

  Finally consider \(l\le m\). Then
  \begin{align}
      Q^{n+1}_{k,l}(\sigma) &= 
      \bigl(\partial_{k+1}^{l-1}\,s_{\setzero{n}\setminus m}\,g\bigr)\cdot \bigl(\partial_{k+1}^{l-1}\,s_{m}\tau\bigr) \otimes \pi_{*}\,\partial_{0}^{k-1}\,\partial_{l+1}^{n+1}\,s_{m}\tau \\
      \notag &= \bigl(s_{\setzero{n-l+k+1}\setminus m-l+k+1}g\bigr)\cdot \bigl(s_{m+k-l+1}\,\partial_{k+1}^{l-1}\tau\bigr) \otimes \pi_{*}\,\partial_{0}^{k-1}\,\partial_{l+1}^{n}\tau \\
      \notag &=  \Ac{g}{m-l+k+1}\bigl( \partial_{k+1}^{l-1}\tau \bigr) \otimes \pi_{*}\,\partial_{0}^{k-1}\,\partial_{l+1}^{n}\tau \\
      \notag &= (\Ac{g}{m-l+k+1}\otimes 1)\, Q^{n}_{k,l}(\tau) = 0
  \end{align}
  by induction. This completes the induction step and the proof.
\end{proof}

We write \(\setone{n}=\{1,\dots,n\}\).
We say that a surjection~\(u\colon\setone{k+l}\to\setone{l\vphantom{+}}\) \newterm{has an enclave}
between positions~\(i\) and~\(i'\ge i+2\) if \(u(i)=u(i')\) and if the values~\(u(j)\) for~\(i<j<i'\) do not also appear at positions~\(\le i\) or~\(\ge i'\).
For example, the surjection~\((1,2,3,2,1,4)\) has exactly two enclaves, namely between positions~\(1\) and~\(5\) and between~\(2\) and~\(4\).

\begin{proposition}
  \label{thm:enclave-0}
  If the surjection~\(u\colon\setone{k+l}\to\setone{l\vphantom{+}}\) has an enclave, then
  \begin{equation*}
    \AWu{u}\,f = 0.
  \end{equation*}
\end{proposition}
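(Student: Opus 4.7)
The plan is to use the naturality identity $\AWu{u}\,\ffbar = \pi_{*}^{\otimes l}\,\AWu{u}\,\ff$ (which follows from simplicial naturality of the interval-cut operad action applied to the projection $\pi\colon ET\to BT$) to reduce the claim to showing that $\pi_{*}^{\otimes l}\,\AWu{u}(\sigma) = 0$ in $C(BT)^{\otimes l}$ for every simplex $\sigma$ appearing in the image of $\ff$. The first step is to fix an enclave of $u$ between positions $i$ and $i'$ with $u(i)=u(i')=j$, and let $V = \{u(i+1), \dots, u(i'-1)\}$ denote the set of inner values, which by hypothesis is disjoint from all other values taken by $u$.

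Next one expands $\AWu{u}(\sigma)$ as a signed sum over vertex partitions $0 = p_{0} \le p_{1} \le \dots \le p_{k+l} = n$. For each summand, set $p = p_{i}$ and $q = p_{i'-1}$. The factors of the resulting tensor split into two groups: those indexed by values in $V$ are faces of $\sigma'' = \sigma(p, \dots, q) = \partial_{0}^{p-1}\partial_{q+1}^{n}\sigma$, and those indexed by values outside $V$ are faces of $\sigma' = \sigma(0, \dots, p, q, \dots, n) = \partial_{p+1}^{q-1}\sigma$; this is precisely what the enclave hypothesis enforces, since no value of $u$ straddles the interval $(i,i')$. Grouping summands by the pair $(p, q)$ and using the combinatorial structure of the interval-cut operad, the total $(p,q)$-contribution may be written as a tensor-interleaved product of an outer interval-cut operation applied to $\sigma'$ (producing the factors outside $V$) and an inner interval-cut operation applied to $\sigma''$ (producing the factors inside $V$).

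By \Cref{eq:Qnkl-sigma}, the pure tensor $\sigma' \otimes \pi_{*}\sigma''$ vanishes in $C(ET) \otimes C(BT)$, forcing either $\sigma'$ to be degenerate in $ET$ or $\pi_{*}\sigma''$ to be degenerate in $BT$. In the first case, $\pi_{*}\sigma'$ is also degenerate in $BT$ (since $\pi_{*}$ commutes with degeneracies), and since interval-cut operations vanish on degenerate simplices in the normalized chain complex, the outer tensor collapses to zero after projection. In the second case, the analogous vanishing applies to the inner tensor built from $\pi_{*}\sigma''$. Either way, each $(p,q)$-contribution vanishes, and summing over $(p,q)$ gives the result. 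The main obstacle is justifying the interleaved factorization of $\AWu{u}$ at fixed $(p,q)$ into the product of outer and inner interval-cut operations; this is a combinatorial exercise on the surjection operad, using the enclave hypothesis to guarantee that the ``outer'' and ``inner'' pieces are well-defined (i.e., that no value other than $j$ is shared between them).
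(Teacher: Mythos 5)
Your proposal is correct and follows essentially the same approach as the paper. Both reduce via naturality of interval cuts to chains on~\(BT\), group the cuts by the pair~\((p,q)=(p_{i},p_{i'-1})\) and use the enclave hypothesis to factor each \((p,q)\)-contribution into an outer cut of~\(\sigma'\) and an inner cut of~\(\sigma''\), then conclude from \(Q^{n}_{p,q}(\sigma)=0\) (\Cref{eq:Qnkl-sigma}) together with the vanishing of interval-cut operations on degenerate normalized chains.
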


\begin{proof}
  We start with a general observation. Let \(\sigma\) be a simplex in some simplicial set.
  If \(u\) has an enclave, then
  it follows from the definition of interval cut operations~\cite[Sec.~2.2]{BergerFresse:2004}
  that any tensor product of simplices appearing in~\(\AWu{u}(\sigma)\)
  can be obtained from a term~\(\tau\otimes\rho\) appearing in~\(\AWu{(1,2,1)}(\sigma)\)
  by applying an interval cut to~\(\tau\) (at one choice of positions, not at all positions as in~\cite[\S 2.2.6]{BergerFresse:2004}),
  another one to~\(\rho\) and permuting the factors of the result.

  Now let \(c\in\Sl_{n}\) for some~\(n\ge0\).
  By definition and naturality we have
  \begin{equation}
    \AWu{u}\,\ffbar(c) = \AWu{u}\,\pi_{*}\,\ff(c) = (\pi_{*}\otimes\pi_{*})\,\AWu{u}\,\ff(c).
  \end{equation}
  Our previous remarks together with~\eqref{eq:AW121-Qnkl} show that it suffices to prove
  that \(Q^{n}_{k,l}(\sigma)\) vanishes for any~\(\sigma\in ET\) appearing in~\(\ff(c)\) and any~\(0\le k<l\le n\).
  But this has been done in \Cref{eq:Qnkl-sigma}.
\end{proof}

\begin{theorem}
  \label{thm:ffbar-hga-formal}
  The map~\(\ffbar^{*}\colon C^{*}(BT)\to H^{*}(BT)\) is a quasi-isomorphism of hgas
  that additionally annihilates all extended hga operations~\(F_{kl}\) with~\((k,l)\ne(1,1)\).
  In particular, \(C^{*}(BT)\) is formal as an hga.
\end{theorem}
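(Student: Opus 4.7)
The plan is to combine \Cref{thm:quiso-ffbar}, which already establishes that \(\ffbar^{*}\) is a quasi-isomorphism of dgas inducing the identity in cohomology, with a combinatorial check that the surjections encoding the relevant operations possess enclaves. Because \(H^{*}(BT)=\Sl^{*}\) is a commutative polynomial algebra and therefore carries the trivial (extended) hga structure, showing that \(\ffbar^{*}\) is a morphism of hgas amounts to showing \(\ffbar^{*}\circ E_{k}=0\) for every \(k\ge1\), while the claimed additional property is exactly that \(\ffbar^{*}\circ F_{kl}=0\) for \((k,l)\ne(1,1)\). Formality of \(C^{*}(BT)\) as an hga then follows immediately.

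Dualizing via~\eqref{eq:def-Ek-cochains} and the definition of \(F_{kl}\) through the surjections \(f_{kl}\) from~\eqref{eq:surjection-fkl}, the desired identities reduce to
\begin{equation*}
  \AWu{e_{k}}\,\ffbar = 0 \quad (k\ge1)
  \qquad\text{and}\qquad
  \AWu{f_{kl}}\,\ffbar = 0 \quad ((k,l)\ne(1,1)).
\end{equation*}
Both are instances of \Cref{thm:enclave-0}, so the remaining task is to exhibit an enclave in each surjection in these ranges.

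For \(e_{k}=(1,2,1,3,1,\ldots,1,k+1,1)\) with \(k\ge1\), the value \(2\) occurs only at position \(2\), flanked by two copies of~\(1\), giving an enclave between positions~\(1\) and~\(3\). For \(f_{kl}\) with \((k,l)\ne(1,1)\), I plan to split into two complementary cases. When \(l\ge2\) (and \(k\ge1\)), the value \(k+l\) appears exactly once, at the penultimate position of \(f_{kl}\), and is flanked by two copies of~\(k\); this yields an enclave at the right end of the sequence. When \(l=1\) and \(k\ge2\), the value \(1\) appears only at position~\(2\), flanked by two copies of~\(k+1\); this yields an enclave at the left end. Since these cases jointly cover all \((k,l)\ne(1,1)\), the enclave hypothesis of \Cref{thm:enclave-0} is met.

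There is no real obstacle beyond this combinatorial verification, because the analytic heart of the argument---the tracing of \(\ff\) through the operators \(Q^{n}_{k,l}\) and the reduction to the \(\AWu{(1,2,1)}\) case---has already been absorbed into \Cref{thm:enclave-0} via \Cref{thm:S-S-P-Sigma} and \Cref{eq:Qnkl-sigma}. Once the enclave case analysis is written out, the formality statement follows immediately, since \(\ffbar^{*}\) is then a quasi-isomorphism of hgas from \(C^{*}(BT)\) to its cohomology equipped with the trivial hga structure.
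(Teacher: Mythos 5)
Your proposal is correct and follows the paper's proof essentially verbatim: reduce to \Cref{thm:enclave-0} (already containing the work of \Cref{thm:S-S-P-Sigma} and \Cref{eq:Qnkl-sigma}) and observe that the surjections \(e_{k}\) with \(k\ge1\) and \(f_{kl}\) with \((k,l)\ne(1,1)\) have enclaves. Your explicit case analysis of where the enclaves sit (around the unique occurrence of~\(2\) in~\(e_{k}\); around~\(k+l\) at the right end when \(l\ge 2\); around~\(1\) at the left end when \(l=1\), \(k\ge2\)) is a correct elaboration of a detail the paper leaves implicit, and the two cases indeed cover all \((k,l)\ne(1,1)\).
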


\begin{proof}
  We know from \Cref{thm:quiso-ffbar} that \(\ffbar^{*}\) is a quasi-isomorphism of dgas.
  The hga operations~\(E_{k}\) with~\(k\ge1\) as well as the operations~\(F_{kl}\) with~\((k,l)\ne(1,1)\)
  are defined by surjections having enclaves, see~\eqref{eq:surjection-ek} and~\eqref{eq:surjection-fkl}.
  Hence the claim follows by dualizing \Cref{thm:enclave-0}.
\end{proof}

\subsection{The case where \texorpdfstring{\(2\)}{2} is invertible}
\label{sec:inverting-2}

  It would greatly simplify the discussion of the next sections if the formality map~\(\ffbar^{*}\)
  also annihilated the operation~\(F_{11}=-\cuptwo\). However, this is impossible to achieve
  for the transpose of a quasi-isomorphism \(f\colon H(BT)\to C(BT)\),
  independently of the coefficient ring~\(\kk\). This can be seen as follows.

  Take a non-zero~\(y\in H_{2}(BT)\) and set \(w=f(y)\in C_{2}(BT)\). Choose a cochain \(a\in C^{2}(BT)\)
  such that \(a(w)\ne0\). Let \(\sigma\) be a \(2\)-simplex appearing in~\(w\) with coefficient~\(w_{\sigma}\ne0\)
  and such that \(a(\sigma)\ne0\).
  Define \(b\in C^{2}(BT)\) by~\(b(\sigma)=1\) and \(b(\tau)=0\) for~\(\tau\ne\sigma\). Then
  \begin{equation}
    (a\cuptwo b)(w) = \sum_{\tau} w_{\tau}\,a(\tau)\,b(\tau) = w_{\sigma}\,a(\sigma) \ne 0,
  \end{equation}
  where we have used the identity~\((a\cuptwo b)(\sigma)=a(\sigma)\,b(\sigma)\), 
  \cf~\cite[\S 2.2.8]{BergerFresse:2004}.
  Hence \(f^{*}(a\cuptwo b)\ne0\), and analogously \(f^{*}(b\cuptwo a)\ne0\).
  Note that \(a\) may be a cocycle, but \(b\) is not.
  (If \(\sigma=[\,g\,|\,1_{0}\,]\) for a loop~\(1_{1}\ne g\in T_{1}\), then \(b\bigl(d\,[\,s_{0}g^{-1}\,|\,g\,|\,1_{0}\,]\bigr)\ne0\).)

  In general one cannot even expect \(\ffbar^{*}\)
  to annihilate all \(\cuptwo\)-products of cocycles as they are related to Steenrod squares.
  For~\(\kk=\Z_{2}\) and any non-zero \([a]\in H^{2}(BT)\) one has
  \begin{equation}
    [a] = \Sq^{0}[a] = [a\cuptwo a]\ne0.
  \end{equation}
The situation changes if we can invert \(2\).

\def\gone{a}
\def\gtwo{b}

\begin{proposition}
  \label{thm:ffbar-cuptwo}
  Assume that \(2\) is invertible in~\(\kk\). Then one can choose representatives~\(( c_{i})\)
  such that
  \(\transpp{\ffbar}\) additionally annihilates all \(\cuptwo\)-products of cocycles.
\end{proposition}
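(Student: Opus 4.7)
The plan is to exploit the inversion involution of the commutative simplicial group~$T$. Since $T$ is commutative, $\iota\colon T\to T$, $g\mapsto g^{-1}$, is a group homomorphism; it extends componentwise to a simplicial endomorphism of~$ET$ (commutativity of~$T$ is precisely what makes the $\partial_{0}$~formula in~\eqref{eq:d-k-EG} $\iota$-equivariant) and descends to~$B\iota\colon BT\to BT$. On homology $\iota_{*}=-1$ on~$\Ll_{1}$, so $B\iota_{*}$ acts as $(-1)^{|\alpha|}$ on~$y_{\alpha}\in\Sl$ and dually $B\iota^{*}$ acts as $(-1)^{p}$ on~$H^{2p}(BT)$.

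First I would adjust the representatives of the~$x_{i}$. Given arbitrary loops $c_{i}\in C_{1}(T)$ representing the~$x_{i}$, the chain $\tfrac12(c_{i}-\iota_{*}c_{i})$ still represents~$x_{i}$ (because $\iota_{*}x_{i}=-x_{i}$) and is antisymmetric under inversion; this substitution is where the invertibility of~$2$ enters. With $\iota_{*}c_{i}=-c_{i}$ for all~$i$, the dg-bialgebra map $\phi\colon\Ll\to C(T)$ satisfies $\iota_{*}\phi(a)=(-1)^{\deg{a}}\phi(a)$ by multiplicativity.

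Second, I would prove by induction on degree that $\iota_{*}\circ\ff=\ff\circ\epsilon$, where $\epsilon\colon\Kl\to\Kl$ is the sign involution $a\cdot y_{\alpha}\mapsto(-1)^{\deg{a}+|\alpha|}\,a\cdot y_{\alpha}$. The base case $\ff(1)=e_{0}$ is fixed by~$\iota$. For $\deg{a}>0$ one has $\iota_{*}\bigl(\phi(a)\ast\ff(y_{\alpha})\bigr)=(-1)^{\deg{a}}\,\phi(a)\ast\iota_{*}\ff(y_{\alpha})$, which combined with the inductive hypothesis gives the correct sign. For $|\alpha|>0$ one uses $S\iota_{*}=\iota_{*}S$ (immediate from~\eqref{eq:def-S}) together with the recursion $\ff(y_{\alpha})=S\ff(dy_{\alpha})$. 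Projecting to~$BT$ yields $B\iota_{*}\,\ffbar=\ffbar\circ\epsilon|_{\Sl}$.

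Third, I would conclude by a symmetry argument. By \Cref{thm:cuptwo}, applied to the hga morphism $\transp\ffbar\colon C^{*}(BT)\to\Sl^{*}$ into the commutative target~$\Sl^{*}$, the value $\transp\ffbar(a\cuptwo b)$ depends only on the classes $[a]$,~$[b]$, so it suffices to treat cocycles of even cohomological degrees~$2p$ and~$2q$ (the odd case is trivial since $H^{*}(BT)$ is even-concentrated). The $\cuptwo$~operation is natural under simplicial maps, hence $B\iota^{*}(a\cuptwo b)=(B\iota^{*}a)\cuptwo(B\iota^{*}b)$. Combining this with Step~2, for any basis element~$y_{\gamma}\in\Sl$ one computes
\begin{align*}
  (-1)^{p+q}\,\transp\ffbar(a\cuptwo b)(y_{\gamma})
  &= \transp\ffbar\bigl((B\iota^{*}a)\cuptwo(B\iota^{*}b)\bigr)(y_{\gamma}) \\
  &= \transp\ffbar\bigl(B\iota^{*}(a\cuptwo b)\bigr)(y_{\gamma}) \\
  &= (a\cuptwo b)\bigl(B\iota_{*}\,\ffbar(y_{\gamma})\bigr) \\
  &= (-1)^{|\gamma|}\,\transp\ffbar(a\cuptwo b)(y_{\gamma}).
\end{align*}
Since $\transp\ffbar(a\cuptwo b)\in\Sl^{2(p+q-1)}$, only~$|\gamma|=p+q-1$ contributes, and there $(-1)^{|\gamma|}=-(-1)^{p+q}$; hence $2\,\transp\ffbar(a\cuptwo b)(y_{\gamma})=0$, and the claim follows upon inverting~$2$. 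The main obstacle is Step~2, namely the induction $\iota_{*}\ff=\ff\circ\epsilon$ with correct signs; once this and the simplicial extension of~$\iota$ to~$ET$ are in place, the rest is a clean symmetry computation.
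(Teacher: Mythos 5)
Your proposal is correct and follows essentially the same route as the paper: antisymmetrize the representatives $c_{i}$ under the inversion involution of~$T$ (using invertibility of~$2$), deduce the $\iota$-equivariance of~$\ff$ and~$\ffbar$ inductively from the recursive definition, and then combine \Cref{thm:cuptwo} with the naturality of~$\cuptwo$ under~$B\iota$ to force $\transp{\ffbar}(a\cuptwo b)$ to equal its own negative. The only differences are expository (you spell out the equivariance induction and evaluate against basis elements~$y_{\gamma}$ rather than using the action of~$\iota^{*}$ on~$H^{*}(BT)$ directly), and both versions are sound.
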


\begin{proof}
  Let \(\iota\colon T\to T\) be the group inversion.
  Being a morphism of groups, it induces involutions of~\(ET\) and~\(BT\),
  which we denote by the same letter.
  Recall that \(\iota_{*}\) changes the sign of all generators~\(x_{i}\in H_{1}(T)\) and
  all cogenerators~\(y_{i}\in H_{2}(BT)\). 
  Starting from any set of representatives~\(( c_{i})\), we set
  \begin{equation}
    \label{eq:def-tildecci-in-proof}
    \tildecc_{i} = \textstyle{\frac{1}{2}}\, c_{i}-\textstyle{\frac{1}{2}}\,\iota_{*} c_{i},
  \end{equation}
  so that \(\iota_{*}\tildecc_{i}=-\tildecc_{i}\).
  We construct \(\ff\) and~\(\ffbar\) based on these representatives.
  The equivariance of~\(\ff\) with respect to the involutions follows inductively from the recursive definition, 
  and it entails that of~\(\ffbar\). 

  Now let \(\gone\) and~\(\gtwo\) be cocycles. By \Cref{thm:cuptwo},
  the value~\(\transpp{\ffbar}(\gone\cuptwo\gtwo)\) only depends
  on the cohomology classes of~\(\gone\) and~\(\gtwo\). In particular,
  we may assume that \(\gone\) is of even degree~\(2k\) and \(\gtwo\) of degree~\(2l\).
  Then \(\gone\cuptwo\gtwo\) is of degree~\(2(k+l-1)\), whence
  \begin{equation}
    \iota^{*}\transpp{\ffbar}(\gone\cuptwo\gtwo)
    = -(-1)^{k+l}\transpp{\ffbar}(\gone\cuptwo\gtwo).
  \end{equation}
  On the other hand, we have
  \begin{equation}
    \iota^{*}(\gone\cuptwo\gtwo)
    =\iota^{*}(\gone)\cuptwo\iota^{*}(\gtwo)
  \end{equation}
  by naturality. Now \(\iota^{*}(\gone)\) is cohomologous to~\((-1)^{k}\,\gone\)
  and \(\iota^{*}(\gtwo)\) cohomologous to~\((-1)^{l}\,\gone\), which implies that
  \begin{equation}
    \transpp{\ffbar}(\iota^{*}(\gone\cuptwo\gtwo))
    = (-1)^{k+l}\,\transpp{\ffbar}(\gone\cuptwo\gtwo).
  \end{equation}
  Since \(2\) is invertible in~\(\kk\), this can only happen if the \(\cuptwo\)-product vanishes.
\end{proof}

\section{The kernel of the formality map}
\label{sec:kernel}

{
\let\alpha a
\let\beta b
\let\gamma c

The kernel of the formality map~\(\transpp{\ffbar}\colon C^{*}(BT)\to H^{*}(BT)\) constructed
in the previous section depends on the choice of representatives. It will be convenient
to consider instead an ideal~\(\ax_{X}\lhd C^{*}(X)\) for any simplicial set~\(X\)
such that \(\ax_{BT}\subset\ker\transpp{\ffbar}\) independently of any choices
and also \(\kappa^{*}(\ax_{Y})\subset\ax_{X}\) for any map~\(\kappa\colon X\to Y\).

Let \(X\) be a simplicial set.
Given elements~\(\beta_{0}\),~\dots,~\(\beta_{k}\in C^{*}(X)\), we write the repeated \(\cupone\)-product as
\begin{equation}
  \RC_{0}(\beta_{0}) = \beta_{0}
  \qquad\text{and}\qquad
  \RC_{k}(\beta_{0},\dots,\beta_{k}) = -\RC_{k-1}(\beta_{0},\dots,\beta_{k-1})\cupone\beta_{k}
\end{equation}
for~\(k\ge1\), compare the Gugenheim--May twisting cochain~\eqref{eq:twisiting-GM}.

We define
\begin{equation}
 \ax=\ax_{X}\lhd C^{*}(X) 
\end{equation}
to be the ideal generated by the following elements
where \(a\),~\(b\),~\(a_{\bullet}\),~\(b_{\bullet}\),~\(c_{\bullet}\in C^{*}(X)\):
\begin{enumarabic}
\item
  \label{def-ax-1}
  all elements of odd degree,
\item all coboundaries,
\item
  all elements~\(E_{k}(a;b_{1},\dots,b_{k})\) with~\(k\ge1\),
\item
  \label{def-ax-4}
  all elements~\(F_{kl}(a_{1},\dots,a_{k};b_{1},\dots,b_{l})\) with~\((k,l)\ne(1,1)\),
\item
  \label{def-ax-5}
  all elements~\(a\cuptwo E_{k}(b;c_{1},\dots,c_{k})\) with~\(k\ge2\),
\item
  \label{def-ax-6}
  all elements~\(a\cuptwo \RC_{k}(b_{0},\dots,b_{k})\) with~\(k\ge0\)
  where \(a\) and~\(b_{\bullet}\) are cocycles.
\end{enumarabic}

\begin{lemma}
  Let \(\kappa\colon X\to Y\) be a map of simplicial sets.
  Then \(\kappa^{*}(\ax_{Y})\subset\ax_{X}\).
\end{lemma}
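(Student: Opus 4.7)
The proof is a direct naturality argument. Since the map $\kappa^{*}\colon C^{*}(Y)\to C^{*}(X)$ is a morphism of extended hgas (both the hga operations $E_{k}$ and the extended operations $F_{kl}$ are defined as transposes of interval cut operations, which are natural with respect to simplicial maps, \cf\ \Cref{sec:cochains-hga}), it commutes in particular with the cup product, with $\cupone$, $\cuptwo$, and with all~$E_{k}$, $F_{kl}$. Moreover, $\kappa^{*}$ is degree-preserving and a chain map, so it sends cocycles to cocycles and coboundaries to coboundaries. Consequently it also commutes with the iterated operation~$\RC_{k}$.

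Since $\ax_{Y}$ is generated as a two-sided ideal by the elements listed in \ref{def-ax-1}--\ref{def-ax-6}, every element of~$\ax_{Y}$ is a finite sum of terms of the form $\alpha\cdot g\cdot\beta$ where $g$ is a generator and $\alpha$,~$\beta\in C^{*}(Y)$. Because $\kappa^{*}$ is multiplicative, we have $\kappa^{*}(\alpha\cdot g\cdot\beta)=\kappa^{*}(\alpha)\cdot\kappa^{*}(g)\cdot\kappa^{*}(\beta)$, so it suffices to verify that $\kappa^{*}$ maps each of the six types of generators of~$\ax_{Y}$ into~$\ax_{X}$.

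This is immediate in every case: for \ref{def-ax-1} one uses that $\kappa^{*}$ preserves degree, for \ref{def-ax-2} that it is a chain map, and for \ref{def-ax-3} and~\ref{def-ax-4} the naturality of $E_{k}$ and $F_{kl}$, which yields
\begin{align*}
  \kappa^{*}E_{k}(a;b_{\bullet}) &= E_{k}\bigl(\kappa^{*}a;\kappa^{*}b_{\bullet}\bigr), \\
  \kappa^{*}F_{kl}(a_{\bullet};b_{\bullet}) &= F_{kl}\bigl(\kappa^{*}a_{\bullet};\kappa^{*}b_{\bullet}\bigr).
\end{align*}
For \ref{def-ax-5} one additionally uses the naturality of $\cuptwo$, giving
\begin{equation*}
  \kappa^{*}\bigl(a\cuptwo E_{k}(b;c_{\bullet})\bigr)=\kappa^{*}(a)\cuptwo E_{k}\bigl(\kappa^{*}b;\kappa^{*}c_{\bullet}\bigr),
\end{equation*}
which is a generator of type \ref{def-ax-5} in~$\ax_{X}$. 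For \ref{def-ax-6} one combines the naturality of $\cuptwo$ and $\RC_{k}$ with the fact that $\kappa^{*}$ sends cocycles to cocycles, so that $\kappa^{*}(a\cuptwo \RC_{k}(b_{0},\dots,b_{k}))=\kappa^{*}(a)\cuptwo \RC_{k}(\kappa^{*}b_{0},\dots,\kappa^{*}b_{k})$ is again of type~\ref{def-ax-6}.

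There is no real obstacle here; the statement is essentially a formal consequence of the naturality of every operation entering the definition of~$\ax$. The only point to keep in mind is that the cocycle hypothesis in~\ref{def-ax-6} is preserved under~$\kappa^{*}$, but this is automatic because $\kappa^{*}$ commutes with the differential.
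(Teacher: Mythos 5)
Your proof is correct and is essentially an expanded version of the paper's one-line argument, which simply invokes the naturality of the extended hga operations; the observations you make (that $\kappa^{*}$ is a degree-preserving dga chain map commuting with all $E_{k}$, $F_{kl}$, $\cupone$, $\cuptwo$ and hence with $\RC_{k}$, and that ideal generators map to generators of the same type) are exactly what the paper's terse proof is encapsulating.
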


\begin{proof}
  This follows directly from the naturality of the extended hga operations.
\end{proof}

Let us write
\begin{equation}
  [\alpha,\beta] = \alpha\beta-(-1)^{\deg\alpha\deg\beta}\,\beta\alpha
\end{equation}
for the commutator of~\(\alpha\),~\(\beta\in C^{*}(X)\).

\begin{lemma}
  \label{thm:commutator-ax}
  For all~\(\alpha\),~\(\beta\in C^{*}(X)\),
  \begin{equation*}
    [\alpha,\beta] \equiv 0 \pmax.
  \end{equation*}
\end{lemma}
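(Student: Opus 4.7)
The plan is to use the Hirsch-style homotopy formula~\eqref{eq:cupone-d} that exhibits the $\cupone$-product as a homotopy between the commuted and standard multiplications. Since $a\cupone b = -E_1(a;b)$, any $\cupone$-product lies in $\ax$ by generator family~\ref{def-ax-1}--\ref{def-ax-4} (specifically item~3), and coboundaries lie in $\ax$ by item~2.

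Applying~\eqref{eq:cupone-d} to arbitrary cochains $\alpha$ and $\beta$, we obtain
\begin{equation*}
  [\alpha,\beta] = \alpha\beta - (-1)^{\deg\alpha\deg\beta}\,\beta\alpha
  = d(\alpha\cupone\beta) + d\alpha\cupone\beta + (-1)^{\deg\alpha}\,\alpha\cupone d\beta.
\end{equation*}
Each term on the right belongs to $\ax$: the first is a coboundary, and the other two are $\cupone$-products, hence values of $-E_1$. This proves the congruence.

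I do not expect any obstacle here: the result is essentially a reformulation of the fact that the $E_1$ operation witnesses homotopy commutativity, packaged through the ideal $\ax$ which was manifestly designed to absorb both coboundaries and $E_k$-values.
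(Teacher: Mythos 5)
Your proof is correct and follows exactly the paper's argument: both invoke the identity~\eqref{eq:cupone-d} (equivalently, $d(\cupone)(\alpha;\beta)=[\alpha,\beta]$) and then observe that each term on the right-hand side — a coboundary plus two $\cupone$-products, i.e.\ values of $-E_1$ — lies in $\ax$ by its defining generators. Nothing is missing; the paper simply states the same reasoning more tersely.
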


\begin{proof}
  This results from the definition of~\(\ax\) and the identity
  \begin{equation}
    d(\cupone)(\alpha;\beta) = [\alpha,\beta].
    \qedhere
  \end{equation}
\end{proof}

\begin{lemma}
    \label{thm:cupone-right-derivation-cup}
    The \(\cupone\)-product is a right derivation of the commutator. That is,
    \begin{equation*}
      [\alpha,\beta]\cupone\gamma = (-1)^{\deg\alpha}[\alpha,\beta\cupone\gamma] + (-1)^{\deg\beta\deg\gamma}[\alpha\cupone\gamma,\beta]
    \end{equation*}
    for all~\(\alpha\),~\(\beta\),~\(\gamma\in C^{*}(X)\).
\end{lemma}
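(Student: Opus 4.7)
The plan is to prove the identity by a direct Koszul-sign computation using only the Hirsch formula~\eqref{eq:hirsch-formula} twice. No deeper structural fact about the hga is needed: the Hirsch formula already encodes exactly the failure of $\cup_1$ to be a derivation of the cup product, and what we are being asked is its shadow at the level of commutators.

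First I would expand the left-hand side as
\begin{equation*}
  [\alpha,\beta]\cupone\gamma = (\alpha\beta)\cupone\gamma - (-1)^{\deg\alpha\deg\beta}\,(\beta\alpha)\cupone\gamma,
\end{equation*}
and apply the Hirsch formula to each of the two products. This yields four terms:
\begin{align*}
  [\alpha,\beta]\cupone\gamma
  &= (-1)^{\deg\alpha}\,\alpha(\beta\cupone\gamma) + (-1)^{\deg\beta\deg\gamma}\,(\alpha\cupone\gamma)\,\beta \\
  &\quad - (-1)^{\deg\alpha\deg\beta+\deg\beta}\,\beta(\alpha\cupone\gamma) - (-1)^{\deg\alpha\deg\beta+\deg\alpha\deg\gamma}\,(\beta\cupone\gamma)\,\alpha.
\end{align*}
Next I would expand the right-hand side using $\deg{\beta\cupone\gamma}=\deg\beta+\deg\gamma-1$ and $\deg{\alpha\cupone\gamma}=\deg\alpha+\deg\gamma-1$, obtaining
\begin{align*}
  (-1)^{\deg\alpha}[\alpha,\beta\cupone\gamma]
  &= (-1)^{\deg\alpha}\,\alpha(\beta\cupone\gamma) - (-1)^{\deg\alpha(\deg\beta+\deg\gamma)}\,(\beta\cupone\gamma)\,\alpha, \\
  (-1)^{\deg\beta\deg\gamma}[\alpha\cupone\gamma,\beta]
  &= (-1)^{\deg\beta\deg\gamma}\,(\alpha\cupone\gamma)\,\beta - (-1)^{(\deg\alpha+1)\deg\beta}\,\beta(\alpha\cupone\gamma),
\end{align*}
after simplifying the exponents modulo~$2$.

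A term-by-term comparison then concludes the proof: the two $\alpha(\beta\cupone\gamma)$ and $(\alpha\cupone\gamma)\beta$ terms match verbatim, while the $\beta(\alpha\cupone\gamma)$ and $(\beta\cupone\gamma)\alpha$ signs match after the identifications $\deg\alpha\deg\beta+\deg\beta\equiv(\deg\alpha+1)\deg\beta$ and $\deg\alpha\deg\beta+\deg\alpha\deg\gamma\equiv\deg\alpha(\deg\beta+\deg\gamma)$ modulo~$2$. There is no genuine obstacle: the only risk is bookkeeping of Koszul signs, which one handles by being systematic about the degree of $\cupone$ being~$-1$.
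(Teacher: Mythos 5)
Your proof is correct and matches the paper's, which simply cites the Hirsch formula~\eqref{eq:hirsch-formula} without writing out the computation. The sign bookkeeping in your expansion checks out, so you have in effect supplied the details the paper leaves to the reader.
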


\begin{proof}
  This is a consequence of the Hirsch formula~\eqref{eq:hirsch-formula}.
\end{proof}

\goodbreak

\begin{lemma}
  \label{thm:cuptwo-prod}
  Let \(\alpha\),~\(\beta\),~\(\gamma\in C^{*}(X)\).
  \begin{enumroman}
  \item
    \label{thm:cuptwo-prod-1}
    Modulo~\(\ax\), the \(\cuptwo\)-product is both a left and a right derivation of the commutator. That is,
    \begin{align*}
      \alpha \cuptwo (\beta\,\gamma) &\equiv (\alpha\cuptwo\beta)\,\gamma + (-1)^{\deg\alpha\deg\beta}\,\beta\,(\alpha\cuptwo\gamma) \pmax, \\
      (\alpha\,\beta) \cuptwo \gamma &\equiv (-1)^{\deg\beta\deg\gamma}\,(\alpha\cuptwo\gamma)\,\beta + \alpha\,(\beta\cuptwo\gamma) \pmax.
    \end{align*}
  \item
    \label{thm:cuptwo-prod-2}
    One has
    \begin{equation*}
      \alpha\cuptwo[\beta,\gamma] \equiv
      [\alpha,\beta]\cuptwo\gamma \equiv 0 \pmax.
    \end{equation*}
  \end{enumroman}
\end{lemma}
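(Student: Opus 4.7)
The plan is to derive both parts directly from the differential identity~\eqref{eq:d-Fkl} satisfied by the extended hga operations~\(F_{12}\) and~\(F_{21}\), working modulo~\(\ax\). The key observation is that in~\eqref{eq:d-Fkl} the left-hand sides~\(d(F_{kl})(\cdots)\) are coboundaries, and every summand on the right that involves an \(E_{k}\) with~\(k\ge1\) or an \(F_{k'l'}\) with~\((k',l')\ne(1,1)\) lies in~\(\ax\). Hence after reducing modulo~\(\ax\), only terms built from~\(F_{11}\) survive, and the Leibniz-type identities of part~\ref{thm:cuptwo-prod-1} will drop out directly.

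Concretely, for the left-derivation formula I would apply~\eqref{eq:d-Fkl} with~\(k=1\), \(l=2\). Here \(A_{12}=E_{2}(\alpha;\beta,\gamma)\in\ax\), and the three summands of~\(B_{12}\) all involve~\(F_{11}\); after unwrapping the~``\(\eqKS\)''~convention they read
\begin{equation*}
  (-1)^{\deg\alpha\deg\beta}\,\beta\,F_{11}(\alpha;\gamma) \;-\; F_{11}(\alpha;\beta\gamma) \;+\; F_{11}(\alpha;\beta)\,\gamma,
\end{equation*}
the first sign coming from moving~\(\beta\) past~\(\alpha\). Setting this expression congruent to zero and substituting \(F_{11}=-\cuptwo\) yields the claimed identity for~\(\alpha\cuptwo(\beta\gamma)\). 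The right-derivation formula follows in exactly the same way from~\(k=2\), \(l=1\): now \(B_{21}\eqKS-E_{2}(\beta;\alpha,\gamma)\in\ax\), and the three surviving summands of~\(A_{21}\) assemble to~\((\alpha\beta)\cuptwo\gamma \equiv (-1)^{\deg\beta\deg\gamma}(\alpha\cuptwo\gamma)\beta + \alpha(\beta\cuptwo\gamma)\pmax\), the sign \((-1)^{\deg\beta\deg\gamma}\) being another Koszul sign produced by the~``\(\eqKS\)''~convention.

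Part~\ref{thm:cuptwo-prod-2} is then a purely formal consequence of part~\ref{thm:cuptwo-prod-1} combined with \Cref{thm:commutator-ax}. Expanding \(\alpha\cuptwo[\beta,\gamma]\) by bilinearity and applying the left-derivation identity to both \(\alpha\cuptwo(\beta\gamma)\) and \(\alpha\cuptwo(\gamma\beta)\) produces four terms. Since all commutators vanish modulo~\(\ax\), I would move \(\alpha\cuptwo\beta\) and \(\alpha\cuptwo\gamma\) into a common position in each term; here the congruence~\(\deg(\alpha\cuptwo\gamma)\equiv\deg\alpha+\deg\gamma\pmod{2}\) (valid because \(\cuptwo\) has degree~\(-2\)) is essential, and the Koszul signs then cancel in pairs. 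The identity~\([\alpha,\beta]\cuptwo\gamma\equiv0\pmax\) follows symmetrically from the right-derivation formula. The only real obstacle throughout the argument is the meticulous sign bookkeeping forced by the~``\(\eqKS\)''~convention; no deeper input is needed.
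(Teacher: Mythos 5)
Your proposal is correct and follows essentially the same route as the paper: part~\ref{thm:cuptwo-prod-1} is read off from the identities for~\(d(F_{12})(\alpha;\beta,\gamma)\) and~\(d(F_{21})(\alpha,\beta;\gamma)\) in~\eqref{eq:d-Fkl}, discarding the coboundary on the left and the \(E_{2}\)-terms on the right as elements of~\(\ax\), and part~\ref{thm:cuptwo-prod-2} then follows by rewriting \(\alpha\cuptwo[\beta,\gamma]\) and \([\alpha,\beta]\cuptwo\gamma\) as sums of commutators and invoking \Cref{thm:commutator-ax}. The signs you report, including the Koszul sign \((-1)^{\deg\alpha\deg\beta}\) on the term \(\beta\,F_{11}(\alpha;\gamma)\) and the parity argument using \(\deg{\cuptwo}=-2\), all check out.
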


\begin{proof}
  The first part follows from the identities
  \begin{align}
    d(F_{12})(\alpha;\beta,\gamma) &\eqKS E_{2}(\alpha;\beta,\gamma)
    - F_{11}(\alpha;\beta)\,\gamma + F_{11}(\alpha,\beta\,\gamma) - F_{11}(\alpha;\gamma), \\
    d(F_{21})(\alpha,\beta;\gamma) &\eqKS \alpha\,F_{11}(\beta;\gamma) - F_{11}(\alpha\,\beta;\gamma) + 
      F_{11}(\alpha;\gamma)\,\beta - E_{2}(\gamma;\alpha,\beta),
  \end{align}
  see~\eqref{eq:d-Fkl}.
  It implies the formulas
  \begin{align}
    \alpha \cuptwo [\beta,\gamma] &\equiv [\alpha\cuptwo\beta,\gamma] + (-1)^{\deg\alpha\deg\beta}[\beta,\alpha\cuptwo\gamma] \pmax, \\
    [\alpha,\beta] \cuptwo \gamma &\equiv (-1)^{\deg\beta\deg\gamma}[\alpha\cuptwo\gamma,\beta] + [\alpha,\beta\cuptwo\gamma] \pmax,
  \end{align}
  which together with \Cref{thm:commutator-ax} entail the second claim.
\end{proof}

\begin{remark}
  \label{rem:ax-kerf}
  So far we have only used parts~\ref{def-ax-1}--\ref{def-ax-4} of the definition of~\(\ax\).
  The elements listed there are also contained in~\(\ker\transpp{f}\) by \Cref{thm:ffbar-hga-formal}
  and because \(H^{*}(BT)\) is concentrated in even degrees.
  Lemmas~\ref{thm:commutator-ax}--\ref{thm:cuptwo-prod} therefore hold as well for~\(X=BT\) and \(\ker\transpp{f}\) instead of~\(\ax\),
  for any choice of representatives~\(c_{i}\in C_{1}(T)\).
\end{remark}

\begin{lemma}
  \label{thm:cuptwo-Ek}
  Let \(\alpha\),~\(\beta\),~\(\gamma_{1}\),~\dots,~\(\gamma_{k}\in C^{*}(BT)\) with~\(k\ge2\). Then
  \begin{equation*}
    \alpha\cuptwo E_{k}(\beta;\gamma_{1},\dots,\gamma_{k}) \equiv E_{k}(\beta;\gamma_{1},\dots,\gamma_{k}) \cuptwo \alpha \equiv 0 \pmkerf.
  \end{equation*}
\end{lemma}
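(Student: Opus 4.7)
The plan is to mirror the strategy of Theorem~\ref{thm:ffbar-hga-formal} and Proposition~\ref{thm:enclave-0}: I will show that the cochain $\alpha \cuptwo E_k(\beta; \gamma_\bullet)$ can be written as a signed sum of transposes of interval cut operations $\transp{\AWu{u}}$ whose underlying surjections $u$ all contain an enclave, and then conclude by Proposition~\ref{thm:enclave-0}.

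First I would unwind the definitions at the cochain level on a general simplicial set. Since $\cuptwo = -\transp{\AWu{(2,1,2,1)}}$ and $E_k = \transp{\AWu{e_k}}$ with $e_k = (1, 2, 1, 3, 1, \ldots, 1, k+1, 1)$, for any simplex $\sigma$ one has
\begin{equation*}
  \bigl(\alpha \cuptwo E_k(\beta; \gamma_\bullet)\bigr)(\sigma) \eqKS (\alpha \otimes \beta \otimes \gamma_1 \otimes \dots \otimes \gamma_k)\,\bigl((1 \otimes \AWu{e_k}) \circ \AWu{(2,1,2,1)}\bigr)(\sigma).
\end{equation*}
Up to signs, the composite $(1 \otimes \AWu{e_k}) \circ \AWu{(2,1,2,1)}$ is an interval cut operation in the sense of the Berger--Fresse surjection operad and decomposes as a signed sum of $\AWu{u}$, where each $u$ arises from the operadic substitution of (shifts of splittings of) $e_k$ into the two positions of value $2$ in $(2,1,2,1)$.

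Next I would verify the central combinatorial claim that every such composite surjection $u$ has an enclave. The crucial fact is that in $e_k$ (with $k \geq 2$) each value $j \in \{2, \ldots, k+1\}$ appears exactly once, alternating with the repeated value $1$. After shifting into $u$, each of $3, 4, \ldots, k+2$ still appears exactly once in $u$, and every such occurrence is flanked on both sides by occurrences of value $2$ (coming from the ``$1$''s of $e_k$). Between two neighboring positions of value $2$ in $u$ that originate from the same split piece of $e_k$, the unique intermediate position carries a value in $\{3, \ldots, k+2\}$, which by uniqueness appears nowhere else in $u$; this is precisely an enclave. A brief case analysis, guided by the example $k = 2$ in which each of the possible splits of $e_2 = (1, 2, 1, 3, 1)$ yields a composite surjection with an enclave, handles the degenerate situations where one of the two split pieces is very short.

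Applying Proposition~\ref{thm:enclave-0} to each surjection $u$ yields $\AWu{u}\,\ffbar = 0$, and summing gives $\transp{\ffbar}\bigl(\alpha \cuptwo E_k(\beta; \gamma_\bullet)\bigr) = 0$, which is the first congruence. The argument for $E_k(\beta; \gamma_\bullet) \cuptwo \alpha$ is completely analogous, using the surjection $(1, 2, 1, 2)$ and inserting $e_k$ into its first slot instead. The main obstacle will be making the operadic substitution rigorous and executing the combinatorial case analysis cleanly: the conclusion is intuitively clear from the alternating structure of $e_k$, but a polished argument requires tracking signs and enumerating the valid splittings.
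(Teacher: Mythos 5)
Your proposal is correct and follows essentially the same route as the paper: both express the two products as operadic composites of the surjection $(2,1,2,1)$ with $e_{k}$, observe that in any splitting of $e_{k}$ ($k\ge2$) into two pieces at least one piece retains an intact block $(1,j,1)$ with $j$ occurring only once, so every composite surjection has an enclave, and then invoke \Cref{thm:enclave-0}. Only note that your intermediate claim that \emph{every} occurrence of a value in $\{3,\dots,k+2\}$ is flanked by $2$'s in the composite fails for a piece beginning or ending at such a value, and that for the second congruence the $E_{k}$ occupies the slot of value $1$ in $(2,1,2,1)$, i.e.\ the composite is $f_{11}\circ_{1}e_{k}$; neither point affects the enclave count you actually use.
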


\begin{proof}
  When the surjection~\(e_{k}\) with~\(k\ge2\) is split into two, then at least one part will have an enclave.
  By the composition rule for the surjection operad, this implies that each surjection appearing in~\(f_{11}\circ_{2}e_{k}\) or~\(f_{11}\circ_{1}e_{k}\)
  again has an enclave.
  This gives the desired identities by \Cref{thm:enclave-0}.
\end{proof}

\begin{lemma}
  \label{thm:cuptwo-cupone-deg2}
  Let \(\alpha\),~\(\beta\),~\(\gamma\in C^{*}(BT)\). If \(\alpha\) is cocycle of degree~\(\deg\alpha\le2\), then
  \begin{equation*}
    \alpha\cuptwo(\beta\cupone\gamma) \equiv (\beta\cupone\gamma)\cuptwo\alpha \equiv 0 \pmkerf.
  \end{equation*}
\end{lemma}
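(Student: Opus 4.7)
The plan is to split into cases according to $\deg\alpha\in\{0,1,2\}$ and to handle the two congruences symmetrically.

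If $\deg\alpha=0$, then $\alpha$ is a scalar multiple of $1\in C^*(BT)$, and since all extended hga operations vanish on $1$ we get $1\cuptwo x=x\cuptwo 1=0$ identically. If $\deg\alpha=1$, then I would use that $BT$ is reduced---so $C^0(BT)\cong\kk$ and the differential $d\colon C^0(BT)\to C^1(BT)$ vanishes---and simply connected---so $H^1(BT)=0$; together these force every $1$-cocycle of $C^*(BT)$ to be zero, and the claim is trivial.

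For $\deg\alpha=2$, note first that $\deg(\alpha\cuptwo(\beta\cupone\gamma))=\deg\beta+\deg\gamma-1$. Since $H^*(BT)$ is concentrated in even degrees, $\transpp{\ffbar}$ kills this element automatically when $\deg\beta+\deg\gamma$ is even, so it suffices to treat the case $\deg\beta+\deg\gamma$ odd; swapping labels we may assume $\deg\beta$ is odd, which already puts $\beta\in\ax\subset\ker\transpp{\ffbar}$ by part~(1) of the definition of $\ax$. Next I would expand $\alpha\cuptwo(\beta\cupone\gamma)$ via the surjection operad: it corresponds, up to sign, to $F_{11}(\alpha;E_1(\beta;\gamma))$, which comes from the composition $f_{11}\circ_2 e_1$. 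This composition is a finite sum of surjections $\{1,\dots,6\}\to\{1,2,3\}$; the enclave-type summands vanish on $\ffbar$ by \Cref{thm:enclave-0}, and for each of the non-enclave summands I would reduce the resulting expression, using $d\alpha=0$, the Hirsch formula~\eqref{eq:hirsch-formula}, and the derivation-like behaviour of $\cuptwo$ established in \Cref{thm:cuptwo-prod}, to a coboundary or a $\cupone$-product or an operation $E_k$/$F_{kl}$ with $k\ge 1$ and $(k,l)\ne(1,1)$. All such reduced terms lie in $\ker\transpp{\ffbar}$ by \Cref{thm:ffbar-hga-formal} (and \Cref{thm:cuptwo-Ek} for the residual case of $F_{11}$ applied to an $E_k$ with $k\ge 2$). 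The second congruence is obtained analogously from $f_{11}\circ_1 e_1$.

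The main obstacle will be the explicit combinatorial analysis of the non-enclave summands of $f_{11}\circ_i e_1$ for $i=1,2$ and verifying that each one really does reduce modulo $\ker\transpp{\ffbar}$ using only $\deg\alpha=2$ and $d\alpha=0$. The concrete structure of $\ffbar(y)$ from Section~\ref{sec:BT-formality-dga}---iterated applications of the simplicial homotopy $S$ to the loop representatives $c_i$---together with the interval-cut identities of \Cref{thm:S-S-P-Sigma} and \Cref{eq:Qnkl-sigma} should provide the necessary tools.
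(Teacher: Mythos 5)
Your treatment of the easy cases is fine and indeed cleaner than the paper's (which handles all degrees uniformly): $\deg\alpha=0$ is trivial since the operations vanish on units, $\deg\alpha=1$ is vacuous since $C^1(BT)=0$ for $1$-reduced $BT$, and $\deg\beta+\deg\gamma$ even is automatic because the output has odd degree and $H^*(BT)$ is even. However, the core case $\deg\alpha=2$ with $\deg\beta+\deg\gamma$ odd is not resolved, and the proposal contains two genuine gaps there.

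First, the step ``swapping labels we may assume $\deg\beta$ is odd'' is unjustified: $\cupone$ is not graded-commutative, so $\alpha\cuptwo(\beta\cupone\gamma)$ and $\alpha\cuptwo(\gamma\cupone\beta)$ are genuinely different. Relating them via~\eqref{eq:cuptwo} introduces terms such as $\alpha\cuptwo\bigl((d\beta)\cuptwo\gamma\bigr)$ whose membership in $\ker\transpp{\ffbar}$ is not covered by \Cref{thm:cuptwo} (since $(d\beta)\cuptwo\gamma$ is not a cocycle) or by any other result established so far, so the reduction is circular. Second, and more seriously, the plan to analyze the individual summands of the operadic composition $f_{11}\circ_{2}e_{1}$ and ``reduce'' the non-enclave ones via the Hirsch formula and the $\cuptwo$-derivation rules is a sketch of exactly the hard step, not an argument. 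The paper's proof does something structurally different: it exhibits an explicit primitive $g_{12}=(2,3,1,3,1,2,1)$ in the surjection operad whose differential writes $f_{11}\circ_{2}e_{1}$ modulo $\cupone$-products, $f_{12}$, and $(2\,3)\cdot f_{12}$ (all already in the kernel), and then shows by a dimension count on interval cuts---the first occurrence of $1$ in $g_{12}$ is sandwiched between two $3$'s, forcing $\sigma_{(1)}$ to have dimension $\ge3$---that $G_{12}(\alpha,\cdot,\cdot)$ vanishes whenever $\deg\alpha\le2$, so the boundary term drops out for $\alpha$ a cocycle. Neither the primitive $g_{12}$ nor this vanishing mechanism appears in your sketch, and the auxiliary lemmas you propose to invoke (\Cref{thm:S-S-P-Sigma}, \Cref{eq:Qnkl-sigma}) address the very particular interval cuts attached to $e_{1}$ and $S$, not the arity-$3$ surjections arising here, so they would not by themselves close the gap.
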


\begin{proof}  
  We consider the element~\(g_{12}=(2,3,1,3,1,2,1)\) in the surjection operad,
  following Kadeishvili~\cite[Rem.~2]{Kadeishvili:2003}.\footnote{%
    Kadeishvili takes \(g_{12}=(1,2,1,3,1,3,2)\) and~\(g_{21}=(1,2,3,2,3,1,3)\) instead. Assuming his definition of~\(E^{1}_{pq}\)
    (see \Cref{fn:kadeishvili1}),
    this gives the formula for~\(d(G_{12})\) stated in~\cite[Rem.~2]{Kadeishvili:2003}
    with the term~\((a\cuptwo c)\cupone b\) replaced by~\((a\cuptwo b)\cupone c\).
    In the formula for~\(d(G_{21})\), the double \(\cuptwo\)-product should read \((a\cuptwo c)\cupone b\).}
  It satisfies
  \begin{equation}
    d\,g_{12} =  e_{1}\circ_{1} f_{11} + (1\,2)\cdot(e_{1}\circ_{2}f_{11}) - f_{11}\circ_{2}e_{1}-f_{12} + (2\,3)\cdot f_{12}
  \end{equation}
  and the corresponding interval cut operation therefore
  \begin{equation}
    \label{eq:d-G12}
    d(G_{12})(\alpha,\beta,\gamma) \equiv \mp \alpha\cuptwo(\beta\cupone\gamma) \pmkerf.
  \end{equation}

  Because there are three \(1\)'s appearing in~\(g_{12}\), the corresponding interval cut operation on a simplex~\(\sigma\)
  has the property that the first simplex~\(\sigma_{(1)}\) in the resulting tensor product involves three intervals, each of which contributes at least one vertex.
  Now the first occurrence of~\(1\) in~\(g_{12}\) is surrounded by two occurrences of~\(3\). Hence the associated
  interval for this~\(1\) must involve at least two vertices for otherwise the simplex~\(\sigma_{(3)}\) made up of the \(3\)-intervals
  would contain twice the same vertex and therefore be degenerate. Hence \(\sigma_{(1)}\) is of dimension at least~\(3\).

  Dually, \(G_{12}(\alpha,\beta,\gamma)\) vanishes for~\(\deg\alpha\le2\), which implies that the left-hand side
  of~\eqref{eq:d-G12} is a coboundary if \(\alpha\) is additionally a cocycle.
  This proves that the first term in the statement is congruent to~\(0\).

  The second part follows analogously by looking at~\(g_{21}=(3,1,3,2,3,2,1)\), which satisfies
  \begin{equation}
    d\,g_{21} = (2\,3)\cdot(e_{1}\circ_{1}f_{11}) + e_{1}\circ_{2}f_{11} - f_{11}\circ_{1} e_{1} + f_{21} - (1\,2)\cdot f_{21}.
    \qedhere
  \end{equation}
\end{proof}

\begin{proposition}
  \label{thm:cuptwo-rep-cupone}
  For all cocycles~\(\alpha\),~\(\beta_{0}\),~\dots,~\(\beta_{k}\in C^{*}(BT)\), \(k\ge1\),
  we have
  \begin{equation*}
    \alpha\cuptwo \RC_{k}(\beta_{0},\dots,\beta_{k}) \equiv \RC_{k}(\beta_{0},\dots,\beta_{k})\cuptwo\alpha \equiv 0 \pmkerf.
  \end{equation*}
\end{proposition}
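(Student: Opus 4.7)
The plan is to prove the statement by induction on the even degree of the cocycle~\(\alpha\). The base case~\(\deg\alpha\le 2\) is immediate: since
\begin{equation*}
  \RC_{k}(\beta_{0},\dots,\beta_{k}) = -\RC_{k-1}(\beta_{0},\dots,\beta_{k-1})\cupone\beta_{k}
\end{equation*}
is a \(\cupone\)-product, \Cref{thm:cuptwo-cupone-deg2} (which places no constraint on the two \(\cupone\)-factors) gives both \(\alpha\cuptwo\RC_{k}(\beta)\equiv 0\) and \(\RC_{k}(\beta)\cuptwo\alpha\equiv 0\) modulo~\(\ker\transp{\ffbar}\).

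For the inductive step with~\(\deg\alpha>2\), I would exploit that \(H^{*}(BT)\) is a polynomial algebra on degree-\(2\) generators to write
\begin{equation*}
  \alpha = \sum_{i}\alpha'_{i}\alpha''_{i} + d\eta
\end{equation*}
where each~\(\alpha'_{i}\),~\(\alpha''_{i}\) is a cocycle of positive even degree strictly less than~\(\deg\alpha\) and~\(\eta\in C^{*}(BT)\). By \Cref{rem:ax-kerf} the derivation identity of \Cref{thm:cuptwo-prod}\,\ref{thm:cuptwo-prod-1} holds modulo~\(\ker\transp{\ffbar}\), so for each product term
\begin{equation*}
  (\alpha'_{i}\alpha''_{i})\cuptwo\RC_{k}(\beta) \equiv \pm\bigl(\alpha'_{i}\cuptwo\RC_{k}(\beta)\bigr)\alpha''_{i} + \alpha'_{i}\bigl(\alpha''_{i}\cuptwo\RC_{k}(\beta)\bigr) \pmkerf,
\end{equation*}
and both factors in parentheses vanish by the inductive hypothesis applied to~\(\alpha'_{i}\) and~\(\alpha''_{i}\). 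The symmetric derivation identity handles \(\RC_{k}(\beta)\cuptwo(\alpha'_{i}\alpha''_{i})\).

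The coboundary piece is the main obstacle. The Leibniz rule for~\(\cuptwo\), combined with~\eqref{eq:cuptwo}, reduces \(d\eta\cuptwo\RC_{k}(\beta)\) modulo coboundaries and \(\cupone\)-products to~\(\pm\,\eta\cuptwo d\RC_{k}(\beta)\), so it suffices to show the latter lies in~\(\ker\transp{\ffbar}\). A short recursion using~\(\RC_{k}=-\RC_{k-1}\cupone\beta_{k}\) together with \(d(a\cupone b)=-da\cupone b -(-1)^{\deg{a}}a\cupone db + [a,b]\) yields
\begin{equation*}
  d\RC_{k}(\beta_{0},\dots,\beta_{k}) = -\sum_{j=1}^{k}\bigl[\RC_{j-1}(\beta_{0},\dots,\beta_{j-1}),\beta_{j}\bigr]\cupone\beta_{j+1}\cupone\cdots\cupone\beta_{k}.
\end{equation*}
Each summand has the shape \([A,B]\cupone\beta_{j+1}\cupone\cdots\cupone\beta_{k}\); iteratively applying \Cref{thm:cupone-right-derivation-cup} to move each~\(\beta_{s}\) inside a commutator bracket expresses it modulo~\(\ax\) (and therefore modulo~\(\ker\transp{\ffbar}\), by \Cref{rem:ax-kerf}) as a sum of commutators~\([X,Y]\) of chains built from \(\cupone\)-products. \Cref{thm:cuptwo-prod}\,\ref{thm:cuptwo-prod-2} then gives \(\eta\cuptwo[X,Y]\equiv 0\pmkerf\), closing the induction. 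The symmetric argument proves \(\RC_{k}(\beta)\cuptwo\alpha\equiv 0\pmkerf\).
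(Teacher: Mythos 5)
Your proof is correct and follows essentially the same route as the paper's: reduce the coboundary contribution via the Leibniz rule for~\(\cuptwo\) to a term~\(\eta\cuptwo d\RC_{k}(\beta_{\bullet})\), express \(d\RC_{k}\) as a sum of commutators using \Cref{thm:cupone-right-derivation-cup}, kill it with \Cref{thm:cuptwo-prod}\,\ref{thm:cuptwo-prod-2}, and use the derivation property together with the fact that \(H^{*}(BT)\) is generated in degree~\(2\) to reduce to \Cref{thm:cuptwo-cupone-deg2}. The only difference is organizational — you run an explicit induction on~\(\deg\alpha\), whereas the paper first disposes of all coboundaries and then replaces \(\alpha\) by a cohomologous product of degree-\(2\) cocycles — but the key lemmas and decomposition are identical.
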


\begin{proof}
  We show that the first term in the statement lies in~\(\ax\); the proof for the second is analogous.
  
  Write \(\beta=\RC_{k}(\beta_{0},\dots,\beta_{k})\) and  
  assume first that \(\alpha=d\gamma\) is a coboundary.
  Then
  \begin{equation}
    d(\gamma\cuptwo\beta) = d\gamma\cuptwo\beta \pm \gamma\cuptwo d\beta
    \pm \gamma\cupone \beta \pm \beta\cupone\gamma,
  \end{equation}
  hence
  \begin{equation}
    \label{eq:dC1}
    \alpha\cuptwo\beta \equiv \mp \gamma\cuptwo d\beta \pmkerf.
  \end{equation}
  Since \(\beta_{1}\),~\dots,~\(\beta_{k}\) are cocycles, we have
  \begin{equation}
    \label{eq:dC2}
    d\,\beta = \sum_{i=1}^{k} \pm \RC_{k-i}\bigl(\bigl[\RC_{i-1}(\beta_{0},\dots,\beta_{i-1}),\beta_{i}\bigr],\beta_{i+1},\dots,\beta_{k}\bigr)
  \end{equation}
  By a repeated application of \Cref{thm:cupone-right-derivation-cup}
  we see that each term on the right-hand side of~\eqref{eq:dC2} is a sum of commutators,
  so that the right-hand side of~\eqref{eq:dC1} vanishes by the analogue of \Cref{thm:cuptwo-prod}\,\ref{thm:cuptwo-prod-2}.
  This proves the claim for~\(\alpha=d\gamma\).
  
  As a consequence, we may replace \(\alpha\) by any cocycle cohomologous to it.
  Because \(H^{*}(BT)\) is generated in degree~\(2\), we may in particular assume
  that \(\alpha\) is the product of cocycles of degree~\(2\). By the analogue of \Cref{thm:cuptwo-prod}\,\ref{thm:cuptwo-prod-1},
  it is enough to consider the case where \(\alpha\) is a single degree-\(2\) cocycle,
  where \Cref{thm:cuptwo-cupone-deg2} applies.
\end{proof}

\begin{corollary}
  \label{cor:ax-kerf}
  Assume that \(2\) is invertible in~\(\kk\), and let \(\transpp{f}\colon C^{*}(BT)\to H^{*}(BT)\)
  be a formality map as in \Cref{thm:ffbar-cuptwo}. Then \(\kkk_{BT}\subset\ker\transpp{f}\).
\end{corollary}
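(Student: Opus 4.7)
The plan is to reduce the assertion to checking that each of the six families of generators of~\(\ax_{BT}\) listed in \ref{def-ax-1}--\ref{def-ax-6} individually lies in~\(\ker\transpp{f}\). Since \(\transpp{f}\colon C^{*}(BT)\to H^{*}(BT)\) is a morphism of dgas by \Cref{thm:quiso-ffbar}, its kernel is a two-sided ideal, so this reduction is legitimate and we need only verify the containment on generators.

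For the first four families the verification was already observed in \Cref{rem:ax-kerf}: odd-degree elements map to~\(0\) because \(H^{*}(BT)\) is concentrated in even degrees; coboundaries map to~\(0\) because the differential on~\(H^{*}(BT)\) is trivial; and the values of~\(E_{k}\) for~\(k\ge1\) and of~\(F_{kl}\) for~\((k,l)\ne(1,1)\) vanish under~\(\transpp{f}\) by \Cref{thm:ffbar-hga-formal}. None of these four cases requires the hypothesis that \(2\) be invertible or the specific choice of representatives~\((\tildecc_{i})\).

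For families~\ref{def-ax-5} and~\ref{def-ax-6}, I would invoke the lemmas proved in \Cref{sec:kernel}. The generators of type~\ref{def-ax-5}, namely \(a\cuptwo E_{k}(b;c_{1},\dots,c_{k})\) with~\(k\ge2\), lie in~\(\ker\transpp{f}\) by \Cref{thm:cuptwo-Ek}. For type~\ref{def-ax-6}, the elements \(a\cuptwo \RC_{k}(b_{0},\dots,b_{k})\) with cocycle arguments and~\(k\ge1\) are covered by \Cref{thm:cuptwo-rep-cupone}. The one remaining case is \(k=0\), where \(\RC_{0}(b_{0})=b_{0}\) and we face an ordinary \(\cuptwo\)-product~\(a\cuptwo b_{0}\) of two cocycles; this is precisely the situation addressed by \Cref{thm:ffbar-cuptwo}, which is where the assumption that \(2\) is invertible in~\(\kk\) and the specific choice of symmetric representatives~\eqref{eq:def-tildecci-in-proof} are used.

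The main (and really only) obstacle is the \(k=0\) subcase of family~\ref{def-ax-6}: this is the unique place where the hypothesis on~\(2\) enters, and without \Cref{thm:ffbar-cuptwo} there would be no reason for~\(\transpp{f}\) to kill \(\cuptwo\)-products of cocycles, as the counterexample at the beginning of \Cref{sec:inverting-2} shows. All other steps are bookkeeping that follows immediately from the preceding lemmas.
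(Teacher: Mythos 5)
Your proof is correct and follows the paper's own argument step for step: you reduce to generators using that $\ker\transpp{f}$ is an ideal, handle parts (1)--(4) via \Cref{rem:ax-kerf} and \Cref{thm:ffbar-hga-formal}, part (5) via \Cref{thm:cuptwo-Ek}, and part (6) via \Cref{thm:cuptwo-rep-cupone} for $k\ge1$ and \Cref{thm:ffbar-cuptwo} for $k=0$. Your added remark correctly identifies the $k=0$ subcase as the sole point where invertibility of $2$ is used.
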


\begin{proof}
  We have already observed in \Cref{rem:ax-kerf}
  that \(\ker\transpp{f}\) contains the elements mentioned in parts~\ref{def-ax-1}--\ref{def-ax-4}
  of the definition of~\(\kkk_{BT}\). Part~\ref{def-ax-5} is covered by \Cref{thm:cuptwo-Ek},
  and part~\ref{def-ax-6} by \Cref{thm:cuptwo-rep-cupone} for~\(k\ge1\) and by \Cref{thm:ffbar-cuptwo} for~\(k=0\).
\end{proof}

\begin{remark}
  \label{rem:ax-kerf-int}
  \Cref{cor:ax-kerf} remains valid for any~\(\kk\) and any formality map constructed in \Cref{sec:BT-formality}
  if part~\ref{def-ax-6} in the definition of~\(\kkk\) is restricted to~\(k\ge1\), that is,
  if \(\kkk\) is only required to contain
  \begin{enumarabic}
  \item[(6\('\))]
    all elements~\(a\cuptwo \RC_{k}(b_{0},\dots,b_{k})\) with~\(k\ge1\)
    where \(a\) and~\(b_{\bullet}\) are cocycles.
  \end{enumarabic}
\end{remark}

}

\section{Spaces and shc maps}
\label{sec:spaces-shc}

Let \(X\) be a simplicial set, and let \(\kkk=\kkk_{X}\lhd C^{*}(X)\)
be the ideal defined in the preceding section.
We want to relate \(\kkk\) to the canonical shc structure on~\(C^{*}(X)\).
From \Cref{thm:shc-mod-k} we conclude
that the structure map~\(\Phi\colon C^{*}(X)\otimes C^{*}(X)\Rightarrow C^{*}(X)\) is \(\kkk\)-strict.
Moreover, the associativity homotopy~\(\ha\colon \Phi\circ(\Phi\otimes1)\simeq\Phi\circ(1\otimes\Phi)\)
is \(\kkk\)-trivial, but the commutativity homotopy~\(\hc\colon \Phi\circ T_{A,A} \simeq \Phi\) is not in general.
This failure requires extra attention.

We introduce the following terminology:
Let \(A\) and~\(B\) be dgas, \(\bbb\lhd B\) and \(m\ge0\).
An shm homotopy~\(h\colon A^{\otimes m}\to B\) is called \newterm{\(\bbb\)-trivial on cocycles} if
\begin{equation}
  h_{(n)}(a_{11}\otimes\dots\otimes a_{1m},\dots,a_{n1}\otimes\dots\otimes a_{nm}) \equiv 0 \pmod\bbb
\end{equation}
for all~\(n\ge1\) and all cocycles~\(a_{11}\),~\dots,~\(a_{nm}\in A\).
Similarly, an shc map~\(f\colon A^{\otimes m}\Rightarrow B\) is called
\newterm{\(\bbb\)-natural on cocycles} if there is a homotopy~\(h\colon A^{\otimes 2m}\to B\)
that is \(\bbb\)-trivial on cocycles and makes the diagram~\eqref{eq:def-natural-shc-map} commute.

\begin{lemma}
  \label{thm:shm-h-trivial-cocycles}
  Let \(h\),~\(k\colon A^{\otimes m}\to B\) be shm homotopies, \(\bbb\)-trivial on cocycles.
  \begin{enumroman}
  \item
    \label{thm:shm-h-trivial-cocycles-cup}
    \label{thm:shm-h-trivial-cocycles-inverse}
    The shm homotopies~\(h\cup k\) and~\(h^{-1}\) are again \(\bbb\)-trivial on cocycles.
  \item
    \label{thm:shm-h-trivial-cocycles-comp}
    If \(f\colon B\to C\) is a \(\ccc\)-strict shm map such that \(f_{(1)}(\bbb)\subset\ccc\), then \(f\circ h\) is \(\ccc\)-trivial on cocycles.
  \item
    If \(T\colon A^{\otimes m}\to A^{\otimes m}\) is some permutation of the factors, then \(h\circ T\) is \(\bbb\)-trivial on cocycles.
  \end{enumroman}
\end{lemma}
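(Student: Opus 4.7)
The plan is to unfold, for each of the three operations, the defining formula from Sections~\ref{sec:tw}--\ref{sec:shm-tensor} and to verify that cocycle inputs continue to be evaluated on cocycles after the bar-construction manipulations involved. The arguments are routine, so I only indicate the pattern.

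For part~\ref{thm:shm-h-trivial-cocycles-cup} I would unfold the cup product as \((h\cup k)=\mu_B\,(h\otimes k)\,\Delta\) on \(\BB(A^{\otimes m})\). The bar-coalgebra diagonal splits a word \([\alpha_1|\dots|\alpha_n]\) into sub-words \([\alpha_1|\dots|\alpha_j]\otimes[\alpha_{j+1}|\dots|\alpha_n]\), so a bar word whose entries are tensors of \(m\) cocycles is split into two bar words of the same form. Hence \((h\cup k)\) evaluated on such a word is a sum of products \(\pm\,h(x')\,k(x'')\) in which each positive-length factor lies in~\(\bbb\) (while the length-zero factor, if any, equals \(\eta_B\)); the product therefore lies in~\(\bbb\). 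For the inverse I would apply the formula \(h^{-1}=\sum_{n\ge0}(1-h)^{\cup n}\) of \Cref{thm:tw-h-equiv-rel}\,\ref{thm:inverse-trivial}. The normalization \(h_{(0)}=\eta_B\) forces \((1-h)(\BBone_{\BB(A^{\otimes m})})=0\) and makes \((1-h)\) coincide with \(-h\) on positive-length bar words, so iterating the cup-product argument gives \(h^{-1}\in\bbb\) on cocycle inputs of positive length, which suffices.

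For part~\ref{thm:shm-h-trivial-cocycles-comp} I would recall that \(f\circ h\) is defined, as in the paragraph preceding \Cref{thm:trivial-h-comp}, as the shm homotopy associated to the composition of the induced maps between bar constructions. Its twisting homotopy family admits an expansion of the same shape as~\eqref{eq:twc-composition}, namely a sum of terms \(\pm f_{(k)}(Y_1,\dots,Y_k)\) in which each \(Y_s\) is a value of one of the two endpoint twisting families of~\(h\) or of~\(h\) itself, with at least one \(Y_s\) being an \(h\)-value. On cocycle inputs this \(h\)-value lies in~\(\bbb\). If \(k\ge2\) then \(f_{(k)}\) takes values in~\(\ccc\) by \(\ccc\)-strictness of~\(f\); if \(k=1\) the single argument lies in~\(\bbb\) and is carried into~\(\ccc\) by the hypothesis \(f_{(1)}(\bbb)\subset\ccc\). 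Either way the result lies in~\(\ccc\).

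For the last part, \(T\colon A^{\otimes m}\to A^{\otimes m}\) is a strict dga automorphism, so composition with the homotopy \(h\) is tautological: \((h\circ T)_{(n)}(\alpha_1,\dots,\alpha_n)=\pm\,h_{(n)}(T\alpha_1,\dots,T\alpha_n)\). Applied to a tensor of \(m\) cocycles, \(T\) produces (up to a Koszul sign) another tensor of \(m\) cocycles, so the right-hand side is a value of \(h_{(n)}\) on cocycle inputs and therefore lies in~\(\bbb\). I do not anticipate any serious obstacle; the only point requiring care is the verification in part~\ref{thm:shm-h-trivial-cocycles-comp} that formula~\eqref{eq:twc-composition} still describes the components of a composition in which one of the factors is a homotopy rather than a map, which is immediate from the definition cited above combined with the shape of~\eqref{eq:shm-to-dgc}.
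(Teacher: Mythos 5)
Your proposal is correct and follows essentially the same route as the paper's (very terse) proof: part (i) by unfolding the cup product and the explicit inverse formula from \Cref{thm:tw-h-equiv-rel}\,\ref{thm:inverse-trivial}, part (ii) by the same argument as \Cref{thm:trivial-h-comp}\,\ref{thm:trivial-h-comp-1} adapted to a composition with a homotopy, and part (iii) by the observation that a permutation sends tensors of cocycles to tensors of cocycles. The details you supply (in particular that every nonvanishing term in the expansion of \(f\circ h\) contains exactly one \(h\)-slot of positive length, so that either \(k\ge2\) forces the value into \(\ccc\) by strictness or \(k=1\) forces it in via \(f_{(1)}(\bbb)\subset\ccc\)) are exactly the intended verification.
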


\begin{proof}
  The first claim follows from the definition of the cup product and
  the formula for the inverse given in \Cref{thm:tw-h-equiv-rel}\,\ref{thm:inverse-trivial}.
  The second part is analogous to \Cref{thm:trivial-h-comp}\,\ref{thm:trivial-h-comp-1},
  and the last claim is trivial.
\end{proof}

Because \(\kkk\) contains all \(\cuptwo\)-products of cocycles,
both \(\hc\) and the homotopy~\(\invhc=\hc\circ T\) in the other direction 
are \(\kkk\)-trivial on cocycles. We need to extend this observation.

\begin{lemma}
  \label{thm:hc-1Phin-trivial}
  For any~\(n\ge0\), the shm homotopy
  \begin{equation*}
    \hc\circ\bigl(1\otimes\iter{\Phi}{n}\bigr)\colon C^{*}(X)\otimes C^{*}(X)^{\otimes n}\to C^{*}(X)
  \end{equation*}
  is \(\kkk\)-trivial on cocycles, and the same holds with~\(\invhc\) instead of~\(\hc\).
\end{lemma}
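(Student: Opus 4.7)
The plan is to expand the composition via the shm composition formula~\eqref{eq:twc-composition}, reduce modulo~$\kkk$ using the structural information in Theorem~\ref{thm:shc-mod-k}, and verify that each surviving term lies in~$\kkk$ on cocycle inputs. Write $A=C^*(X)$ throughout.

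Since $1_A$ is a strict dga map, formula~\eqref{eq:f-mult-otimes-g} yields
\[
(1_A\otimes\iter{\Phi}{n})_{(i)}(a_1\otimes\bar{b}_1,\dots,a_i\otimes\bar{b}_i)
\eqKS \iter{\mu_A}{i}(a_\bullet)\otimes(\iter{\Phi}{n})_{(i)}(\bar{b}_\bullet),
\]
with $\bar{b}_s\in A^{\otimes n}$. On cocycle inputs the left tensor factor is always a cocycle, while by Theorem~\ref{thm:shc-mod-k}\,\ref{thm:Phin-k-strict} the right factor lies in~$\kkk$ for $i\ge2$ and equals the cocycle $\iter{\mu_A}{n}(\bar{b}_1)$ for $i=1$. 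Substituting into~\eqref{eq:twc-composition} and using that $\hc_{(j)}\equiv0\pmod{\kkk}$ for $j\notin\{0,2\}$ by Theorem~\ref{thm:shc-mod-k}\,\ref{thm:hcn-k}, only the $j=2$ contributions survive modulo $\kkk$ for $k\ge1$, and each reduces to
\[
\pm d_1\,(c_1\cuptwo d_2)\,c_2 \pmod{\kkk},
\]
where $i_1+i_2=k$ with $i_1,i_2\ge1$, $c_s=\iter{\mu_A}{i_s}(a_\bullet)$, and $d_s=(\iter{\Phi}{n})_{(i_s)}(\bar{b}_\bullet)$.

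I would then split into three cases. If $i_1=i_2=1$ (so $k=2$), all four elements $c_1,c_2,d_1,d_2$ are cocycles, and $c_1\cuptwo d_2\in\kkk$ by part~\ref{def-ax-6} of the definition of $\kkk$ (with $\RC_0$ the identity). If $i_1\ge2$, then $d_1\in\kkk$ immediately forces the whole product into $\kkk$ because $\kkk$ is a two-sided ideal. The remaining case $i_1=1$, $i_2\ge2$ is the main obstacle: here $c_1$ is a cocycle and $d_2\in\kkk$, but one still has to verify $c_1\cuptwo d_2\in\kkk$.

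To close this last case, I would invoke the construction of $\Phi$ in the companion paper~\cite{Franz:hgashc}, from which one reads off that on cocycle inputs, $(\iter{\Phi}{n})_{(i_2)}(\bar{b}_\bullet)$ is a sum of products whose individual factors are either products of $b$-cocycles (hence cocycles themselves) or $E_k$-values with $k\ge1$ applied to cocycles. The derivation property of $\cuptwo$ modulo $\kkk$ in Lemma~\ref{thm:cuptwo-prod}\,\ref{thm:cuptwo-prod-1}, applied factor by factor, reduces the task to checking $c_1\cuptwo v\in\kkk$ for each such individual factor $v$: this holds by part~\ref{def-ax-6} with $k=0$ when $v$ is a cocycle, by part~\ref{def-ax-5} when $v=E_k(x;y_\bullet)$ with $k\ge2$, and by part~\ref{def-ax-6} with $k=1$ when $v=E_1(x;y)=-x\cupone y$ for cocycles $x,y$. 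The claim for $\invhc=\hc\circ T$ follows by rewriting $\invhc\circ(1\otimes\iter{\Phi}{n})=\hc\circ(\iter{\Phi}{n}\otimes1)\circ T'$ with $T'$ the appropriate swap $A\otimes A^{\otimes n}\to A^{\otimes n}\otimes A$; the analogous case analysis produces surviving terms of the form $\pm c_1(d_1\cuptwo c_2)d_2\pmod{\kkk}$ whose hard case $i_1\ge2$ is handled by the same structural input.
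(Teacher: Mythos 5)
Your strategy is the paper's: expand the composition, use the \(\kkk\)-strictness of~\(\iter{\Phi}{n}\) and the mod-\(\kkk\) description of~\(\hc_{(2)}\) to isolate the terms~\(\pm d_1(c_1\cuptwo d_2)c_2\), and then kill \(c_1\cuptwo d_2\) via a structural description of~\(\iter{\Phi}{n}_{(i_2)}\) on cocycles combined with the derivation property of~\(\cuptwo\) modulo~\(\kkk\). The easy branches of your case split are fine, but the structural claim you ``read off'' is both misstated and unproved, and it is where the entire content of the lemma sits. What is actually true (and what the paper establishes by induction) is that \(\iter{\Phi}{n}_{(l)}\) on cocycles is a linear combination of products whose factors are either iterated \(\cupone\)-products~\(\RC_k(c_0,\dots,c_k)\) of cocycles with \emph{arbitrary}~\(k\ge0\), or \(E_k\)-terms with~\(k\ge2\) whose arguments need \emph{not} be cocycles. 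Your version admits only single \(E_1\)-values on cocycles, i.e.\ \(\RC_1\); the nested \(\cupone\)-products with~\(k\ge2\) genuinely occur (in the induction step, \(E_1\) applied to a factor that is already an~\(\RC_k\) produces~\(\RC_{k+1}\) via the Hirsch formula~\eqref{eq:hirsch-formula}), and they are exactly why part~\ref{def-ax-6} of the definition of~\(\kkk\) is stated for all~\(k\ge0\). As written, your final check of~\(c_1\cuptwo v\in\kkk\) does not cover these factors~\(v\) — the fix is immediate, since part~\ref{def-ax-6} handles them, but the case is missing. (The assertion that the \(E_k\)-factors with~\(k\ge2\) have cocycle arguments is also false, though harmless, as part~\ref{def-ax-5} has no cocycle hypothesis.) To close the argument you must actually prove the corrected structural claim: the induction uses the Hirsch formula to distribute~\(E_1\) over products and the composition identity~\eqref{eq:formula-Ek-El} to absorb \(E_1\circ E_k\) into terms~\(E_{k'}\) with~\(k'\ge k\ge2\).

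A further caveat on your last sentence: for~\(\invhc\) the surviving \(\cuptwo\)-products are~\(d_1\cuptwo c_2\) with the \(\iter{\Phi}{n}\)-value in the \emph{first} slot, and parts~\ref{def-ax-5}--\ref{def-ax-6} of the definition of~\(\kkk\) are not symmetric in the two arguments of~\(\cuptwo\), so ``the same structural input'' does not literally apply; one needs the reversed congruences, which \Cref{thm:cuptwo-Ek} and \Cref{thm:cuptwo-rep-cupone} supply only modulo~\(\ker\transpp{\ffbar}\) on~\(BT\). The paper's own proof dispatches this with ``analogous,'' so this is a shared subtlety rather than a defect peculiar to your argument, but it deserves a word if you write the proof out.
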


\begin{proof}
  By naturality we may assume that \(\kappa\) is the identity map of~\(X=BT\).
  
  Let \(l\ge0\), and let \(b_{1}\),~\dots,~\(b_{l}\in C^{*}(BT)^{\otimes n}\) with~\(b_{i}=b_{i,1}\otimes\dots\otimes b_{i,n}\)
  where all~\(b_{i,j}\) are cocycles.
  We claim that \(\iter{\Phi}{n}_{(l)}(b_{1},\dots,b_{l})\)
  is a linear combination of products of terms of the following two kinds:
  Repeated \(\cupone\)-products~\(\RC_{k}(c_{0},\dots,c_{k})\) of cocycles with~\(k\ge0\),
  or \(E_{k}\)-terms with~\(k\ge2\) (and not necessarily cocycles as arguments).

  This follows by induction: \(\iter{\Phi}{n}_{(0)}=0\), and \(\iter{\Phi}{n}_{(1)}(b_{1})=b_{1,1}\cdots b_{1,n}\)
  is a product of cocycles~\(b_{1,j}=\RC_{0}(b_{1,j})\). For the induction step, we observe from the formula for~\(\Phi\)
  and the composition formula~\eqref{eq:twc-composition} for shm maps
  that we get products of terms~\(E_{m}(\dots)\) with some value~\(\iter{\Phi}{n}_{(l)}(b_{1},\dots,b_{l})\)
  plugged into the first argument and cocycles into the remaining arguments.
  
  We consider each factor~\(E_{m}\) separately.
  For~\(m=0\) the induction hypothesis applies and for~\(m\ge2\) there is nothing to show. So assume \(m=1\).  
  By induction and the Hirsch formula~\eqref{eq:hirsch-formula}, we may assume that the first argument
  is a repeated \(\cupone\)-product of cocycles or a term~\(E_{k}\) with~\(k\ge2\).
  In the former case we get another repeated \(\cupone\)-product of cocycles.
  In the latter case the identity~\eqref{eq:formula-Ek-El} shows that we end up with a sum of terms~\(E_{k'}\) with~\(k'\ge k\ge2\).
  This completes the proof of the claim.

  Now consider \(\hc_{(m)}\) for~\(m\ge1\), or rather
  its description modulo~\(\kkk\) given in \Cref{thm:shc-mod-k}\,\ref{thm:hcn-k}.
  We have to plug cocycles into the first arguments and values \(\iter{\Phi}{n}_{(l)}(b_{1},\dots,b_{l})\)
  as above into the second arguments.
  Because the \(\cuptwo\)-product is a derivation modulo~\(\kkk\) by \Cref{thm:cuptwo-prod}\,\ref{thm:cuptwo-prod-1},
  we only have to consider terms of the following two kinds in light of our previous discussion:
  firstly, terms~\(a\cuptwo b\)
  where \(a\) is a cocycle and \(b\) a repeated \(\cupone\)-product of cocycles, and secondly,
  \(\cuptwo\)-prod\-ucts where the second argument is an~\(E_{k}\)-term with~\(k\ge2\).
  Both kinds of terms are contained in~\(\kkk\) by definition.

  The proof for~\(\invhc\) is analogous.
\end{proof}

\begin{proposition}
  \label{thm:Phin-ax-natural}
  For any~\(n\ge0\)
  the iteration~\(\iter{\Phi}{n}\colon C^{*}(X)^{\otimes n}\Rightarrow C^{*}(X)\)
  is an shc map that is \(\kkk\)-natural on cocycles.
\end{proposition}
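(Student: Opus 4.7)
The plan is to proceed by induction on~\(n\). The cases~\(n=0\) and~\(n=1\) are immediate, since \(\iter{\Phi}{0}=\eta_{A}\) and~\(\iter{\Phi}{1}=1_{A}\) are strict dga maps and the shc structure on~\(A^{\otimes 0}=\kk\) and~\(A^{\otimes 1}=A\) makes the naturality square commute strictly.

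The crucial case is~\(n=2\), where I need a \(\kkk\)-trivial-on-cocycles homotopy
\begin{equation*}
\Phi\circ(\Phi\otimes\Phi)\circ(1\otimes T\otimes 1)\simeq\Phi\circ(\Phi\otimes\Phi)\colon A^{\otimes 4}\Rightarrow A.
\end{equation*}
My plan is to rewrite both sides as~\(\iter{\Phi}{4}\) applied to the four input arguments in two different orders, paying only \(\ha\)-homotopies for the rebracketings, which are unconditionally \(\kkk\)-trivial by \Cref{thm:shc-mod-k}\,\ref{thm:ha-k-trivial}. The residual discrepancy is a transposition of the middle two factors, which I will realise by composing a commutativity homotopy~\(\hc\) with iterated products~\(\iter{\Phi}{*}\) that premultiply the neighbouring arguments. \Cref{thm:hc-1Phin-trivial} is the precise input making such a composition \(\kkk\)-trivial on cocycles. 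Cup-composing these pieces by \Cref{thm:tw-h-equiv-rel}\,\ref{thm:cup-trivial-h} and applying \Cref{thm:shm-h-trivial-cocycles} then yields the required homotopy.

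For the induction step I would use the defining relation~\(\iter{\Phi}{n+1}=\Phi\circ(\iter{\Phi}{n}\otimes 1_{A})\) together with the decomposition~\(\Phi_{A^{\otimes(n+1)}}=(\Phi_{A^{\otimes n}}\otimes\Phi)\circ(1\otimes T\otimes 1)\) of the tensor-product shc structure. Applying \Cref{thm:comp-shm-tensor} (with the strict map~\(1_{A}\)) identifies the middle composition with \((\iter{\Phi}{n}\circ\Phi_{A^{\otimes n}})\otimes\Phi\) on the nose. The inductive hypothesis then supplies a \(\kkk\)-trivial-on-cocycles homotopy from this to \((\Phi\circ(\iter{\Phi}{n}\otimes\iter{\Phi}{n}))\otimes\Phi\); tensoring with the strict map~\(\Phi\) and post-composing with the outer~\(\Phi\) preserves the triviality class by the formulas of \Cref{sec:shm-tensor} together with \Cref{thm:shm-h-trivial-cocycles}\,\ref{thm:shm-h-trivial-cocycles-comp} (the hypothesis~\(\Phi_{(1)}(\kkk)\subset\kkk\) holds because \(\Phi_{(1)}=\mu_{A}\) and \(\kkk\) is an ideal). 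A final rebracketing from~\(\Phi\circ((\Phi\circ(\iter{\Phi}{n}\otimes\iter{\Phi}{n}))\otimes\Phi)\) to~\(\Phi\circ(\iter{\Phi}{n+1}\otimes\iter{\Phi}{n+1})\) is one further associativity step, again \(\kkk\)-trivial.

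The hard part will be the book-keeping: I must verify that every occurrence of~\(\hc\) in the globally cup-composed homotopy is, up to a permutation of arguments, of the form~\(\hc\circ(1\otimes\iter{\Phi}{k})\) applied to cocycles, so that \Cref{thm:hc-1Phin-trivial} genuinely applies. My plan to manage this is always to perform all available associativity rebracketings (via the \(\kkk\)-trivial~\(\ha\)) to bring products into the canonical form~\(\iter{\Phi}{k}\) before inserting any commutativity swap, and to exploit the \(\kkk\)-strictness of every iteration~\(\iter{\Phi}{k}\) given by \Cref{thm:shc-mod-k}\,\ref{thm:Phin-k-strict} together with \Cref{thm:shm-h-trivial-cocycles} to keep the triviality class intact at each intermediate composition and tensoring.
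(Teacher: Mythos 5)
Your high-level strategy---induction on $n$, base case $n=2$ built from $\ha$- and $\hc$-homotopies, with \Cref{thm:hc-1Phin-trivial} as the technical crux---matches the paper's, which follows Munkholm~\cite[Prop.~4.5]{Munkholm:1974}. For $n=2$ your description is reasonable in spirit, even though at that stage $\invhc$ receives its arguments directly rather than after an $\iter{\Phi}{*}$ premultiplication, so only the trivial $n=1$ case of \Cref{thm:hc-1Phin-trivial} is used there.

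The gap is in your induction step, specifically the claim that the passage from $\Phi\circ\bigl((\Phi\circ(\iter{\Phi}{n}\otimes\iter{\Phi}{n}))\otimes\Phi\bigr)$ to $\Phi\circ(\iter{\Phi}{n+1}\otimes\iter{\Phi}{n+1})$ is ``one further associativity step''. It is not. Keeping track of the transposition $(1\otimes T\otimes 1)$ that your decomposition of $\Phi_{A^{\otimes(n+1)}}$ introduces, and using the identity $\bigl(\iter{\Phi}{n}\otimes\iter{\Phi}{n}\otimes 1\otimes 1\bigr)\circ(1\otimes T_{A,A^{\otimes n}}\otimes 1)=(1\otimes T_{A,A}\otimes 1)\circ\bigl(\iter{\Phi}{n}\otimes 1\otimes\iter{\Phi}{n}\otimes 1\bigr)$, you find that the two sides of this ``rebracketing'' are exactly
\begin{equation*}
  \Phi\circ(\Phi\otimes\Phi)\circ(1\otimes T\otimes 1)\circ\bigl(\iter{\Phi}{n}\otimes 1\otimes\iter{\Phi}{n}\otimes 1\bigr)
  \quad\text{and}\quad
  \Phi\circ(\Phi\otimes\Phi)\circ\bigl(\iter{\Phi}{n}\otimes 1\otimes\iter{\Phi}{n}\otimes 1\bigr),
\end{equation*}
\emph{i.e.}, the two sides of the $n=2$ naturality square precomposed with $\iter{\Phi}{n}\otimes 1\otimes\iter{\Phi}{n}\otimes 1$. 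The homotopy joining them is therefore $h^{[2]}\circ(\iter{\Phi}{n}\otimes 1\otimes\iter{\Phi}{n}\otimes 1)$ (Munkholm's $h_{2}$), and $h^{[2]}$ contains the commutativity homotopy $\invhc$. So the commutativity swap re-enters at \emph{every} induction step, now fed with values of $\iter{\Phi}{n}$ rather than raw cocycles; this is precisely the $k_{1}$ vs.~$k_{2}$ comparison in the paper and the only place where the full strength of \Cref{thm:hc-1Phin-trivial} (arbitrary $n$) is required. Deferring the cocycle bookkeeping to an afterthought hides the fact that, as stated, your induction step is missing exactly the hardest piece of the argument. A secondary slip: $\Phi$ is not a strict shm map (only $\kkk$-strict), so ``tensoring with the strict map $\Phi$'' is not well-founded as written---you need to justify separately that tensoring a cocycle-trivial homotopy with the non-strict $\Phi$ preserves triviality on cocycles, which the paper sidesteps by choosing a different decomposition.
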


\begin{proof}
  Munkholm~\cite[Prop.~4.5]{Munkholm:1974} has shown that \(\iter{\Phi}{n}\) is an shc map.
  The non-trivial part of the proof is to construct a homotopy
  \begin{equation}
    h^{[n]}\colon \Phi\circ\bigl(\iter{\Phi}{n}\otimes\iter{\Phi}{n}\bigr) \simeq \iter{\Phi}{n}\circ \Phi^{\otimes n}\circ T_{n}
  \end{equation}
  where \(T_{n}\colon A^{\otimes n}\otimes A^{\otimes n}\to (A\otimes A)^{\otimes n}\)
  is the reordering of the \(2n\)~factors~\(A=C^{*}(X)\) corresponding to the permutation
  \begin{equation}
    \begin{pmatrix}
      1 & 2 & \dots & n & n+1 & n+2 & \dots & 2n \\  
      1 & 3 & \dots & 2n-1 & 2 & 4 & \dots & 2n
    \end{pmatrix}.
  \end{equation}  
  We follow Munkholm's arguments and verify that in our setting they lead to a homotopy that is \(\kkk\)-trivial on cocycles.
  There is nothing to show for~\(n\le1\).

  We start with the case~\(n=2\), see~\cite[p.~31]{Munkholm:1974}. The homotopies labelled \(h_{1}\),~\(h_{2}\),~\(h_{4}\) and~\(h_{5}\)
  by Munkholm are \(\kkk\)-trivial because \(\Phi\circ(1\otimes\Phi)\) is \(\kkk\)-strict and the homotopy~\(\ha\) is \(\kkk\)-trivial,
  see Lemmas~\ref{thm:shc-mod-k},~\ref{thm:shm-h-trivial-cocycles}\,\ref{thm:shm-h-trivial-cocycles-comp} and~\ref{thm:trivial-h-comp}.
  So consider the homotopy
  \begin{equation}
    \label{eq:h3}
    h_{3} = \Phi\circ(1_{A}\otimes\Phi)\circ(1_{A}\otimes\invhc\otimes 1_{A}).
  \end{equation}
  We have shown above
  that \(\invhc\) is \(\kkk\)-trivial on cocycles.
  Together with the \(\kkk\)-strictness of~\(\Phi\circ(1\otimes\Phi)\) 
  this implies by \Cref{thm:shm-h-trivial-cocycles}
  that \(h_{3}\) is \(\kkk\)-trivial on cocycles, too, and therefore also the sought-after homotopy
  \begin{equation}
    \label{eq:h-h12345}
    h^{[2]} = h_{1}\cup h_{2}\cup h_{3}\cup h_{4}\cup h_{5}.
  \end{equation}

  For the induction step we have another set of homotopies~\(h_{1}\) to~\(h_{5}\), see~\cite[p.~32]{Munkholm:1974}.
  The homotopies~\(h_{1}\) and~\(h_{4}\) are \(\kkk\)-trivial by \Cref{thm:comp-shm-tensor} and \Cref{thm:trivial-h-comp} because \(\Phi\) is \(\kkk\)-strict.
  The homotopy~\(h_{3}\) is actually not needed. In fact, the identity
  \begin{equation}
    \bigl(\iter{\Phi}{n}\otimes1\bigr) \circ T_{A,A^{\otimes n}} =  T_{A,A} \circ \bigl(1\otimes\iter{\Phi}{n}\bigr)
  \end{equation}
  (see~\cite[\S 3.6\,(iii)]{Munkholm:1974})
  and~\cite[Prop.~3.3\,(ii)]{Munkholm:1974} (or \Cref{thm:comp-shm-tensor}) implies that
  \begin{multline}
    \bigl(1_{A}\otimes T_{A,A}\otimes 1_{A}\bigr) \circ \bigl(\iter{\Phi}{n}\otimes 1_{A}\otimes\iter{\Phi}{n}\otimes 1_{A}\bigr) \\*
    = \bigl(\iter{\Phi}{n}\otimes\iter{\Phi}{n}\otimes1_{A}\otimes1_{A}\bigr) \circ \bigl(1_{A^{\otimes n}}\otimes T_{A,A^{\otimes n}}\otimes 1_{A}\bigr),
  \end{multline}
  which means that the homotopy relation labelled~``\(\stackrel{3}{\simeq}\)'' in~\cite[p.~32]{Munkholm:1974} is an equality.
  That the homotopy~\(h_{5}\) is \(\kkk\)-trivial on cocycles uses that so is \(h^{[n]}\) by induction,
  that \(\Phi\) is \(\kkk\)-strict and also \Cref{thm:shm-h-trivial-cocycles}. 

  To show that
  \begin{equation}
    h_{2} = h^{[2]}\circ \bigl(\iter{\Phi}{n}\otimes 1_{A}\otimes\iter{\Phi}{n}\otimes 1_{A}\bigr)
  \end{equation}
  is \(\kkk\)-trivial on cocycles,
  we may by~\eqref{eq:Hom-cup-dgc} and \Cref{thm:shm-h-trivial-cocycles}\,\ref{thm:shm-h-trivial-cocycles-cup}
  consider the composition with each factor in~\eqref{eq:h-h12345} separately.
  (Recall that \(h\circ f=h\,\BB f\) for an shm homotopy~\(h\) and an shm map~\(f\).)
  The homotopies~\(h_{1}\),~\(h_{2}\),~\(h_{4}\) and~\(h_{5}\) for the case~\(n=2\)
  are \(\kkk\)-trivial on all arguments and therefore pose no problem.
  
  It remains to look at the homotopy
  \begin{align}
    \label{eq:h3comp}
    k_{1} &= h_{3}\circ \bigl(\iter{\Phi}{n}\otimes 1_{A}\otimes\iter{\Phi}{n}\otimes 1_{A}\bigr) \\
    \notag &= \Phi\circ(1_{A}\otimes\Phi)\circ(1_{A}\otimes\invhc\otimes 1_{A})\circ \bigl(\iter{\Phi}{n}\otimes 1_{A}\otimes\iter{\Phi}{n}\otimes 1_{A}\bigr).
  \shortintertext{We want to compare it to the homotopy}
    k_{2} &= \Phi\circ(1_{A}\otimes\Phi)\circ\Bigl(\iter{\Phi}{n}\otimes \bigl(\invhc\circ\bigl(1_{A}\otimes\iter{\Phi}{n}\bigr)\bigr)\otimes 1_{A}\Bigr).
  \shortintertext{Denoting reduction modulo~\(\kkk\) by a bar above a map, we have}
    \bar k_{1} &= \iter{\mu_{A/\kkk}}{3}\circ (1_{A/\kkk}\otimes\barinvhc\otimes 1_{A/\kkk})\circ \bigl(\iter{\bar\Phi}{n}\otimes 1_{A}\otimes\iter{\Phi}{n}\otimes \bar 1_{A}\bigr), \\
    \bar k_{2} &= \iter{\mu_{A/\kkk}}{3}\circ \Bigl(\iter{\bar\Phi}{n}\otimes \bigl(\barinvhc\circ\bigl(1_{A}\otimes\iter{\Phi}{n}\bigr)\bigr)\otimes \bar 1_{A}\Bigr).
  \end{align}
  Because \(\iter{\bar\Phi}{n}\) is a dga map, \(\bar k_{1}\) and \(\bar k_{2}\) agree, see \Cref{thm:tensor-shm-h}.
  Moreover, the homotopy \(\barinvhc\circ(1\otimes\iter{\Phi}{n})\) is \(0\)-trivial (that is, trivial) on cocycles by \Cref{thm:hc-1Phin-trivial},
  which together with \Cref{thm:shm-h-trivial-cocycles}\,\ref{thm:shm-h-trivial-cocycles-comp}
  implies that \(\bar k_{2}\) has the same property.
  Putting these facts together, we obtain that \(k_{1}\) is \(\kkk\)-trivial on cocycles.  
  This completes the proof.
\end{proof}

Let \(n\ge0\) and choose cocycles~\(a_{1}\),~\dots,~\(a_{n}\in C^{*}(X)\) of even positive degrees.
We write \(\aa=(a_{1},\dots,a_{n})\) and consider the shm map
\begin{equation}
  \label{eq:def-Lambda-a}
  \Lambda_{\aa}\colon \kk[\xx]\coloneqq\kk[x_{1}]\otimes\dots\otimes\kk[x_{n}] \stackrel{\lambda_{\aa}}{\longrightarrow} C^{*}(X)^{\otimes n} \stackrel{\iter{\Phi}{n}}{\Longrightarrow} C^{*}(X)
\end{equation}
where \(x_{1}\),~\dots,~\(x_{n}\) are indeterminates of degrees~\(\deg{x_{i}}=\deg{a_{i}}\) and
\(\lambda_{\aa}\) is the tensor product of the dga maps sending each~\(x_{i}\) to~\(a_{i}\).

\begin{remark}
  The map~\(\Lambda_{\aa}\) can be expressed in terms of the hga operations on~\(C^{*}(X)\).
  It is not the same as Wolf's explicit shm~map \cite[Sec.~3]{Wolf:1977}, which only uses \(\cupone\)-products.
\end{remark}

\begin{proposition}
  \label{thm:Lambda-shc-natural}
  If \(2\) is invertible in~\(\kk\), then the map
  \( 
    \Lambda_{\aa}\colon \kk[\xx]\to C^{*}(X)
  \) 
  is a \(\kkk\)-strict and \(\kkk\)-natural shc map.
\end{proposition}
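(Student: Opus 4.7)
The strategy is to exploit the factorization \(\Lambda_{\aa} = \iter{\Phi}{n} \circ \lambda_{\aa}\), in which \(\lambda_{\aa}\) is a strict dga~map carrying \(\kk[\xx]\) into the subspace of cocycles (polynomials in the~\(a_i\)), together with the structural results on~\(\iter{\Phi}{n}\) from \Cref{thm:shc-mod-k} and \Cref{thm:Phin-ax-natural}.

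The \(\kkk\)-strictness of~\(\Lambda_{\aa}\) is essentially formal: since \(\lambda_{\aa}\) is strict, the composition formula~\eqref{eq:twc-composition} reduces to \((\Lambda_{\aa})_{(m)} = (\iter{\Phi}{n})_{(m)} \circ \lambda_{\aa}^{\otimes m}\) for every~\(m \ge 1\), so the \(\kkk\)-strictness of~\(\iter{\Phi}{n}\) recorded in \Cref{thm:shc-mod-k}\ref{thm:Phin-k-strict} transfers immediately to~\(\Lambda_{\aa}\).

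For the shc~property and \(\kkk\)-naturality, note that \(\Phi_{\kk[\xx]} = \mu_{\kk[\xx]}\) since \(\kk[\xx]\) is commutative, so one needs a \(\kkk\)-trivial shm homotopy \(\Phi \circ (\Lambda_{\aa} \otimes \Lambda_{\aa}) \simeq \Lambda_{\aa} \circ \mu_{\kk[\xx]}\). I would obtain this as a cup product (via \Cref{thm:tw-h-equiv-rel}\ref{thm:cup-trivial-h}) of two pieces. The first piece comes from \Cref{thm:Phin-ax-natural}: by \Cref{thm:comp-shm-tensor} applied to the strict map~\(\lambda_{\aa}\) we have the exact equality \(\Lambda_{\aa} \otimes \Lambda_{\aa} = (\iter{\Phi}{n} \otimes \iter{\Phi}{n}) \circ (\lambda_{\aa} \otimes \lambda_{\aa})\); precomposing the homotopy~\(h^{[n]}\) (which is only \(\kkk\)-trivial on cocycles) with \(\lambda_{\aa} \otimes \lambda_{\aa}\), whose image consists of tuples of cocycles, upgrades it to a genuine \(\kkk\)-trivial homotopy
\[
  \Phi \circ (\Lambda_{\aa} \otimes \Lambda_{\aa}) \simeq \iter{\Phi}{n} \circ \Phi^{\otimes n} \circ T_n \circ (\lambda_{\aa} \otimes \lambda_{\aa}).
\]
Since \(\lambda_{\aa}\) is a dga~map, the target of the shc~diagram is \(\Lambda_{\aa} \circ \mu_{\kk[\xx]} = \iter{\Phi}{n} \circ \mu^{\otimes n} \circ T_n \circ (\lambda_{\aa} \otimes \lambda_{\aa})\).

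The remaining step, and the main obstacle, is to bridge these two \(\kkk\)-strict shm~maps by a \(\kkk\)-trivial homotopy. They agree in first component, both equal to \(\iter{\mu}{n}\circ\mu^{\otimes n}\circ T_n\circ(\lambda_{\aa}\otimes\lambda_{\aa})\), while all their higher components lie in~\(\kkk\): for the first one uses that \(\Phi\) is \(\kkk\)-strict and that \(\iter{\mu}{n}\) takes \(\kkk^{\boxtimes n}\) into~\(\kkk\) (because \(\kkk\) is a two-sided ideal), and for the second the \(\kkk\)-strictness of~\(\iter{\Phi}{n}\). I plan to assemble the bridging homotopy factor by factor: for each~\(i\), construct a \(\kkk\)-trivial shm homotopy between \(\Phi \circ (\lambda_{a_i} \otimes \lambda_{a_i})\) and the strict map \(\lambda_{a_i} \circ \mu_{\kk[x_i]}\) by inductively solving for \(h_{(k)} \in \kkk\); the obstruction at each stage is a cocycle in~\(\kkk\) whose explicit form modulo~\(\kkk\) is controlled by \Cref{thm:shc-mod-k}\ref{thm:hcn-k} and the Hirsch formula~\eqref{eq:hirsch-formula}, which should allow one to exhibit a primitive inside~\(\kkk\). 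These single-variable homotopies then combine, following the pattern of \Cref{thm:tensor-prod-shc}, into a \(\kkk^{\boxtimes n}\)-trivial homotopy on \(C^*(X)^{\otimes n}\), which becomes \(\kkk\)-trivial after postcomposition with~\(\iter{\Phi}{n}\) using \Cref{thm:trivial-h-comp}. Naturality in~\(X\) throughout follows from the naturality of the hga operations on cochains and of all the homotopies involved.
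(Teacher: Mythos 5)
Your factorization and cup-product strategy are essentially the same as the paper's: split the shc naturality square for \(\Lambda_{\aa}\) into a top square governed by the naturality of the strict map~\(\lambda_{\aa}\) and a bottom square governed by \Cref{thm:Phin-ax-natural}, then take the cup product of the two resulting \(\kkk\)-trivial homotopies. Your treatment of the second piece — precomposing the cocycle-trivial homotopy from \Cref{thm:Phin-ax-natural} with \(\lambda_{\aa}\otimes\lambda_{\aa}\) and using that its image consists of cocycles — is exactly what the paper does.

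The genuine gap is the first piece, the single-variable step. You correctly isolate it: you need a \(\kkk\)-trivial shm homotopy between \(\Phi\circ(\lambda_{a_i}\otimes\lambda_{a_i})\) and the strict map \(\lambda_{a_i}\circ\mu_{\kk[x_i]}\). But your argument — that both are \(\kkk\)-strict with equal first components and that the inductive obstruction cocycle ``should'' admit a primitive in~\(\kkk\) — does not establish this. Two shm maps that agree in degree~\(1\) and have all higher components in~\(\kkk\) are not automatically related by a \(\kkk\)-trivial homotopy: you would need to know that the relevant obstruction classes vanish in an appropriate sense, and merely lying in~\(\kkk\) is not enough. The paper resolves exactly this by invoking~\citehgashc{Prop.~7.2} from the companion paper, which provides the \(\kkk\)-natural shc structure on the dga map~\(\kk[x_i]\to C^*(X)\), \(x_i\mapsto a_i\), upon the specific choice \(b=-\frac{1}{2}\,a_i\cuptwo a_i\in\kkk\). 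That input is not a technical afterthought; it is the substantive lemma your inductive sketch is trying to reprove, and it is also where the hypothesis that \(2\) is invertible enters (through \(-\frac{1}{2}\) and the membership \(a_i\cuptwo a_i\in\kkk\)). Without either citing that result or actually carrying out the obstruction analysis, the proof is incomplete.

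A secondary remark: after establishing the per-variable naturality, you still need to assemble the~\(n\)-variable statement. You gesture at ``the pattern of \Cref{thm:tensor-prod-shc}'', which is indeed what the paper does (yielding a \(\kkk^{\boxtimes n}\)-trivial homotopy, then composing with the \(\kkk\)-strict \(\iter{\Phi}{n}\) to land in~\(\kkk\) via \Cref{thm:trivial-h-comp}), so that part of your plan is sound once the single-variable lemma is supplied.
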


\begin{proof}
  Since \(\iter{\Phi}{n}\) is \(\kkk\)-strict, so is \(\Lambda_{\aa}\) by \Cref{thm:trivial-h-comp}\,\ref{thm:trivial-h-comp-0}.
  It remains to consider the diagram
  \begin{equation}
    \begin{tikzcd}[column sep=large]
      \kk[\xx]\otimes\kk[\xx] \arrow{d}[left]{\lambda_{\aa}\otimes\lambda_{\aa}} \arrow{r}{\mu^{\otimes n}\circ T_{n}} & \kk[\xx] \arrow{d}{\lambda_{\aa}} \\
      C^{*}(X)^{\otimes n}\otimes C^{*}(X)^{\otimes n} \arrow[Rightarrow]{d}[left]{\iter{\Phi}{n}\otimes\iter{\Phi}{n}} \arrow[Rightarrow]{r}{\Phi^{\otimes n}\circ T_{n}\,} & C^{*}(X)^{\otimes n} \arrow[Rightarrow]{d}{\iter{\Phi}{n}} \\
      C^{*}(X)\otimes C^{*}(X) \arrow[Rightarrow]{r}{\Phi} & C^{*}(X) \mathrlap{.}
    \end{tikzcd}
  \end{equation}

  Each dga~map~\(\kk[x_{i}]\to C^{*}(X)\), \(x_{i}\mapsto a_{i}\) is in fact a \(\kkk\)-natural shc map
  because we can choose \(b=-\frac{1}{2}\,a_{i}\cuptwo a_{i}\in\kkk\) in the statement of~\citehgashc{Prop.~7.2}.
  Then the shc map~\(\lambda_{\aa}\) is \(\kkk^{\boxtimes n}\)-natural by \Cref{thm:tensor-prod-shc} and induction.
  Because \(\iter{\Phi}{n}\) is \(\kkk\)-strict, its composition~\(h_{1}\) with the homotopy making the top diagram commute is \(\kkk\)-trivial by \Cref{thm:trivial-h-comp}\,\ref{thm:trivial-h-comp-1}.

  Composed with the top left arrow, the homotopy making the bottom square commute
  is \(\kkk\)-trivial by~\Cref{thm:Phin-ax-natural}. The cup product of this composed homotopy~\(h_{2}\)
  with~\(h_{1}\) then is \(\kkk\)-trivial as well by \Cref{thm:tw-h-equiv-rel}\,\ref{thm:cup-trivial-h}. This proves the claim since
  \begin{equation}
    (\lambda_{\aa}\otimes\lambda_{\aa})\circ\bigl(\iter{\Phi}{n}\otimes\iter{\Phi}{n}\bigr)
    = (\lambda_{\aa}\circ\iter{\Phi}{n})\otimes(\lambda_{\aa}\circ\iter{\Phi}{n}) = \Lambda_{\aa}\otimes\Lambda_{\aa}
  \end{equation}
  by \Cref{thm:comp-shm-tensor}.
\end{proof}

If \(H^{*}(X)\cong\kk[\xx]\) is polynomial and each \(a_{i}\) represents \(x_{i}\) under this isomorphism,
then \(\Lambda_{\aa}\) is a quasi-isomorphism. Note that it depends both on the choice of the generators~\(x_{i}\)
and of their representatives~\(a_{i}\).

\begin{theorem}
  \label{thm:h-square-twc}
  Assume that \(2\) is invertible in~\(\kk\), and
  let \(\phi\colon Y\to X\) be a map of simplicial sets with polynomial cohomology.
  Let \(\aa\) and~\(\bb\) be representatives of some generators of~\(H^{*}(X)\) and~\(H^{*}(Y)\), respectively.
  Then the diagram
  \begin{equation*}
    \begin{tikzcd}
      H^{*}(X) \arrow[Rightarrow]{d}[left]{\Lambda_{\aa}} \arrow{r}{\phi^{*}} & H^{*}(Y) \arrow[Rightarrow]{d}{\Lambda_{\bb}} \\
      C^{*}(X) \arrow{r}{\phi^{*}} & C^{*}(Y)
    \end{tikzcd}
  \end{equation*}
  commutes up to a \(\kkk_{Y}\)-trivial shm homotopy.
\end{theorem}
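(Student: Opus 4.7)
The plan is to rewrite both composites in a shape amenable to the shc-naturality machinery developed earlier, reducing the problem to a single-generator change-of-representatives question. First, since $\phi^{*}\colon C^{*}(X)\to C^{*}(Y)$ is a morphism of extended hgas, by \Cref{thm:shc-mod-k} the shc structure map $\Phi$ is natural under $\phi^{*}$; inducting on $n$ and using \Cref{thm:comp-shm-tensor} to form the requisite tensor powers of $\phi^{*}$, I obtain the identity $\phi^{*}\circ\iter{\Phi_{X}}{n} = \iter{\Phi_{Y}}{n}\circ(\phi^{*})^{\otimes n}$ of shm maps. Composing on the right with the dga map $\lambda_{\aa}$ then yields
\[
  \phi^{*}\circ\Lambda_{\aa} \;=\; \iter{\Phi_{Y}}{n}\circ\lambda_{\phi^{*}\aa},
\]
where $\lambda_{\phi^{*}\aa}\colon H^{*}(X)\to C^{*}(Y)^{\otimes n}$ places $\phi^{*}(a_{i})$ in the $i$-th factor.

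Second, set $p_{i}\coloneqq \phi^{*}(x_{i})\in H^{*}(Y)$ and introduce the dga map $\rho\colon H^{*}(X)\to H^{*}(Y)^{\otimes n}$ placing $p_{i}$ in the $i$-th factor, so that $\iter{\mu}{n}\circ\rho = \phi^{*}$ as dga maps $H^{*}(X)\to H^{*}(Y)$. By \Cref{thm:Lambda-shc-natural} the shc~map $\Lambda_{\bb}\colon H^{*}(Y)\to C^{*}(Y)$ is $\kkk_{Y}$-strict and $\kkk_{Y}$-natural; since $H^{*}(Y)$ is commutative (so $\Phi_{H^{*}(Y)}$ may be taken as strict multiplication), \Cref{thm:Phi-n-homotopy} furnishes a $\kkk_{Y}$-trivial shm~homotopy $\iter{\Phi_{Y}}{n}\circ\Lambda_{\bb}^{\otimes n}\simeq\Lambda_{\bb}\circ\iter{\mu}{n}$. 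Precomposing with the dga map $\rho$, which preserves $\kkk_{Y}$-triviality by \Cref{thm:trivial-h-comp}\,\ref{thm:trivial-h-comp-2}, gives
\[
  \iter{\Phi_{Y}}{n}\circ\Lambda_{\bb}^{\otimes n}\circ\rho \;\simeq\; \Lambda_{\bb}\circ\phi^{*}.
\]

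It will then remain to construct a $\kkk_{Y}$-trivial shm~homotopy $\iter{\Phi_{Y}}{n}\circ\lambda_{\phi^{*}\aa} \simeq \iter{\Phi_{Y}}{n}\circ\Lambda_{\bb}^{\otimes n}\circ\rho$, since cupping it with the previous homotopy via \Cref{thm:tw-h-equiv-rel}\,\ref{thm:cup-trivial-h} yields the asserted $\phi^{*}\circ\Lambda_{\aa}\simeq\Lambda_{\bb}\circ\phi^{*}$. Both of these shm~maps are $\iter{\Phi_{Y}}{n}$ post-composed with a map $H^{*}(X)\to C^{*}(Y)^{\otimes n}$ whose level-$1$ component places the cohomologous cocycles $\phi^{*}(a_{i})$ and $\Lambda_{\bb,(1)}(p_{i})$, respectively, in the $i$-th slot. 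My approach is to break the comparison into individual generators via \Cref{thm:tensor-prod-shc}, reducing to a single-generator change-of-representatives problem on each dga map $\kk[x_{i}]\to C^{*}(Y)$, to be handled by the argument used in the proof of \Cref{thm:Lambda-shc-natural} (invoking \citehgashc{Prop.~7.2}) with a suitable choice of $b\in\kkk_{Y}$ built from $\cuptwo$-products of the representatives and a chosen primitive of $\phi^{*}(a_{i})-\Lambda_{\bb,(1)}(p_{i})$; post-composing with $\iter{\Phi_{Y}}{n}$, which is $\kkk_{Y}$-strict by \Cref{thm:shc-mod-k}\,\ref{thm:Phin-k-strict}, preserves $\kkk_{Y}$-triviality via \Cref{thm:trivial-h-comp}\,\ref{thm:trivial-h-comp-1}. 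The hard part will be this third step: the single-generator change-of-representatives homotopy must absorb a coboundary correction whose primitive need not lie in $\kkk_{Y}$, and making this work inside the shc~structure will require a finer invocation of the companion paper's results than those already employed; the other two steps are mechanical applications of the framework developed in Sections~\ref{sec:shm-tensor}, \ref{sec:shc} and~\ref{sec:spaces-shc}.
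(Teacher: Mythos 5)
Your three steps reproduce the paper's proof almost exactly: step one is the strictly commuting left square of the paper's diagram (\(\phi^{*}\circ\iter{\Phi_{X}}{n}=\iter{\Phi_{Y}}{n}\circ(\phi^{*})^{\otimes n}\), which follows from the naturality of the hga structure and hence of \(\Phi\) under the hga morphism \(\phi^{*}\)); step two is the right square, handled by \Cref{thm:Phi-n-homotopy} applied to the \(\kkk_{Y}\)-strict, \(\kkk_{Y}\)-natural map \(\Lambda_{\bb}\) and then precomposed with the dga map you call \(\rho\) (the paper's \(\lambda_{\tilde\aa}\)); step three is the triangle comparing \(\phi^{*\otimes n}\circ\lambda_{\aa}\) with \(\Lambda_{\bb}^{\otimes n}\circ\lambda_{\tilde\aa}\) generator by generator. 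The only real issue is that you leave step three open, and the difficulty you flag there is not actually present. On each tensor factor you are comparing the dga map \(\kk[x_{i}]\to C^{*}(Y)\), \(x_{i}\mapsto\phi^{*}(a_{i})\), with the \(\kkk_{Y}\)-strict shm map \(\kk[x_{i}]\to H^{*}(Y)\Rightarrow C^{*}(Y)\); their degree-one components are cohomologous cocycles of \emph{even} degree, and the relevant companion result is \citehgashc{Prop.~7.1} (change of representative for a polynomial generator), not \citehgashc{Prop.~7.2} (which is the shc-naturality statement already used in \Cref{thm:Lambda-shc-natural}). The ``coboundary correction whose primitive need not lie in \(\kkk_{Y}\)'' is harmless: the primitive has odd degree, and \(\kkk_{Y}\) contains all odd-degree elements by part~\ref{def-ax-1} of its definition — this is exactly the hypothesis under which \citehgashc{Prop.~7.1} delivers a \(\kkk_{Y}\)-trivial homotopy. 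With that in hand, the tensor product of the single-generator homotopies is \(\kkk_{Y}^{\boxtimes n}\)-trivial by induction and \Cref{thm:trivial-h-comp}, and postcomposing with the \(\kkk_{Y}\)-strict \(\iter{\Phi_{Y}}{n}\) makes it \(\kkk_{Y}\)-trivial, as you anticipate; cupping the three homotopies via \Cref{thm:tw-h-equiv-rel} finishes the argument. So: same route as the paper, with one step left unfinished whose obstruction is illusory.
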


The corresponding result in~\cite[Sec.~7]{Munkholm:1974} is the heart of Munkholm's paper,
and for our proof of \Cref{thm:main} in the next section \Cref{thm:h-square-twc} will also be crucial.

\begin{proof}
  We write \(f=\phi^{*}\), \(\aa=(a_{1},\dots,a_{n})\) and \(\kkk=\kkk_{Y}\).
  By assumption, we have \(H^{*}(X)=\kk[x_{1},\dots,x_{n}]\).
  We define \(\tilde\aa=(H^{*}(f)(a_{1}),\dots,H^{*}(f)(a_{n}))\) and consider the diagram
  \begin{equation}
    \begin{tikzcd}[column sep=large]
      & H^{*}(X) \arrow{dl}[above]{\lambda_{\aa}\quad\mkern0mu} \arrow{dr}{\!\!\lambda_{\tilde\aa}} \\
      C^{*}(X)^{\otimes n} \arrow[Rightarrow]{d}{\iter{\Phi_{X}}{n}} \arrow{r}[below]{f^{\otimes n}} & C^{*}(Y)^{\otimes n} \arrow[Rightarrow]{d}{\iter{\Phi_{Y}}{n}} & H^{*}(Y)^{\otimes n} \arrow[Rightarrow]{l}{\Lambda_{\bb}^{\otimes n}} \arrow{d}{\iter{\mu}{n}} \\
     C^{*}(X) \arrow{r}[below]{f} & C^{*}(Y) & \arrow[Rightarrow]{l}{\Lambda_{\bb}} H^{*}(Y) \mathrlap{.}
    \end{tikzcd}
  \end{equation}
  The composition from~\(H^{*}(X)\) to~\(C^{*}(X)\) equals \(\Lambda_{\aa}\), and the one from~\(H^{*}(X)\) to~\(H^{*}(Y)\) is \(H^{*}(f)\).
  The left square commutes strictly by the naturality of the hga structure.
  \Cref{thm:Phi-n-homotopy} implies that the right square commutes up to a \(\kkk\)-trivial homotopy
  since \(\Lambda_{\bb}\) is \(\kkk\)-strict and \(\kkk\)-natural by \Cref{thm:Lambda-shc-natural}.

  The composition
  \begin{equation}
    \label{eq:Aicomp1}
    \kk[x_{i}] \longrightarrow H^{*}(Y) \stackrel{\Lambda_{\bb}}{\Longrightarrow} C^{*}(Y)
  \end{equation}
  is a \(\kkk\)-strict shm map, and
  the composition
  \begin{equation}
    \label{eq:Aicomp2}
    \kk[x_{i}] \longrightarrow C^{*}(X) \stackrel{f}{\longrightarrow} C^{*}(Y)
  \end{equation}
  is the dga~map sending \(x_{i}\) to~\(f(a_{i})\).
  Since both \((\Lambda_{\bb})_{(1)}(\tilde a_{i})\) and~\(f(a_{i})\) represent the even-degree element~\(\tilde a_{i}\in H^{*}(B)\)
  and \(\kkk\) contains all elements of odd degree,
  these two maps are homotopic via a \(\kkk\)-trivial shm homotopy by~\citehgashc{Prop.~7.1}.

  The two ways to go from~\(H^{*}(X)\) to~\(C^{*}(Y)^{\otimes n}\) in the diagram represent the tensor products of the shm~maps just discussed.
  This implies by induction and \Cref{thm:trivial-h-comp} that also the triangle commutes up to a \(\kkk^{\boxtimes n}\)-trivial homotopy.
  Its composition with~\(\iter{\Phi_{Y}}{n}\) is \(\kkk\)-trivial by \Cref{thm:trivial-h-comp} as \(\iter{\Phi_{Y}}{n}\) is \(\kkk\)-strict.
  \Cref{thm:tw-h-equiv-rel} 
  concludes the proof.
\end{proof}

\section{Homogeneous spaces}
\label{sec:homog}

We are now ready to prove \Cref{thm:main}.
We assume in this and the next section that \(\kk\) is a principal ideal domain in which \(2\) is invertible.

Let \(G\) be a connected Lie group and \(\iota\colon K\hookrightarrow G\) a closed connected subgroup
such that the order of the torsion subgroup of~\(H^{*}(G;\Z)\) is invertible in~\(\kk\) and analogously for~\(K\).
This implies that \(BG\) and~\(BK\) have polynomial cohomology over~\(\kk\)
(and in fact is equivalent to it), see~\cite[Rem.~IV.8.1]{HusemollerMooreStasheff:1974}.
While we will make use of a maximal torus~\(T\subset K\) in our proof,
\(G\) could more generally be any topological group such that \(BG\) has polynomial cohomology in the sense of \Cref{sec:simp-prelim}.
By \Cref{thm:homog-reduced-groups} we may assume both~\(BG\) and~\(BK\) to be \(1\)-reduced.
For simplicity, we denote the induced maps~\(C^{*}(BG)\to C^{*}(BK)\) and~\(H^{*}(BG)\to H^{*}(BK)\) both by~\(\iota^{*}\).

Our goal is to construct an isomorphism of graded algebras
\begin{equation}
  \label{eq:map-Tor-HGK}
  H^{*}(G/K) \cong \Tor_{H^{*}(BG)}\bigl(\kk,H^{*}(BK)\bigr),
\end{equation}
natural in the pair~\((G,K)\).
Recall from \Cref{thm:iso-bar-GK} that there is a natural isomorphism of graded algebras
\begin{equation}
  \label{eq:map-Tor-CGK}
  H^{*}(G/K) \cong \Tor_{C^{*}(BG)}\bigl(\kk,C^{*}(BK)\bigr),
\end{equation}
It suffices therefore to connect the two bar constructions underlying the torsion products in~\eqref{eq:map-Tor-HGK} and~\eqref{eq:map-Tor-CGK}.
We start by establishing an isomorphism of graded \(\kk\)-modules,
proceeding in a way similar to Munkholm~\cite[\S 7.4]{Munkholm:1974}.
Remember that we have defined one-sided bar constructions as twisted tensor products in~\eqref{eq:def-bar-one-sided}.

Let \(\aa\) and~\(\bb\) be representatives of generators of~\(H^{*}(BG)\) and~\(H^{*}(BK)\), respectively.
We write the induced shm~quasi-isomorphism~\(\Lambda_{\aa}\colon H^{*}(BG)\Rightarrow C^{*}(BG)\)
introduced in~\eqref{eq:def-Lambda-a} as~\(\Laa\)
and analogously \(\Lbb=\Lambda_{\bb}\colon H^{*}(BK)\Rightarrow C^{*}(BK)\).

We define the map
\begin{equation}
  \Theta_{G,K}\colon \BB(\kk,H^{*}(BG),H^{*}(BK)) \to \BB(\kk,C^{*}(BG),C^{*}(BK))
\end{equation}
as the composition of the chain maps
\begin{equation}
  \begin{tikzcd}
    \BB H^{*}(BG) \otimes_{\iota^{*}\circ t_{H^{*}(BG)}} H^{*}(BK) \arrow{d}{\FLbb} \\
    \BB H^{*}(BG) \otimes_{\Lbb\circ\iota^{*}\circ t_{H^{*}(BG)}} C^{*}(BK) \arrow{d}{\delta_{h}} \\
    \BB H^{*}(BG) \otimes_{\iota^{*}\circ\Laa\circ t_{H^{*}(BG)}} C^{*}(BK) \arrow{d}{\BB\Laa\otimes 1} \\
    \BB C^{*}(BG) \otimes_{\iota^{*}\circ t_{C^{*}(BG)}} C^{*}(BK) \mathrlap{,}
  \end{tikzcd}
\end{equation}
given by Lemmas~\ref{thm:quiso-B-1-1-g},~\ref{thm:homotop-tw} and~\ref{thm:quiso-B-1-f-1}, respectively,
where the \(\kkk_{BK}\)-trivial twisting cochain homotopy~\(h\) in the second step comes from \Cref{thm:h-square-twc}.
Note that \(\Theta_{G,K}\) depends
on the chosen representative cocycles~\(\aa\) and~\(\bb\).

\begin{lemma}
  \label{thm:Theta-modulo-k}
  Modulo~\(\BB C^{*}(BG) \otimes\kkk_{BK}\) we have
  \begin{equation*}
    \Theta_{G,K} \equiv \BB\Laa \otimes \Lbbone.
  \end{equation*}
\end{lemma}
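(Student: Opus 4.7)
The plan is to peel off the three layers in the definition of $\Theta_{G,K}$ one at a time. By construction
\[
  \Theta_{G,K} = (\BB\Laa \otimes 1_{C^{*}(BK)}) \circ \delta_{h} \circ \FLbb,
\]
and since the target $\BB C^{*}(BG) \otimes C^{*}(BK)$ is only reached after the outermost arrow, the natural strategy is to work modulo $\BB H^{*}(BG) \otimes \kkk_{BK}$ for the first two stages and then transport that congruence along $\BB\Laa \otimes 1$.

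For the innermost map, note that $\Lbb=\Lambda_{\bb}$ is a $\kkk_{BK}$-strict shm map by \Cref{thm:Lambda-shc-natural}. \Cref{thm:quiso-B-1-1-g} then applies and yields
\[
  \FLbb \equiv 1_{\BB H^{*}(BG)} \otimes (\Lbb)_{(1)} = 1 \otimes \Lbbone \pmod{\BB H^{*}(BG) \otimes \kkk_{BK}}.
\]
For the middle map, the twisting cochain homotopy $h$ furnished by \Cref{thm:h-square-twc} is $\kkk_{BK}$-trivial by construction, so \Cref{thm:homotop-tw} gives
\[
  \delta_{h} \equiv 1 \pmod{\BB H^{*}(BG) \otimes \kkk_{BK}}.
\]

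The outermost map $\BB\Laa \otimes 1_{C^{*}(BK)}$ is a strict chain map of tensor products that clearly carries $\BB H^{*}(BG) \otimes \kkk_{BK}$ into $\BB C^{*}(BG) \otimes \kkk_{BK}$. Composing the three congruences gives
\[
  \Theta_{G,K} \equiv (\BB\Laa \otimes 1) \circ 1 \circ (1 \otimes \Lbbone) = \BB\Laa \otimes \Lbbone \pmod{\BB C^{*}(BG) \otimes \kkk_{BK}},
\]
which is the claim. There is no genuine obstacle in this argument: once \Cref{thm:h-square-twc} provides the $\kkk_{BK}$-triviality of $h$ and \Cref{thm:Lambda-shc-natural} supplies the $\kkk_{BK}$-strictness of $\Lbb$, what remains is pure bookkeeping, and the only care required is to ensure that $\kkk_{BK}$ is preserved by each successive tensoring and composition.
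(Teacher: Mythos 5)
Your proposal is correct and follows the paper's own argument: both peel off the three stages of \(\Theta_{G,K}\), using \Cref{thm:quiso-B-1-1-g} with the \(\kkk_{BK}\)-strictness of \(\Lbb\) for the first stage, \Cref{thm:homotop-tw} with the \(\kkk_{BK}\)-triviality of \(h\) for the second, and the fact that \(\BB\Laa\otimes 1\) preserves the ideal for the third. Your write-up is just a more explicit version of the paper's one-line proof.
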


Recall that \(\Lbbone\) is the quasi-isomorphism of complexes
\begin{equation}
  H^{*}(BK)\cong\kk[y_{1},\dots,y_{n}] \to C^{*}(BK),
  \qquad
  y_{1}^{k_{1}}\cdots y_{n}^{k_{n}} \mapsto b_{1}^{k_{1}}\cdots b_{n}^{k_{n}}.
\end{equation}

\begin{proof}
  The congruence follows from Lemmas~\ref{thm:homotop-tw},~\ref{thm:quiso-B-1-f-1} and~\ref{thm:quiso-B-1-1-g},
  given that \(\Lbb\) is a \(\kkk_{BK}\)-strict shm map and \(h\) a \(\kkk_{BK}\)-trivial homotopy.
\end{proof}

\begin{proposition} \( \)
  \label{thm:Theta-additive}
  \begin{enumroman}
  \item
    \label{thm:Theta-additive-1}
    The map
    \begin{equation*}
      H^{*}(\Theta_{G,K})\colon \Tor_{H^{*}(BG)}\bigl(\kk,H^{*}(BK)\bigr) \to \Tor_{C^{*}(BG)}\bigl(\kk,C^{*}(BK)\bigr)
    \end{equation*}
    is an isomorphism of graded \(\kk\)-modules.
  \item
    \label{thm:Theta-additive-2}
    The Eilenberg--Moore spectral sequence for the fibration~\(G/K\hookrightarrow BK\to BG\)
    collapses at the second page.
  \end{enumroman}
\end{proposition}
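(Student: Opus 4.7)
The plan is to deduce both parts from the observation that $\Theta_{G,K}$ is a quasi-isomorphism that preserves the bar-length filtration on source and target.

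For part~\ref{thm:Theta-additive-1}, I would verify that each of the three chain maps composing $\Theta_{G,K}$ is individually a quasi-isomorphism. The first, $\FLbb$, is a quasi-isomorphism by \Cref{thm:quiso-B-1-1-g} combined with \Cref{rem:tw-tensor-quiso}, since $\Lbb$ is an shm quasi-isomorphism. The middle map $\delta_h$ is even an isomorphism of complexes by \Cref{thm:homotop-tw}. The last map $\BB\Laa\otimes 1$ is a quasi-isomorphism by \Cref{thm:quiso-B-1-f-1} together with \Cref{rem:tw-tensor-quiso}, using that the dgc map $\BB\Laa$ is a quasi-isomorphism because $\Laa_{(1)}$ is one (via the standard spectral sequence on bar length). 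All torsion-freeness and bounded-below hypotheses of \Cref{rem:tw-tensor-quiso} hold because $H^*(BG)$, $H^*(BK)$, $C^*(BG)$ and $C^*(BK)$ are free $\kk$-modules concentrated in non-negative degrees.

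For part~\ref{thm:Theta-additive-2}, the Eilenberg--Moore spectral sequence is the one associated to the increasing bar-length filtration on $\Bar{C^*(BK)}{C^*(BG)}$, converging to $H^*(G/K)$ via \Cref{thm:iso-bar-GK} with $E_2$ page equal to $\Tor^*_{H^*(BG)}(\kk,H^*(BK))$ by Künneth. I would verify directly from the formulas in Lemmas~\ref{thm:quiso-B-1-1-g}, \ref{thm:homotop-tw} and~\ref{thm:quiso-B-1-f-1} that each constituent of $\Theta_{G,K}$ sends filtration degree~$\le k$ to filtration degree~$\le k$: the map $\FLbb$ produces only summands of bar lengths in $\{0,\dots,k\}$; $\delta_h$ is the identity modulo strictly lower filtration; and $\BB\Laa$ decomposes $[a_1|\dots|a_k]$ into a sum of elements of bar length at most~$k$. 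Identifying the $E_1$ pages on both sides via Künneth with $\BB H^*(BG)\otimes H^*(BK)$, the induced map on~$E_1$ becomes the identity, because $H^*(\Laa_{(1)})$ and $H^*(\Lbbone)$ are the identity on cohomology; consequently the induced map on $E_2$ is the identity on $\Tor^*_{H^*(BG)}(\kk,H^*(BK))$.

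The source spectral sequence (for the filtered complex $\Bar{H^*(BK)}{H^*(BG)}$) collapses at $E_2$ for formal reasons: its total differential has bidegree~$(-1,1)$ with respect to the bar length, so $d_0=0$, $d_1$ captures the entire differential, and by definition~\eqref{eq:def-dg-Tor} we obtain $E_2 = \Tor^*_{H^*(BG)}(\kk,H^*(BK)) = E_\infty$ termwise. Since $H^*(\Theta_{G,K})$ is an isomorphism on total cohomology by part~\ref{thm:Theta-additive-1} and the $E_2$ map is an isomorphism, the standard comparison theorem forces the target Eilenberg--Moore spectral sequence to collapse at $E_2$ as well. The main potential pitfall is the careful verification of filtration preservation for $\BB\Laa$ (whose terms corresponding to decompositions into fewer than~$k$ parts lie in strictly lower filtration) and of the Künneth identifications on~$E_1$; neither step should pose any genuine difficulty.
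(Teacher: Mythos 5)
Your proposal is correct and follows essentially the same route as the paper: the paper likewise reduces everything to the bar-length filtration, uses the congruence \(\Theta_{G,K}\equiv\BB\Laa\otimes\Lbbone\) to identify the induced map on the second pages as an isomorphism, and deduces the collapse of the target spectral sequence from that of the source. The only cosmetic difference is order of deduction — you establish part~\ref{thm:Theta-additive-1} first by composing the three quasi-isomorphisms (each justified exactly as in \Cref{rem:tw-tensor-quiso}), whereas the paper obtains both parts at once from the \(E_{2}\)-comparison — but the underlying spectral-sequence argument is the same.
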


\begin{proof}
  Both \(\Laa\) and~\(\Lbb\) are quasi-isomorphisms, and so is \(\BB\Laa\).
  It follows from \Cref{thm:Theta-modulo-k} as in \Cref{rem:tw-tensor-quiso}
  that \(\Theta_{G,K}\) induces an isomorphism between the second pages of these spectral sequences
  and therefore between the torsion products.
  
  Because the spectral sequence for~\(\BB(\kk,H^{*}(BG),H^{*}(BK))\) collapses at this stage,
  so does the one for~\(\BB(\kk,C^{*}(BG),C^{*}(BK))\), which is the Eilenberg--Moore spectral sequence of the fibration.
\end{proof}

\begin{remark}
  We assume that \(2\) is a unit in~\(\kk\) to ensure that the shc maps~\(\Laa\) and~\(\Lbb\)
  are natural with respect to~\(\kkk_{BG}\) and~\(\kkk_{BK}\), respectively,
  see the proof of \Cref{thm:Lambda-shc-natural}.
  If this were not the case, then the congruence in \Cref{thm:Theta-modulo-k} would still hold
  modulo~\(\BB C^{*}(BG)\otimes C^{>0}(BK)\), and this is enough to prove \Cref{thm:Theta-additive}.
  We thus recover Munkholm's collapse theorem
  for spaces with polynomial cohomology \cite[Thm.]{Munkholm:1974}.
\end{remark}

We now turn to the multiplicativity and naturality of~\(H^{*}(\Theta_{G,K})\).
Here our approach is inspired by Wolf~\cite[p.~331]{Wolf:1977}.
Let \(\kappa\colon T\to K\) be a morphism of simplicial groups where \(T\) is some torus.
We also choose a formality map~\(\ffbar^{*}\colon C^{*}(BT)\to H^{*}(BT)\) as in \Cref{thm:ffbar-cuptwo},
keeping in mind that \(\ffbar^{*}\) annihilates \(\kkk_{BT}\) by \Cref{cor:ax-kerf}.
\def\Pi{\ffbar^{*}}%
Based on~\(\kappa\) and~\(\Pi\)
we define the map
\begin{equation}
  \Psi_{\kappa}\colon \BB(\kk,C^{*}(BG),C^{*}(BK)) \to \BB(\kk,C^{*}(BG),H^{*}(BT))
\end{equation}
as the composition
\begin{equation}
  \label{eq:def-Psi}
  \begin{tikzcd}
    \BB C^{*}(BG) \otimes_{\iota^{*}\circ t_{C^{*}(BG)}} C^{*}(BK) \arrow{d}{1\otimes\kappa^{*}} \\
    \BB C^{*}(BG) \otimes_{\kappa^{*}\iota^{*}\circ t_{C^{*}(BG)}} C^{*}(BT) \arrow{d}{1\otimes \QBT } \\
    \BB C^{*}(BG) \otimes_{\QBT\kappa^{*}\iota^{*}\circ t_{C^{*}(BG)}} H^{*}(BT) \mathrlap{.}
  \end{tikzcd}
\end{equation}

\begin{lemma} \( \)
  \label{thm:HPsi}
  \begin{enumroman}
  \item
    \label{thm:HPsi-1}
    \(\Psi_{\kappa}\) is a morphism of dgas.
  \item
    \label{thm:HPsi-2}
    If \(\kappa\) is the inclusion of a maximal torus into~\(K\), then \(H^{*}(\Psi_{\kappa})\)
    is injective, hence so is the map
    \( 
      H^{*}(G/K) \to H^{*}(G/T)
    \). 
  \item
    \label{thm:HPsi-3}
    The composition~\(\Psi_{\kappa}\,\Theta_{G,K}\) 
    is the map
    \begin{equation*}
      \begin{tikzcd}
	\BB H^{*}(BG)\otimes_{\iota^{*}\circ t_{H^{*}(BG)}} H^{*}(BK) \arrow{d}{\BB\Laa\otimes\kappa^{*}} \\
        \BB C^{*}(BG) \otimes_{\QBT\kappa^{*}\iota^{*}\circ t_{C^{*}(BG)}} H^{*}(BT) \mathrlap{.}
      \end{tikzcd}
    \end{equation*}
  \end{enumroman}
\end{lemma}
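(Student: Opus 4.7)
The plan is to factor $\Psi_{\kappa}=(1\otimes\Pi)\circ(1\otimes\kappa^{*})$ from the definition~\eqref{eq:def-Psi} and treat each of the three assertions through this decomposition. For~(i), the product on a one-sided bar construction $\BB(\kk,A,A')$ constructed in \Cref{thm:def-prod-bar} depends only on the hga structure of the coefficient module $A'$ (through the operation $\EEE$), so any hga morphism $\phi\colon A'\to A''$ compatible with the relevant map out of $A$ induces a dga morphism $1\otimes\phi$ on the one-sided bar constructions. Hence it suffices to verify that $\kappa^{*}$ and $\Pi$ are morphisms of hgas: the first by naturality of the hga structure on cochains (\Cref{sec:cochains-hga}), and the second because $H^{*}(BT)$ is a commutative dga, hence a trivial hga, so being an hga morphism amounts to annihilating each $E_{k}$ with $k\ge1$, which is part of \Cref{thm:ffbar-hga-formal}.

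The key calculation is~(iii), which I would carry out by tracing through the definition of $\Theta_{G,K}$ and exploiting the ideal $\kkk_{BK}$. Since $\Lbb$ is $\kkk_{BK}$-strict by \Cref{thm:Lambda-shc-natural} and the homotopy $h$ used to build $\Theta_{G,K}$ is $\kkk_{BK}$-trivial by \Cref{thm:h-square-twc}, Lemmas~\ref{thm:quiso-B-1-1-g} and~\ref{thm:homotop-tw} yield
\begin{equation*}
  \delta_{h}\,\FLbb(\aa\otimes h) \equiv \aa\otimes\lambda_{b}(h) \pmod{\BB H^{*}(BG)\otimes\kkk_{BK}}.
\end{equation*}
Postcomposing with $\BB\Laa\otimes\Pi\kappa^{*}$, the error term is annihilated because $\kappa^{*}(\kkk_{BK})\subset\kkk_{BT}$ by naturality of the defining operations and $\Pi(\kkk_{BT})=0$ by \Cref{cor:ax-kerf}. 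What survives is $\BB\Laa(\aa)\otimes\Pi\kappa^{*}\lambda_{b}(h)$; since $\Pi$ is a chain map into a complex with zero differential inducing the identity on cohomology, it sends any cocycle to its cohomology class, yielding $\Pi\kappa^{*}\lambda_{b}(h)=\kappa^{*}(h)$. The same ingredients show that the twisting cochains $\Pi\kappa^{*}\iota^{*}\Laa$ and $\kappa^{*}\iota^{*}\,t_{H^{*}(BG)}$ coincide on $\BB H^{*}(BG)$, which is required for $\BB\Laa\otimes\kappa^{*}$ to be a well-defined chain map by \Cref{thm:homotopy-twisted}.

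For~(ii), the factor $1\otimes\Pi$ is a quasi-isomorphism by \Cref{thm:quiso-B-1-1-g} and \Cref{rem:tw-tensor-quiso} (the complexes are torsion-free and bounded below, the latter because $BG$ is $1$-reduced), so injectivity of $H^{*}(\Psi_{\kappa})$ is equivalent to injectivity of $H^{*}(1\otimes\kappa^{*})$. By naturality of the Eilenberg--Moore isomorphism of \Cref{thm:iso-bar-GK}, the latter corresponds to the pullback $H^{*}(G/K)\to H^{*}(G/T)$ along $G/T\to G/K$, and I would conclude by a Leray--Hirsch argument for the bundle $K/T\hookrightarrow G/T\to G/K$: since $T\subset K$ is a maximal torus, $H^{*}(BT)\to H^{*}(K/T)$ is surjective with finitely generated free target, so the restriction $H^{*}(G/T)\to H^{*}(K/T)$ is surjective as well, exhibiting $H^{*}(G/T)$ as a free $H^{*}(G/K)$-module into which $H^{*}(G/K)$ embeds as the module of constants.

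The hardest step is~(iii): keeping track of the ideals and verifying that the mod-$\kkk_{BK}$ congruences really collapse to exact equalities after postcomposition with $\Pi\kappa^{*}$, with the twisting-cochain compatibility $\Pi\kappa^{*}\iota^{*}\Laa=\kappa^{*}\iota^{*}\,t_{H^{*}(BG)}$ relying on the literal identification of $\Pi$ on cocycles with the cohomology-class map---itself a consequence of the concrete construction of $\ffbar$ in \Cref{sec:BT-formality}.
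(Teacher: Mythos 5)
Your proposal is correct, and parts (i) and (iii) follow essentially the paper's own reasoning: for (i) the paper likewise invokes naturality for $1\otimes\kappa^*$ and inspection of the product formula for $1\otimes\Pi$ (which, as you note, comes down to $\Pi$ being an hga morphism into a trivial hga), and for (iii) the paper simply cites \Cref{thm:Theta-modulo-k}, whose content is exactly the congruence you re-derive, combined with the same observation that $\kappa^{*}(\kkk_{BK})\subset\kkk_{BT}\subset\ker\Pi$ and that $\Pi$ sends a cocycle to its class.

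Part (ii) is where your route genuinely diverges. You prove the injectivity of $H^{*}(G/K)\to H^{*}(G/T)$ directly by a Leray--Hirsch argument applied to the bundle $K/T\hookrightarrow G/T\to G/K$, feeding in surjectivity of $H^{*}(G/T)\to H^{*}(K/T)$ via the classifying map $G/T\to BT$; you then deduce the injectivity of $H^{*}(\Psi_{\kappa})$ from that, effectively reversing the ``hence'' in the statement. The paper instead stays entirely on the algebraic side: it applies Leray--Hirsch only to $K/T\hookrightarrow BT\to BK$, concludes that $H^{*}(BT)$ is free over $H^{*}(BK)$ with $\kappa^{*}H^{*}(BK)$ a direct summand, obtains injectivity of $\Tor_{H^{*}(BG)}(\kk,\kappa^{*})$ on the $E_{2}$-pages, and then transports this to $H^{*}(1\otimes\kappa^{*})$ using the Eilenberg--Moore degeneration already established in \Cref{thm:Theta-additive}\,\ref{thm:Theta-additive-2}. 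Your version is more geometric and requires the extra step of relating the fibre inclusions for $G/T\to G/K$ and $BT\to BK$, while the paper's version stays within the bar-construction formalism and reuses the collapse it has already proved; both are valid, and each buys a slightly different kind of transparency — yours at the level of spaces, the paper's at the level of differential torsion products.
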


The idea of reducing to a maximal torus 
goes back to Baum~\cite[Lemma~7.2]{Baum:1968}.
Note that \Cref{thm:homotopy-twisted}\,\ref{thm:homotopy-twisted-1}
confirms that \(\BB\Laa\otimes\kappa^{*}\) is a chain map because \(\BB\Laa\) is \(\kkk_{BG}\)-trivial,
so that \(\QBT\,\kappa^{*}\,\iota^{*}\,t_{C^{*}(BG)}\,\BB\Laa=\kappa^{*}\,\iota^{*}\,t_{H^{*}(BG)}\colon\BB H^{*}(BG)\to H^{*}(BT)\).

\begin{proof}
  The first map in~\eqref{eq:def-Psi} is a dga~map by naturality
  and the second one by inspection of the product formula~\eqref{eq:def-prod-one-sided-bar}.
  This proves the first claim.

  If \(T\subset K\) is a maximal torus, then \(H^{*}(K/T)\) is concentrated in even degrees,
  as is \(H^{*}(BK)\) by assumption. Hence the Serre spectral sequence for the fibration \(K/T\hookrightarrow BT\to BK\)
  degenerates at the second page. By the Leray--Hirsch theorem, this implies
  that \(H^{*}(BT)\) is a free module over~\(H^{*}(BK)\) with~\(\kappa^{*}(H^{*}(BK))\) 
  being a direct summand.

  As a consequence, the induced map
  \begin{equation}
    \Tor_{H^{*}(BG)}\bigl(\kk,H^{*}(BK)\bigr) \xrightarrow{\Tor_{1}(1,\kappa^{*})} \Tor_{H^{*}(BG)}\bigl(\kk,H^{*}(BT)\bigr)
  \end{equation}
  is injective. This is the map between the second pages of the Eilenberg--Moore spectral sequences for~\(G/K\) and~\(G/T\), respectively.
  Because these spectral sequences degenerate at this level by \Cref{thm:Theta-additive}\,\ref{thm:Theta-additive-2},
  this implies that the map~\(1\otimes\kappa^{*}\) in~\eqref{eq:def-Psi} is injective in cohomology.

  Another standard spectral sequence argument shows that the map~\(1\otimes \QBT \) in~\eqref{eq:def-Psi} is a quasi-isomorphism since \(\QBT \) is so.
  Together with the naturality of the isomorphism~\eqref{eq:map-Tor-CGK} this shows the second claim.

  The last part is a consequence of \Cref{thm:Theta-modulo-k}.
\end{proof}

\begin{theorem}
  \label{thm:Theta-mult}
  The isomorphism~\(H^{*}(\Theta_{G,K})\) is multiplicative.
\end{theorem}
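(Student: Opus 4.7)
The plan is to use the injectivity afforded by passage to a maximal torus. Choose $\kappa\colon T\hookrightarrow K$ with $T$ a maximal torus of~$K$. By \Cref{thm:HPsi}\,\ref{thm:HPsi-1} and~\ref{thm:HPsi-2}, the chain map~$\Psi_{\kappa}$ is a morphism of dgas that is injective in cohomology, so it suffices to verify multiplicativity of $H^{*}(\Psi_{\kappa}\circ\Theta_{G,K})$. By \Cref{thm:HPsi}\,\ref{thm:HPsi-3} this composition equals $\mathbf{B}\Laa\otimes\kappa^{*}$, whence the whole problem reduces to showing that this last map preserves products in cohomology.

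Next I would exploit the commutativity of $H^{*}(BT)$ and~$H^{*}(BK)$ to simplify the dga structures on both one-sided bar constructions. Both products come from \Cref{thm:def-prod-bar}, applied to the hga morphisms $\iota^{*}\colon H^{*}(BG)\to H^{*}(BK)$ and $\QBT\circ\kappa^{*}\circ\iota^{*}\colon C^{*}(BG)\to H^{*}(BT)$ (the latter is an hga map by naturality together with~\Cref{thm:ffbar-hga-formal}). Since the target algebras $H^{*}(BK)$ and~$H^{*}(BT)$ are commutative, their hga operations $E_{k}$ vanish identically for $k\ge1$, so in the Kadeishvili--Saneblidze product formula~\eqref{eq:def-prod-one-sided-bar} only the summand $m=l$ survives. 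The products on source and target therefore reduce to the tensor products $\mu_{\mathbf{B}H^{*}(BG)}\otimes\mu_{H^{*}(BK)}$ and $\mu_{\mathbf{B}C^{*}(BG)}\otimes\mu_{H^{*}(BT)}$ respectively, where $\mu_{\mathbf{B}H^{*}(BG)}$ is the shuffle product and $\mu_{\mathbf{B}C^{*}(BG)}$ the hga-derived bar multiplication.

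With this simplification, multiplicativity of $\mathbf{B}\Laa\otimes\kappa^{*}$ in cohomology decouples into two independent statements: that $\kappa^{*}\colon H^{*}(BK)\to H^{*}(BT)$ is a dga map (immediate, since it is induced by a simplicial map) and that $\mathbf{B}\Laa\colon\mathbf{B}H^{*}(BG)\to\mathbf{B}C^{*}(BG)$ is multiplicative up to chain homotopy. For the latter, the construction of the shc structure from the hga in~\cite{Franz:hgashc} ensures that $\mu_{\mathbf{B}C^{*}(BG)}$ coincides (or is chain homotopic to) $\mathbf{B}\Phi\circ\shuffle$, and similarly $\mu_{\mathbf{B}H^{*}(BG)}=\mathbf{B}\mu_{H^{*}(BG)}\circ\shuffle$. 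Applying~\Cref{thm:shuffle-natural-shm} to $\Laa$ gives
\begin{equation*}
  \mu_{\mathbf{B}C^{*}(BG)}\circ(\mathbf{B}\Laa\otimes\mathbf{B}\Laa) = \mathbf{B}\bigl(\Phi\circ(\Laa\otimes\Laa)\bigr)\circ\shuffle
\end{equation*}
and $\mathbf{B}\Laa\circ\mu_{\mathbf{B}H^{*}(BG)}=\mathbf{B}(\Laa\circ\mu_{H^{*}(BG)})\circ\shuffle$. The required chain homotopy between these is obtained by applying the bar construction to the $\kkk_{BG}$-trivial shc-naturality homotopy $\Phi\circ(\Laa\otimes\Laa)\simeq\Laa\circ\mu_{H^{*}(BG)}$ furnished by~\Cref{thm:Lambda-shc-natural}, and then composing with the shuffle map.

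The main obstacle I anticipate is bookkeeping: verifying that the Kadeishvili--Saneblidze product truly degenerates to a tensor product when the codomain algebra is commutative, keeping the Koszul signs straight, and matching the hga-derived multiplication $\mu_{\mathbf{B}A}$ with the shc-derived one $\mathbf{B}\Phi\circ\shuffle$ well enough to run the chain-homotopy argument. Once these compatibilities are recorded, assembling the pieces is routine, and the resulting multiplicativity of $H^{*}(\Theta_{G,K})$ combined with \Cref{thm:iso-bar-GK} completes the proof of \Cref{thm:main}.
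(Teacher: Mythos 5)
Your proposal follows the paper's proof closely, and the overall architecture is right: reduce to a maximal torus via \Cref{thm:HPsi}, split $\Psi_{\kappa}\circ\Theta_{G,K}$ into $\BB\Laa\otimes\kappa^*$, note $\kappa^*$ is multiplicative, and show $\BB\Laa$ is multiplicative up to a $\kkk_{BG}$-trivial coalgebra homotopy by chaining shuffle-naturality (\Cref{thm:shuffle-natural-shm}) with the shc-naturality of $\Laa$ (\Cref{thm:Lambda-shc-natural}). Your explicit observation that the Kadeishvili--Saneblidze product degenerates to $\mu_{\BB A}\otimes\mu_{A'}$ when the second algebra is commutative (because $E_k$ then vanishes for $k\ge1$, so only the $m=l$ term of~\eqref{eq:def-prod-one-sided-bar} survives) is correct and, if anything, makes explicit something the paper leaves implicit. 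The matching of the hga-derived bar multiplication with $\BB\Phi\circ\shuffle$ that you flag as a worry is exactly handled by the cited \citehgashc{Prop.~4.3}.

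The one place where you under-describe the argument is the final assembly, which is not just bookkeeping. Having a $\kkk_{BG}$-trivial coalgebra homotopy $h\colon\BB H^*(BG)\otimes\BB H^*(BG)\to\BB C^*(BG)$ between the bar-level maps does not automatically yield a chain homotopy on the one-sided bar constructions: those carry the \emph{twisted} differential, so the candidate homotopy $h\otimes\kappa^*\mu_{H^*(BK)}$ must be checked against the twist. This is exactly \Cref{thm:homotopy-twisted}\,\ref{thm:homotopy-twisted-2}, whose hypothesis $t\,h=0$ is nontrivial: one needs the associated twisting cochain homotopy $t_{C^*(BG)}\,h$ to land in $\kkk_{BG}$, to push forward through $\iota^*$ and $\kappa^*$ into $\kkk_{BT}$, and finally to be annihilated by the formality map $\QBT=\ffbar^*$ via \Cref{cor:ax-kerf}. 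That last step is precisely where the entire Section~\ref{sec:kernel} apparatus and the hypothesis that $2$ is invertible enter, so it deserves to be named rather than folded into ``assembling the pieces is routine.''
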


\begin{proof}
  Let \(\kappa\colon T\hookrightarrow K\) be the inclusion of a maximal torus. By \Cref{thm:HPsi} 
  it suffices to prove that the composition~\(\Psi_{\kappa}\,\Theta_{G,K}=\BB\Laa\otimes\kappa^{*}\) is multiplicative up to homotopy.
  Clearly, \(\kappa^{*}\) is multiplicative.
  
  We claim that \(\BB\Laa\) is multiplicative up to a \(\kkk_{BG}\)-trivial coalgebra homotopy
  \begin{equation}
    h\colon \BB H^{*}(BG)\otimes \BB H^{*}(BG) \to \BB C^{*}(BG).
  \end{equation}
  To see this, we consider the diagram
  \begin{equation}
    \!\!\!
    \begin{tikzcd}[column sep=2em]
      \BB H^{*}(BG)\otimes \BB H^{*}(BG) \arrow{d}{\BB\Laa\otimes \BB\Laa} \arrow{r}{\shuffle} & \BB\bigl(H^{*}(BG)\otimes H^{*}(BG)\bigr) \arrow{d}{\BB(\Laa\otimes \Laa)} \arrow{r}{\BB\mu} & \BB H^{*}(BG) \arrow{d}{\BB\Laa} \\
      \BB C^{*}(BG)\otimes \BB C^{*}(BG) \arrow{r}{\shuffle} & \BB\bigl(C^{*}(BG)\otimes C^{*}(BG)\bigr) \arrow{r}{\BB\Phi} & \BB C^{*}(BG) \mathrlap{.}
    \end{tikzcd}
  \end{equation}
  The composition along the top row is the multiplication in~\(\BB H^{*}(BG)\), and by~\citehgashc{Prop.~4.3}
  the one along the bottom row equals the product in~\(\BB C^{*}(BG)\).
  The left square commutes by naturality of the shuffle map (\Cref{thm:shuffle-natural-shm}).
  The right square commutes up to a \(\kkk_{BG}\)-trivial coalgebra homotopy because \(\Laa\)
  is a \(\kkk_{BG}\)-natural shc map by \Cref{thm:Lambda-shc-natural}.
  The claim follows.
  
  By transposing factors,
  we can pass from the tensor product
  \begin{equation}
    \bigl( \BB H^{*}(BG)\otimes_{\iota^{*}\circ t_{H^{*}(BG)}} H^{*}(BK) \bigr)
    \otimes
    \bigl( \BB H^{*}(BG)\otimes_{\iota^{*}\circ t_{H^{*}(BG)}} H^{*}(BK) \bigr)
  \end{equation}
  to the single twisted tensor product
  \begin{equation}
    \bigl( \BB H^{*}(BG)\otimes\BB H^{*}(BG) \bigr) \otimes_{t'} \bigl( \BB H^{*}(BK)\otimes\BB H^{*}(BK) \bigr)
  \end{equation}
  whose twisting cochain~\(t'\) vanishes except for
  \begin{equation}
    t'([x]\otimes 1)=\iota^{*}(x)\otimes 1,
    \qquad
    t'(1\otimes[x])=1\otimes\iota^{*}(x)
  \end{equation}
  with~\(x\in H^{*}(BG)\). We want to show that the two chain maps
  \begin{equation}
    \bigl(\BB\Laa\otimes\kappa^{*}\bigr)\,\bigl(\mu_{\BB H^{*}(BG)}\otimes\mu_{H^{*}(BK)}\bigr)
    = \BB\Laa\,\mu_{\BB H^{*}(BG)}\otimes\kappa^{*}\mu_{H^{*}(BK)}
  \end{equation}
  and
  \begin{multline}
    \bigl(\mu_{\BB C^{*}(BG)}\otimes\mu_{H^{*}(BK)}\bigr)\,\bigl(\BB\Laa\otimes\BB\Laa\otimes\kappa^{*}\otimes\kappa^{*}\bigr) \\*
    = \mu_{\BB C^{*}(BG)}\,(\BB\Laa\otimes\BB\Laa)\otimes\kappa^{*}\mu_{H^{*}(BK)}
  \end{multline}
  are homotopic.
  Given that the coalgebra homotopy~\(h\) is \(\kkk_{BG}\)-trivial,
  we can appeal to \Cref{thm:homotopy-twisted}\,\ref{thm:homotopy-twisted-2}.
\end{proof}

\begin{theorem}
  \label{thm:iso-natural}
  Let \(\phi\colon (G,K)\to(G',K')\) be a map of pairs, both satisfying our assumptions,
  and choose representatives~\(\aa'\) and~\(\bb'\) for generators of~\(H^{*}(BG')\) and~\(H^{*}(BK')\), respectively.
  Then the following diagram commutes.
  \begin{equation*}
    \begin{tikzcd}[column sep=huge]
      \Tor_{H^{*}(BG')}\bigl(\kk,H^{*}(BK')\bigr) \arrow{d} \arrow{r}{\Tor_{\phi^{*}}(1,\phi^{*})} & \Tor_{H^{*}(BG)}\bigl(\kk,H^{*}(BK)\bigr) \arrow{d} \\
      H^{*}(G'/K') \arrow{r} & H^{*}(G/K)
    \end{tikzcd}
  \end{equation*}
\end{theorem}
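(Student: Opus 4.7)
The plan is to combine the injectivity of~\(H^{*}(\Psi_{\kappa})\) from \Cref{thm:HPsi}\ref{thm:HPsi-2} with the naturality of~\(\Laa\) up to \(\kkk_{BG}\)-trivial homotopy established in \Cref{thm:h-square-twc}. Write \(f=\phi^{*}\) throughout. The naturality of the Eilenberg--Moore isomorphism (\Cref{thm:iso-bar-GK}) reduces the statement to the commutativity up to chain homotopy of the square whose top row goes from \(\BB(\kk,H^{*}(BG'),H^{*}(BK'))\) to \(\BB(\kk,H^{*}(BG),H^{*}(BK))\) via \(\BB f\otimes f\), bottom row similarly with \(C^{*}\) in place of~\(H^{*}\), and vertical arrows~\(\Theta_{G',K'}\) and~\(\Theta_{G,K}\). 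Fixing a maximal torus~\(\kappa\colon T\hookrightarrow K\) and applying the injection \(H^{*}(\Psi_{\kappa})\), it then suffices to show that \(\Psi_{\kappa}\circ\Theta_{G,K}\circ(\BB f\otimes f)\) and \(\Psi_{\kappa}\circ(\BB f\otimes f)\circ\Theta_{G',K'}\) are chain homotopic as maps with target~\(\BB(\kk,C^{*}(BG),H^{*}(BT))\).

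Next, I rewrite both compositions explicitly. \Cref{thm:HPsi}\ref{thm:HPsi-3} applied to~\((G,K)\) yields \(\Psi_{\kappa}\circ\Theta_{G,K}=\BB\Laa\otimes\kappa^{*}\), so the first composition equals \(\BB(\Laa\circ f)\otimes(\kappa^{*}f)\). For the second, applying \Cref{thm:HPsi}\ref{thm:HPsi-3} to~\((G',K')\) with the simplicial group map \(\phi\kappa\colon T\to K'\) gives \(\Psi_{\phi\kappa}\circ\Theta_{G',K'}=\BB\Lambda_{\aa'}\otimes(\kappa^{*}f)\); a direct check on generators shows \(\Psi_{\kappa}\circ(\BB f\otimes f)=(\BB f\otimes 1)\circ\Psi_{\phi\kappa}\) (both equal \(\BB f\otimes\QBT\kappa^{*}f\)), so the second composition becomes \(\BB(f\circ\Lambda_{\aa'})\otimes(\kappa^{*}f)\).

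Finally, \Cref{thm:h-square-twc} applied to~\(B\phi\colon BG\to BG'\) with the representatives \(\aa'\) and \(\aa\) furnishes a \(\kkk_{BG}\)-trivial shm homotopy \(h\colon\Laa\circ f\simeq f\circ\Lambda_{\aa'}\), whose associated coalgebra homotopy \(\BB h\) connects \(\BB(\Laa\circ f)\) with \(\BB(f\circ\Lambda_{\aa'})\). The remaining step is to invoke \Cref{thm:homotopy-twisted}\ref{thm:homotopy-twisted-2} with the dga map~\(\kappa^{*}f\colon H^{*}(BK')\to H^{*}(BT)\) to promote \(\BB h\otimes(\kappa^{*}f)\) to a chain homotopy of twisted tensor products. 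The main obstacle is verifying the hypothesis that the target twisting cochain \(t=\QBT\kappa^{*}\iota^{*}\,t_{C^{*}(BG)}\) annihilates~\(\BB h\): the \(\kkk_{BG}\)-triviality of~\(h\) forces each nontrivial component of the underlying twisting cochain homotopy to take values in \(\kkk_{BG}\); naturality of the ideal gives \(\kappa^{*}\iota^{*}(\kkk_{BG})\subset\kkk_{BT}\); and \(\QBT=\transp{\ffbar}\) annihilates \(\kkk_{BT}\) by \Cref{cor:ax-kerf}. This interplay between \(\kkk\)-triviality and the hga formality of~\(BT\) is precisely what the machinery of the preceding sections has been built to deliver.
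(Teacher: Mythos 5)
Your proof is correct and follows essentially the same route as the paper's: reduce to the prolonged square into $\BB(\kk,C^{*}(BG),H^{*}(BT))$ via the injectivity in \Cref{thm:HPsi}\,\ref{thm:HPsi-2}, identify the two compositions as $\BB(\Laa\circ\phi^{*})\otimes\kappa^{*}\phi^{*}$ and $\BB(\phi^{*}\circ\Lambda_{\aa'})\otimes\kappa^{*}\phi^{*}$ using \Cref{thm:HPsi}\,\ref{thm:HPsi-3}, and then connect them by $\BB h\otimes\kappa^{*}\phi^{*}$ where $h$ comes from \Cref{thm:h-square-twc}, verifying the hypothesis of \Cref{thm:homotopy-twisted}\,\ref{thm:homotopy-twisted-2} through $\kkk$-triviality, naturality of~$\kkk$, and \Cref{cor:ax-kerf}. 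The only cosmetic difference is that you phrase the commutation of the middle and bottom squares of the paper's diagram~\eqref{eq:naturality-diag} as the identity $\Psi_{\kappa}\circ(\BB f\otimes f)=(\BB f\otimes 1)\circ\Psi_{\phi\kappa}$, which is an equivalent reformulation.
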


\begin{proof}
  Let \(T\subset K\) again be a maximal torus.
  We consider the diagram
  \begin{equation}
    \label{eq:naturality-diag}
    \begin{tikzcd}[column sep=huge]
      \BB(\kk,H^{*}(BG'),H^{*}(BK')) \arrow{d}{\Theta_{G',K'}} \arrow{r}{\BB(1,\phi^{*},\phi^{*})} & \BB(\kk,H^{*}(BG),H^{*}(BK)) \arrow{d}{\Theta_{G,K}} \\
      \BB(\kk,C^{*}(BG'),C^{*}(BK')) \arrow{d}{1\otimes\kappa^{*}\phi^{*}} \arrow{r}{\BB(1,\phi^{*},\phi^{*})} & \BB(\kk,C^{*}(BG),C^{*}(BK)) \arrow{d}{1\otimes\kappa^{*}} \\
      \BB(\kk,C^{*}(BG'),C^{*}(BT)) \arrow{d}{1\otimes \QBT } \arrow{r}{\BB(1,\phi^{*},1)} & \BB(\kk,C^{*}(BG),C^{*}(BT)) \arrow{d}{1\otimes \QBT } \\
      \BB(\kk,C^{*}(BG'),H^{*}(BT)) \arrow{r}\arrow{r}{\BB(1,\phi^{*},1)} & \BB(\kk,C^{*}(BG),H^{*}(BT)) \mathrlap{.}
    \end{tikzcd}
  \end{equation}
  We have to show that the top square in the diagram commutes in cohomology.
  By \Cref{thm:HPsi}\,\ref{thm:HPsi-2}, it suffices to consider
  the prolongations of the maps in question to~\(\BB(\kk,C^{*}(BG),H^{*}(BT))\).
  
  The composition along the path via~\(\BB(\kk,H^{*}(BG),H^{*}(BK))\) gives the map
  \begin{equation}
    \label{eq:comp1}
    \BB\Laa\,\BB\phi^{*}\otimes\kappa^{*}\phi^{*}
  \end{equation}
  by \Cref{thm:HPsi}\,\ref{thm:HPsi-3}.
  Since the middle square in~\eqref{eq:naturality-diag} commutes by naturality and the bottom square by construction,
  the same result shows that
  the path via~\(\BB(\kk,C^{*}(BG'),C^{*}(BK'))\) gives
  \begin{equation}
    \label{eq:comp2}
    \BB\phi^{*}\,\BB\Lambda^{G'}\otimes\kappa^{*}\,\phi^{*}.
  \end{equation}  

  By \Cref{thm:h-square-twc} there is a \(\kkk_{BG}\)-triv\-ial homotopy~\(h\)
  between the shm maps \(\phi^{*}\circ\Lambda^{G'}\) and~\(\Laa\circ\phi^{*}\).
  In other words, \(\BB h\) is a \(\kkk_{BG}\)-trivial coalgebra homotopy between~\(\BB\Laa\,\BB\phi^{*}\) and~\(\BB\phi^{*}\,\BB\Lambda^{G'}\).
  This implies by \Cref{thm:homotopy-twisted}\,\ref{thm:homotopy-twisted-2}
  that \(\BB h\otimes\kappa^{*}\phi^{*}\) is a homotopy between the maps~\eqref{eq:comp1} and~\eqref{eq:comp2}
  and completes the proof.
\end{proof}

\begin{corollary}
  The isomorphism~\eqref{eq:map-Tor-HGK} does not depend on the chosen representatives~\(\aa\) and~\(\bb\).
\end{corollary}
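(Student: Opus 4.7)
The plan is to derive the independence as a direct corollary of the naturality established in \Cref{thm:iso-natural}, by taking the map of pairs to be the identity but allowing the two sides to carry different choices of representatives.

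Concretely, let $(\aa,\bb)$ and $(\aa',\bb')$ be two choices of representatives of generators of $H^{*}(BG)$ and~$H^{*}(BK)$, and write $\Theta_{G,K}^{\aa,\bb}$ and~$\Theta_{G,K}^{\aa',\bb'}$ for the corresponding chain maps constructed in \Cref{sec:homog}. I would apply \Cref{thm:iso-natural} to the identity map~$\phi=1_{(G,K)}\colon(G,K)\to(G,K)$, taking the source pair to be equipped with the representatives $(\aa,\bb)$ and the target pair with $(\aa',\bb')$; the proof of \Cref{thm:iso-natural} makes no use of any compatibility between the representatives on the two sides, so it applies verbatim. Since $\phi^{*}=1$ on both $H^{*}(BG)$ and $H^{*}(BK)$, the induced map $\Tor_{\phi^{*}}(1,\phi^{*})$ is the identity of $\Tor_{H^{*}(BG)}(\kk,H^{*}(BK))$, and likewise the induced map $H^{*}(G/K)\to H^{*}(G/K)$ is the identity. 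The commutative square of \Cref{thm:iso-natural} then collapses to the identity
\begin{equation*}
  H^{*}\bigl(\Theta_{G,K}^{\aa,\bb}\bigr) = H^{*}\bigl(\Theta_{G,K}^{\aa',\bb'}\bigr)
\end{equation*}
between the two cohomology isomorphisms (composed with the natural isomorphism of \Cref{thm:iso-bar-GK}), which is exactly the desired independence statement.

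There is essentially no obstacle: the whole point of \Cref{thm:iso-natural} is that its diagram commutes for arbitrary choices of representatives on each side, so the corollary is a one-line specialization. The only minor thing to check is that the proof of \Cref{thm:iso-natural} indeed treats $(\aa,\bb)$ and $(\aa',\bb')$ as independent data; this is clear from the argument, since the homotopy produced there (via the \(\kkk_{BG}\)-trivial homotopy~\(\BB h\) from \Cref{thm:h-square-twc}) makes no reference to any relation between representatives on the source and target of~$\phi$.
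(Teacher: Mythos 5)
Your proposal is correct and is exactly the paper's argument: the paper's proof is the single line ``Take $\phi$ to be the identity map in Theorem~\ref{thm:iso-natural},'' relying on the fact that the naturality statement already allows independent choices of representatives on source and target. Your additional check that the proof of Theorem~\ref{thm:iso-natural} uses no compatibility between the two sets of representatives is a sensible (and valid) elaboration of the same point.
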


\begin{proof}
  Take \(\phi\colon (G,K)\to(G,K)\) to be the identity map in \Cref{thm:iso-natural}.
\end{proof}

\begin{remark}
  Theorems~\ref{thm:Theta-mult} and~\ref{thm:iso-natural} actually hold not just for principal ideal domains,
  but for all coefficient rings~\(\kk\) in which \(2\) is invertible.
  (We remark that already Gugenheim--May~\cite[\S 4]{GugenheimMay:1974}
  allow Noetherian rings and Husemoller--Moore--Stasheff~\cite{HusemollerMooreStasheff:1974}
  arbitrary coefficients.)
  The only change required is to replace cochain complexes with chain complexes,
  which are now homotopy Gerstenhaber coalgebras. Imitating our arguments,
  one obtains quasi-isomorphisms of dgcs
  \begin{multline}
    \label{eq:quiso-homological}
    C(G/K) \to \OM(\kk,C(BG),C(EG/K)) \\* \leftarrow \OM(\kk,C(BG),C(BK)) \to \OM(\kk, H(BG),H(BK)).
  \end{multline}
  The cobar constructions are dgcs by the homological analogue of \Cref{thm:def-prod-bar}.
  The dual maps to \eqref{eq:quiso-homological} induce the isomorphism~\eqref{eq:map-Tor-HGK}
  by the universal coefficient spectral sequence since \(H(BG)\) and~\(H(BK)\)
  are free of finite type over~\(\kk\).
  We have chosen the cohomological setting in this paper because we expect it to be more accessible.
\end{remark}

\begin{remark}
  \label{rem:PU2}
  Baum~\cite[Ex.~4]{Baum:1968} has observed that there is no multiplicative isomorphism of the form~\eqref{eq:map-Tor-HGK}
  for the projective unitary group~\(PU(n)=U(n)/U(1)\) with~\(n\equiv2\pmod{4}\) and~\(\kk=\Z_{2}\).
  This is readily verified for~\(PU(2)\):
  Recall that the torsion product of graded commutative algebras is bigraded with the \(\Tor\)-degree being non-positive.
  In the case at hand one obtains
  \begin{equation}
    \def\Ztwo{\Z_{2}}
    \begin{array}{ccc|c}
      & \Ztwo & & 4 \\
      & \Ztwo & \Ztwo & 2 \\
      & & \Ztwo & 0 \\
      \hline
      -2 & -1 & 0 & \mathrlap{\;\;\;.}
    \end{array}
  \end{equation}
  Because the product respects bidegrees, the non-zero element in bidegree~\((-1,2)\) squares to~\(0\).
  This does of course not happen for the generator~\(x\in H^{1}(PU(2))\) as \(PU(2)\cong SO(3)\approx\RP^{3}\).
  
  The same counterexample shows that one cannot expect the isomorphism~\eqref{eq:map-Tor-HGK} to be natural if \(2\) is not invertible in~\(\kk\).
  Consider the diagonal map
  \begin{equation}
    PU(2) = U(2) \bigm/ U(1) \to \bigl(U(2)\times U(2)\bigr)\bigm/\bigl(U(1)\times U(1)\bigr) = PU(2)\times PU(2),
  \end{equation}
  which induces the cup product in cohomology.
  Naturality of the isomorphism~\eqref{eq:map-Tor-HGK} would predict that
  the image of~\(x\otimes x\) in~\(H^{2}(PU(2))\) vanishes, which again is not the case.
\end{remark}

\begin{remark}
  \label{rem:gen-homog}
  May--Neumann~\cite{MayNeumann:2002} have observed that \Cref{thm:intro:additive} extends to generalized homogeneous spaces,
  that is, to homotopy fibres of maps~\(\phi\colon Y\to X\) where \(X\) and~\(Y\) take the roles of~\(BG\) and~\(BK\), respectively.
  The same is true for \Cref{thm:Theta-mult}: Assume that \(X\) and~\(Y\) have polynomial cohomology and
  that there is a map~\(\kappa\colon BT\to Y\) where \(BT\) is the classifying space of some torus such that \(H^{*}(BT)\) is a free \(H^{*}(Y)\)-module.
  If \(X\) is, for example, \(1\)-reduced, then the dga~\(\BB(\kk,C^{*}(X),C^{*}(Y))\) computes the cohomology of the homotopy fibre~\(F\) of~\(\phi\),
  and the same argument as before shows that in this case there is an isomorphism of graded algebras
  \begin{equation}
    H^{*}(F) = \Tor_{H^{*}(X)}\bigl(\kk,H^{*}(Y)\bigr)
  \end{equation}
  under our assumption that \(2\) is invertible in~\(\kk\).
\end{remark}

\section{Examples}
\label{sec:examples}

Like Cartan's \Cref{thm:Cartan},
our main result (\Cref{thm:main}) reduces the task of computing the cohomology ring
of a homogeneous space~\(G/K\) to a purely algebraic problem,
provided that one understands the map~\(H^{*}(BG)\to H^{*}(BK)\).
We illustrate this with two examples that recover and (in the case~\(n>n_{1}+\dots+n_{k}\) with~\(k>1\))
generalize computations that can be found in~\cite{BrunerCatanzaroMay:2013},~%
\cite[Sec.~XI.4]{GreubHalperinVanstone:1976},~\cite[Thm.~3.10]{He:2020} and~\cite[Ch.~3]{MimuraToda:1991}.
We continue to assume that \(2\) is invertible in the given principal ideal domain~\(\kk\).

\subsection{Unitary groups}
\label{sec:example-U}

We consider the homogeneous space
\begin{equation}
  U(n)\bigm/ U(n_{1})\times\dots\times U(n_{k})
\end{equation}
where \(k\),~\(n\),~\(n_{1}\),~\dots,~\(n_{k}\) are positive integers such that \(n\ge n_{1}+\dots+n_{k}\).
For~\(k=1\) this is a complex Stiefel manifold. For~\(n=n_{1}+\dots+n_{k}\) we get
a (complete or partial) complex flag variety, in particular a complex Grassmannian for~\(k=2\).

Recall that
\begin{equation}
  H^{*}(BU(n)) = \kk[c_{1},\dots,c_{n}]
\end{equation}
is a polynomial ring in the Chern classes~\(c_{j}\) of degree~\(2j\).
The total Chern class \(c=1+c_{1}+\dots+c_{n}\) restricts to the product
\begin{equation}
  c^{(1)}\cdots c^{(k)} \in H^{*}\bigl(U(n_{1})\times\dots\times U(n_{k})\bigr)
\end{equation}
of the total Chern classes of the factors. Hence
\begin{equation}
  \label{eq:restr-cj}
  \iota^{*}(c_{j}) = \!\!\sum_{j_{1}+\dots+j_{k}=j}\!\! c^{(1)}_{j_{1}}\cdots c^{(k)}_{j_{k}}
\end{equation}
for~\(1\le j\le n\), \cf~\cite[Thm.~3.1]{BrunerCatanzaroMay:2013} or~\cite[Thm.~3.5.8\,(3)]{MimuraToda:1991}.
Here we allow \(j_{i}=0\) by setting \(c^{(i)}_{0}=1\).

Let \(\nn=n_{1}+\dots+n_{k}\) be the rank of~\(K=U(n_{1})\times\dots\times U(n_{k})\).
If this equals the rank~\(n\) of~\(G=U(n)\), then \(H^{*}(G/K)\) is concentrated
in even degrees \cite[Thm.~7.3.21\,(1)]{MimuraToda:1991}. As in the proof of \Cref{thm:HPsi}\,\ref{thm:HPsi-2},
this implies by the Leray--Hirsch theorem that \(H^{*}(BK)\) is free over~\(H^{*}(BG)\), so that
\begin{align}
  \label{eq:ex-U-HGK-same-rank}
  H^{*}(G/K) &= \Tor_{H^{*}(BG)}\bigl(\kk,H^{*}(BK)\bigr) = \kk \otimes_{H^{*}(BG)} H^{*}(BK) \\
  \notag &= 
  \kk\bigl[c^{(1)}_{1},\dots,c^{(1)}_{n_{1}},\dots,c^{(k)}_{1},\dots,c^{(k)}_{n_{k}}\bigr]
  \bigm/ \bigl\langle\iota^{*}(c_{1}),\dots,\iota^{*}(c_{\nn})\bigr\rangle
\end{align}
as a graded \(\kk\)-algebra.

If \(\nn<n\), then the only difference to the previous case is that we have
\(\iota^{*}(c_{j})=0\) for~\(j>\nn\). Hence
\begin{align}
  H^{*}(G/K) &= \Tor_{\kk[c_{1},\dots,c_{n}]}\bigl(\kk,H^{*}(BK)\bigr) \\
  \notag &= \Tor_{\kk[c_{1},\dots,c_{\nn}]}\bigl(\kk,H^{*}(BK)\bigr)
  \otimes \Tor_{\kk[c_{\nn+1},\dots,c_{n}]}(\kk,\kk) \\
  \notag &= R \otimes \bigwedge(x_{2\nn+1},\dots,x_{2n-1})
\end{align}
where \(R\) is the algebra from the last line of~\eqref{eq:ex-U-HGK-same-rank}
and each exterior generator~\(x_{i}\) is of degree~\(i\).
Note that \(R=\kk\) if \(k=1\), that is, in case of a Stiefel manifold.

There is a canonical inclusion~\(\phi\colon U(n)\hookrightarrow U(n')\) for~\(n\le n'\).
The case~\(k=1\) of~\eqref{eq:restr-cj} means that each~\(c_{j}\in H^{*}(BU(n'))\) maps to~\(c_{j}\in H^{*}(BU(n))\)
if~\(j\le n\) and to~\(0\) otherwise.
Let \(G'/K'=U(n')/U(n'_{1})\times\dots\times U(n'_{k})\) be a second quotient with~
\(n_{i}\le n'_{i}\) for all~\(i\).
If \(\phi\) restricts to the canonical inclusion~\(U(n_{i})\hookrightarrow U(n'_{i})\) for each~\(1\le i\le k\),
then the naturality part of \Cref{thm:main} implies that the induced map~\(H^{*}(G'/K')\to H^{*}(G/K)\)
is given as follows: Each generator~\(c^{(i)}_{j}\in H^{*}(G'/K')\) is sent to its counterpart
in~\(H^{*}(G/K)\) if~\(j\le n_{i}\) and to~\(0\) otherwise.
Each exterior generator~\(x_{2j-1}\) is similarly sent to ``itself''
if~\(j\le\nn\) and to~\(0\) otherwise.

\subsection{Special orthogonal groups}

We now turn to the homogeneous space
\begin{equation}
  SO(n) \bigm/ SO(n_{1})\times\dots\times SO(n_{k})
\end{equation}
where \(k\),~\(n\),~\(n_{1}\),~\dots,~\(n_{k}\) are again positive integers such that \(n\ge n_{1}+\dots+n_{k}\).
As in the unitary case, we obtain real Stiefel manifolds, Grassmannians and other
flag varieties as special cases. We set \(G=SO(n)\) and~\(K=SO(n_{1})\times\dots\times SO(n_{k})\).
Depending on whether \(n\) is even or odd, we write \(n=2m\) or~\(n=2m+1\), and similarly for the~\(n_{i}\).
We assume that~\(n_{1}\),~\dots,~\(n_{l}\) are even and \(n_{l+1}\),~\dots,~\(n_{k}\) odd.
We finally abbreviate the rank~\(m_{1}+\dots+m_{k}\) of~\(K\) to~\(\mm\).

Since \(2\) is assumed to be a unit in~\(\kk\), we have
\begin{align}
  \label{eq:HBSOn}
  H^{*}(BSO(n)) = \begin{cases}
    \kk[p_{1},\dots,p_{m-1},e] & \text{if \(n\) is even,} \\
    \kk[p_{1},\dots,p_{m}] & \text{if \(n\) is odd} \\
  \end{cases}
\end{align}
where \(p_{j}\) is the \(j\)-th Pontryagin class of degree~\(4j\),
and for even~\(n\) the Euler class~\(e\) of degree~\(n=2m\) squares to~\(p_{m}\).
The Künneth theorem gives \(H^{*}(BK)\).

Analogously to the total Chern class,
the total Pontryagin class~\(1+p_{1}+\dots+p_{m}\) restricts to the
product~\(p^{(1)}\cdots p^{(k)}\) of the total Pontryagin classes of the factors.
In other words,
\begin{equation}
  \iota^{*}(p_{j}) = \!\!\sum_{j_{1}+\dots+j_{k}=j}\!\! p^{(1)}_{j_{1}}\cdots p^{(k)}_{j_{k}}
\end{equation}
for~\(1\le j\le n\), where again we set \(p^{(i)}_{0}=0\).
For even~\(n\), the Euler class~\(e\) restricts to the product~\(e^{(1)}\cdots e^{(k)}\)
of the Euler classes of the factors if \(\mm=m\) and otherwise to~\(0\) (since so does \(p_{m}\)).
Compare \cite[Cor.~7.3\,(iii)]{BrunerCatanzaroMay:2013}.

We start with the case where \(G\) and~\(K\) have the same rank~\(\mm=m\).
This happens if and only if all~\(n_{i}\) are even and add up to~\(n\).
As before, this implies that \(H^{*}(BK)\) is free over~\(H^{*}(BG)\), so that we have
\begin{equation}
  H^{*}(G/K) = H^{*}(BK)
  \bigm/ \bigl\langle\iota^{*}(p_{1}),\dots,\iota^{*}(p_{m-1}),\iota^{*}(e))\bigr\rangle.
\end{equation}

Now assume \(\mm<m\).
Then \(\iota^{*}(p_{j})=0\) for~\(j>\mm\), and also \(\iota^{*}(e)=0\) if \(n\) is even.
Let \(S\subset H^{*}(BK)\) be the subalgebra generated by all Pontryagin classes~\(p^{(i)}_{j}\)
including \(p^{(i)}_{m_{i}}\) for each even~\(n_{i}\).
From~\eqref{eq:HBSOn} we see that \(H^{*}(BK)\) is a free \(S\)-module with basis
\begin{equation}
  e^{I} = \prod_{i\in I} e^{(i)}
\end{equation}
where \(I\) runs through the subsets of~\(\{1,\dots,l\}\).

The polynomial algebra~\(\kk[p_{1},\dots,p_{\mm}]\subset H^{*}(BG)\) acts on~\(S\) in the same way
as \(H^{*}(BG)\) acted on~\(H^{*}(BK)\) in the unitary example from \Cref{sec:example-U},
except that all degrees are now doubled.
This implies that \(S\) is a free module over~\(\kk[p_{1},\dots,p_{\mm}]\), hence the same holds for~\(H^{*}(BK)\).
For odd~\(n\) we therefore get
\begin{align}
  \label{eq:ex-SO-HGK}
  H^{*}(G/K) &= \Tor_{\kk[p_{1},\dots,p_{\mm}]}\bigl(\kk,H^{*}(BK)\bigr) \otimes
  \Tor_{\kk[p_{\mm+1},p_{m}]}(\kk,\kk) \\
  \notag &= R \otimes \bigwedge(x_{4\mm+3},\dots,x_{4m-1})
\end{align}
where
\begin{equation}
  R = H^{*}(BK) \bigm/ \bigl\langle \iota^{*}(p_{1}),\dots,\iota^{*}(p_{\mm}) \bigr\rangle.
\end{equation}
As before, the subscripts of the exterior generators in~\eqref{eq:ex-SO-HGK} indicate degrees.
For even~\(n\), we similarly get
\begin{equation}
  H^{*}(G/K) = R \otimes \bigwedge(x_{4\mm+3},\dots,x_{4m-5},y_{2m-1})
\end{equation}
where the additional exterior generator is induced from the Euler class~\(e\).

The behaviour of these isomorphisms under maps are analogous to the unitary case.
We omit the details.

\appendix

\section{Completing the proof of Proposition \ref{thm:tensor-shm}}
\label{sec:pf-tensor-prod-Ai}

In this appendix we complete the proof of \Cref{thm:tensor-shm}.
Given two shm maps~\(f\colon A\Rightarrow A'\) and~\(g\colon B\Rightarrow B'\),
we justify that the maps~\(h_{(n)}\) introduced in that proof satisfy
the defining identity~\eqref{eq:tw-h-family-3} for a twisting homotopy family corresponding to an shm homotopy
from~\((1\otimes g)\circ(f\otimes 1)\) to~\((f\otimes1)\circ(1\otimes g)\).
Explicitly, we have to show
\begin{align}
  \label{eq:app:tw-h-family-3}
  d(h_{(n)})(a_{\bullet}\otimes b_{\bullet}) &\eqKS \sum_{k=1}^{n-1}(-1)^{k}\,h_{(n-1)}(a_{\bullet}\otimes b_{\bullet},a_{k}a_{k+1}\otimes b_{k}b_{k+1},a_{\bullet}\otimes b_{\bullet}) \\*
  \notag &\; + \sum_{k=1}^{n} \bigl((1\otimes g)\circ(f\otimes 1)\bigr)_{(k)}(a_{\bullet}\otimes b_{\bullet})\,h_{(n-k)}(a_{\bullet}\otimes b_{\bullet}) \\*
  \notag &\; - \sum_{k=0}^{n-1} (-1)^{k}\,h_{(k)}(a_{\bullet}\otimes b_{\bullet})\, \bigl((f\otimes1)\circ(1\otimes g)\bigr)_{(n-k)}(a_{\bullet}\otimes b_{\bullet})
\end{align}
for~\(n\ge0\) and~\(a_{\bullet}\otimes b_{\bullet}\in A\otimes B\). Recall that \(h_{(0)}=\eta_{A'}\otimes\eta_{B'}\) and
\begin{equation}
  h_{(n)}(a_{\bullet}\otimes b_{\bullet}) \eqKS \sum_{k,l\ge1}\mkern-0mu\sum_{\substack{i_{1}+\dots+i_{k}+\\j_{1}+\dots+j_{l}=n}}\mkern-5mu
  (-1)^{\epsilon}\,F \otimes G
\end{equation}
for~\(n\ge1\), where the second sum is over all decompositions of~\(n\) into~\(k+l\) positive integers,
\begin{align}
  F &= \iter{\mu}{k}\Bigl(f_{(i_{1})}(a_{\bullet}),\dots,f_{(i_{k-1})}(a_{\bullet}),f_{(i_{k}+l)}\bigl(a_{\bullet},\iter{\mu}{j_{1}}(a_{\bullet}),\dots,\iter{\mu}{j_{l}}(a_{\bullet})\bigr)\Bigr), \\
  G &= \iter{\mu}{l}\Bigl(g_{(k+j_{1})}\bigl(\iter{\mu}{i_{1}}(b_{\bullet}),\dots,\iter{\mu}{i_{k}}(b_{\bullet}),b_{\bullet}\bigr),g_{(j_{2})}(b_{\bullet}),\dots,g_{(j_{l})}(b_{\bullet})\Bigr), \\
  \epsilon &= \sum_{s=1}^{k}s\,(i_{s}-1) + \sum_{t=1}^{l}(l-t)(j_{t}-1) + k\,(l-1)+1.
\end{align}
The twisting families corresponding
to the compositions of shm maps~\((1\otimes g)\circ(f\otimes 1)\) and~\((f\otimes1)\circ(1\otimes g)\)
are given by~\eqref{eq:1-g-f-1} and~\eqref{eq:f-1-1-g}, respectively.
We only present a sketch of the proof.
The computation is elementary, but lengthy because of the many cases to consider.

\medskip

The case \(d(h_{(0)})=0\) being clear, we have to compute \(d(h_{(n)})(a_{\bullet}\otimes b_{\bullet})\) for~\(n\ge1\).
Recall from the discussion before~\eqref{eq:example-diff-maps}
that both the composition and the tensor product of maps obey the graded Leibniz rule.
The only maps in~\eqref{eq:def-Hn} that are not chain maps are the components~\(f_{(n)}\) and~\(g_{(n)}\) of~\(f\) and~\(g\), respectively.
From the formula~\eqref{eq:tw-fam-2} we see that two kinds of terms appear
when applying the differential to~\(f_{(n)}\):
those obtained by splitting the arguments and those obtained by multiplying two of them. More precisely,
we say that a term~\(f_{(n)}(a_{\bullet})\) is \newterm{split at position~\(m\)}
if it is split between the \(m\)-th and the \((m+1)\)-st argument,
that is, if we consider the term~\(f_{(m)}(a_{\bullet})f_{(n-m)}(a_{\bullet})\) of \(d(f_{(n)})(a_{\bullet})\).
We similarly say that two arguments are \newterm{multiplied at position~\(m\)}
if we consider the term~\(f_{(n-1)}(a_{\bullet},a_{m}a_{m+1},a_{\bullet})\) where
the arguments at positions~\(m\) and~\(m+1\) are multiplied.
The same applies to the components of~\(g\). We claim that when computing \(d(h_{(n)})\),
all terms on the right-hand side of~\eqref{eq:app:tw-h-family-3} are indeed produced
and all other terms that come up cancel in pairs.

Below is a list all terms that appear in the computation.
In each case, we indicate
whether the corresponding terms cancel against other terms
or pair with terms on the other side of the equation.
The notation ``\textbf{X}~\(\to\)~\textbf{Y}'' means
that the terms~\textbf{X} cancel or pair with the terms~\textbf{Y}.

\medbreak

\noindent\textbf{Terms produced by~\(d(h_{(n)})\)}
\medskip
\begin{caselist}
\item Splitting of a term~\(f_{(i_{s})}\)
  \begin{caselist}
  \item Term~\(f_{(i_{s})}\), \(1\le s<k\), at any position (if \(k\ge 2\))
    \label{tensor:11}
    \cancelswith{tensor:41}
  \item Term~\(f_{(i_{k}+l)}\) at position~\(1\le m<i_{k}\) (if \(i_{k}\ge 2\))
    \label{tensor:11bis}
    \cancelswith{tensor:41bis}
  \item Term~\(f_{(i_{k}+l)}\) at position~\(m=i_{k}\)
    \begin{caselist}
    \item \(j_{1}=1\) and \(l=1\)
      \label{tensor:120}
      \cancelswith{tensorh:21}
    \item \(j_{1}=1\) and \(l>1\)
      \label{tensor:121}
      \cancelswith{tensor:221}
    \item \(j_{1}>1\)
      \label{tensor:122}
      \cancelswith{tensor:321}
    \end{caselist}
  \item Term~\(f_{(i_{k}+l)}\) at position~\(i_{k}+1\le m<i_{k}+l\) (if \(l\ge 2\))
    \label{tensor:13}
    \cancelswith{htensor:2}
  \end{caselist}
  
\item Splitting of a term~\(g_{(j_{t})}\)
  \begin{caselist}
  \item Term~\(g_{(k+j_{1})}\) at position~\(1\le m<k\) (if \(k\ge 2\))
    \label{tensor:21}
    \cancelswith{tensorh:1}
  \item Term~\(g_{(k+j_{1})}\) at position~\(m=k\)
    \begin{caselist}
    \item \(i_{k}=1\) and \(k=1\)
      \label{tensor:220}
      \cancelswith{htensor:11}
    \item \(i_{k}=1\) and \(k>1\)
      \label{tensor:221}
      \cancelswith{tensor:121}
    \item \(i_{k}>1\)
      \label{tensor:222}
      \cancelswith{tensor:421}
    \end{caselist}
  \item Term~\(g_{(k+j_{1})}\) at position~\(k+1\le m<k+j_{1}\) (if \(j_{1}\ge 2\))
    \label{tensor:23}
    \cancelswith{tensor:33}
  \item Term~\(g_{(j_{t})}\), \(1<t\le l\), at any position (if \(l\ge 2\))
    \label{tensor:23bis}
    \cancelswith{tensor:33bis}
  \end{caselist}
  
\item Multiplication of two arguments of a term~\(f_{(i_{s})}\)
  \begin{caselist}
  \item Term~\(f_{(i_{s})}\), \(1\le s<k\), at any position
    \label{tensor:31}
    \cancelswith{tensor1:1}
  \item Term~\(f_{(i_{k}+l)}\) at position~\(1\le m< i_{k}\)
    \label{tensor:31bis}
    \cancelswith{tensor1:1bis}
  \item Term~\(f_{(i_{k}+l)}\) at position~\(m=i_{k}\)
    \begin{caselist}
    \item \(i_{k}=1\) and \(k=1\)
      \label{tensor:320}
      \cancelswith{htensor:12}
    \item \(i_{k}=1\) and \(k>1\)
      \label{tensor:321}
      \cancelswith{tensor:122}
    \item \(i_{k}>1\)
      \label{tensor:322}
      \cancelswith{tensor:422}
    \end{caselist}
  \item Term~\(f_{(i_{k}+l)}\) at position~\(m=i_{k}+1\) (if \(l\ge 2\))
    \label{tensor:33}
    \cancelswith{tensor:23}
  \item Term~\(f_{(i_{k}+l)}\) at position~\(i_{k}+1<m<i_{k}+l\) (if \(l\ge 3\))
    \label{tensor:33bis}
    \cancelswith{tensor:23bis}
  \end{caselist}
  
\item Multiplication of two arguments of a term~\(g_{(j_{t})}\)  
  \begin{caselist}
  \item Term~\(g_{(k+j_{1})}\) at position~\(1\le m<k-1\) (if \(k\ge 3\))
    \label{tensor:41}
    \cancelswith{tensor:11}
  \item Term~\(g_{(k+j_{1})}\) at position~\(m=k-1\) (if \(k\ge 2\))
    \label{tensor:41bis}
    \cancelswith{tensor:11bis}
  \item Term~\(g_{(k+j_{1})}\) at position~\(m=k\)
    \begin{caselist}
    \item \(j_{1}=1\) and \(l=1\)
      \label{tensor:420}
      \cancelswith{tensorh:22}
    \item \(j_{1}=1\) and \(l>1\)
      \label{tensor:421}
      \cancelswith{tensor:222}
    \item \(j_{1}>1\)
      \label{tensor:422}
      \cancelswith{tensor:322}
    \end{caselist}
  \item Term~\(g_{(k+j_{1})}\) at position~\(k+1\le m<k+j_{1}\)
    \label{tensor:43}
    \cancelswith{tensor1:2}
  \item Term~\(g_{(j_{t})}\), \(1<t\le l\), at any position
    \label{tensor:43bis}
    \cancelswith{tensor1:2bis}
  \end{caselist}
\end{caselist}

\bigbreak
\noindent\textbf{Terms appearing in~\(h_{(n-1)}(\dots,a_{m}a_{m+1}\otimes b_{m}b_{m+1},\dots)\)}
\medskip
\begin{caselist}[resume]
\item \(m\le i_{1}+\dots+i_{k-1}\) (if \(k\ge 2\))
  \label{tensor1:1}
  \cancelswith{tensor:31}
\item \(i_{1}+\dots+i_{k-1} < m\le i_{1}+\dots+i_{k}\)
  \label{tensor1:1bis}
  \cancelswith{tensor:31bis}
\item \(i_{1}+\dots+i_{k} < m \le i_{1}+\dots+i_{k}+j_{1}\)
  \label{tensor1:2}
  \cancelswith{tensor:43}
\item \(i_{1}+\dots+i_{k}+j_{1} < m\) (if \(l\ge 2\))
  \label{tensor1:2bis}
  \cancelswith{tensor:43bis}
\end{caselist}

\bigbreak
\noindent\textbf{Terms appearing in~\(\bigl((1\otimes g)\circ(f\otimes 1)\bigr)_{(k)}\cdot h_{(n-k)}\)}
\medskip
\begin{caselist}[resume]
\item \(k<n\)
  \label{tensorh:1}
  \cancelswith{tensor:21}
\item \(k=n\)
  \begin{caselist}
  \item \(i_{n}=1\)
    \label{tensorh:21}
    \cancelswith{tensor:120}
  \item \(i_{n}>1\)
    \label{tensorh:22}
    \cancelswith{tensor:420}
  \end{caselist}
\end{caselist}

\bigbreak
\noindent\textbf{Terms appearing in~\(h_{(k)}\cdot\bigl((f\otimes 1)\circ(1\otimes g)\bigr)_{(n-k)}\)}
\medskip
\begin{caselist}[resume]
\item \(k=0\)
  \begin{caselist}
  \item \(j_{1}=1\)
    \label{htensor:11}
    \cancelswith{tensor:220}
  \item \(j_{1}>1\)
    \label{htensor:12}
    \cancelswith{tensor:320}
  \end{caselist}
\item \(k>0\)
  \label{htensor:2}
  \cancelswith{tensor:13}
\end{caselist}

\medskip

To illustrate how the proof proceeds,
let us discuss two cases in detail. We write \(\ii=(i_{1},\dots,i_{k})\),~\(\jj=(j_{1},\dots,j_{l})\)
and \(\epsilon(\ii,\jj)\) for the sign exponent~\(\epsilon\) from~\eqref{eq:def-epsilon-ij}.
We compute sign exponents modulo~\(2\), indicated by~``\(\equiv\)''.

\medskip

\noindent\thpair{tensor:11}{tensor:41}
The case~\ref{tensor:11} with a splitting of~\(f_{(i_{s})}(a_{\bullet})\)
at position~\(1\le p<i_{s}\) produces the term~\((-1)^{\epsilon'} F'\otimes G'\) with
\begin{align}
  F' &\eqKS \iter{\mu}{k+1}\Bigl(f_{(i_{1})}(a_{\bullet}),\dots,f_{(p)}(a_{\bullet}),f_{(q)}(a_{\bullet}),\dots,f_{(i_{k-1})}(a_{\bullet}), \\*
  \notag &\qquad\qquad\qquad\qquad\qquad\qquad\qquad f_{(i_{k}+l)}\bigl(a_{\bullet},\iter{\mu}{j_{1}}(a_{\bullet}),\dots,\iter{\mu}{j_{l}}(a_{\bullet})\bigr)\Bigr), \\
  \shortintertext{where \(p+q=i_{s}\),}
  G' &= G \\
  \notag &\eqKS \iter{\mu}{l}\Bigl(g_{(k+j_{1})}\bigl(\iter{\mu}{i_{1}}(b_{\bullet}),\dots,\iter{\mu}{i_{k}}(b_{\bullet}),b_{\bullet}\bigr),g_{(j_{2})}(b_{\bullet}),\dots,g_{(j_{l})}(b_{\bullet})\Bigr) \\
  \shortintertext{and}
  \epsilon' &\equiv \epsilon(\ii,\jj) + (1-i_{1}) + \dots + (1-i_{s-1}) + p \\
  \notag &\equiv \epsilon(\ii,\jj) + i_{1} + \dots + i_{s-1} + s + p + 1.
\end{align}
On the other hand, the case~\ref{tensor:41} gives the term~\((-1)^{\epsilon''} F''\otimes G''\) with
\begin{align}
  F'' &= F \\
  \notag &\eqKS \iter{\mu}{k}\Bigl(f_{(i_{1})}(a_{\bullet}),\dots,f_{(i_{k-1})}(a_{\bullet}),f_{(i_{k}+l)}\bigl(a_{\bullet},\iter{\mu}{j_{1}}(a_{\bullet}),\dots,\iter{\mu}{j_{l}}(a_{\bullet})\bigr)\Bigr) \\
  G'' &\eqKS \iter{\mu}{l}\Bigl(g_{(k-1+j_{1})}\bigl(\iter{\mu}{i_{1}}(b_{\bullet}),\dots,\iter{\mu}{i_{m}+i_{m+1}}(b_{\bullet}),\dots,\iter{\mu}{i_{k}}(b_{\bullet}),b_{\bullet}\bigr), \\*
  \notag &\qquad\qquad\qquad\qquad\qquad\qquad\qquad\qquad\qquad\quad g_{(j_{2})}(b_{\bullet}),\dots,g_{(j_{l})}(b_{\bullet})\Bigr), \\
  \epsilon'' &\equiv \epsilon(\ii,\jj) + (1-i_{1}) + \dots + (1-i_{k-1}) + (1-i_{k}-l) + m + 1 \\
  \notag &\equiv \epsilon(\ii,\jj) + i_{1} + \dots + i_{k} + k + l + m + 1.
\end{align}
We rewrite this second case in terms
of~\(\ii''=(i_{1},\dots,i_{s-1},p,q,i_{s+1},\dots,i_{k})\) (of length~\(k''=k+1\)) 
and~\(s=m\). Then \(F''=F'\), \(G''=G'\) and
\begin{align}
  \epsilon(\ii'',\jj) &\equiv \epsilon(\ii,\jj) - s(p+q-1) + s(p-1) + (s+1)(q-1) \\
  \notag &\qquad  + (i_{s+1}-1) + \dots + (i_{k}-1) + (l-1) \\
  \notag &\equiv \epsilon(\ii,\jj) + i_{s+1} + \dots + i_{k} + p + k + l.
\end{align}
Hence
\begin{align}
  \epsilon'' &\equiv \epsilon(\ii'',\jj) + i_{1} + \dots + p + q + \dots + i_{k} + k'' + l + s + 1 \\
  \notag &\equiv \epsilon(\ii,\jj) + p + s + 1 \equiv \epsilon' + 1,
\end{align}
which means that the terms produced by these two cases have opposite signs
and therefore cancel out.

\medskip

\noindent\thpair{tensor:21}{tensorh:1}
The case~\ref{tensor:21} produces the term~\((-1)^{\epsilon'}F'\otimes G'\) with
\begin{align}
  F' &= F \\
  \notag &\eqKS \iter{\mu}{k}\Bigl(f_{(i_{1})}(a_{\bullet}),\dots,f_{(i_{k-1})}(a_{\bullet}),f_{(i_{k}+l)}\bigl(a_{\bullet},\iter{\mu}{j_{1}}(a_{\bullet}),\dots,\iter{\mu}{j_{l}}(a_{\bullet})\bigr)\Bigr), \\
  G' &\eqKS \iter{\mu}{l+1}\Bigl(g_{(m)}\bigl(\iter{\mu}{i_{1}}(b_{\bullet}),\dots,\iter{\mu}{i_{m}}(b_{\bullet})\bigr), \\*
  \notag &\qquad\quad g_{(k-m+j_{1})}\bigl(\iter{\mu}{i_{m+1}}(b_{\bullet}),\dots,
  \iter{\mu}{i_{k}}(b_{\bullet}),b_{\bullet}\bigr),g_{(j_{2})}(b_{\bullet}),\dots,g_{(j_{l})}(b_{\bullet})\Bigr), \\
  \epsilon' &\equiv \epsilon(\ii,\jj) + (1-i_{1}) + \dots + (1-i_{k-1}) + (1-i_{k}-l) + m \\
  \notag &\equiv \epsilon(\ii,\jj) + i_{1} + \dots + i_{k} +k + l + m.
\end{align}
We now consider the case~\ref{tensorh:1} with~\(k=m\). Taking \eqref{eq:1-g-f-1} into account,
we see that it produces terms of the form~\((-1)^{\epsilon''}F'\otimes G'\) with
\begin{align}
  F'' &\eqKS \iter{\mu}{m}\Bigl(f_{(i_{1})}(a_{\bullet}),\dots,f_{(i_{m})}(a_{\bullet})\Bigr)\cdot \iter{\mu}{k-m}\Bigl(f_{(i_{m+1})}(a_{\bullet}),\dots, \\*
  \notag &\qquad\qquad\qquad\qquad\quad\;\; f_{(i_{k-1})}(a_{\bullet}),f_{(i_{k}+l)}\bigl(a_{\bullet},\iter{\mu}{j_{1}}(a_{\bullet}),\dots,\iter{\mu}{j_{l}}(a_{\bullet})\bigr)\Bigr), \\
  G'' &\eqKS g_{(m)}\bigl(\iter{\mu}{i_{1}}(b_{\bullet}),\dots,\iter{\mu}{i_{m}}(b_{\bullet})\bigr)
  \cdot \iter{\mu}{l}\Bigl(g_{(k-m+j_{1})}\bigl(\iter{\mu}{i_{m+1}}(b_{\bullet}),\dots, \\*
  \notag &\qquad\qquad\qquad\qquad\qquad\qquad\qquad \iter{\mu}{i_{k}}(b_{\bullet}),b_{\bullet}\bigr),g_{(j_{2})}(b_{\bullet}),\dots,g_{(j_{l})}(b_{\bullet})\Bigr), \\
  \epsilon'' &\equiv \sum_{s=1}^{m}(s-1)(i_{s}-1) + \epsilon(\ii'',\jj) + (m-1)\Bigl(\sum_{s=m+1}^{k}(i_{s}-1) + l\Bigr)
\end{align}
where \(\ii''=(i_{m+1},\dots,i_{k})\). The first summand of~\(\epsilon''\)
is \eqref{eq:1-g-f-1-epsilon}, and last one arises
because we have moved the second factor of~\(F''\) (which comes from~\(h\)) past the first factor of~\(G''\)
(which comes from~\((1\otimes g)\circ(f\otimes 1)\)).
We have \(F''=F'\), \(G''=G'\) and
\begin{align}
  \epsilon'' &\equiv \sum_{s=1}^{k}s(i_{s}-1) + \sum_{s=1}^{m}(i_{s}-1) + \sum_{s=m+1}^{k}m(i_{s}-1) \\
  \notag &\qquad + \sum_{t=1}^{l}(l-t)(j_{t}-1) + k(l-1) -m(l-1) + 1 \\
  \notag &\qquad + \sum_{s=m+1}^{k}(m-1)(i_{s}-1) + l(m-1) \\
  \notag &\equiv \epsilon(\ii,\jj) + \sum_{s=1}^{k}(i_{s}-1) -m(l-1) + l(m-1) \\
  \notag &\equiv \epsilon(\ii,\jj) + i_{1} + \dots + i_{k} + k + l + m.
\end{align}
Hence the terms produced by these two cases agree, including the signs.

\end{document}